\newtheorem{theorem}{Theorem}
\newtheorem{proposition}[theorem]{Proposition}
\newtheorem{lemma}[theorem]{Lemma}
\newtheorem{corollary}[theorem]{Corollary}
\title{Kneser graphs are Hamiltonian}
\author{Arturo Merino}
\address[Arturo Merino]{Department of Mathematics, TU Berlin, Germany}
\email{merino@math.tu-berlin.de}
\author{Torsten M\"utze}
\address[Torsten M\"utze]{Department of Computer Science, University of Warwick, United Kingdom \& Department of Theoretical Computer Science and Mathematical Logic, Charles University, Prague, Czech Republic}
\email{torsten.mutze@warwick.ac.uk}
\author{Namrata}
\address[Namrata]{Department of Computer Science, University of Warwick, United Kingdom}
\email{namrata@warwick.ac.uk}
\thanks{This work was supported by Czech Science Foundation grant GA~22-15272S. The three authors participated in the workshop `Combinatorics, Algorithms and Geometry' in March 2024, which was funded by German Science Foundation grant~522790373.}
\thanks{An extended abstract of this work appeared in the Proceedings of the 55th Annual ACM Symposium on the Theory of Computing (STOC 2023) \cite{MR4617441}.}
\begin{document}

\begin{abstract}
For integers~$k\geq 1$ and $n\geq 2k+1$, the Kneser graph~$K(n,k)$ has as vertices all $k$-element subsets of an $n$-element ground set, and an edge between any two disjoint sets.
It has been conjectured since the 1970s that all Kneser graphs admit a Hamilton cycle, with one notable exception, namely the Petersen graph~$K(5,2)$.
This problem received considerable attention in the literature, including a recent solution for the sparsest case $n=2k+1$.
The main contribution of this paper is to prove the conjecture in full generality.
We also extend this Hamiltonicity result to all connected generalized Johnson graphs (except the Petersen graph).
The generalized Johnson graph~$J(n,k,s)$ has as vertices all $k$-element subsets of an $n$-element ground set, and an edge between any two sets whose intersection has size exactly~$s$.
Clearly, we have $K(n,k)=J(n,k,0)$, i.e., generalized Johnson graphs include Kneser graphs as a special case.
Our results imply that all known natural families of vertex-transitive graphs defined by intersecting set systems have a Hamilton cycle, which settles an interesting special case of Lov\'asz' conjecture on Hamilton cycles in vertex-transitive graphs from~1970.
Our main technical innovation is to study cycles in Kneser graphs by a kinetic system of multiple gliders that move at different speeds and that interact over time, reminiscent of the gliders in Conway's Game of Life, and to analyze this system combinatorially and via linear algebra.
\end{abstract}

\maketitle

\section{Introduction}

For integers~$k\geq 1$ and~$n\geq 2k+1$, the \emph{Kneser graph~$K(n,k)$} \marginpar{$K(n,k)$} has as vertices all $k$-element subsets of~$[n]:=\{1,2,\ldots,n\}$, and an edge between any two sets~$A$ and~$B$ that are disjoint, i.e., $A\cap B=\emptyset$.
Kneser graphs were introduced by Lov\'asz~\cite{MR514625} in his celebrated proof of Kneser's conjecture.
Using the Borsuk-Ulam theorem, he proved that the chromatic number of~$K(n,k)$ equals $n-2k+2$, and his proof gave rise to the field of topological combinatorics.
We proceed to list a few other important properties of Kneser graphs.
The maximum independent set in~$K(n,k)$ has size $\binom{n-1}{k-1}$ by the famous Erd{\H o}s-Ko-Rado~\cite{MR0140419} theorem.
Furthermore, the graph~$K(n,k)$ is vertex-transitive, i.e., it `looks the same' from the point of view of any vertex, and all vertices have degree~$\binom{n-k}{k}$.
Lastly, note that when $n<ck$, the Kneser graph~$K(n,k)$ does not contain cliques of size~$c$, whereas it does contain such cliques when $n\geq ck$.
Many other properties of Kneser graphs have been studied, for example their diameter~\cite{MR2186709}, treewidth~\cite{MR3177543}, boxicity~\cite{caoduro-lichev:22}, and removal lemmas~\cite{MR3766248}.

\subsection{Hamilton cycles in Kneser graphs}

In this work we investigate Hamilton cycles in Kneser graphs, i.e., cycles that visit every vertex exactly once.
Kneser graphs have long been conjectured to have a Hamilton cycle, with one notable exception, the Petersen graph~$K(5,2)$ (see Figure~\ref{fig:petersen}), which only admits a Hamilton path.
This conjecture goes back to the 1970s, and in the following we give a detailed account of this long history.
As Kneser graphs are vertex-transitive, this is a special case of Lov\'asz' famous conjecture~\cite{MR0263646}, which asserts that every connected vertex-transitive graph admits a Hamilton path.
A stronger form of the conjecture asserts that every connected vertex-transitive graph admits a Hamilton cycle, apart from five exceptional graphs, one of them being the Petersen graph.
So far, the conjecture for Hamilton cycles in Kneser graphs has been tackled from two angles, namely for sufficiently dense Kneser graphs, and for the sparsest Kneser graphs.
From the aforementioned results about the degree and cliques in~$K(n,k)$, we see that $K(n,k)$ is relatively dense when $n$ is large w.r.t.\ $k$, and relatively sparse otherwise.
The sparsest case is when $n=2k+1$, and the graphs $O_k:=K(2k+1,k)$ \marginpar{$O_k$} are also known as \emph{odd graphs}.
Intuitively, proving Hamiltonicity should be easier for the dense cases, and harder for the sparse cases.

We first recap the known results for dense Kneser graphs.
Heinrich and Wallis~\cite{MR510592} showed that~$K(n,k)$ has a Hamilton cycle if $n\geq 2k+k/(\sqrt[k]{2}-1)=(1+o(1))k^2/\ln 2$.
This was improved by B.~Chen and Lih~\cite{MR888679}, whose results imply that~$K(n,k)$ has a Hamilton cycle if $n\geq (1+o(1))k^2/\log k$; see~\cite{MR1405991}.
In another breakthrough, Y.~Chen~\cite{MR1778200} showed that~$K(n,k)$ is Hamiltonian when~$n\geq 3k$.
A~particularly nice and clean proof for the cases where~$n=ck$, $c\in\{3,4,\ldots\}$, was obtained by Y.~Chen and F\"uredi~\cite{MR1883565}.
Their proof uses Baranyai's well-known partition theorem for complete hypergraphs~\cite{MR0416986} to partition the vertices of~$K(ck,k)$ into cliques of size~$c$.
This proof method was extended by Bellmann and Sch\"ulke to any $n\geq 4k$~\cite{MR4247012}.
The asymptotically best result known to date, again due to Y.~Chen~\cite{MR1999733}, is that~$K(n,k)$ has a Hamilton cycle if $n\geq (3k+1+\sqrt{5k^2-2k+1})/2=(1+o(1))2.618\ldots\cdot k$.
With the help of computers, Shields and Savage~\cite{MR2020936} found Hamilton cycles in~$K(n,k)$ for all~$n\leq 27$ (except for the Petersen graph).

We now briefly summarize the Hamiltonicity story of the sparsest Kneser graphs, namely the odd graphs.
Note that $O_k=K(2k+1,k)$ has degree~$k+1$, which is only logarithmic in the number of vertices.
The conjecture that~$O_k$ has a Hamilton cycle for all $k\geq 3$ originated in the 1970s, in papers by Meredith and Lloyd~\cite{MR0457282,MR0321782} and by Biggs~\cite{MR556008}.
Already Balaban~\cite{balaban:72} exhibited a Hamilton cycle for the cases~$k=3$ and~$k=4$, and Meredith and Lloyd described one for~$k=5$ and~$k=6$.
Later, Mather~\cite{MR0389663} solved the case~$k=7$.
M\"utze, Nummenpalo and Walczak~\cite{MR4273468} finally settled the problem for all odd graphs, proving that $O_k$ has a Hamilton cycle for every~$k\geq 3$.
In fact, they even proved that~$O_k$ admits double-exponentially (in $k$) many distinct Hamilton cycles.
Already much earlier, Johnson~\cite{MR2836824} provided an inductive argument that establishes Hamiltonicity of~$K(n,k)$ provided that the existence of Hamilton cycles is known for several smaller Kneser graphs.
Combining his result with the unconditional results from~\cite{MR4273468} yields that~$K(2k+2^a,k)$ has a Hamilton cycle for all $k\geq 3$ and~$a\geq 0$.
These results still leave infinitely many open cases, the sparsest one of which is the family~$K(2k+3,k)$ for $k\geq 1$.

Another line of attack towards proving Hamiltonicity is to find long cycles in~$K(n,k)$.
To this end, Johnson~\cite{MR2046083} showed that there exists a constant~$c>0$ such that the odd graph~$O_k$ has a cycle that visits at least a $(1-c/\sqrt{k})$-fraction of all vertices, which is almost all vertices as $k$~tends to infinity.
This was generalized and improved in~\cite{MR3759914}, where it was shown that~$K(n,k)$ has a cycle visiting a $2k/n$-fraction of all vertices.
For $n=2k+1$ this fraction is $(1-1/(2k+1))$, and more generally for $n=2k+o(k)$ it is~$1-o(1)$.

The main contribution of this paper is to settle the conjecture on Hamilton cycles in Kneser graphs affirmatively in full generality.

\begin{theorem}
\label{thm:Knk}
For all $k\geq 1$ and $n\geq 2k+1$, the Kneser graph~$K(n,k)$ has a Hamilton cycle, unless it is the Petersen graph, i.e., $(n,k)=(5,2)$.
\end{theorem}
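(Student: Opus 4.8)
The plan is to separate the parameter range into a \emph{dense} regime, where the statement is essentially already available, and a \emph{sparse} regime, which requires a genuinely new construction. For the dense regime, Y.~Chen's theorem~\cite{MR1778200} supplies a Hamilton cycle whenever $n\geq 3k$, and Shields and Savage~\cite{MR2020936} dispose of all cases with $n\leq 27$; one may also invoke Johnson's inductive reduction~\cite{MR2836824}, which derives Hamiltonicity of $K(n,k)$ from that of a few Kneser graphs with smaller parameters and in particular preserves the quantity $n-2k$ under the step $(n,k)\mapsto(n-2,k-1)$, to further shrink the list of graphs that must be handled directly. After these reductions it suffices to produce an explicit Hamilton cycle in $K(n,k)$ for all sufficiently large $k$ and all $n$ in the window $2k+1\leq n\leq 3k-1$, with $K(5,2)$ the sole genuine exception.

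For this window I would not work inside the full graph $K(n,k)$, but inside a spanning subgraph $\mathcal{F}$ cut out by a deterministic local rule. Represent a $k$-set $S\subseteq\mathbb{Z}_n$ as a cyclic binary word of length $n$ with exactly $k$ ones, group the ones into maximal blocks, and call each block a ``glider'' whose ``mass'' is its length. The rule evolves the configuration in discrete time by sliding every glider along the cyclic track, a glider of mass $m$ advancing at a speed that depends on $m$, so that gliders of different masses drift at different rates, eventually collide, and then merge or split according to a fixed protocol --- this is the ``kinetic system of multiple gliders''. Two things need checking: first, that each elementary move is an edge of $K(n,k)$, which holds precisely because for $n$ in our window the word before the move and the word after it have disjoint supports; second, that the rule is a bijection on $\binom{[n]}{k}$, so that iterating it partitions the vertex set of $K(n,k)$ into vertex-disjoint cycles, i.e.\ yields a \emph{cycle factor} $\mathcal{F}$.

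The remaining task is the by now classical one of turning a cycle factor into a single Hamilton cycle. For this I would equip the construction with a repertoire of local ``flips'': alternative moves that, applied to two distinct cycles of $\mathcal{F}$, splice them into one. Let $H$ be the auxiliary graph whose vertices are the cycles of $\mathcal{F}$, with an edge for each available flip; if $H$ is connected, then executing the flips along a spanning tree of $H$ merges all cycles of $\mathcal{F}$ into a Hamilton cycle of $K(n,k)$. Proving that $H$ is connected is where linear algebra enters: attach to every cycle of $\mathcal{F}$ an invariant vector recording conserved data of its gliders (their masses and the total winding accumulated over one period), observe that a flip alters this vector by a prescribed increment, and show that these increment vectors span the entire ambient lattice --- equivalently, that a suitable transition matrix has full rank. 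Full rank forces $H$ to be connected, and conversely for $(n,k)=(5,2)$ this matrix drops rank (the flip graph on $\mathcal{F}$ is disconnected), which is exactly why the Petersen graph must be excluded.

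The main obstacle, and the heart of the argument, is the combinatorial analysis of the glider dynamics: one must control how many cycles $\mathcal{F}$ has and how the gliders are distributed along each of them, and prove the resulting full-rank statement \emph{uniformly} across the whole window $2k+1\leq n\leq 3k-1$. The hardest end of the window is $n$ close to $2k+1$ --- the odd graphs, and families such as $K(2k+3,k)$ --- where there are very few gliders, the degree is only logarithmic in the number of vertices, and parity-type obstructions are most severe, so the spanning/full-rank argument has to be pushed through by a careful hands-on analysis rather than a soft counting bound; a bounded number of genuinely small instances will have to be verified separately, consistent with the computer search of~\cite{MR2020936}. Finally, once Kneser graphs are settled, I expect the passage to all connected generalized Johnson graphs $J(n,k,s)$ to be a further layer on top of the same glider machinery rather than a fundamentally new idea.
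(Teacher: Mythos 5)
The broad scaffold you describe --- a cycle factor cut out by a deterministic local rule, analyzed through ``gliders,'' then spliced into one Hamilton cycle via $4$-cycle flips organized by an auxiliary graph --- does match the paper's strategy, but several of your specific commitments are wrong in ways that would sink the argument. You define a glider as a maximal block of $1$s, with blocks that merge or split on collision. In the paper a glider is \emph{not} a maximal run of $1$s: the factor rule is cyclic parenthesis matching (match $1$s to nearest $0$s as bracket pairs, then complement all matched bits), and a glider is one piece of a recursive Motzkin-path decomposition of the matched positions, so a single glider's bits may be interleaved with other gliders' bits. Crucially, gliders never merge or split --- they overtake and are overtaken, with the multiset of speeds invariant along every cycle. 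That invariant is exactly what the gluing argument leans on; a merge/split protocol would destroy it. You also misplace the linear algebra. It is not a ``full rank forces the flip graph connected'' statement, and $(5,2)$ is not a rank-drop exception. The determinant computation (Lemma~\ref{lem:detM}) serves a local purpose: the equations of motion are nonsingular, so no glider stalls forever (Lemma~\ref{lem:forward}), guaranteeing that the needed connector configurations eventually occur. Connectivity of the flip graph is instead proved combinatorially: each connector is chosen so that the number partition of $k$ given by sorted glider speeds increases lexicographically, possibly after a short compensating chain, pushing every cycle towards the ``all $1$s consecutive'' cycles, which are then tied together via a Cayley-graph argument.

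The most serious gap is your claim that the same machinery can be ``pushed through by a careful hands-on analysis'' down to $n=2k+1$, with odd graphs and $K(2k+3,k)$ at the hard end of the same window. That cannot work: $K(2k+1,k)$ contains no $4$-cycles at all, so the splicing step literally has nothing to use, and the lexicographic-improvement argument needs $n\geq 2k+3$ in an essential way (the paper says explicitly its method fails for $n\in\{2k+1,2k+2\}$). The paper covers $n=2k+1$ by citing M\"utze--Nummenpalo--Walczak~\cite{MR4273468} and $n=2k+2$ by composing that with Johnson's reduction~\cite{MR2836824}; the Petersen graph is the $(5,2)$ instance sitting on that prior-work boundary, not a degeneracy of a transition matrix. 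Your dense/sparse split should therefore be $n\leq 2k+2$ versus $n\geq 2k+3$, and the new construction covers \emph{all} of $n\geq 2k+3$ uniformly --- there is no reason to restrict to $n\leq 3k-1$ or to invoke Chen's $n\geq 3k$ theorem at all.
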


In the following we present generalizations of this result that we establish in this paper, and we discuss how they extend previously known Hamiltonicity results.
The relations between these results for different families of vertex-transitive graphs are illustrated in Figure~\ref{fig:theorems}.
In fact, our proof of Theorem~\ref{thm:Knk} enables us to settle all known natural instances of Lov\'asz' conjecture for vertex-transitive graphs defined by intersecting set systems.
As we shall see, Kneser graphs are the hardest cases among them to prove.
Indeed, the more general families of graphs can be settled easily once Hamiltonicity is established for Kneser graphs.

\begin{figure}[h!]
\includegraphics[page=1]{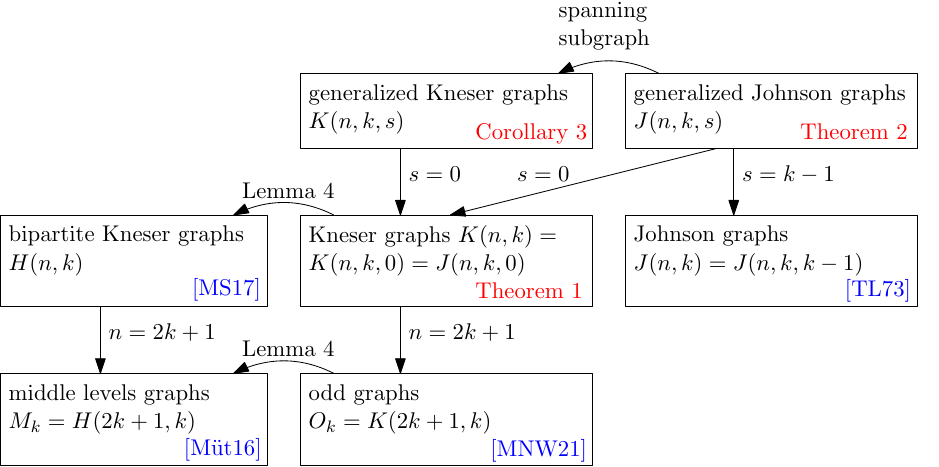}
\caption{Relation between Hamiltonicity results established in this paper and previous papers.
Arrows indicate implications.}
\label{fig:theorems}
\end{figure}

\subsection{Generalized Johnson graphs}

The \emph{generalized Johnson graph~$J(n,k,s)$} \marginpar{$J(n,k,s)$}  has as vertices all $k$-element subsets of~$[n]$, and an edge between any two sets~$A$ and~$B$ that satisfy~$|A\cap B|=s$, i.e., the intersection of~$A$ and~$B$ has size exactly~$s$.
To ensure that the graph is connected, we assume that $s<k$ and $n\geq 2k-s+\mathbf{1}_{[s=0]}$, where $\mathbf{1}_{[s=0]}$ denotes the indicator function that equals~1 if $s=0$ and~0 otherwise.
Generalized Johnson graphs are sometimes called `uniform subset graphs' in the literature, and they are also vertex-transitive.
Furthermore, by taking complements, we see that $J(n,k,s)$ is isomorphic to~$J(n,n-k,n-2k+s)$.
Clearly, Kneser graphs are special generalized Johnson graphs obtained for~$s=0$.
On the other hand, the graphs obtained for~$s=k-1$ are known as (ordinary) \emph{Johnson graphs}~$J(n,k):=J(n,k,k-1)$. \marginpar{$J(n,k)$}

Chen and Lih~\cite{MR888679} conjectured that all graphs~$J(n,k,s)$ admit a Hamilton cycle except the Petersen graph~$J(5,2,0)=J(5,3,1)$, and this problem was reiterated in Gould's survey~\cite{MR1106528}.
In their original paper, Chen and Lih settled the cases $s\in\{k-1,k-2,k-3\}$.
It is known that a Hamilton cycle in the Johnson graph~$J(n,k)=J(n,k,k-1)$ can be obtained by restricting the binary reflected Gray code for bitstrings of length~$n$ to those strings with Hamming weight~$k$~\cite{MR0349274}.
In fact, for Johnson graphs~$J(n,k)$ much stronger Hamiltonicity properties are known~\cite{MR1298971,MR1301949}.
Other properties of generalized Johnson graphs were investigated in~\cite{MR2466973,MR3713392,MR4055024,kozhevnikov-zhukovskii:22}.

We generalize Theorem~\ref{thm:Knk} further, by showing that all connected generalized Johnson graphs admit a Hamilton cycle.
This resolves Chen and Lih's conjecture affirmatively in full generality.

\begin{theorem}
\label{thm:Jnks}
For all $k\geq 1$, $0\leq s<k$, and $n\geq 2k-s+\mathbf{1}_{[s=0]}$ the generalized Johnson graph~$J(n,k,s)$ has a Hamilton cycle, unless it is the Petersen graph, i.e., $(n,k,s)\in\{(5,2,0),(5,3,1)\}$.
\end{theorem}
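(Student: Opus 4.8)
The plan is to deduce Theorem~\ref{thm:Jnks} from Theorem~\ref{thm:Knk} by a short induction on~$n$, resting on two structural observations. First, every generalized Johnson graph is \emph{edge-transitive}: for two edges $\{A,B\}$ and $\{A',B'\}$ of $J(n,k,s)$ one has $|A\cup B|=2k-s=|A'\cup B'|\le n$, so some permutation of~$[n]$ sends $A\mapsto A'$ and $B\mapsto B'$. Hence \emph{if $J(n,k,s)$ has a Hamilton cycle, then every edge of $J(n,k,s)$ lies on a Hamilton cycle} --- apply an automorphism moving an edge of a fixed Hamilton cycle onto the prescribed one. I will carry this ``edge-Hamiltonicity'' through the induction, since it trivializes the gluing step below. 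Second, the complementation isomorphism $J(n,k,s)\cong J(n,n-k,n-2k+s)$ turns the extremal case $n=2k-s$ (for $s\ge1$) into $K(2k-s,k-s)$, a Kneser graph with $2k-s\ge 2(k-s)+1$; Theorem~\ref{thm:Knk} gives a Hamilton cycle there unless it is $K(5,2)$, i.e.\ unless $(n,k,s)=(5,3,1)$, which is the excluded triple. With $s=0$ covered directly by Theorem~\ref{thm:Knk} and degenerate cases such as $k=1$ (where $J(n,1,0)=K_n$) trivial, this forms the base of the induction.

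For the inductive step take $1\le s\le k-1$ and $n\ge 2k-s+1$, and split the vertex set of $J:=J(n,k,s)$ according to whether the $k$-set contains the element~$n$: the sets avoiding~$n$ induce a copy of $J(n-1,k,s)$, and the sets containing~$n$ induce a copy of $J(n-1,k-1,s-1)$, because deleting~$n$ lowers both the size and the intersection size by one. Both sub-triples are admissible, with first coordinate $n-1<n$, and --- apart from the exceptions noted below --- neither is a Petersen triple, so by induction both halves are edge-Hamiltonian. Now fix $A_1',A_2'\in\binom{[n-1]}{k-1}$ and $B_1,B_2\in\binom{[n-1]}{k}$ with $|A_1'\cap A_2'|=s-1$, $|B_1\cap B_2|=s$, and $|A_1'\cap B_1|=|A_2'\cap B_2|=s$; such a ``compatible quadruple'' exists because $|[n-1]|=n-1\ge 2k-s$ leaves enough room to realize these pairwise intersections (a routine check, a little tighter when $n=2k-s+1$). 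Then $A_1'A_2'$ is an edge of the ``containing~$n$'' half, $B_1B_2$ is an edge of the ``avoiding~$n$'' half, and $\{\{n\}\cup A_1',B_1\}$ and $\{\{n\}\cup A_2',B_2\}$ are edges of~$J$ joining the two halves. Choosing a Hamilton cycle of the first half through $A_1'A_2'$ and one of the second half through $B_1B_2$, then deleting these two edges and adding the two cross edges, produces a single Hamilton cycle of~$J$; edge-transitivity upgrades this to edge-Hamiltonicity, closing the induction.

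Since Theorem~\ref{thm:Knk} --- the genuinely hard input --- is assumed, the only real care needed is in pinning down the finitely many exceptional triples, and I expect this, rather than the induction itself, to absorb the bulk of the work. A half equal to the Petersen graph forces $(n,k,s)\in\{(6,3,1),(6,4,2)\}$; but $(6,4,2)\cong K(6,2)$ is handled outright by Theorem~\ref{thm:Knk}, so the one genuinely special case is $J(6,3,1)$, whose two halves are both copies of the Petersen graph. As the Petersen graph has Hamilton paths but no Hamilton cycle, here one instead takes a suitable Hamilton path in each half and joins the four endpoints by two cross edges to obtain a Hamilton cycle of the $20$-vertex graph $J(6,3,1)$, a finite verification. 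The slight tightening of the compatible-quadruple construction in the boundary cases $n=2k-s+1$, together with any remaining very small triples, are dispatched directly --- with computer assistance if desired, as in~\cite{MR2020936}.
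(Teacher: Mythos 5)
Your proof is correct and follows essentially the same route as the paper: you partition the vertex set of $J(n,k,s)$ by whether a set contains the fixed element~$n$, obtaining copies of $J(n-1,k,s)$ and $J(n-1,k-1,s-1)$, glue Hamilton cycles of the two halves across a $4$-cycle using edge-transitivity (this is exactly the content of Chen and Lih's Lemma~\ref{lem:JindHC}, which you rederive rather than cite), and anchor the induction at the Kneser cases via Theorem~\ref{thm:Knk} together with the complementation isomorphism $J(n,k,s)\cong J(n,n-k,n-2k+s)$. Your identification of $J(6,3,1)$ as the single genuine Petersen obstruction, to be patched by a direct finite check, matches the paper's handling of the small base cases.
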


\subsection{Generalized Kneser graphs}

The \emph{generalized Kneser graph~$K(n,k,s)$} \marginpar{$K(n,k,s)$} has as vertices all $k$-element subsets of~$[n]$, and an edge between any two sets~$A$ and~$B$ that satisfy~$|A\cap B|\leq s$, i.e., the intersection of~$A$ and~$B$ has size at most~$s$.
The definition is very similar to generalized Johnson graphs, only the equality condition on the size of the set intersection is replaced by an inequality.
As a consequence, we clearly have $K(n,k,s)=\bigcup_{t\leq s}J(n,k,t)$, i.e., $K(n,k,s)$ has the same vertex set as~$J(n,k,s)$, but more edges.
In other words, $J(n,k,s)$ is a spanning subgraph of~$K(n,k,s)$.
Generalized Kneser graphs are also vertex-transitive, and they have been studied heavily in the literature; see e.g.~\cite{MR797510,MR1468332,MR2427759,MR3937815,MR4040062,MR4311584,MR4406218,metsch:22}.

As $J(n,k,s)$ is a spanning subgraph of~$K(n,k,s)$, Theorem~\ref{thm:Jnks} yields the following immediate corollary.

\begin{corollary}
\label{cor:Jnks}
For all $k\geq 1$, $0\leq s<k$, and $n\geq 2k-s+\mathbf{1}_{[s=0]}$ the generalized Kneser graph~$K(n,k,s)$ has a Hamilton cycle, unless it is the Petersen graph, i.e., $(n,k,s)\in\{(5,2,0),(5,3,1)\}$.
\end{corollary}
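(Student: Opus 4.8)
The plan is to deduce this immediately from Theorem~\ref{thm:Jnks} by the spanning-subgraph observation already noted in the text: since an edge of $J(n,k,s)$ (two $k$-sets meeting in exactly $s$ points) is in particular an edge of $K(n,k,s)$ (two $k$-sets meeting in at most $s$ points), the graph $J(n,k,s)$ is a spanning subgraph of $K(n,k,s)$, i.e.\ they share the same vertex set and every edge of the former is an edge of the latter. A Hamilton cycle is a spanning subgraph on its own, so any Hamilton cycle of $J(n,k,s)$ is, verbatim, a Hamilton cycle of $K(n,k,s)$.

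Concretely, I would argue as follows. Fix $k\geq 1$, $0\leq s<k$, and $n\geq 2k-s+\mathbf{1}_{[s=0]}$, and suppose first that $(n,k,s)\notin\{(5,2,0),(5,3,1)\}$. By Theorem~\ref{thm:Jnks}, $J(n,k,s)$ has a Hamilton cycle $C$. Since $V(J(n,k,s))=V(K(n,k,s))=\binom{[n]}{k}$ and $E(J(n,k,s))\subseteq E(K(n,k,s))$, the cycle $C$ uses only edges of $K(n,k,s)$ and visits every vertex of $K(n,k,s)$ exactly once; hence $C$ is a Hamilton cycle of $K(n,k,s)$ as well. In the remaining cases $(n,k,s)\in\{(5,2,0),(5,3,1)\}$ we have $K(5,2,0)=K(5,2)$, the Petersen graph, and $K(5,3,1)=J(5,3,0)\cup J(5,3,1)$; the latter is isomorphic to $K(5,2,0)$ by taking complements (the complement map sending $A\mapsto[5]\setminus A$ is a bijection $\binom{[5]}{3}\to\binom{[5]}{2}$ carrying "intersection at most $1$" to "intersection at most $1$"), so it is again the Petersen graph. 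Thus the stated exceptions are exactly the Petersen graph, matching the formulation, and no obstacle remains.

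There is essentially no hard part here: the corollary is a one-line consequence of Theorem~\ref{thm:Jnks} together with the monotonicity of Hamiltonicity under adding edges, the only minor point being to double-check that the two excluded triples do collapse to the single Petersen graph (as just verified via complementation). I would keep the proof to the two sentences: "As observed above, $J(n,k,s)$ is a spanning subgraph of $K(n,k,s)$, so any Hamilton cycle of the former is a Hamilton cycle of the latter; the claim now follows from Theorem~\ref{thm:Jnks}."
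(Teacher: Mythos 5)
Your proposal is correct and is exactly the paper's own argument: the corollary follows immediately from Theorem~\ref{thm:Jnks} because $J(n,k,s)$ is a spanning subgraph of $K(n,k,s)$, so any Hamilton cycle of the former is one of the latter. The extra check that the two excluded triples both give the Petersen graph is accurate (indeed $K(5,3,1)=J(5,3,1)$ since two $3$-subsets of $[5]$ cannot be disjoint), though the paper leaves this implicit.
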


\subsection{Bipartite Kneser graphs and the middle levels problem}

For integers $k\geq 1$ and $n\geq 2k+1$, the \emph{bipartite Kneser graph~$H(n,k)$} \marginpar{$H(n,k)$} has as vertices all $k$-element and $(n-k)$-element subsets of~$[n]$, and an edge between any two sets~$A$ and~$B$ that satisfy~$A\seq B$.
It is easy to see that bipartite Kneser graphs are also vertex-transitive.
The following simple lemma shows that Hamiltonicity of~$K(n,k)$ is harder than the Hamiltonicity of~$H(n,k)$.

\begin{lemma}
\label{lem:KHnk}
If $K(n,k)$ admits a Hamilton cycle, then $H(n,k)$ admits a Hamilton cycle or path.
\end{lemma}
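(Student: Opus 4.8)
The plan is to exploit the natural two-to-one type of covering relationship between $H(n,k)$ and $K(n,k)$. Observe that there is a graph homomorphism-like correspondence: the vertex set of $H(n,k)$ splits into the $k$-sets and the $(n-k)$-sets of $[n]$, and mapping each $(n-k)$-set $B$ to its complement $[n]\setminus B$ (a $k$-set) identifies the two sides. Under this identification, an edge $\{A,B\}$ of $H(n,k)$ with $A$ a $k$-set, $B$ an $(n-k)$-set, and $A\subseteq B$ becomes a pair of $k$-sets $A$ and $[n]\setminus B$; the condition $A\subseteq B$ is equivalent to $A\cap([n]\setminus B)=\emptyset$, i.e., $A$ and $[n]\setminus B$ form an edge of $K(n,k)$. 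So edges of $H(n,k)$ are in bijection with edges of $K(n,k)$, but each vertex of $K(n,k)$ corresponds to two vertices of $H(n,k)$ (one on each side).

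First I would take a Hamilton cycle $C = (A_0, A_1, \ldots, A_{N-1}, A_0)$ in $K(n,k)$, where $N=\binom{n}{k}$, and "lift" it to $H(n,k)$ as follows: starting from the $k$-set vertex $A_0$ on the $k$-side, walk along the edge to the $(n-k)$-set $[n]\setminus A_1$, then from $[n]\setminus A_1$ walk along the edge (of $H(n,k)$) corresponding to the edge $\{A_1,A_2\}$ of $K(n,k)$ to the $k$-set $A_2$, and so on, alternating sides. Each step in $C$ becomes one step in $H(n,k)$, alternately landing on the $k$-side and the $(n-k)$-side; after traversing all $N$ edges of $C$ we return to the starting vertex. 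The key point is a parity dichotomy: if $N$ is even, this closed walk is a cycle in $H(n,k)$ that visits each $k$-set $A_i$ with $i$ even and each complemented set $[n]\setminus A_i$ with $i$ odd — but since $N$ is even, the index parities are consistent all the way around, and one checks that this visits all $N$ vertices on one side and all $N$ on the other, hence it is a Hamilton cycle of $H(n,k)$. If $N$ is odd, the walk fails to close up into a simple cycle covering everything in one pass (the parity flips after one loop), but going around twice — i.e., traversing $C$ and then $C$ again — produces a closed walk of length $2N$ that visits every vertex of $H(n,k)$ exactly once before closing; removing the final edge yields a Hamilton path.

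The main obstacle is making the parity bookkeeping precise and verifying that in each case every vertex of $H(n,k)$ is hit exactly once. Concretely, I would argue: the lifted walk visits, in order, $A_0, \overline{A_1}, A_2, \overline{A_3}, \ldots$; when $N$ is even this is the sequence $(A_0, \overline{A_1}, A_2, \ldots, A_{N-2}, \overline{A_{N-1}})$ followed by the edge back to $A_0$, and since every $A_i$ appears exactly once in $C$, the even-indexed ones give all $\binom{n}{k}$ $k$-sets exactly once (because as $i$ ranges over $\{0,2,\ldots,N-2\}$... wait, that is only half of them). So in fact the correct statement is subtler: in one pass we only collect $N/2$ sets on each side, so the lift must be that both passes are needed, OR the indexing works out because $A_i$ and $A_{i+1}$ being disjoint $k$-sets with $n<2k$... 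Let me restate the clean version: the right construction is to note $H(n,k)$ has $2\binom{n}{k}$ vertices, a Hamilton cycle of $K(n,k)$ has $\binom{n}{k}$ edges, and the lift of the *closed walk obtained by going around $C$ twice* has length $2\binom{n}{k}$; one shows this double traversal visits $A_0,\overline{A_1},A_2,\ldots,\overline{A_{N-1}},A_0,\overline{A_1},\ldots$ — no, that repeats. The honest resolution, which I would carry out carefully in the proof, is this: each edge $\{A_i,A_{i+1}\}$ of $K(n,k)$ lifts to *two* edges of $H(n,k)$, namely $\{A_i,\overline{A_{i+1}}\}$ and $\{\overline{A_i},A_{i+1}\}$; taking the union over all edges of $C$ gives a 2-regular spanning subgraph of $H(n,k)$ (every $k$-set $A_i$ meets $\overline{A_{i-1}}$ and $\overline{A_{i+1}}$; every set $\overline{A_i}$ meets $A_{i-1}$ and $A_{i+1}$), hence a disjoint union of cycles covering all $2N$ vertices. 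This 2-factor is a single Hamilton cycle precisely when $N$ is odd, and splits into two equal cycles when $N$ is even; in the latter case, I would delete one vertex... no — in the latter case I would instead delete one edge from each of the two cycles and reconnect, but that may not be possible with $H(n,k)$-edges, so the clean claim really is "cycle or path": when $N$ is even, pick one of the two cycles, which is a cycle on $N$ vertices — not spanning. Hence the precise argument: the 2-factor is always a Hamilton cycle when $\gcd$-type parity gives one component, and a Hamilton path is extracted from the two-component case by removing one edge and observing the two cycles are joined through a single vertex identification — the details of which component structure arises, governed by the parity of $N=\binom{n}{k}$, is exactly the step I expect to require the most care, and I would handle it by tracking the permutation $i\mapsto i+1 \pmod N$ on the "side label" $\mathbb{Z}/2$ and computing the orbit structure of the induced permutation on $\mathbb{Z}/N \times \mathbb{Z}/2$.
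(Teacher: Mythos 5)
Your construction is the paper's: lift each Hamilton-cycle edge $\{x_i,x_{i+1}\}$ of $K(n,k)$ to the two edges $\{x_i,\overline{x_{i+1}}\}$ and $\{\overline{x_i},x_{i+1}\}$ of $H(n,k)$, yielding a $2$-regular spanning subgraph that is a single Hamilton cycle when $N=\binom{n}{k}$ is odd and two vertex-disjoint spanning cycles when $N$ is even. (These are precisely the paper's $P=(x_1,\overline{x_2},x_3,\ldots)$ and $P'=(\overline{x_1},x_2,\overline{x_3},\ldots)$.) That part is fine.

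There is a genuine gap in the even case, and you flag it yourself but never close it. ``Pick one of the two cycles'' is not spanning; ``joined through a single vertex identification'' is impossible since the two cycles are vertex-disjoint in $H(n,k)$; and the closing proposal to ``track the permutation $i\mapsto i+1$ on the side label'' only re-derives the component count you already have, it does not produce a Hamilton path. What is actually needed is an edge of $H(n,k)$ with one endpoint in each cycle, so that you can break each cycle at that edge's endpoints and splice. Such an edge must have the form $\{x_i,\overline{x_j}\}$ with $i\neq j$ and $i\equiv j\pmod 2$, i.e.\ it comes from a same-parity chord $\{x_i,x_j\}$ of $C$ in $K(n,k)$. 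Crucially, $\{x_i,\overline{x_i}\}$ is never an edge of $H(n,k)$, which is why the naive ``remove one edge from each and reconnect at the corresponding spot'' that you consider does not work. A same-parity chord exists because $K(n,k)$ is not bipartite ($\chi(K(n,k))=n-2k+2\geq 3$): if every edge of $K(n,k)$ joined an odd- and an even-indexed vertex of $C$, the odd/even split (a genuine bipartition when $N$ is even) would two-colour $K(n,k)$. With that chord in hand, delete an edge of $P$ incident to $x_i$ and an edge of $P'$ incident to $\overline{x_j}$, and join the two resulting paths via $\{x_i,\overline{x_j}\}$ to obtain a Hamilton path of $H(n,k)$. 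This is the step the paper states as ``$P$ and $P'$ can be joined to a Hamilton path'' and that your write-up leaves open.
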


\begin{proof}
Given a Hamilton cycle~$C=(x_1,x_2,\ldots,x_N)$ in~$K(n,k)$, the sequences $P:=(x_1,\overline{x_2},x_3,\overline{x_4},\ldots)$ and $P':=(\overline{x_1},x_2,\overline{x_3},x_4,\ldots)$, where $\overline{x_i}:=[n]\setminus x_i$, are two spanning paths in~$H(n,k)$.
Consequently, if~$N=\binom{n}{k}$ is odd, then the concatenation $PP'$ is a Hamilton cycle in~$H(n,k)$, and if $N$ is even, then $P$ and~$P'$ are two disjoint cycles that together span the graph and that can be joined to a Hamilton path.
\end{proof}

We do not know how to strengthen Lemma~\ref{lem:KHnk} to obtain a Hamilton cycle in~$H(n,k)$ independently of the parity of~$N=\binom{n}{k}$ and without additional knowledge about the Hamilton cycle in~$K(n,k)$.

The sparsest bipartite Kneser graphs~$M_k:=H(2k+1,k)$ \marginpar{$M_k$} are known as \emph{middle levels graphs}, as they are isomorphic to the subgraph of the $(2k+1)$-dimensional hypercube induced by the middle two levels.
The well-known \emph{middle levels conjecture} asserts that~$M_k$ has a Hamilton cycle for all $k\geq 1$.
This conjecture was raised in the 1980s, settled affirmatively in~\cite{MR3483129}, and a short proof was given in~\cite{MR3819051}.
More generally, all bipartite Kneser graphs~$H(n,k)$ were shown to have a Hamilton cycle in~\cite{MR3759914}, via a short argument that uses the sparsest case~$M_k$ as a basis for induction.
These papers completed a long line of previous partial results on these problems; see the papers for more references and historical remarks.
Via Lemma~\ref{lem:KHnk} and its proof shown before, our Theorem~\ref{thm:Knk} thus also yields a new alternative proof for the Hamiltonicity of bipartite Kneser graphs (though we only obtain a Hamilton path when $\binom{n}{k}$ is even).

Consequently, our results in this paper settle Lov\'asz' conjecture for all known natural families of vertex-transitive graphs that are defined by intersecting set systems.

\subsection{Algorithmic considerations}

A \emph{combinatorial Gray code}~\cite{MR1491049,MR4649606} is an algorithm that computes a listing of combinatorial objects such that any two consecutive objects in the list satisfy a certain adjacency condition.
Many such algorithms are covered in depths in Knuth's book `The Art of Computer Programming Vol.~4A'~\cite{MR3444818}, and several of them correspond to computing a Hamilton cycle in a vertex-transitive graph, thus algorithmically solving one special case of Lov\'asz' conjecture.
For example, the classical binary reflected Gray code computes a Hamilton cycle in the $n$-dimensional hypercube, which can be seen as the Cayley graph of~$\mathbb{Z}_2^n$ given by the standard generators.
Another example is the well-known Steinhaus-Johnson-Trotter algorithm, which computes a Hamilton cycle in the Cayley graph of the symmetric group when the generators are adjacent transpositions.
Similarly, the recent solution~\cite{MR4060409} of Nijenhuis and Wilf's sigma-tau problem~\cite[Ex.~6]{MR0396274} computes a Hamilton cycle in the Cayley (di)graph of the symmetric group with the two generators being cyclic left-shift or transposition of the first two elements.
Similar Gray code algorithms have been discovered for the symmetric group with other generators, such as prefix reversals~\cite{DBLP:journals/cacm/Ord-Smith67,MR753548}, prefix shifts~\cite{corbett_1992,MR1308693,MR2682614}, and for other groups such as the alternating group~\cite{MR900932,MR3599935}.

Subsets of size~$k$ of an $n$-element ground set are known as \emph{$(n,k)$-combinations} in the Gray code literature.
Many different algorithms have been devised for generating $(n,k)$-combinations by element exchanges, i.e., any two consecutive combinations differ in removing one element from the subset and adding another one~\cite{MR0349274,MR782221,MR995888,MR737262,MR821383,MR936104}.
This is equivalent to saying that any two consecutive sets intersect in exactly~$k-1$ elements, i.e., such a Gray code computes a Hamilton cycle in the Johnson graph~$J(n,k)$.

Computing a Hamilton cycle in the Kneser graph~$K(n,k)$ thus corresponds to computing a Gray code for $(n,k)$-combinations where the adjacency condition is disjointness.
Our proof of the existence of a Hamilton cycle in~$K(n,k)$ is constructive, and it translates straightforwardly into an algorithm for computing the cycle whose running time is polynomial in the size~$N:=\binom{n}{k}$ of the Kneser graph.
It remains open whether there exists a more efficient algorithm, i.e., one with running time that is polynomial in~$n$ and~$k$ per generated combination (note that $N$ is exponential in~$k$), similarly to the previously mentioned combination generation algorithms; see also the discussion at the end of this paper.

\subsection{Proof ideas}

In Section~\ref{sec:Jnks} below we demonstrate how Theorem~\ref{thm:Knk} can be used to establish Theorem~\ref{thm:Jnks} by a simple inductive construction.
Consequently, the main work in this paper is to prove Theorem~\ref{thm:Knk}.

As mentioned before, M\"utze, Nummenpalo and Walczak~\cite{MR4273468} proved that $K(n,k)$ has a Hamilton cycle for $n=2k+1$ and all~$k\geq 3$.
Combining this result with Johnson's construction~\cite{MR2836824} shows that $K(n,k)$ has a Hamilton cycle for $n=2k+2^a$ and all~$k\geq 3$ and~$a\geq 0$, in particular for $n=2k+2$.
The techniques developed in this paper work whenever $n\geq 2k+3$, and thus they settle all remaining cases of Theorem~\ref{thm:Knk}.
It should be noted that our proof does not work in the cases $n=2k+1$ and $n=2k+2$, so the two earlier constructions do not become obsolete.

We follow a two-step approach to construct a Hamilton cycle in~$K(n,k)$ for $n\geq 2k+3$.
In the first step, we construct a \emph{cycle factor} in the graph, i.e., a collection of disjoint cycles that together visit all vertices.
In the second step, we join the cycles of the factor to a single cycle.
In the following we discuss both of these steps in more detail, outlining the main obstacles and novel ingredients to overcome them.
This outline reflects the structure of the remainder of this paper.

\subsubsection{Cycle factor construction}
\label{sec:idea-factor}

The starting point is to consider the characteristic vectors of the vertices of~$K(n,k)$.
For every $k$-element subset of~$[n]$, this is a bitstring of length~$n$ with exactly $k$ many 1s at the positions corresponding to the elements of the set.
For example, the vertex~$\{1,7,9\}$ of $K(9,3)$ is represented by the bitstring~$100000101$; see also Figure~\ref{fig:petersen}.
In this figure and the following ones, 1s are often represented by black squares, and 0s by white squares.
Clearly, two sets $A$ and~$B$ that are vertices of~$K(n,k)$ are disjoint if and only if the corresponding bitstrings have no 1s at the same positions.

\begin{figure}
\includegraphics{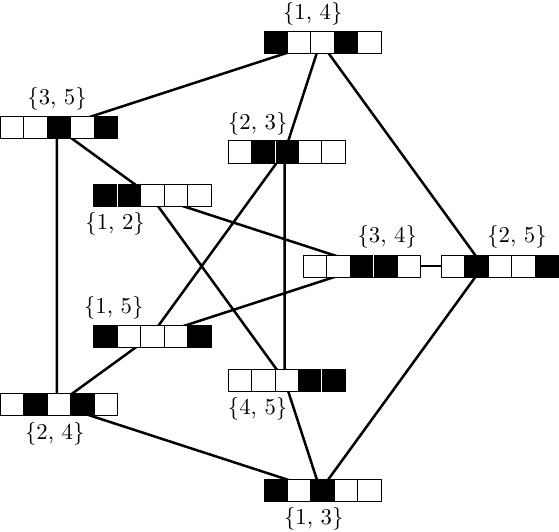}
\caption{The Petersen graph~$K(5,2)$.
The vertices are all 2-elements subsets of $[5]=\{1,2,3,4,5\}$, and in the corresponding bitstrings, 1s are represented by black squares and 0s by white squares.
}
\label{fig:petersen}
\end{figure}

Our construction of a cycle factor in the Kneser graph~$K(n,k)$ uses the following simple rule based on parenthesis matching, which is a technique pioneered by Greene and Kleitman~\cite{MR0389608} (in a completely different context):
Given a vertex represented by a bitstring~$x$, we interpret the 1s in~$x$ as opening brackets and the 0s as closing brackets, and we match closest pairs of opening and closing brackets in the natural way, which will leave some 0s unmatched.
This matching is done \emph{cyclically} across the boundary of~$x$, i.e., $x$ is considered as a cyclic string.
We write $f(x)$ for the vertex obtained from~$x$ by complementing all matched bits, leaving the unmatched bits unchanged.
For example, $x=100000101$ is interpreted as $x=()))))()({}=())\hyph\hyph\hyph()($, where each $\hyph$ denotes an unmatched closing bracket, and then complementing matched bits (the first three and last three in this case) yields the vertex~$f(x)=011000010$.
Repeatedly applying~$f$ to every vertex partitions the vertices of the Kneser graph into cycles, and we write $C(x):=(x,f(x),f^2(x),\ldots)$ for the cycle containing~$x$.
For example, for $x$ from before we obtain~$C(x)=(100000101,011000010,000110001,100001100,010000011,\ldots,000011010)$.
Figure~\ref{fig:gliders} shows several more examples of cycles generated by this parenthesis matching rule.
The reason that this rule indeed generates disjoint cycles is that~$f$ is invertible and that $f(x)\neq x$ and $f^2(x)\neq x$.
Indeed, $x$ is obtained from~$f(x)$ by applying the same parenthesis matching procedure as before, but with interpreting the 1s as closing brackets and the 0s are opening brackets instead.

\begin{figure}
\includegraphics[page=1]{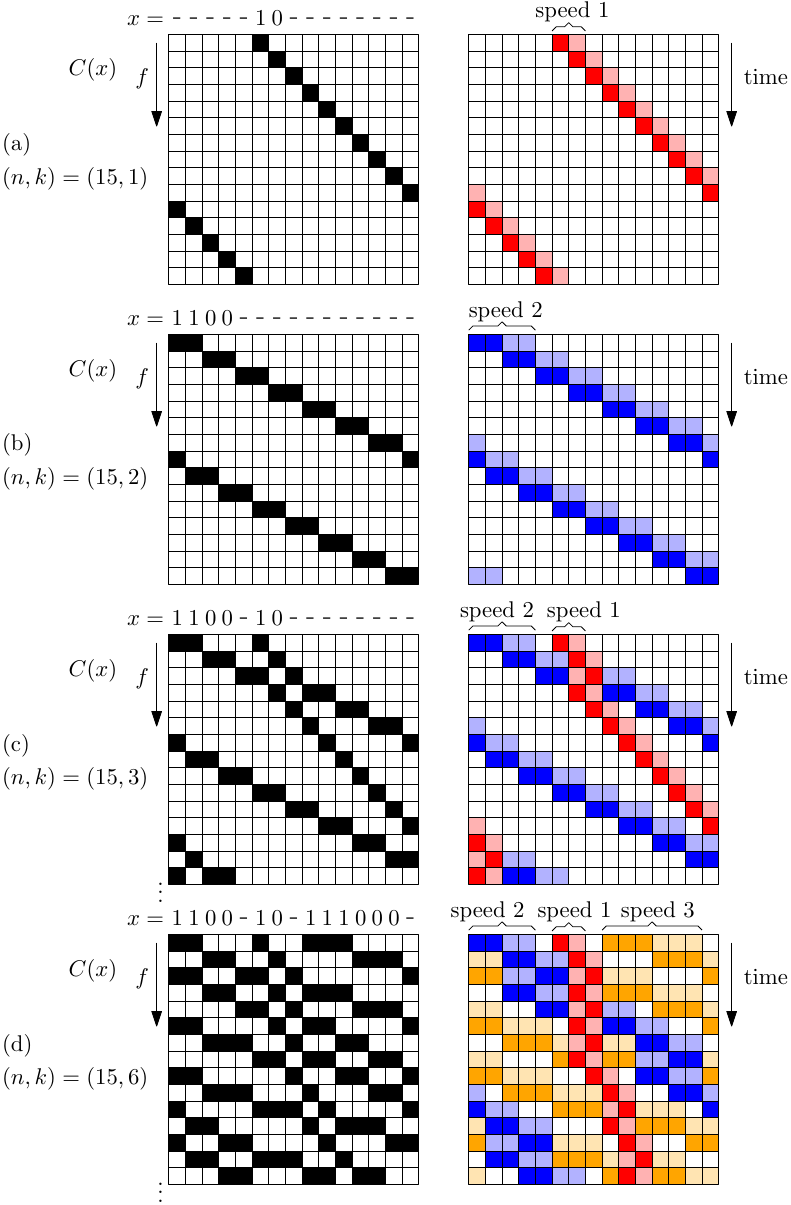}
\caption{Cycles of the factor $\cC_{n,k}$ in several different Kneser graphs~$K(n,k)$.
The cycles in~(a) and~(b) are shown completely, whereas in~(c) and~(d) only the first 15 vertices are shown.
The right hand side shows the interpretation of certain groups of bits as gliders, and their movement over time.
Matched bits belonging to the same glider are colored in the same color, with the opaque filling given to 1-bits, and the transparent filling given to 0-bits.
(a) one glider of speed~1; (b) one glider of speed~2; (c) two gliders with speeds~1 and~2 that participate in an overtaking; (d) three gliders of speeds~1, 2 and~3 that participate in multiple overtakings.
Animations of these examples are available at~\cite{gliders}.
}
\label{fig:gliders}
\end{figure}

\subsubsection{Analysis via gliders}
\label{sec:idea-gliders}

The next key step is to understand the structure of the cycles generated by~$f$, as this is important for joining the cycles to a single Hamilton cycle.
Unfortunately, the number of cycles and their lengths in our factor are governed by intricate number-theoretic phenomena, which we are unable to understand fully.
Instead, we describe the evolution of a bitstring~$x$ under repeated applications of~$f$ combinatorially, which enables us to extract some important cycle properties and invariants (other than the number of cycles and the cycle lengths).
Specifically, we describe this evolution by a kinetic system of multiple gliders that move at different speeds and that interact over time, reminiscent of the gliders in Conway's Game of Life.
This physical interpretation and its analysis are one of the main innovations of this paper.
Specifically, we view each application of~$f$ as one unit of time moving forward.
Furthermore, we partition the matched bits of~$x$ into groups, and each of these groups is called a \emph{glider}.
A glider has a \emph{speed} associated to it, which is given by the number of 1s in its group.
As a consequence of this definition, the sum of speeds of all gliders equals~$k$.
For example, in the cycle shown in Figure~\ref{fig:gliders}~(a), there is a single matched~1 and the corresponding matched~0, and together these two bits form a glider of speed~1 that moves one step to the right in every time step.
Applying $f$ means going down to the next row in the picture, so the time axis points downwards.
Similarly, in Figure~\ref{fig:gliders}~(b), there are two matched~1s and the corresponding two matched~0s, and together these four bits form a glider of speed~2 that moves two steps to the right in every time step.
As we see from these examples, a single glider of speed~$v$ simply moves uniformly, following the basic physics law
\begin{equation*}
s(t)=s(0)+v\cdot t,
\end{equation*}
where $t$ is the time (i.e., the number of applications of~$f$) and $s(t)$ is the position of the glider in the bitstring as a function of time.
The position~$s(t)$ has to be considered modulo~$n$, as bitstrings are considered as cyclic strings and the gliders hence wrap around the boundary.
The situation gets more interesting and complicated when gliders of different speeds interact with each other.
For example, in Figure~\ref{fig:gliders}~(c), there is one glider of speed~2 and one glider of speed~1.
As long as these groups of bits are separated, each glider moves uniformly as before.
However, when the speed~2 glider catches up with the speed~1 glider, an overtaking occurs.
During an overtaking, the faster glider receives a boost, whereas the slower glider is delayed.
This can be captured by augmenting the corresponding equations of motion by introducing additional terms, making them non-uniform.
In the simplest case of two gliders of different speeds, the equations become
\begin{align*}
s_1(t)&=s_1(0)+v_1\cdot t-2v_1 c_{1,2}, \\
s_2(t)&=s_2(0)+v_2\cdot t+2v_1 c_{1,2},
\end{align*}
where the subscript~1 stands for the slower glider and the subscript~2 stands for the faster glider, and the additional variable~$c_{1,2}$ counts the number of overtakings.
Note that the terms~$2v_1c_{1,2}$ occur with opposite signs in both equations, capturing the fact that the faster glider is boosted by the same amount that the slower glider is delayed.
This can be seen as `energy conservation' in the system of gliders.
Overall, the slower glider stands still for two time steps during an overtaking, as $v_1\cdot 2-2v_1\cdot 1=0$, and the faster glider's position changes by an additional amount of $2v_1$ (compared to its movement without overtaking).
For more than two gliders, the equations of motion can be generalized accordingly, by introducing additional overtaking counters between any pair of gliders (see Proposition~\ref{prop:motion}).
Nevertheless, as the reader may appreciate from Figure~\ref{fig:gliders}~(d), in general it is highly nontrivial to recognize from an arbitrary bitstring~$x$ which of its matched bits belong to which glider, and consequently which glider is currently overtaking which other glider.
Note that in general the gliders will not be nicely separated, but will be involved in simultaneous interactions, so that the groups of bits forming the gliders will be interleaved in complicated ways.
Our general rule that achieves the glider partition is based on a recursion that uses an interpretation of~$x$ as a Motzkin path, where every matched~1 becomes an~$\ustep$-step in the Motzkin path, every matched~0 becomes a~$\dstep$-step, and every unmatched~0 becomes a $\fstep$-step (see Section~\ref{sec:glider}).

One important property that we extract from the aforementioned physics interpretation is that the number of gliders and their speeds are invariant along each cycle (see Lemma~\ref{lem:Vx-invariant}).
For example, in Figure~\ref{fig:gliders}~(d), every bitstring along this cycle has three gliders of speeds~1, 2 and~3.
Note in this example that the speeds do not necessarily correspond to the lengths of inclusion maximal runs of consecutive 1s in the bitstrings, due to the interleaving of gliders.
We also use the equations of motion to derive a seemingly innocent, but very crucial property, namely that no glider stands still forever, but will move eventually (see Lemma~\ref{lem:forward}).
Note that the speed~1 glider in Figure~\ref{fig:gliders}~(d) stands still between time steps~2--8, as during those steps it is overtaken once by the speed~2 glider, and twice by the speed~3 glider (wrapping around the boundary).
We establish this fact by linear algebra, by showing that the determinant of the linear systems of equations that governs the gliders' movements is non-singular (see Lemma~\ref{lem:detM}).

For the reader's entertainment, we programmed an interactive animation of gliders over time, and we encourage experimentation with this code, which can be found at~\cite{gliders}.
In particular, this link contains animations of many examples used in figures from our paper, which greatly improves their educational value.

The cycle factor construction discussed before and our analysis via gliders actually work for all $n\geq 2k+1$, not just for $n\geq 2k+3$.
The assumption $n\geq 2k+3$ will become crucial in the next step, though.

\subsubsection{Gluing the cycles together}
\label{sec:idea-gluing}

To join the cycles of our factor to a single Hamilton cycle, we consider a 4-cycle~$D$ that shares two opposite edges with two cycles~$C,C'$ from our factor.
Clearly, the symmetric difference of the edge sets $(C\cup C')\Delta D$ yields a single cycle on the same vertex set as~$C\cup C'$.
We may repeatedly apply such gluing operations, each time reducing the number of cycles in the factor by one, until the resulting factor has a single cycle, namely a Hamilton cycle.
It turns out that the cycle factor defined by~$f$ admits a lot of such gluing 4-cycles.
Note that $K(n,k)$ does not have any 4-cycles for $n=2k+1$, so the assumption $n\geq 2k+2$ is needed here.

The two main technical obstacles we have to overcome are the following:
(a)~All of the 4-cycles used for the gluing must be edge-disjoint, so that none of the gluings interfere with each other.
(b)~We must use sufficiently many gluings to achieve connectivity, i.e., every cycle must be connected to every other cycle via a sequence of gluings.
These two objectives are somewhat conflicting with each other, so satisfying both at the same time is nontrivial.
The final gluings that we use and that satisfy both conditions are described by a set of nine intricate regular expressions (see~\eqref{eq:alphai}).

The 4-cycles that we use for the gluings are based on local modifications of two bitstrings~$x$ and~$y$ that satisfy certain conditions and that lie on two different cycles~$C(x)$ and~$C(y)$ from our factor, by considering the gliders in~$x$ and~$y$.
Specifically, this local modification changes the speed sets of the gliders in~$x$ and~$y$ in a controllable way.
Recall that the speeds of gliders are invariant along each cycle, so these speeds will only change along the gluing 4-cycles.
To control the gluing, we consider the speeds of gliders in a bitstring~$x$ in non-increasing order.
Recall that the sum of speeds equals~$k$, so such a sorted sequence forms a number partition of~$k$.
To establish~(b) we choose gluings that guarantee a lexicographic increase in those number partitions.
This ensures that every cycle is joined, via a sequence of gluings, to a cycle that has the lexicographically largest number partition, namely the number~$k$ itself.
This corresponds to a single glider of the maximum speed~$k$, i.e., to a bitstring~$x$ in which all 1s are consecutive.

\begin{figure}[h!]
\includegraphics[page=3]{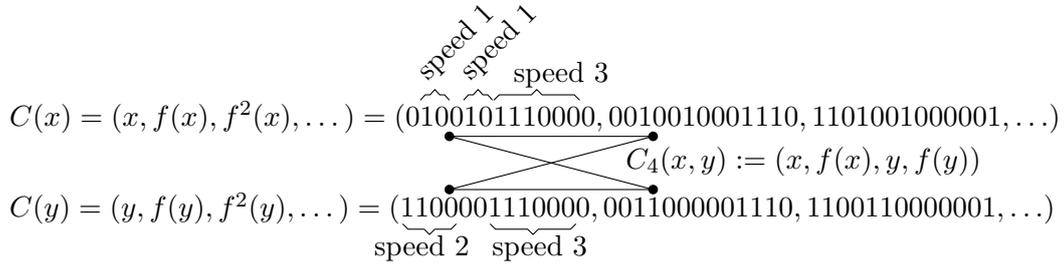}
\caption{Gluing of two cycles from the factor via a 4-cycle in~$K(13,5)$.}
\label{fig:gluing}
\end{figure}

For example, consider the two cycles~$C(x)$ and~$C(y)$ shown in Figure~\ref{fig:gluing}, which can be glued together using the 4-cycle $C_4(x,y):=(x,f(x),y,f(y))$.
Note that in $C(x)$, there are two gliders of speed~1 and one glider of speed~3, whereas in~$C(y)$ there is one glider of speed~2 and one of speed~3.
Consequently, via the gluing we have moved from the number partition~$(3,1,1)$ to the lexicographically larger partition~$(3,2)$.

The general idea for choosing the gluings~$C_4(x,y)$, which can already be seen in this example, is such that in~$x$ we decrease the speed of a glider of the minimum speed by~1, and instead we increase the speed of any other glider by~1, which ensures that the number partition associated with~$y$ is lexicographically larger than that of~$x$.
Unfortunately, it is not always possible to use gluings that guarantee such immediate lexicographic improvement.
In some cases we have to use gluings where a small lexicographic decrease occurs.
It is then argued that a subsequent gluing compensates for this defect such that the overall effect of the resulting sequence of gluings is again a lexicographic improvement.
For example, from a vertex with associated number partition~$(4,4)$, the first gluing may lead to a vertex with number partition~$(4,3,1)$, and the next gluing may lead to~$(5,3)$.
While the step $(4,4)\rightarrow (4,3,1)$ is a lexicographic decrease instead of an increase, overall $(4,4)\rightarrow (4,3,1)\rightarrow (5,3)$ is a lexicographic increase.
In this step of the proof the assumption~$n\geq 2k+3$ finally enters the picture, as it gives us the necessary flexibility in choosing gluings that are guaranteed to achieve this improvement in all cases. 

The arguments so far show that every cycle is connected, via a sequence of gluings, to a cycle in which all 1s are consecutive.
Note however, that there may be several such cycles, depending on the values of~$n$ and~$k$.
Specifically, there are exactly $\gcd(n,k)$ such cycles.
To join those, we observe that the subgraph of~$K(n,k)$ induced by those special cycles is isomorphic to a Cayley graph of~$\mathbb{Z}/n\mathbb{Z}$, which admits many gluing 4-cycles to join them (see Lemma~\ref{lem:join}).

\subsection{Proof of Theorem~\ref{thm:Jnks}}
\label{sec:Jnks}

We first show how Theorem~\ref{thm:Knk} can be used to establish the more general Theorem~\ref{thm:Jnks} quite easily.
Chen and Lih showed the following about generalized Johnson graphs.

\begin{lemma}[{\cite[Thm.~1]{MR888679}}]
\label{lem:JindHC}
If $J(n-1,k-1,s-1)$ and $J(n-1,k,s)$ have a Hamilton cycle, then $J(n,k,s)$ also has a Hamilton cycle.
\end{lemma}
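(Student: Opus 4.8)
The plan is to build a Hamilton cycle of $J(n,k,s)$ by splitting the vertex set according to whether the $k$-set contains the ground-set element~$n$ or not. Write $\cV_0$ for the $k$-subsets of $[n]$ that do \emph{not} contain~$n$, and $\cV_1$ for those that do. A set in $\cV_0$ is just a $k$-subset of $[n-1]$, so the subgraph of $J(n,k,s)$ induced by $\cV_0$ is exactly $J(n-1,k,s)$. A set $A\in\cV_1$ has the form $A'\cup\{n\}$ with $A'$ a $(k-1)$-subset of $[n-1]$, and two such sets $A'\cup\{n\}$, $B'\cup\{n\}$ satisfy $|A\cap B|=s$ iff $|A'\cap B'|=s-1$; hence the subgraph induced by $\cV_1$ is $J(n-1,k-1,s-1)$ (via the bijection $A\mapsto A'$). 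By hypothesis both induced subgraphs carry a Hamilton cycle; call them $C_0=(x_1,\dots,x_p)$ on $\cV_0$ and $C_1=(y_1,\dots,y_q)$ on $\cV_1$, where $p=\binom{n-1}{k}$ and $q=\binom{n-1}{k-1}$.

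The idea is now to cut each cycle open and reconnect the two paths into one cycle through the whole vertex set. For this I need a \emph{crossing edge}: an edge of $J(n,k,s)$ from some $x_i\in\cV_0$ to some $y_j\in\cV_1$, together with a second crossing edge from $x_{i+1}$ to $y_{j\pm 1}$ so that deleting the edge $x_ix_{i+1}$ from $C_0$ and $y_jy_{j+1}$ (say) from $C_1$ and adding the two crossing edges yields a single spanning cycle. The cleanest way to guarantee this is to produce, for a conveniently chosen consecutive pair $x_i,x_{i+1}$ on $C_0$, a consecutive pair $y_j,y_{j+1}$ on $C_1$ with $x_i\sim y_j$ and $x_{i+1}\sim y_{j+1}$ in $J(n,k,s)$. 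Concretely, pick any edge $x_ix_{i+1}$ of $C_0$; both $x_i$ and $x_{i+1}$ are $k$-subsets of $[n-1]$ with $|x_i\cap x_{i+1}|=s$. I want to choose $(k-1)$-subsets $A',B'\subseteq[n-1]$ with $|A'\cap B'|=s-1$ (so that $A'\cup\{n\}$ and $B'\cup\{n\}$ are adjacent on $C_1$, after possibly rotating/relabelling $C_1$ to make them consecutive — any Hamilton cycle can be traversed starting anywhere, and an edge of $C_1$ is a pair of adjacent vertices, which is exactly such a pair $A',B'$) such that additionally $|x_i\cap(A'\cup\{n\})|=s$ and $|x_{i+1}\cap(B'\cup\{n\})|=s$, i.e. $|x_i\cap A'|=s$ and $|x_{i+1}\cap B'|=s$. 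So the task reduces to: given the fixed pair $x_i,x_{i+1}$, find an \emph{edge} $\{A',B'\}$ of $J(n-1,k-1,s-1)$ with $|x_i\cap A'|=s$ and $|x_{i+1}\cap B'|=s$.

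Such $A',B'$ are easy to exhibit by hand once $n$ is large enough relative to $k$ — one takes $A'$ to consist of $s$ elements of $x_i$ together with $k-1-s$ further elements chosen to control the intersection with $x_{i+1}$, and similarly for $B'$, using that $|x_i\cap x_{i+1}|=s$ leaves room to place the remaining elements; the connectivity bound $n\geq 2k-s+\mathbf 1_{[s=0]}$ on the \emph{larger} graph $J(n,k,s)$ (equivalently $n-1\geq 2k-s-1+\dots$ for the two smaller ones, which is what makes the induction hypothesis applicable at all) is exactly what provides the necessary slack. I expect this combinatorial case analysis — verifying that a suitable crossing edge exists for \emph{some} edge of $C_0$, in every admissible range of $(n,k,s)$, including the boundary cases and small cases where one of the two smaller graphs degenerates (e.g. $s=0$, or $k-1$ near $s-1$, or $J(n-1,k,s)$ having only one vertex or being edgeless) — to be the only real obstacle; the topological part (symmetric difference of two cycles plus two crossing edges is a single Hamilton cycle) is routine. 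The degenerate cases should be handled separately: if one of $\cV_0,\cV_1$ is a single vertex or empty the statement is either vacuous or reduces to attaching a pendant structure, and if one smaller graph has no Hamilton cycle the hypothesis is not met and there is nothing to prove. Finally, since this lemma is quoted verbatim from \cite{MR888679}, one may alternatively just cite it; the above is the natural self-contained argument.
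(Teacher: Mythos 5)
Your overall strategy is exactly the paper's (and Chen--Lih's): split the vertices according to whether they contain $n$, identify the two induced subgraphs with $J(n-1,k,s)$ and $J(n-1,k-1,s-1)$, and merge the two Hamilton cycles via a $4$-cycle that has one edge inside each part. But the step where you arrange for $A'\cup\{n\}$ and $B'\cup\{n\}$ to be \emph{consecutive} on $C_1$ is not justified as written, and the parenthetical argument is wrong on its own terms. Rotating a Hamilton cycle (choosing a different starting vertex) does not change which pairs are consecutive, and the observation that $\{A',B'\}$ ``is a pair of adjacent vertices'' only says it is an edge of the graph $J(n-1,k-1,s-1)$, not that it is one of the $q$ edges actually used by $C_1$. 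So if you first fix $A',B'$ by their intersection properties with $x_i,x_{i+1}$, you have no control over whether they are neighbours on the given cycle $C_1$. The missing ingredient, which the paper names explicitly, is that generalized Johnson graphs are \emph{edge-transitive}: an automorphism of $J(n-1,k-1,s-1)$ carries any edge of $C_1$ onto the prescribed edge $\{A',B'\}$, and its image $\phi(C_1)$ is again a Hamilton cycle, now passing through $\{A',B'\}$ as consecutive vertices. (Symmetrically, one can use edge-transitivity of $J(n-1,k,s)$ to position $C_0$, which is how the paper phrases it: choose the $4$-cycle first, then use edge-transitivity of \emph{both} smaller graphs to force each Hamilton cycle through the right opposite edge.)

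Two smaller remarks. First, you still owe the existence of the crossing $4$-cycle itself, i.e.\ of an edge $\{x_i,x_{i+1}\}$ in $J(n-1,k,s)$ and an edge $\{A',B'\}$ in $J(n-1,k-1,s-1)$ with $|x_i\cap A'|=|x_{i+1}\cap B'|=s$; you correctly flag this as the remaining combinatorial work but do not carry it out, and the boundary parameter ranges do need a look (though with edge-transitivity in hand you are free to pick the most convenient $4$-cycle, which makes this easy rather than a genuine case analysis). Second, once edge-transitivity is invoked, the ``choose $x_i,x_{i+1}$ first, then find $A',B'$'' order you use becomes unnecessarily awkward: it is cleaner to pick the $4$-cycle freely, as the paper does, and then transport \emph{both} cycles onto it.
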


The proof of Lemma~\ref{lem:JindHC} given in~\cite{{MR888679}} is based on a straightforward partitioning of the graph~$J(n,k,s)$ into two subgraphs that are isomorphic to~$J(n-1,k-1,s-1)$ and~$J(n-1,k,s)$.
Specifically, this partition is obtained by considering all vertices (=sets) that contain a fixed element, $n$ say, and those that do not contain it.
One can then join the cycles in the two subgraphs to one, by taking the symmetric difference with a 4-cycle that has one edge in each of the two subgraphs, using the fact that Johnson graphs are edge-transitive, i.e., we can force each of the cycles in the two subgraphs to use this edge from the 4-cycle.
All that is needed now for the proof of Theorem~\ref{thm:Jnks} is the following simple observation.

\begin{lemma}
\label{lem:JindPart}
If $J(n,k,s)$ is a generalized Johnson graph, then either it is a Kneser graph or $J(n-1,k-1,s-1)$ and $J(n-1,k,s)$ are both generalized Johnson graphs.
\end{lemma}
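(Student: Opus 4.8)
The plan is to unwind the definition of "generalized Johnson graph" and simply check the numerical side conditions. Recall that $J(n,k,s)$ being a (connected) generalized Johnson graph means $k\geq 1$, $0\leq s<k$, and $n\geq 2k-s+\mathbf{1}_{[s=0]}$. So I would split into two cases according to whether $s=0$ or $s\geq 1$.

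First, suppose $s=0$. Then by definition $J(n,k,0)=K(n,k)$ is a Kneser graph, and we are in the first alternative of the statement; nothing more is needed.

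Now suppose $s\geq 1$. I need to verify that both $J(n-1,k-1,s-1)$ and $J(n-1,k,s)$ satisfy the three defining conditions. For $J(n-1,k-1,s-1)$: since $s\geq 1$ we have $k-1\geq s\geq 1$, so the "$k\geq 1$" condition holds; from $0\leq s<k$ we get $0\leq s-1<k-1$ (the strict inequality because $s<k$ gives $s-1<k-1$), so the "$0\leq s'<k'$" condition holds; and since $s\geq 1$ the indicator term vanishes on both sides, so the bound $n\geq 2k-s$ becomes exactly $n-1\geq 2(k-1)-(s-1)$, as required. For $J(n-1,k,s)$: here $k\geq 1$ and $0\leq s<k$ are inherited unchanged, and again since $s\geq 1$ both indicator terms are $0$, so $n\geq 2k-s$ is equivalent to $n-1\geq 2k-s-1=2k-s+\mathbf{1}_{[s=0]}-1$, wait—more simply, $n\geq 2k-s$ implies $n-1\geq 2k-s-1$, but the required bound for $J(n-1,k,s)$ is $n-1\geq 2k-s$. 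So the one genuinely nontrivial inequality to check is $n-1\geq 2k-s$, i.e.\ $n\geq 2k-s+1$.

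The main (and only) obstacle is therefore this last inequality: when $s\geq 1$, does $n\geq 2k-s$ already force $n\geq 2k-s+1$? Plainly not in general—the edge case is $n=2k-s$ exactly. I would handle this by noting that if $n=2k-s$ with $s\geq 1$, then $J(n,k,s)=J(2k-s,k,s)$ is isomorphic under complementation to $J(2k-s,\,k-s,\,0)=K(2k-s,k-s)$, a Kneser graph, so we land in the first alternative of the statement after all. (Here one uses the stated isomorphism $J(n,k,s)\cong J(n,n-k,n-2k+s)$ with $n=2k-s$: then $n-k=k-s$ and $n-2k+s=0$.) Hence either $J(n,k,s)$ is a Kneser graph—covered by the cases $s=0$ or $n=2k-s$—or $n\geq 2k-s+1$ and $s\geq 1$, in which case the computations above show both $J(n-1,k-1,s-1)$ and $J(n-1,k,s)$ are genuine connected generalized Johnson graphs, completing the proof.
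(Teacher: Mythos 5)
Your proof is correct in its overall structure and matches the paper's argument exactly: both split into the cases $s=0$ (a Kneser graph by definition), $s\geq 1$ with $n=2k-s$ (a Kneser graph via the complementation isomorphism $J(n,k,s)\cong J(n,n-k,n-2k+s)$), and $s\geq 1$ with $n>2k-s$, in which case both inductive subgraphs are valid. One step is imprecise, though: when you write that ``since $s\geq 1$ the indicator term vanishes on both sides'' while checking $J(n-1,k-1,s-1)$, this is wrong when $s=1$, because the relevant indicator for the subgraph is $\mathbf{1}_{[s-1=0]}$, which equals $1$ when $s=1$; the required bound there is then $n-1\geq 2(k-1)-0+1=2k-1$, i.e.\ $n\geq 2k$, which does \emph{not} follow from $n\geq 2k-s=2k-1$ alone. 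You are saved because your final regime assumes the strict inequality $n\geq 2k-s+1$ (which for $s=1$ reads $n\geq 2k$), and that slack does cover the extra $+1$ — this is precisely what the paper exploits by deriving $n-1\geq 2(k-1)-(s-1)+1\geq 2(k-1)-(s-1)+\mathbf{1}_{[s-1=0]}$. But your write-up cites ``the computations above,'' which were carried out under the weaker and, in that one place, erroneous premise; you should redo the $J(n-1,k-1,s-1)$ check under $n\geq 2k-s+1$ and acknowledge the indicator $\mathbf{1}_{[s-1=0]}$ explicitly rather than asserting it vanishes.
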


In the proof we will use the aforementioned observation that $J(n,k,s)$ is isomorphic to~$J(n,n-k,n-2k+s)$.

\begin{proof}
Let $k\geq 1$, $0\leq s<k$ and $n\geq 2k-s+\mathbf{1}_{[s=0]}$.
If $s=0$, then $J(n,k,s)=J(n,k,0)=K(n,k)$ is a Kneser graph.
This happens in particular if $k=1$.
If $s>0$ and $n=2k-s$, then $J(n,k,s)=J(n,n-k,n-2k+s)=J(n,k-s,0)=K(n,k-s)$ is also a Kneser graph.
Otherwise, we have $k>1$, $s>0$ and $n>2k-s$, and we consider the graphs $H_1:=J(n-1,k-1,s-1)$ and~$H_0:=J(n-1,k,s)$.
From $k>1$ we obtain $k-1\geq 1$, and from $s>0$ and $s<k$ we obtain that $0\leq s-1<k-1$.
Furthermore, the inequality $n>2k-s$ is equivalent to $n-1>2(k-1)-(s-1)$, which implies $n-1\geq 2(k-1)-(s-1)+1$.
Combining these observations shows that the graph~$H_1$ is indeed a valid generalized Johnson graph.
Similarly, the inequality $n>2k-s$ implies that $n-1\geq 2k-s=2k-s+\mathbf{1}_{[s=0]}$ (since $s>0$), and consequently the graph~$H_0$ is also a valid generalized Johnson graph.
\end{proof}

\begin{proof}[Proof of Theorem~\ref{thm:Jnks}]
We argue by induction on~$n$.
To settle the base cases $n\leq 6$, we argue as follows.
There are 19 triples of values~$(n,k,s)$ with $n\leq 6$ that satisfy the assumptions in the theorem, namely $\{(3,1,0),(4,1,0),(5,1,0),(6,1,0),(5,2,0),(6,2,0),(3,2,1),(4,2,1),(5,2,1),(6,2,1)$, $(5,3,1),(6,3,1),(4,3,2),(5,3,2),(6,3,2),(6,4,2),(5,4,3),(6,4,3),(6,5,4)\}$.
Using that $J(n,k,s)=J(n,n-k,n-2k+s)$ and eliminating the exceptional Petersen graph $J(5,2,0)=J(5,3,1)$ leaves ten triples to be checked, namely $\{(3,1,0),(4,1,0),(5,1,0),(6,1,0),(6,2,0),(4,2,1),(5,2,1)$, $(6,2,1),(6,3,1),(6,3,2)\}$.
The four triples $(n,k,s)\in\{(3,1,0),(4,1,0),(5,1,0),(6,1,0)\}$ yield complete graphs ($k=1$), which clearly have a Hamilton cycle.
The four triples $(n,k,s)\in \{(4,2,1),(5,2,1),(6,2,1),(6,3,2)\}$ are Johnson graphs ($s=k-1$), which have a Hamilton cycle by the results from~\cite{MR0349274}.
The remaining two triples $(n,k,s)\in\{(6,2,0),(6,3,1)\}$ are settled by the results from~\cite{MR888679} ($s=k-2$).

For the induction step consider a generalized Johnson graph~$J(n,k,s)$ with $n\geq 7$.
By Lemma~\ref{lem:JindPart}, $J(n,k,s)$ is either a Kneser graph, or $J(n-1,k-1,s-1)$ and $J(n-1,k,s)$ are both generalized Johnson graphs.
In the first case, Theorem~\ref{thm:Knk} yields that $J(n,k,s)$ has a Hamilton cycle.
In the second case, both $J(n-1,k-1,s-1)$ and $J(n-1,k,s)$ have a Hamilton cycle by induction, and then Lemma~\ref{lem:JindHC} shows that~$J(n,k,s)$ has a Hamilton cycle as well.
\end{proof}

\section{Cycle factor construction}

In this section we describe in detail the construction of a cycle factor in the Kneser graph~$K(n,k)$ outlined in Section~\ref{sec:idea-factor}.
This construction is valid for the entire range of values $n\geq 2k+1$.

\subsection{Preliminaries}

We let $X_{n,k}$ \marginpar{$X_{n,k}$} denote the set of all bitstrings of length~$n$ with exactly $k$ many 1s.
We interpret the vertices of the Kneser graph~$K(n,k)$ as bitstrings in~$X_{n,k}$, by considering the corresponding characteristic vectors.
Every pair of disjoint sets, which is an edge in the Kneser graph, corresponds to a pair of bitstrings that have no 1s at the same positions.
These definitions are illustrated in Figure~\ref{fig:petersen}.

We write $\varepsilon$ for the empty string. \marginpar{$\varepsilon,\ol{x}$}
Moreover, for any bitstring~$x$, we write $\ol{x}$ for the complemented bitstring.
We also write $x\,y$ for the concatenation of the bitstrings~$x$ and~$y$, and $x^a$ for the $a$-fold repetition of~$x$. \marginpar{$x\,y,x^a$}

For integers~$a\leq b$ we define $[a,b]:=\{a,a+1,\ldots,b\}$, and we refer to this set of integers as an \emph{interval}. \marginpar{$[a,b]$}

For a set~$A$ of integers and an integer~$b$ we define $A+b:=\{a+b\mid a\in A\}$. \marginpar{$A+b$}

Throughout this paper, important terminology and symbols are printed on the page boundaries at the place where they are first defined, to facilitate going back and looking up the definitions.

\subsection{Cycle factor construction}
\label{sec:factor}

We consider a bitstring~$x \in X_{n,k}$, and we apply \emph{parenthesis matching} to it, which is a technique developed by Greene and Kleitman~\cite{MR0389608} in the context of symmetric chain partitions of posets.
For this we interpret the 1s in~$x$ as opening brackets and the 0s as closing brackets, and we match closest pairs of opening and closing brackets in the natural way; see Figure~\ref{fig:matching}.
This matching is done \emph{cyclically} across the boundary of~$x$, i.e., $x$ is considered as a cyclic string.
Specifically, we find an opening bracket followed directly (cyclically) by a closing bracket, i.e., a substring~$10$, match them to each other, delete both of them, and repeat.
Note that the resulting matching does not depend on the order in which the procedure is applied in situations when multiple substrings~$10$ are present.

The parenthesis matching procedure can be described equivalently as follows:
For every 1-bit in~$x$, we consider the shortest (cyclic) substring starting from this bit to the right that contains the same number of~0s as~1s, and we match it to the last 0-bit of this substring.

As $x$ is considered as a cyclic string, in the following we take indices in~$x$ modulo~$n$, with $1,\ldots,n$ as representatives of the equivalence classes.
As $n \geq 2k+1$, there are more 0s than 1s in~$x$, and consequently every~1 is matched to some~0, but not every~0 is matched to a~1.
Whenever we want to emphasize that we consider a bitstring~$x$ with parenthesis matching applied to it, we write every unmatched~0 in~$x$ as $\hyph$.
For example, we write $x=001100001=0\hyph1100\hyph\hyph1$.

\begin{figure}[b!]
\includegraphics[page=1]{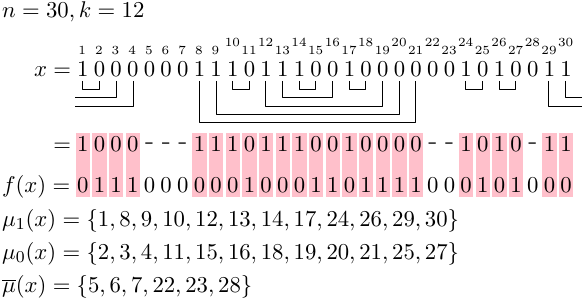}
\caption{Parenthesis matching for a bitstring $x\in X_{30,12}$.
Matched pairs of bits are indicated by square brackets, and bits that are complemented to obtain~$f(x)$ are highlighted.
}
\label{fig:matching}
\end{figure}

We let $\mu(x)\seq [n]$ \marginpar{$\mu(x),\ol{\mu}(x)$} denote the set of positions of bits that are matched in~$x$, and we write $\ol{\mu}(x):=[n]\setminus \mu(x)$ for the positions of unmatched bits.
Moreover, we partition $\mu(x)$ into the sets~$\mu_1(x)$ and~$\mu_0(x)$ \marginpar{$\mu_1(x),\mu_0(x)$} of the positions of matched~1s and~0s, respectively.
By these definitions, the sets~$\mu_1(x)$, $\mu_0(x)$ and $\ol{\mu}(x)$ are all disjoint, their union is~$[n]$, the union of~$\mu_1(x)$ and~$\mu_0(x)$ is~$\mu(x)$, and the sizes of the four sets $\mu_1(x)$, $\mu_0(x)$, $\mu(x)$, and $\ol{\mu}(x)$ are $k$, $k$, $2k$, and $n-2k$, respectively.
As every 1-bit is matched, $\mu_1(x)$ is the set of positions of \emph{all} 1s in~$x$.

For any $x\in X_{n,k}$, we let $f(x)\in X_{n,k}$ \marginpar{$f(x)$} denote the bitstring obtained from~$x$ by complementing all matched bits, i.e., the bits at all positions in~$\mu(x)$; see Figure~\ref{fig:matching}.

By this definition we have $\mu_1(f(x)) = \mu_0(x)$.
Consequently, $x$ and~$f(x)$ have no 1s at the same positions, i.e., they are characteristic vectors of disjoint sets.
It follows that $(x,f(x))$ is an edge in the Kneser graph~$K(n,k)$.

\begin{subequations}
\label{eq:factor}
For any $x\in X_{n,k}$ we define a sequence of vertices in~$K(n,k)$ by
\marginpar{$C(x)$}
\begin{equation}
\label{eq:Cx}
C(x):= \big(x,f(x),f^2(x),\ldots\big),
\end{equation}
i.e., we repeatedly apply~$f$ to~$x$ until we obtain~$x$ again.

\begin{lemma}
\label{lem:Cx}
Let $n \geq 2k+1$.
For any $x\in X_{n,k}$, the sequence $C(x)$ defined in \eqref{eq:Cx} describes a cycle of length at least~3 in the Kneser graph~$K(n,k)$.
\end{lemma}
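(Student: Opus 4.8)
The plan is to reduce the statement to three facts: (i) $f$ is a bijection of $X_{n,k}$; (ii) $f(x)\neq x$; and (iii) $f^2(x)\neq x$. Granting these, iterating $f$ from $x$ eventually returns to $x$ (as $f$ is a permutation of the finite set $X_{n,k}$), so there is a least $\ell\geq 1$ with $f^\ell(x)=x$, and the vertices $x,f(x),\dots,f^{\ell-1}(x)$ are then pairwise distinct: if $f^a(x)=f^b(x)$ with $a<b<\ell$, injectivity gives $f^{b-a}(x)=x$, contradicting minimality of $\ell$. Since it was already observed that $z$ and $f(z)$ have disjoint supports for every $z\in X_{n,k}$, every pair of cyclically consecutive vertices of $C(x)$ is an edge of $K(n,k)$, so $C(x)$ is a closed walk through $\ell$ distinct vertices; by (ii) we have $\ell\neq 1$ and by (iii) we have $\ell\neq 2$, hence $C(x)$ is a cycle of length $\ell\geq 3$.

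For (i) I would exhibit $f^{-1}$ explicitly. Let $g\colon X_{n,k}\to X_{n,k}$ be defined by the same recipe as $f$, except using the \emph{reverse} bracket convention: interpret $1$s as closing and $0$s as opening brackets, perform the cyclic matching (every $1$ gets matched, since $n\geq 2k+1$ means there are strictly more $0$s than $1$s), and complement all matched bits. It suffices to show $g\circ f=\mathrm{id}$, since then $f$ is injective, hence bijective as $X_{n,k}$ is finite. To prove $g(f(x))=x$, I would use the laminar (non-crossing) structure of parenthesis matching together with the standard fact that the cyclic arc strictly between a matched pair is itself a balanced, fully matched bracket word containing no unmatched $0$. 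Applying $f$ complements exactly the bits at $\mu(x)$; since complementing a balanced, fully matched bracket word while simultaneously swapping the roles of the two bracket symbols leaves its matching unchanged, an induction over the nesting of the matched pairs shows that the reverse parenthesis matching of $f(x)$ matches exactly the positions of $\mu(x)$ with the same pairing, and leaves $\ol{\mu}(x)$ unmatched (the only $1$s of $f(x)$ lie in $\mu_0(x)\subseteq\mu(x)$ and are used up by this matching). Hence $g$ complements $f(x)$ a second time on $\mu(x)$, recovering $x$.

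Fact (ii) is immediate: since $k\geq 1$, the string $x$ has a $1$, which is matched, so $\mu(x)\neq\emptyset$ and $f$ flips at least one bit. For (iii), suppose $f^2(x)=x$ and set $y:=f(x)$. Using $\mu_1(f(z))=\mu_0(z)$ with $z=x$ and $z=y$ gives $\mu_1(y)=\mu_0(x)$ and $\mu_1(x)=\mu_0(y)$, hence $\mu(y)=\mu_0(y)\cup\mu_1(y)=\mu_1(x)\cup\mu_0(x)=\mu(x)$; in particular $\ol{\mu}(x)=\ol{\mu}(y)$, and on their common set of matched positions $\mu(x)$ the strings $x$ and $y$ are bitwise complementary, by definition of $f$. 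Now the $n-2k\geq 1$ unmatched $0$s cut the cyclic string into arcs; each non-empty arc is a balanced, fully matched bracket word for $x$ and, by the same argument with the same delimiting positions, also for $y$, so it begins with an opening bracket, i.e.\ with a $1$, in \emph{both} $x$ and $y$. Since $k\geq 1$, at least one arc is non-empty, and its first position then carries a $1$ in $x$ and a $1$ in $y$, contradicting their complementarity on $\mu(x)$. Therefore $f^2(x)\neq x$.

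The main obstacle is fact (i): one has to set up the cyclic parenthesis matching and its laminar structure carefully and verify that $f$ conjugates the forward matching of $x$ into the reverse matching of $f(x)$. The hypothesis $n\geq 2k+1$ is used in two places — to ensure every $1$ is matched (so that $f$, $g$, and fact (ii) make sense) and, for fact (iii), to guarantee that unmatched $0$s exist, which is exactly what powers the arc argument; for $n=2k$ there are no unmatched $0$s and $f$ can indeed have $2$-cycles.
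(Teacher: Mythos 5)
Your proposal is correct, and it follows the same overall skeleton as the paper (establish that $f$ is a permutation, then rule out fixed points and $2$-cycles, then observe consecutive strings are disjoint), but the argument for the critical third step is genuinely different. The paper rules out $f(x)=x$ and $f^2(x)=x$ simultaneously with a single local argument: it exhibits a position $i$ (the first bit of a maximal run of unmatched $0$s in $x$, which exists since $n\geq 2k+1$) satisfying $i\in\ol{\mu}(x)$, $i\in\mu_0(f(x))$, and $i\in\mu_1(f^2(x))$, so the bit of $x$ at position $i$ differs from that of $f(x)$ and from that of $f^2(x)$. You instead dispose of $f(x)\neq x$ trivially from $k\geq 1$ and prove $f^2(x)\neq x$ by a global ``block'' argument: assuming $f^2(x)=x$ and setting $y:=f(x)$, you deduce $\mu(y)=\mu(x)$ and that $x,y$ are bitwise complementary on $\mu(x)$, then observe that each block (maximal run of matched bits, which both strings delimit at the same positions since $\ol{\mu}(x)=\ol{\mu}(y)$) is a Dyck word starting with $1$ in both $x$ and $y$, contradicting complementarity at that first position. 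This is correct; it implicitly uses the fact that blocks are Dyck words, which the paper only formalizes later in the Motzkin-path section but which is standard for cyclic parenthesis matching when unmatched $0$s exist. The paper's argument is more concrete and self-contained at this point in the exposition; yours extracts the cleaner structural invariant $\mu(f(x))=\mu(x)\Rightarrow$ block-level contradiction, at the cost of a slightly larger preparatory apparatus. Your proof of invertibility (exhibiting the inverse as the reverse-bracket variant) is essentially the paper's, with a somewhat more detailed but still informal justification that the reversed matching of $f(x)$ reproduces the pairing $\mu(x)$; both treatments leave this at the same level of rigor.
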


\begin{proof}
We first argue that the mapping~$f$ is invertible.
Specifically, the inverse mapping is obtained by interchanging the roles of~0s and~1s, i.e., given a bitstring~$x$, we interpret the 0s in~$x$ as opening brackets and the 1s as closing brackets, and we match closest pairs of opening and closing brackets in the natural way (cyclically across the boundary).
Then $f^{-1}(x)$ is the bitstring obtained from~$x$ by complementing all matched bits.

As $f$ is invertible, there are no two distinct bitstrings $x,x'\in X_{n,k}$ with $f(x)= f(x')$.
Furthermore, as the set $X_{n,k}$ is finite, the first duplicate bitstring in the sequence $C(x)$ is the first string~$x$, so the sequence $C(x)$ is indeed cyclic.

Now consider three consecutive bitstrings~$x$, $f(x)$ and $f^2(x)$ in the sequence~$C(x)$.
To complete the proof of the lemma, we show that $x\neq f(x)$ and $x\neq f^2(x)$.
For this we identify a position $i\in[n]$ such that $i\in\ol{\mu}(x)$, $i\in\mu_0(f(x))$ and $i\in\mu_1(f^2(x))$.
By the assumption $n\geq 2k+1$ the set~$\ol{\mu}(x)$ is nonempty, and we let $i\in\ol{\mu}(x)$ be such that $i-1\in\mu(x)$.
Specifically, we have $i-1\in\mu_0(x)$, and consequently $i-1\in\mu_1(f(x))$, which implies that the 0-bit at position~$i$ in~$f(x)$ is matched to the 1-bit to its left, so $i\in\mu_0(f(x))$.
It follows that~$i\in\mu_1(f^2(x))$, as claimed.
\end{proof}

By the definition~\eqref{eq:Cx}, for any two vertices~$x,y\in X_{n,k}$, the cycles~$C(x)$ and~$C(y)$ have either no vertices in common or all of them.
We may thus define a \emph{cycle factor} in~$K(n,k)$ by
\marginpar{$\cC_{n,k}$}
\begin{equation}
\cC_{n,k}:=\{C(x)\mid x\in X_{n,k}\}.
\end{equation}
\end{subequations}

For example, for the Petersen graph $K(5,2)$, for $x:=10100$ and $x':=11000$ we get
\begin{align*}
C(x)&=(1010\hyph,\hyph1010,0\hyph101,10\hyph10,010\hyph1), \\
C(x')&=(1100\hyph,0\hyph110,100\hyph1,\hyph1100,00\hyph11),
\end{align*}
and $\cC_{5,2}=\{C(x),C(x')\}$.
More examples are shown in Figure~\ref{fig:gliders}.

\section{Analysis via gliders: static properties}

In this and the next section we analyze various properties and invariants of the cycles of the factor~$\cC_{n,k}$ defined in the previous section.
This analysis uses a system of interacting gliders, as sketched in Section~\ref{sec:idea-gliders}.
Specifically, we first focus on the `static' properties of gliders for one fixed vertex~$x\in X_{n,k}$, and later on the `dynamic' properties of gliders when going from~$x$ to~$f(x)$ along the cycle~$C(x)$, describing how gliders in~$x$ move to become gliders in~$f(x)$.
All of these results hold for the entire range of values $n\geq 2k+1$.

\subsection{Motzkin paths and Dyck paths}

We identify a bitstring $x\in X_{n,k}$ with a Motzkin path \marginpar{Motzkin path} in the integer lattice~$\mathbb{Z}^2$ in the following way; see Figure~\ref{fig:motzkin}.
We apply parenthesis matching to~$x$ and we read the bits of~$x$ from left to right.
Every matched~1 is drawn as an $\ustep$-step, every matched~0 as a $\dstep$-step, and every unmatched~0 as a $\fstep$-step, and these steps change the current coordinate by $(+1,+1)$, $(+1,-1)$, or $(+1,0)$, respectively.
In other words, the lattice path corresponding to~$x$ has $\ustep$-steps at the positions in~$\mu_1(x)$, $\dstep$-steps at the positions in~$\mu_0(x)$, and $\fstep$-steps at the positions in~$\ol{\mu}(x)$.
To define the absolute position of this lattice path, it suffices to specify the coordinate of one point on it.
Specifically, if $i$ is the position of the first unmatched 0-bit in~$x$, then this $\fstep$-step starts at the coordinate~$(i-1,0)$.

In every substring of~$x$ for which every prefix has at least as many~1s as~0s, every 0-bit is matched to some 1-bit to its left in this substring.
As a consequence, the Motzkin path~$x$ never moves below the abscissa and all $\fstep$-steps of~$x$ lie on the abscissa.
It follows that in the above definition, we can choose~$i$ as the position of \emph{any} unmatched 0-bit in~$x$, not necessarily the first one.

\begin{figure}
\includegraphics{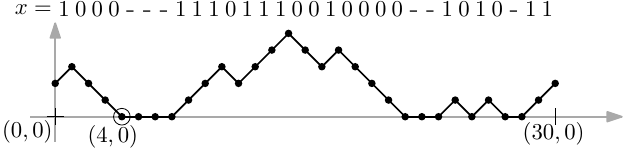}
\caption{The Motzkin path corresponding to the bitstring~$x$ from Figure~\ref{fig:matching}.
The starting point~$(4,0)$ of the first $\fstep$-step of~$x$ is marked.
}
\label{fig:motzkin}
\end{figure}

For any integer $\ell\geq 0$, we write $D_\ell$ for the set of \emph{Dyck words} \marginpar{Dyck word} of length~$2\ell$, i.e., bitstrings with $\ell$ many 1s and~0s that have at least as many~1s as~0s in every prefix.
Note that we have $D_0=\{\varepsilon\}$ and $D_\ell=\{1\,u\,0\,v \mid u\in D_i, v\in D_{\ell-i-1}, \text{ and } i=0,\ldots,\ell-1\}$ for $\ell>0$.
Moreover, we write~$D$ \marginpar{$D,D'$} for the set of Dyck words of arbitrary length, i.e., $D := \bigcup_{\ell\geq 0} D_\ell$.
Lastly, we use $D'\seq D$ to denote the set of bitstrings that have strictly more 1s than~0s in every proper nonempty prefix, i.e., we have $D' = \{1\,u\,0 \mid u \in D\}$.

Observe that a substring~$y$ of~$x$ with $y\in D$ corresponds to a subpath of the Motzkin path~$x$ that has the same number of $\ustep$-steps and $\dstep$-steps, does not contain any $\fstep$-steps, and that never moves below the height of the starting point, i.e., it is a Dyck subpath.

\subsection{The infinite string~\texorpdfstring{$\wh{x}$}{xhat}}
\label{sec:inf-string}

We let~$\wh{x}$ \marginpar{$\wh{x}$} be the bitstring obtained by extending~$x$ by copies of itself infinitely in both directions.
We can think of $\wh{x}$ as the string obtained by unrolling the cyclic string~$x$.
As the parenthesis matching procedure interprets~$x$ as a cyclic string, the notion of matched and unmatched bits extends in the natural way from~$x$ to~$\wh{x}$.
Consequently, the Motzkin path corresponding to~$\wh{x}$ is obtained by concatenating infinitely many copies of the Motzkin path of~$x$.
In this way, the indices of bits in~$\wh{x}$ or of steps on the corresponding Motzkin path continue beyond the indices~$1,\ldots,n$ used in~$x$, i.e., these indices continue with $0,-1,-2,\ldots,$ to the left, and $n+1,n+2,\ldots$ to the right, and in~$\wh{x}$ we see the same bits/steps in each equivalence class of indices modulo~$n$.
We can translate the indices in~$\wh{x}$ back to~$x$ simply by considering them modulo~$n$, with $1,\ldots,n$ as representatives of the equivalence classes.

The motivation for considering the infinite Motzkin path~$\wh{x}$ in addition to the finite path~$x$ is that we aim to partition certain steps of~$\wh{x}$ into groups, which we will call~\emph{gliders}.
We do this for every vertex along the cycle~$C(x)$, with the goal of tracking the movement of gliders along~$C(x)$.
By~\eqref{eq:Cx}, moving one step along the cycle~$C(x)$ corresponds to one application of~$f$, which we interpret as time moving forward by one unit; recall Figure~\ref{fig:gliders}.
As $\wh{x}$ has periodicity~$n$, each glider repeats periodically every $n$ steps along~$\wh{x}$; see Figure~\ref{fig:position}.
However, for formulating continuous equations of motions for the gliders, it is crucial to treat these periodic copies as separate entities that continually move towards $+\infty$ along the cycle~$C(x)$, and not to treat them as a single entity that moves and wraps around the boundary of~$x$.
So in the infinite string~$\wh{x}$, an infinite periodic set of gliders continually moves to the right over time, and we consider them through the finite `window'~$x$, in which they appear to wrap around the boundary.

\subsection{Hills and valleys}

We refer to any Dyck subpath~$y\in D$ of~$\wh{x}$ as a \emph{hill}, \marginpar{hill, valley} and to any complemented Dyck subpath~$y$ with $\ol{y}\in D$ as a \emph{valley}.
If a hill~$y$ of~$\wh{x}$ touches the abscissa, we refer to it as a \emph{base hill}. \marginpar{base hill}
Clearly, a base hill~$y$ starts and ends at the abscissa, but it may touch it more than twice, i.e, $y$ is not necessarily from the set~$D'$.
We define the \emph{height} \marginpar{height} of a hill~$y$ in~$x$ as the difference between the ordinates of its highest point and its starting point.
Similarly, the \emph{depth} \marginpar{depth} of a valley~$y$ in~$x$ is defined as the difference between the ordinates of its starting point and its lowest point.

We consider a hill~$y\in D'$ in~$\wh{x}$ of height~$h$, and we let $p$ denote the leftmost highest point of~$y$ on the Motzkin path.
The hill~$y$ can be decomposed uniquely as
\begin{equation}
\label{eq:hill}
y = \begin{cases}
1\, 0 & \text{if } h=1, \\
1 \,u_1 \, 1 \, u_2 \,\cdots\, 1 \, u_{h-2} \, 1\, 1 \, v_0 \, 0 \, v_1 \, 0 \,\cdots\, 0 \, v_{h-2} \, 0 \, 0, & \\
\hspace{12mm} \text{ with } u_1,\ldots,u_{h-2},\ol{v_0},\ldots,\ol{v_{h-2}} \in D, & \text{if } h\geq 2, \\
\end{cases}
\end{equation}
i.e., the~$u_i$ are inclusion maximal hills in~$y$ to the left of the point~$p$, and the~$v_i$ are inclusion maximal valleys in~$y$ to the right of the point~$p$; see Figure~\ref{fig:hill}.
We refer to the hill~$u_i$ as the \emph{$i$th bulge of~$y$} \marginpar{bulge} and to the valley~$v_i$ as the \emph{$i$th dent of~$y$}. \marginpar{dent}
If $h=1$ then $y=1\,0$ has neither bulges nor dents.
From~\eqref{eq:hill} we obtain the following lemma.

\begin{lemma}
\label{lem:bulge-dent}
For any hill $y\in D'$ of height~$h$ in~$\wh{x}$, the $i$th bulge of~$y$ has height at most~$h-1-i$ for all $i=1,\ldots,h-2$, and the $i$th dent of~$y$ has depth at most~$h-1-i$ for all $i=0,\ldots,h-2$.
\end{lemma}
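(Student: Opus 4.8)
The plan is to derive both bounds directly from the explicit decomposition~\eqref{eq:hill}, with no induction; the whole argument rests on two structural facts about~$y$, one for each bound. I would first dispose of the trivial case $h=1$ (then $y=1\,0$ has neither bulges nor dents), assume $h\geq 2$, and fix the convention that the starting point of the Motzkin path~$y$ has ordinate~$0$, so that its leftmost highest point~$p$ has ordinate~$h$.

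The first real step is to pin down the ordinate at which each bulge and each dent is based. By~\eqref{eq:hill}, the prefix of~$y$ preceding the $i$th bulge~$u_i$ is $1\,u_1\,1\,u_2\cdots 1\,u_{i-1}\,1$; since each $u_j\in D$ has equally many $\ustep$- and $\dstep$-steps, this prefix has net height~$i$, so $u_i$ starts at ordinate~$i$. Similarly, the prefix preceding the $i$th dent~$v_i$ is $1\,u_1\cdots 1\,u_{h-2}\,1\,1\,v_0\,0\,v_1\cdots 0\,v_{i-1}\,0$, which, because the $u_j$ and the $v_j$ (the latter complemented Dyck words, $\ol{v_j}\in D$) are all height-neutral, has net height $h-i$; hence $v_i$ starts at ordinate~$h-i$, and in particular $v_0$ starts at~$p$. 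With this in place, the two bounds follow immediately: $u_i$ lies entirely to the left of~$p$, and $p$ is by definition the \emph{leftmost} point of ordinate~$h$, so every point of~$u_i$ has ordinate at most~$h-1$, and being a hill based at ordinate~$i$ it has height at most $h-1-i$; on the other side, $y\in D'$ means $y=1\,u\,0$ with $u\in D$, so every lattice point of~$y$ other than its two endpoints has ordinate at least~$1$, and since~$v_i$ is, by~\eqref{eq:hill}, a proper subpath of~$y$ lying strictly between its two endpoints, all its points have ordinate at least~$1$, so as a valley based at ordinate~$h-i$ it has depth at most $(h-i)-1=h-1-i$.

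I do not expect a genuine obstacle here, since~\eqref{eq:hill} already does the structural work; the only points requiring care are the height bookkeeping for the two prefixes (in particular that the complemented Dyck words~$v_j$ are height-neutral exactly as the~$u_j$ are, and the degenerate index ranges when $h=2$), and being explicit that the two bounds draw on genuinely different properties of~$y$ — leftmost-maximality of~$p$ for the bulges, and the strict positivity encoded by $y\in D'$ for the dents.
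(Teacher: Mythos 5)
Your proof is correct and takes the same route the paper implicitly intends: the paper states the lemma with no written proof, merely remarking that it follows from the decomposition~\eqref{eq:hill}, and your argument simply makes explicit the height bookkeeping (that $u_i$ is based at ordinate~$i$ and bounded above by $h-1$ via leftmost-maximality of~$p$, and that $v_i$ is based at ordinate~$h-i$ and bounded below by~$1$ via $y\in D'$) that justifies this claim.
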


\begin{figure}[h!]
\includegraphics{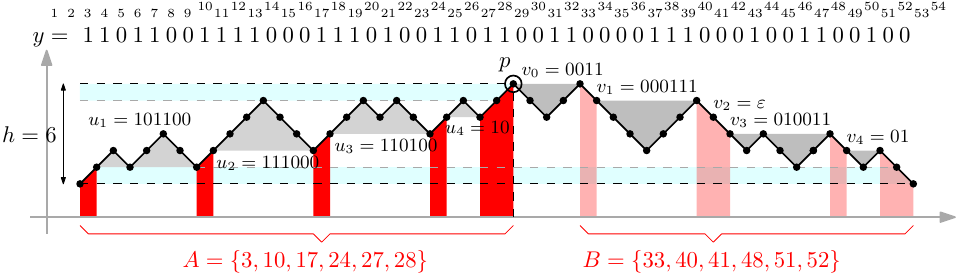}
\caption{Decomposition of a hill $y\in D'$ of height~$h=6$ contained in some larger Motzkin path~$\wh{x}$ into bulges~$u_i$ and dents~$v_i$.
The leftmost highest point~$p$ of~$y$ is marked.
}
\label{fig:hill}
\end{figure}

\subsection{Glider partition}
\label{sec:glider}

In the following we define a function~$\Gamma(x)$ that recursively computes a partition of the set of indices of all $\ustep$-steps and $\dstep$-steps in the infinite Motzkin path~$\wh{x}$.
This corresponds to the set of all indices of matched bits in the bitstring~$\wh{x}$, i.e., to the set~$\mu(\wh{x})\seq \mathbb{Z}$.
Moreover, the sets of the partition are grouped into pairs~$(A,B)$ with $|A|=|B|$ and $\max A<\min B$, and either $A$ contains positions of~$\ustep$-steps of~$\wh{x}$ and $B$ contains positions of~$\dstep$-steps, or vice versa.
We will refer to every such pair~$(A,B)$ as a \emph{glider} (the formal definition is given below), and we think of the steps of the Motzkin path~$\wh{x}$ at the positions in~$A$ and~$B$ as the steps that `belong' to the glider~$(A,B)$.
Every $\ustep$-step and $\dstep$-step of $\wh{x}$ belongs to precisely one glider, whereas $\fstep$-steps do not belong to any glider; see Figure~\ref{fig:Gamma}.

\begin{figure}
\makebox[0cm]{ 
\includegraphics[scale=0.9]{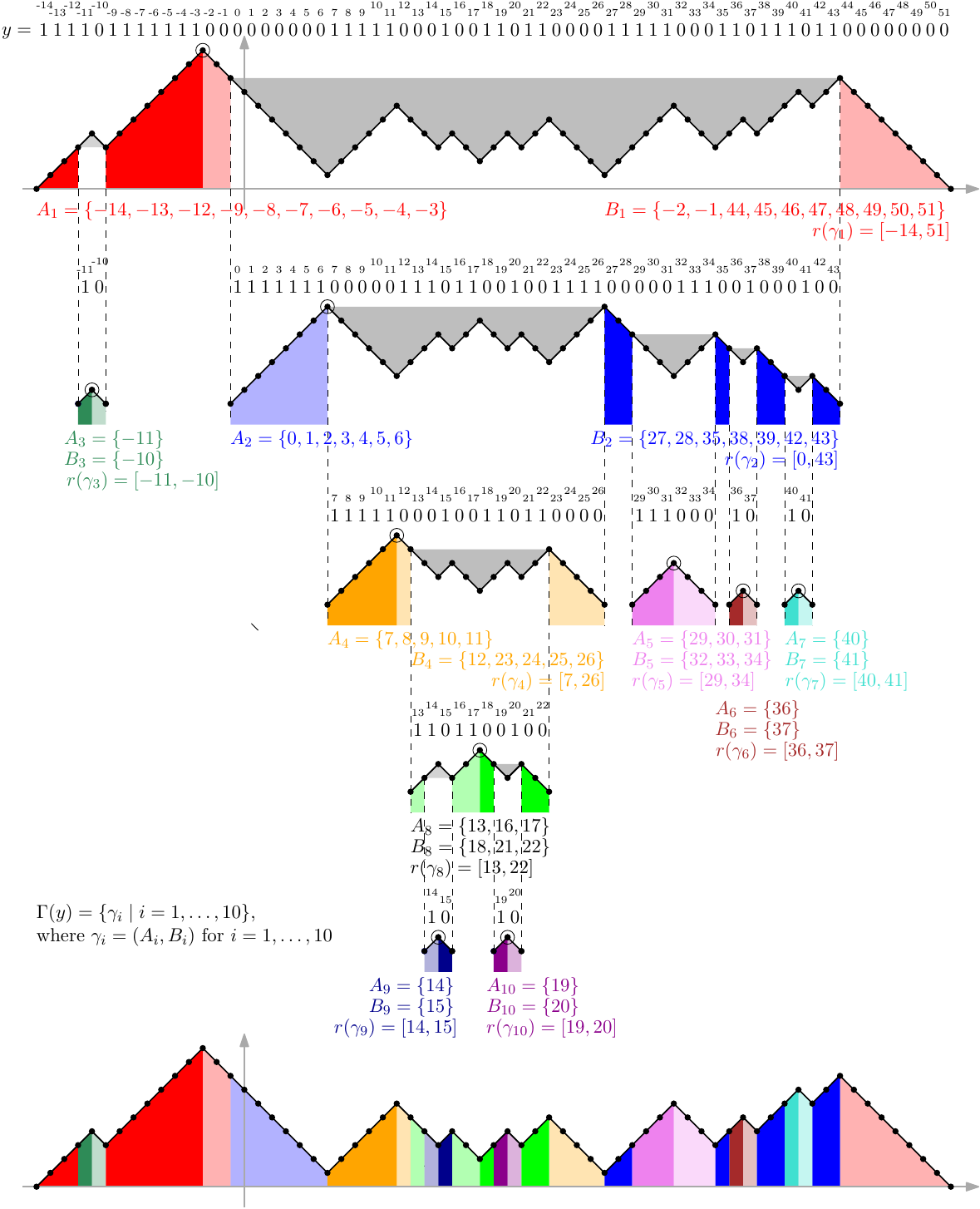}
}
\caption{Glider partitioning in a hill $y\in D'$ contained in some larger Motzkin path~$\wh{x}$.}
\label{fig:Gamma}
\end{figure}

We first define
\begin{subequations}
\label{eq:Gamma}
\marginpar{$\Gamma(x)$}
\begin{equation}
\label{eq:Gamma-base}
\Gamma(x):=\bigcup_{y \text{ base hill in } \wh{x}} \Gamma(y).
\end{equation}

For any hill~$y\in D\setminus D'$ in~$\wh{x}$, if $y=\varepsilon$ we define
\begin{equation}
\Gamma(y):=\emptyset,
\end{equation}
and if $y\neq\varepsilon$ we consider the partition $y=y_1\cdots y_\ell$ with $y_1,\ldots,y_\ell\in D'$ and define
\begin{equation}
\label{eq:Gamma-multi}
\Gamma(y):=\bigcup_{i=1}^\ell \Gamma(y_i).
\end{equation}
The interesting step of the recursion happens for hills $y\in D'$ in~$\wh{x}$, for which we partition~$y$ uniquely as in~\eqref{eq:hill}, and define
\begin{equation}
\label{eq:Gamma-hill}
\Gamma(y):=\Big(\bigcup \limits_{i=1}^{h-2} \Gamma({u_i}) \, \cup \, \bigcup \limits_{i=0}^{h-2} \Gamma(\ol{v_i})\Big) \, \cup \, \{(A, B)\},
\end{equation}
\end{subequations}
where~$A$ and~$B$ are the sets of indices of the~1s and~0s of~$y$, respectively, that do not belong to any of the bulges~$u_i$ or the dents~$v_i$ in the decomposition~\eqref{eq:hill}; see Figure~\ref{fig:hill}.
Recall that if $h=1$, then $y=1\,0$ has no bulges or dents, so in this case the two unions in~\eqref{eq:Gamma-hill} indexed by~$i$ are empty and then $A$ and~$B$ are singleton sets containing the positions of the~1 and the~0 in~$y$, respectively.
In general, the indices in~$A$ and~$B$ are absolute with respect to the Motzkin path~$\wh{x}$ that contains~$y$ as a subpath.
We refer to any pair~$(A,B)\in\Gamma(x)$ computed in~\eqref{eq:Gamma-hill} as a \emph{glider}.  \marginpar{glider}

The complemented strings~$\ol{v_i}$ on the right hand side of~\eqref{eq:Gamma-hill} need further explanation.
By definition we have $\ol{v_i}\in D$, i.e., the subpath~$v_i$ in~$\wh{x}$ is a valley.
Therefore, to compute~$\Gamma(\ol{v_i})$ we apply~$\Gamma$ to the hill~$\ol{v_i}$, obtained by complementing the valley~$v_i$, without changing any other steps of~$\wh{x}$, as they are irrelevant for the computation of~$\Gamma(\ol{v_i})$; see Figure~\ref{fig:Gamma}.
Note that the vertical positions of the steps of~$\wh{x}$ or its subpaths are irrelevant for the definition of~$\Gamma$, but what matters are their indices on the horizontal axis (as they enter the sets~$A$ and~$B$ in~\eqref{eq:Gamma-hill}), which are not modified by the complementation.
However, complementation changes the roles of 0s and 1s, so for any glider $(A,B)\in\Gamma(x)$, $A$ and $B$ are either sets of indices of $\ustep$-steps and $\dstep$-steps, respectively, of some hill in the original Motzkin path~$\wh{x}$, or sets of indices of $\dstep$-steps and $\ustep$-steps, respectively, of some valley in the original path~$\wh{x}$.
What is important is that $\max A<\min B$, i.e., all steps in~$A$ are to the left of all steps in~$B$.

\subsection{Range of gliders and equivalence classes}

We define the \emph{range} of a glider~$\gamma=(A,B)\in\Gamma(x)$ as the interval
\marginpar{range $r(\gamma)$}
\begin{equation*}
r(\gamma):=[\min A, \max B]=\{\min A,\min A+1,\ldots,\max B\}\seq\mathbb{Z}.
\end{equation*}

Moreover, we write $\wh{x}_{r(\gamma)}$ for the subpath of~$\wh{x}$ on the interval~$r(\gamma)$.
This subpath contains all steps from~$A$ and~$B$, plus possibly steps of other gliders, no~$\fstep$-steps at all, and it starts with a step from~$A$ and ends with a step from~$B$.
We refer to a glider $\gamma\in\Gamma(x)$ as \emph{non-inverted} if $\wh{x}_{r(\gamma)}$ is a hill in~$\wh{x}$, and as \emph{inverted} \marginpar{inverted} if $\wh{x}_{r(\gamma)}$ is a valley.
Non-inverted gliders~$\gamma=(A,B)$ have $\ustep$-steps of~$\wh{x}$ at the positions in~$A$ and $\dstep$-steps at the positions in~$B$, and for inverted gliders the situation is reversed.
For example, in Figure~\ref{fig:Gamma}, the gliders $\gamma_1,\gamma_3,\gamma_4,\gamma_5,\gamma_6,\gamma_7,\gamma_{10}$ are non-inverted, whereas $\gamma_2,\gamma_8,\gamma_9$ are inverted.

Clearly, as~$\wh{x}$ is an infinite string containing infinitely many hills, the set~$\Gamma(x)$ is an infinite set.
However, as $\wh{x}$ has periodicity~$n$, we can partition the set~$\Gamma(x)$ into finitely many equivalence classes of gliders.
Specifically, for any two gliders $(A,B),(A',B')\in\Gamma(x)$, we write $(A,B)\sim (A',B')$ if these sets are the same modulo~$n$, i.e., $A'=A+j\cdot n$ and $B'=B+j\cdot n$ for some~$j\in \mathbb{Z}$.
For any glider $\gamma\in\Gamma(x)$, its equivalence class is denoted by~$[\gamma]$ and the set of all equivalence classes by
\marginpar{$\Gamma(x)/{\sim}$}
\begin{equation}
\label{eq:equiv-classes}
\Gamma(x)/{\sim}:=\{[\gamma] \mid \gamma \in \Gamma(x)\};
\end{equation}
see Figure~\ref{fig:position}.
We also define
\marginpar{$\nu(x)$}
\begin{equation}
\label{eq:nu}
\nu(x):=|\Gamma(x)/{\sim}|,
\end{equation}
and we simply refer to this quantity as the \emph{number of gliders}.

\subsection{Position and speed of gliders}
\label{sec:pos-speed}

\begin{figure}[b!]
\makebox[0cm]{ 
\includegraphics{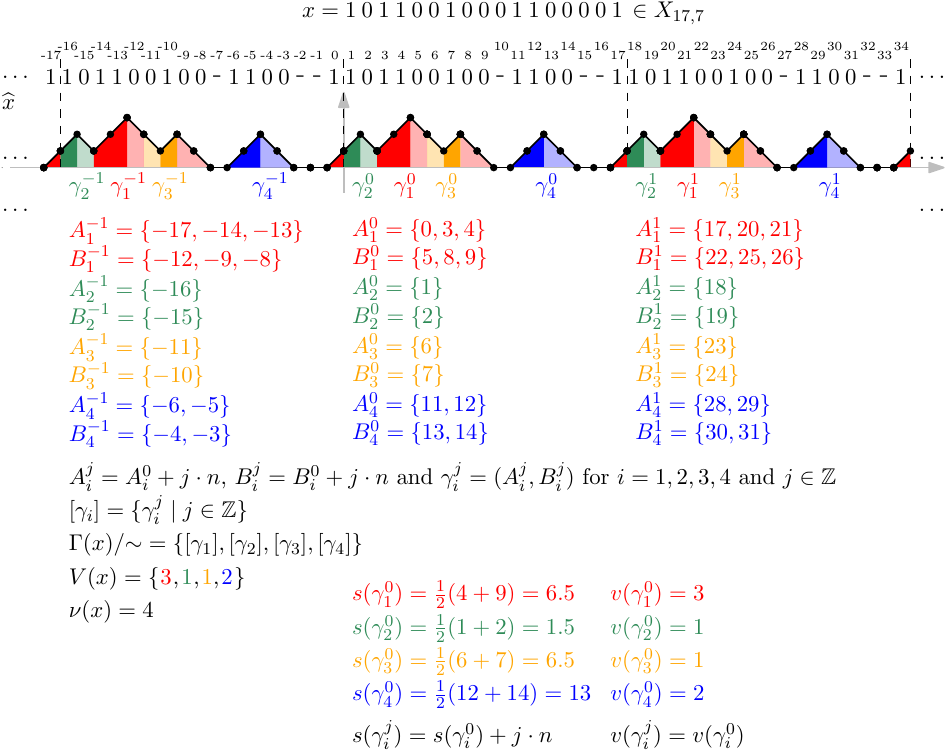}
}
\caption{Equivalence classes of gliders in~$\wh{x}$, and their position and speed.}
\label{fig:position}
\end{figure}

For any glider $\gamma=(A,B)\in\Gamma(x)$ we define
\marginpar{$s_1(\gamma),s_2(\gamma)$}
\begin{subequations}
\begin{equation}
\label{eq:s12}
s_1(\gamma):=\max A \quad \text{and} \quad s_2(\gamma):=\max B.
\end{equation}
Using these, the \emph{position of~$\gamma$} is defined as
\marginpar{position $s(\gamma)$}
\begin{equation}
\label{eq:pos}
s(\gamma):=\frac{1}{2} \big(s_1(\gamma) + s_2(\gamma)\big)=\frac{1}{2} \big({\max A} + \max B\big).
\end{equation}
\end{subequations}

In words, the position~$s(\gamma)$ is the average position of the rightmost $\ustep$-step and the rightmost $\dstep$-step that belong to~$\gamma$, which is true regardless of whether~$\gamma$ is inverted or not.
Clearly, the numbers $s(\gamma)$, $\gamma\in\Gamma(x)$, are either integers or half-integers; see Figure~\ref{fig:position}.
Also note from the example in the figure that two distinct gliders may have the same position~$s(\gamma)$, whereas the integers~$s_1(\gamma)$ and~$s_2(\gamma)$ are unique to the glider~$\gamma$.
For technical reasons we also define
\marginpar{$s_0(\gamma)$}
\begin{equation*}
s_0(\gamma):=\min A,
\end{equation*}
so we have $r(\gamma)=[s_0(\gamma),s_2(\gamma)]$.

The \emph{speed} of $\gamma=(A,B)\in\Gamma(x)$ is defined as
\marginpar{speed $v(\gamma)$}
\begin{equation}
\label{eq:speed}
v(\gamma):=|A|=|B|.
\end{equation}
Note that the speed equals the height of the hill~$\wh{x}_{r(\gamma)}$ if~$\gamma$ is non-inverted, or the depth of the valley~$\wh{x}_{r(\gamma)}$ if~$\gamma$ is inverted.
We also define the \emph{speed set}
\marginpar{speed set}
\marginpar{$V(x)$}
\begin{equation}
\label{eq:Vx}
V(x):=\{v(\gamma) \mid [\gamma] \in \Gamma(x)/{\sim}\},
\end{equation}
which we define as a multiset of size~$\nu(x)$, as gliders from distinct equivalence classes may have the same speed.
Note that
\begin{equation}
\label{eq:sum-speeds}
\sum_{v\in V(x)} v=k,
\end{equation}
as the sum of speeds equals the number of~1s in~$x$.
In other words, the speed set can be seen as a number partition of~$k$.

\subsection{Properties of the speed set}

In the following, we establish a few combinatorial properties of the speed set~$V(x)$.
Specifically, Lemma~\ref{lem:VW} below asserts that the speed set can be computed more directly, without invoking the glider partitioning recursion described in Section~\ref{sec:glider}.
Furthermore, Lemma~\ref{lem:trans} asserts that the number of gliders~$\nu(x)=|V(x)|$ is given by the number of descents in~$x$.

Let $X$ be a nonempty multiset of positive integers.
We write $X\oplus 1$ \marginpar{$X\oplus 1$} for the multiset obtained by incrementing one of the largest elements of~$X$ by~1.
For example, for $X=\{4,4,4,2,1,1\}$ we have $X\oplus 1=\{5,4,4,2,1,1\}$.

\begin{figure}
\makebox[0cm]{ 
\includegraphics[page=1]{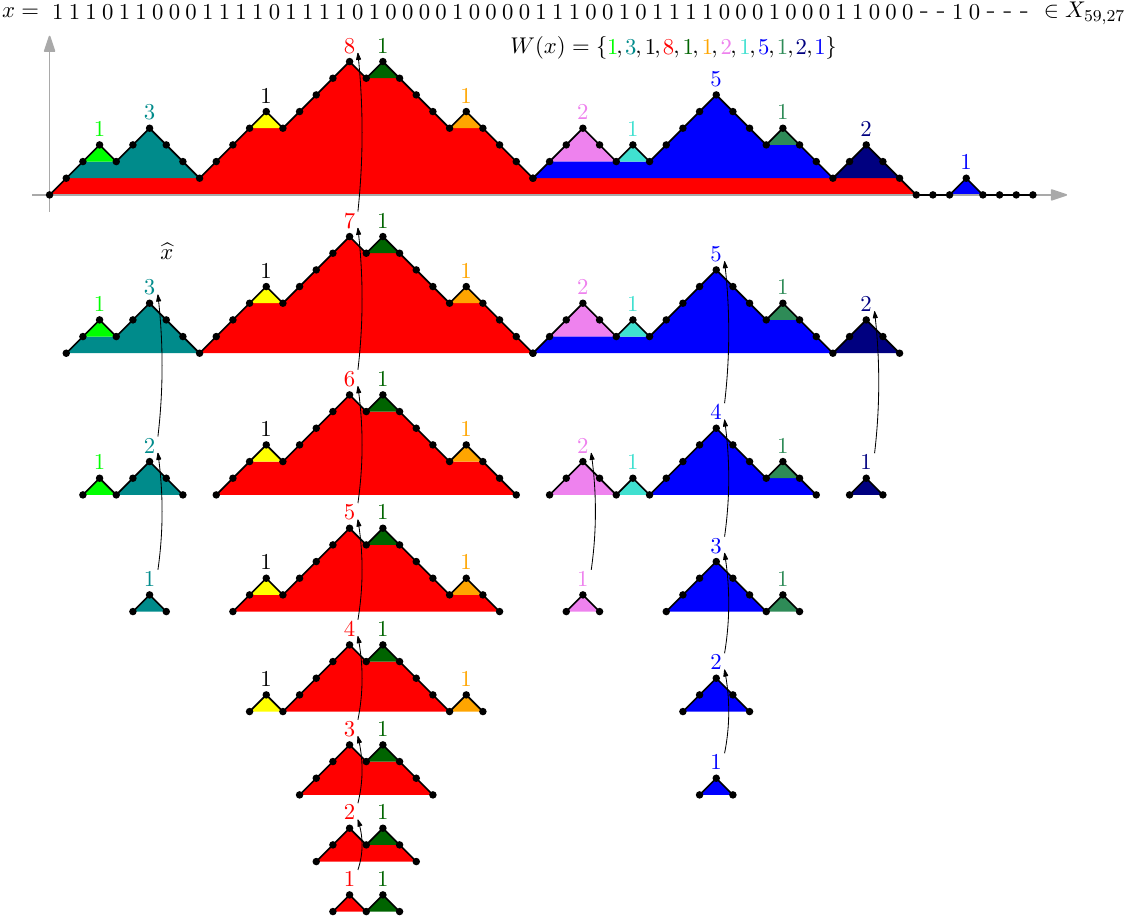}
}
\caption{Illustration of the recursive definition of the multiset~$W(x)$.
The vertical arrows show the values being incremented by the $\oplus 1$ operation.}
\label{fig:speed}
\end{figure}

In a bitstring $x\in X_{n,k}$ under parenthesis matching, we refer to any inclusion maximal (cyclic) substring of matched bits as a \emph{block}. \marginpar{block}
We define a multiset of integers~$W(x)$ as follows; see Figure~\ref{fig:speed}.
We first define
\marginpar{$W(x)$}
\begin{subequations}
\label{eq:Wx}
\begin{equation}
W(x):=\bigcup_{\text{$y$ block in~$x$}} W(y).
\end{equation}
For any $y\in D\setminus D'$, $y\neq \varepsilon$, we consider the partition $y=y_1\cdots y_\ell$ with $y_1,\ldots,y_\ell \in D'$ and define
\begin{equation}
\label{eq:Wx-multi}
W(y):=\bigcup_{i=1}^\ell W(y_i).
\end{equation}
Lastly, for $y\in D'$ we have $y=1\,u\,0$ with $u\in D$ and define
\begin{equation}
\label{eq:Wx-hill}
W(y):=\begin{cases}
\{1\} & \text{if } u=\varepsilon, \\
W(u)\oplus 1 & \text{if } u\neq \varepsilon.
\end{cases}
\end{equation}
\end{subequations}

\begin{lemma}
\label{lem:VW}
For any $x\in X_{n,k}$, the multisets~$V(x)$ and~$W(x)$ defined in~\eqref{eq:Vx} and~\eqref{eq:Wx}, respectively, are the same, i.e., we have $V(x)=W(x)$.
\end{lemma}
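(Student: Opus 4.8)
The plan is to show that the two recursive procedures compute the same multiset by establishing a tighter, pointwise correspondence between the glider partitioning recursion $\Gamma$ and the recursion defining $W$. The natural approach is structural induction on the recursive structure common to both definitions, namely the decomposition of the infinite Motzkin path $\wh{x}$ (equivalently, the cyclic string $x$) into blocks, the decomposition of a block $y\in D\setminus D'$ into its $D'$-factors $y_1\cdots y_\ell$, and finally the decomposition \eqref{eq:hill} of a single hill $y\in D'$ of height $h$ into its bulges $u_1,\dots,u_{h-2}$ and dents $v_0,\dots,v_{h-2}$. Since \eqref{eq:Gamma-base} and \eqref{eq:Wx} split over blocks in the same way, and \eqref{eq:Gamma-multi} and \eqref{eq:Wx-multi} split over $D'$-factors in the same way, the entire content of the lemma is concentrated in the single-hill case, matching \eqref{eq:Gamma-hill} against \eqref{eq:Wx-hill}.

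First I would prove the following statement by induction on the length of a hill $y\in D'$: the multiset $\{v(\gamma)\mid \gamma\in\Gamma(y)\}$ of glider speeds produced by \eqref{eq:Gamma-hill} equals $W(y)$ as defined by \eqref{eq:Wx-hill}, and moreover the glider $(A,B)$ that \eqref{eq:Gamma-hill} adds "at the top" has speed exactly $h=\operatorname{height}(y)$. The base case $h=1$, i.e. $y=1\,0$, is immediate: $\Gamma(y)$ is a single glider of speed $1$ and $W(y)=\{1\}$. For $h\ge 2$, I would observe that the newly-created glider $(A,B)$ consists of the $h$ "left 1s" $1u_1 1u_2\cdots 1\,1$ and the $h$ "right 0s" $0v_1 0\cdots 0\,0$ of \eqref{eq:hill} that survive after the bulges and dents are removed, so $|A|=|B|=h$, giving the claimed speed $h$. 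The remaining speeds of $\Gamma(y)$ come, by \eqref{eq:Gamma-hill}, from $\Gamma(u_i)$ for $i=1,\dots,h-2$ and $\Gamma(\ol{v_i})$ for $i=0,\dots,h-2$; by the induction hypothesis these are $W(u_i)$ and $W(\ol{v_i})$ respectively. On the other side, writing $y=1\,u\,0$ with $u\in D$, I would decompose $u$ into its $D'$-factors and unwind \eqref{eq:Wx-multi} and \eqref{eq:Wx-hill}: the key combinatorial claim is that $u$'s $D'$-factors are exactly the bulges $u_1,\dots,u_{h-2}$ together with the complemented dents $\ol{v_0},\dots,\ol{v_{h-2}}$ — this follows from reading the decomposition \eqref{eq:hill} and noting that after the point $p$ the path descends through the dents, so that $u = u_1\cdots u_{h-2}\cdot(\text{something})$ where the suffix, once one strips the forced structure around $p$, reorganizes into the $\ol{v_i}$'s. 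Then $W(u)=\bigcup_i W(u_i)\cup\bigcup_i W(\ol{v_i})$, and the effect of the outer $\oplus 1$ in \eqref{eq:Wx-hill} is precisely to take one of the largest of these speeds and increment it — which I must match against "adding a fresh speed-$h$ glider."

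This last matching is the step I expect to be the main obstacle, because superficially \eqref{eq:Gamma-hill} \emph{adds} an element (a speed-$h$ glider) whereas \eqref{eq:Wx-hill} \emph{increments} an existing element. The resolution, which I would prove carefully, is that among the speeds collected from the bulges and dents there is always exactly one that is "ready to be promoted" to $h$ — concretely, the $0$th dent $v_0$ (the rightmost maximal valley starting at $p$) contributes, via $\Gamma(\ol{v_0})$, a glider whose speed together with the fresh structure makes up the height; equivalently, by Lemma~\ref{lem:bulge-dent} the $i$th bulge has height $\le h-1-i$ and the $i$th dent has depth $\le h-1-i$, so the largest speed among all the $W(u_i),W(\ol{v_i})$ is at most $h-1$, and the recursion \eqref{eq:Wx-hill} increments one such maximum-speed entry to turn it into a glider of speed $h$. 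I would therefore need a small separate lemma: in $W(u)$ (equivalently in the multiset of sub-glider speeds), a largest element has value exactly $h-1$, and incrementing it yields $h$; this uses that $1\,u\,0$ has height $h$ so $u$ has height $h-1$, together with the fact (provable by the same induction) that the largest speed produced by the $W$-recursion on a hill equals its height. Once this alignment is in hand, \eqref{eq:Gamma-hill} and \eqref{eq:Wx-hill} produce identical multisets, the induction closes, and summing over $D'$-factors and over blocks via \eqref{eq:Gamma-multi}/\eqref{eq:Wx-multi} and \eqref{eq:Gamma-base}/\eqref{eq:Wx} gives $V(x)=W(x)$ for all $x\in X_{n,k}$.
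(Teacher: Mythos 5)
Your high-level strategy — structural induction on the common recursive skeleton, isolating the hard case at the single-hill step, and identifying the ``add a speed-$h$ glider versus increment an existing speed by one'' tension as the crux — is sound, and your auxiliary observation that the maximum of $W(u)$ for $y=1\,u\,0$ of height $h$ is exactly $h-1$ is correct and would be easy to prove. But the load-bearing combinatorial claim you state, namely that the $D'$-factors of $u$ are ``exactly the bulges $u_1,\dots,u_{h-2}$ together with the complemented dents $\ol{v_0},\dots,\ol{v_{h-2}}$,'' is false, and I don't see how to repair it within your framework.

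Concretely: the bulges $u_i$ for $i\geq 2$ sit at height $i-1>0$ inside $u$, so they can never be $D'$-factors of $u$; the complemented dents $\ol{v_i}$ are not even substrings of $u$ (the actual substrings are the valleys $v_i$, not their complements); and the ``spine'' steps of the central glider sit interleaved among the bulges and dents, so $u$ is not of the form $u_1\cdots u_{h-2}\cdot(\text{suffix})$ to begin with. For a minimal counterexample take $h=3$, $y=1\,1\,1\,0\,1\,0\,0\,0$. Then $u_1=\varepsilon$, $v_0=0\,1$, $v_1=\varepsilon$, so your claimed decomposition would give $D'$-factors $\{1\,0\}$, but in fact $u=1\,1\,0\,1\,0\,0$ is itself a single $D'$-factor (every proper prefix has strictly more $1$'s than $0$'s), and it is not $1\,0$. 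The underlying issue is that the $W$-recursion~\eqref{eq:Wx-hill} peels a hill one layer at a time from the outside in (strip $1\,\cdot\,0$), whereas the $\Gamma$-recursion~\eqref{eq:Gamma-hill} removes a single spine that runs through all $h$ levels at once, and these decompositions are genuinely incompatible; you cannot align them depth-for-depth.

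The paper circumvents this exact obstacle by introducing a third, non-recursive combinatorial object: the \emph{staircase} of a position $j$, namely the maximal chain of $\ustep$-step positions $j_1<\cdots<j_h$ such that the intermediate subpaths are low hills. It then proves $T(x)=W(x)$ and $T(x)=V(x)$ separately, where $T(x)$ is the multiset of staircase sizes; the staircase is precisely the object that the chain of $\oplus 1$ operations in the $W$-recursion traces out, and separately it coincides with the $\ustep$-step positions of a glider. If you want to pursue your direct route, you would need to prove something along these lines — in effect reintroducing staircases — because your ``small separate lemma'' about the maximum speed of $W(u)$ only accounts for one $\oplus 1$ step, not the full $h$-step chain, and because the side-branches spun off at each level of the $W$-recursion are not the bulges and dents of the original hill but other, intermediate Dyck factors.
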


The proof of Lemma~\ref{lem:VW} is split into two auxiliary statements, Lemmas~\ref{lem:WT} and~\ref{lem:VT}, below.
To state and prove those, we need some additional preparations.

Consider a lattice path~$y$ consisting of~$\ustep$-steps and~$\dstep$-steps.
We write~$U(y)$ \marginpar{$U(y)$} for the set of positions of all~$\ustep$-steps of~$y$.
For any $j\in U(y)$, the \emph{staircase~$S(j)$} \marginpar{staircase $S(j)$} is the maximum size set $\{j=j_1<\cdots<j_h\}\seq U(y)$ such that the subpath~$u_i$ of~$y$ strictly between positions~$j_i$ and~$j_{i+1}$ is a hill, i.e., $u_i\in D$, of height at most~$h-1-i$, for all $i=1,\ldots,h-1$.
In particular, $u_{h-1}$ has height at most~0, i.e., $u_{h-1}=\varepsilon$.

Note that any two staircases are either disjoint or one is contained in the other as a suffix when ordering the elements increasingly.
A staircase is \emph{full} \marginpar{full} if it is not contained in another staircase.
For example, in Figure~\ref{fig:stair} we have $S(2)\supseteq S(5)\supseteq S(6)$, so $S(5)$ and~$S(6)$ are not full, but $S(2)$ is full.
For any~$j\in U(y)$ we have $j\in S(j)$ by definition, and therefore $j$ is contained in exactly one full staircase~$S(i)$ for some $i\leq j$.
Consequently, the set of full staircases partitions the set~$U(y)$; see Figure~\ref{fig:stair}.
We write $S(x)$ \marginpar{$S(x)$} for the union of all full staircases over all blocks~$y\in D$ in~$x$.
We also define the multiset \marginpar{$T(x)$}
\begin{equation}
\label{eq:Tx}
T(x):=\{|S|\mid S\in S(x)\}.
\end{equation}

\begin{figure}
\makebox[0cm]{ 
\includegraphics[page=2]{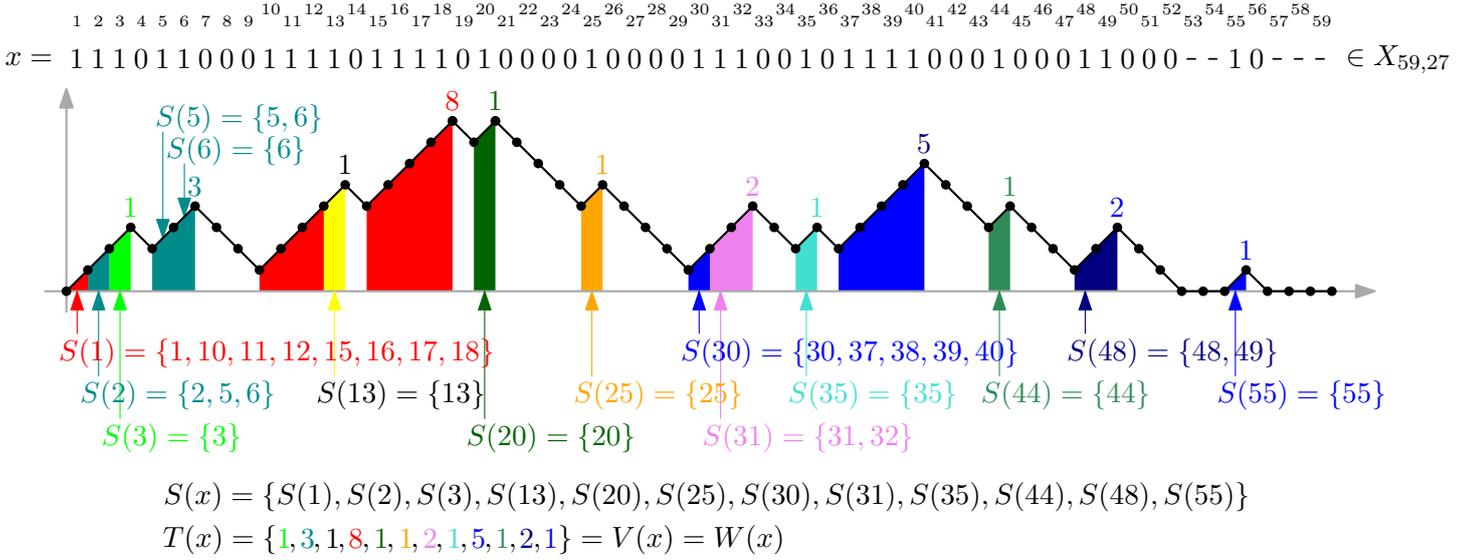}
}
\caption{Illustration of staircases.
All shown staircases are full, except $S(5)$ and~$S(6)$.
This continues the example from Figure~\ref{fig:speed}.}
\label{fig:stair}
\end{figure}

The notations in the following lemma are illustrated in Figure~\ref{fig:stair-proofs}~(a).

\begin{lemma}
\label{lem:staircase}
Let $y\in D$ be a Dyck path. \\
If $y\in D\setminus D'$ consider the partition $y=y_1\cdots y_\ell$ with $y_1,\ldots,y_\ell\in D'$.
Then we have the following:
\begin{enumerate}[label=(\roman*),leftmargin=8mm]
\item If $j,k\in U(y)$ with $j<k$ are from two distinct sets of $U(y_1),\ldots,U(y_\ell)$, then $j$ and~$k$ do not belong to the same staircase of~$y$.
\end{enumerate}
If $y\in D'$ consider the partition of~$y$ defined in~\eqref{eq:hill} and let
\begin{equation*}
A:=\begin{cases} U(y) & \text{if } h=1, \\
U(y)\setminus \Big(\bigcup_{i=1}^{h-2} U(u_i)\cup \bigcup_{i=0}^{h-2} U(v_i)\Big) & \text{if } h\geq 2.
\end{cases}
\end{equation*}
Then we have the following:
\begin{enumerate}[label=(\roman*),leftmargin=8mm]
\setcounter{enumi}{1}
\item If $j,k\in U(y)$ with $j<k$ are from two distinct sets of $A,U(u_1),\ldots,U(u_{h-2}),U(v_0),\ldots,U(v_{h-2})$, then $j$ and $k$ do not belong to the same staircase of~$y$.
\item The set~$A$ is a full staircase of~$y$.
\end{enumerate}
\end{lemma}

\begin{proof}
For any index~$i$ in the support of~$y$, we write~$h(i)$ for the height of the midpoint of the~$i$th step of~$y$.
Furthermore, for $i,j\in U(y)$ we define $h(i,j):=h(j)-h(i)+1$.
For example, for $y=y_1\cdots y_{10}=1011101000$ we have $h(1)=h(2)=h(3)=h(10)=1/2$, $h(4)=h(9)=3/2$, $h(5)=h(6)=h(7)=h(8)=5/2$, and $h(1,7)=3$.

We refer to a pair of indices~$(i,j)$, $i\leq j$, as a \emph{brace} if the subpath~$y_{[i,j]}$ starts and ends with an $\ustep$-step, and has no other steps at height~$h(i)$ or $h(j)$ (other than the first and last one, respectively).
By this definition, $h(i)$ and~$h(j)$ are the unique minimum and maximum of~$\{h(k)\mid k\in[i,j]\}$, respectively.
We observe the following:
(A) If $S(j)=\{j=j_1<\cdots<j_h\}$ is a staircase, then $j_i$ for all $i=1,\ldots,h$ is the rightmost position of an $\ustep$-step of~$y_{[j_1,j_h]}$ with $h(j_i,j_1)=i$.
(B) If $S(j)=\{j=j_1<\cdots<j_h\}$ is a staircase, then $b(S(j)):=(j_1,j_h)$ is a brace, and we have $|S(j)|=h=h(j_1,j_h)$.
(C) If $j,k,\ell,m\in U(y)$, $j<k<\ell<m$, are such that $h(k)\geq h(\ell)$ and~$(j,m)$ is a brace, then $k\notin S(j)$.

To prove~(i), suppose for the sake of contradiction that $j\in U(y_i)$ and $k\in U(y_{i'})$, $i<i'$, belong to the same staircase~$S$, and let $(j',k'):=b(S)$ be its brace.
Note that $j'\leq j$ and $k\leq k'$.
Then, as the first $\ustep$-step of~$y_{i'}$ has the minimum possible height of all steps of~$y$ and lies strictly between positions~$j$ and~$k$, the pair $(j',k')$ is not a brace, contradicting~(B).

To prove~(ii), suppose for the sake of contradiction that $j,k\in U(y)$ with $j<k$ are from two distinct sets of $A,U(u_1),\ldots,U(u_{h-2}),U(v_0),\ldots,U(v_{h-2})$ and belong to the same staircase~$S$, and let $(j',k'):=b(S)$ be its brace.
We also define $p:=\max A$.
We distinguish four cases.

{\bf Case~(a):} $j\in A$ and~$k\in U(u_i)$ for some $i\in\{1,\ldots,h-2\}$.
Then the position~$\ell\in A$ of the first $\ustep$-step after~$u_i$ satisfies $k<\ell<p$ and $h(k)\geq h(\ell)$, and~$(j,p)$ is a brace, so (C) gives $k\notin S(j)$, a contradiction.

{\bf Case (b):} $j\in A$ or $j\in U(u_i)$ for some $i\in\{1,\ldots,h-2\}$, and~$k\in U(v_{i'})$ for some $i'\in\{0,\ldots,h-2\}$.
As the $\ustep$-step at position~$p$ has the maximum possible height of all steps of~$y$ and $j<p<k$, the pair~$(j',k')$ is not a brace, contradicting~(B).

{\bf Case (c):} $j\in U(u_i)$ and~$k\in U(u_{i'})$ for $i,i'\in\{1,\ldots,h-2\}$ with $i<i'$.
Then there is an index~$\ell\in A$ with $j<\ell<k$ and $h(\ell)=h(j)$, contradicting~(A).

{\bf Case (d):} $j\in U(v_i)$ and~$k\in U(v_{i'})$ for $i,i'\in\{0,\ldots,h-2\}$ with $i<i'$.
Then the position~$\ell:=\max U(v_i)$ of the last $\ustep$-step of $v_i$ satisfies $j<\ell<k$ and $h(\ell)>h(k')$, and therefore $(j',k')$ is not a brace, contradicting~(B).

This completes the proof of property~(ii).

Property~(iii) follows directly from~(ii) and the definition of staircases.
\end{proof}

\begin{figure}
\makebox[0cm]{ 
\includegraphics[page=3]{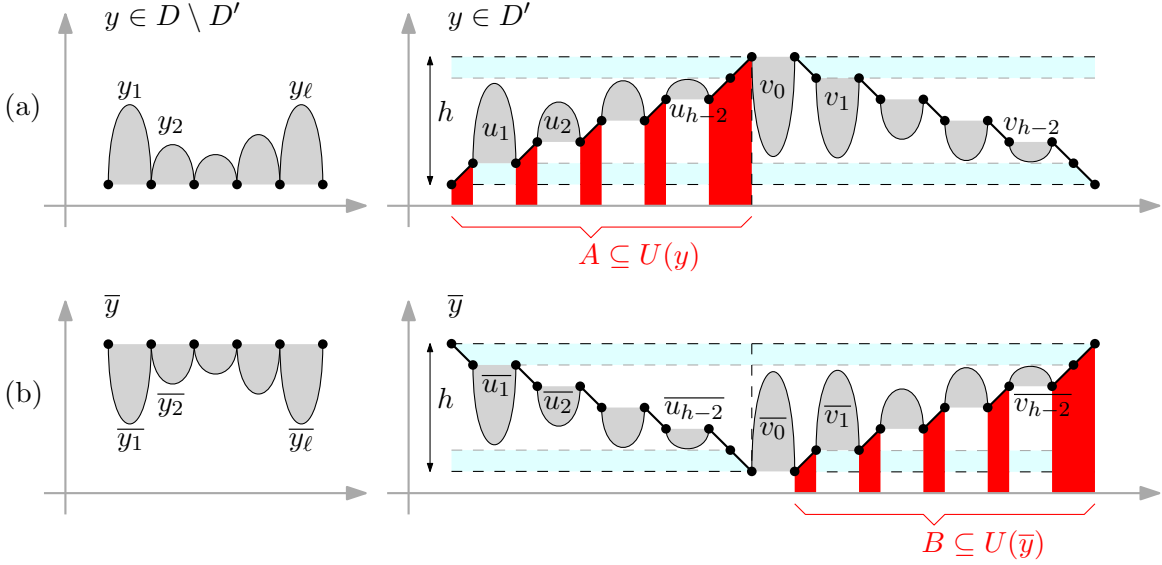}
}
\caption{Illustration of (a) Lemma~\ref{lem:staircase} and (b) Lemma~\ref{lem:staircase-comp}.}
\label{fig:stair-proofs}
\end{figure}

The following lemma is analogous to Lemma~\ref{lem:staircase}, and it considers a complemented Dyck path~$\ol{y}$; see Figure~\ref{fig:stair-proofs}~(b).

\begin{lemma}
\label{lem:staircase-comp}
Let $y\in D$ be a Dyck path. \\
If $y\in D\setminus D'$ consider the partition $y=y_1\cdots y_\ell$ with $y_1,\ldots,y_\ell\in D'$.
Then we have the following:
\begin{enumerate}[label=(\roman*),leftmargin=8mm]
\item If $j,k\in U(\ol{y})$ with $j<k$ are from two distinct sets of $U(\ol{y_1}),\ldots,U(\ol{y_\ell})$, then $j$ and~$k$ do not belong to the same staircase of~$\ol{y}$.
\end{enumerate}
If $y\in D'$ consider the partition of~$y$ defined in~\eqref{eq:hill} and let
\begin{equation*}
B:=\begin{cases} U(\ol{y}) & \text{if } h=1, \\
U(\ol{y})\setminus \Big(\bigcup_{i=1}^{h-2} U(\ol{u_i})\cup \bigcup_{i=0}^{h-2} U(\ol{v_i})\Big) & \text{if } h\geq 2.
\end{cases}
\end{equation*}
Then we have the following:
\begin{enumerate}[label=(\roman*),leftmargin=8mm]
\setcounter{enumi}{1}
\item If $j,k\in U(\ol{y})$ with $j<k$ are from two distinct sets of $B,U(\ol{u_1}),\ldots,U(\ol{u_{h-2}}),U(\ol{v_0}),\ldots,U(\ol{v_{h-2}})$, then $j$ and $k$ do not belong to the same staircase of~$\ol{y}$.
\item The set~$B$ is a full staircase of~$\ol{y}$.
\end{enumerate}
\end{lemma}

The proof of Lemma~\ref{lem:staircase-comp} is analogous to the proof of Lemma~\ref{lem:staircase}, and thus we omit it.

\begin{lemma}
\label{lem:WT}
For any~$x\in X_{n,k}$, the multisets~$W(x)$ and~$T(x)$ defined in~\eqref{eq:Wx} and~\eqref{eq:Tx}, respectively, are the same, i.e., we have $W(x)=T(x)$.
\end{lemma}

\begin{proof}
Consider a block~$y\in D$ in~$x$ and the corresponding Dyck path.
The recursive computation of~$W(y)$ via~\eqref{eq:Wx-multi} and~\eqref{eq:Wx-hill} distinguishes two cases, and the second one has two subcases.

If~$y\in D\setminus D'$, $y\neq \varepsilon$, then we have $y=y_1\cdots y_\ell$ for $y_1,\ldots,y_\ell\in D'$ and by~\eqref{eq:Wx-multi} we have $W(y)=\bigcup_{i=1}^\ell W(y_i)$.
By Lemma~\ref{lem:staircase}~(i), the set of staircases of~$y$ is obtained as the union of the sets of staircases of the substrings~$y_i$, i.e., we have $T(y)=\bigcup_{i=1}^\ell T(y_i)$.
By induction we know $W(y_i)=T(y_i)$ for all $i=1,\ldots,\ell$ and consequently $W(y)=T(y)$.

If~$y=1\,0\in D'$ then by the first case in~\eqref{eq:Wx-hill} we have $W(y)=\{1\}$.
Furthermore, we clearly have $T(y)=\{1\}$, so indeed $W(y)=T(y)$.
If~$y=1\,u\,0\in D'$ with $u\neq \varepsilon$ then by the second case in~\eqref{eq:Wx-hill} we have $W(y)=W(u)\oplus 1$.
Furthermore, one of the largest staircases of~$S(u)$ is extended by the leading~1 in~$y$ to a full staircase in~$S(y)$, implying that $T(y)=T(u)\oplus 1$.
By induction we know $W(u)=T(u)$ and consequently $W(y)=T(y)$.
\end{proof}

\begin{lemma}
\label{lem:VT}
For any~$x\in X_{n,k}$, the multisets~$V(x)$ and~$T(x)$ defined in~\eqref{eq:Vx} and~\eqref{eq:Tx}, respectively, are the same, i.e., we have $V(x)=T(x)$.
\end{lemma}

\begin{proof}
We claim that for any glider~$\gamma=(A,B)\in \Gamma(x)$, the set of positions of its~$\ustep$-steps is a full staircase~$S$ in the base hill of~$\wh{x}$ containing this glider.
Specifically, if $\gamma$ is non-inverted, then $S=A$, whereas if $\gamma$ is inverted, then $S=B$.
In both cases we obtain $v(\gamma)=|A|=|B|=|S|$ (recall~\eqref{eq:speed}).
As every $\ustep$-step of this base hill belongs to exactly one glider and to exactly one full staircase, we thus obtain a bijection between the gliders and full staircases of this base hill, and consequently a bijection between the multisets~$T(x)$ and~$V(x)$, proving the lemma.
The claim follows by induction from~\eqref{eq:Gamma-multi} and~\eqref{eq:Gamma-hill}, using Lemmas~\ref{lem:staircase} and~\ref{lem:staircase-comp}.
\end{proof}

We emphasize that in the proofs of Lemmas~\ref{lem:WT} and~\ref{lem:VT} the glider partitions for $y=1\,u\,0$ and~$u$ differ in a nontrivial way in general, i.e., $\Gamma(y)$ may differ from~$\Gamma(u)$ by more than simply adding the positions of the first and last step of~$y$ to a fastest glider from~$\Gamma(u)$; see Figure~\ref{fig:speed-conn}.
For the base hill~$w$ in this figure, there is no apparent correspondence between gliders in~$\Gamma(1\,w\,0)$ and those in~$\Gamma(w)$, because the added steps in the beginning and in the end substantially change the way that the recursive function~$\Gamma$ operates on and partitions the two Motzkin paths.

With these preparations, the proof of Lemma~\ref{lem:VW} is now straightforward.

\begin{proof}[Proof of Lemma~\ref{lem:VW}]
Combine Lemmas~\ref{lem:WT} and~\ref{lem:VT}.
\end{proof}

We write $d(x)$ \marginpar{descents $d(x)$} for the number of descents in the (cyclic) string~$x$, i.e., the number of occurrences of the substring~$10$.
Clearly, this is equal to the number of ascents, i.e., the number of occurrences of the substring~$01$.
Furthermore, this is the same as the number of inclusion maximal substrings of~1s, or the number of inclusion maximal substrings of~0s.
For example, $x=001100010001$ has three descents, one of them across the boundary, i.e., $d(x)=3$.
As this quantity has nothing to do with parenthesis matching, we did not distinguish matched or unmatched~0s in~$x$ in this example.
Nonetheless, the next lemma asserts that $d(x)$ equals the number of gliders~$\nu(x)$.

\begin{lemma}
\label{lem:trans}
For any~$x\in X_{n,k}$ we have $d(x)=\nu(x)$.
\end{lemma}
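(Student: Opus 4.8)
The plan is to reduce the statement, via Lemma~\ref{lem:VW}, to a purely combinatorial count on bitstrings. By Lemma~\ref{lem:VW} the multisets $V(x)$ and $W(x)$ coincide, and by construction $V(x)$ has exactly $\nu(x)$ elements; hence $\nu(x)=|W(x)|$, so it suffices to prove $|W(x)|=d(x)$.

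The first main step is a lemma asserting that for every Dyck word $y\in D$ the cardinality $|W(y)|$ equals the number $\rho(y)$ of maximal runs of $1$s in $y$, read as an ordinary (linear, non-cyclic) string. I would prove this by induction on the length of $y$, following the recursion~\eqref{eq:Wx}. The key observation is that $\oplus 1$ leaves the cardinality of a multiset unchanged, so on the level of cardinalities the recursion collapses to $|W(1\,0)|=1$, to $|W(1\,u\,0)|=|W(u)|$ for $u\in D\setminus\{\varepsilon\}$, and to $|W(y_1\cdots y_\ell)|=\sum_{i=1}^{\ell}|W(y_i)|$ for $y_1,\dots,y_\ell\in D'$. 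One then checks that $\rho$ satisfies exactly the same recursion, using only the elementary facts that a nonempty Dyck word starts with a $1$ and ends with a $0$: for $y=1\,u\,0$ with $u\neq\varepsilon$ the leading $1$ is absorbed into the first run of $1$s of $u$ (which begins with $1$) while the trailing $0$ creates no new run, so $\rho(y)=\rho(u)$; for $y=1\,0$ we have $\rho(y)=1$; and in a concatenation of words $y_i\in D'$ each $y_i$ ends in $0$ and the next begins with $1$, so no runs merge across the seams and $\rho$ is additive. Matching the two recursions term by term completes the induction.

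The remaining step is to sum over the blocks of $x$. Since every $1$-bit of $x$ is matched, each $1$ lies in some block; and since every block is a nonempty Dyck word, it begins with a $1$ and ends with a $0$, while consecutive blocks are separated (cyclically) by unmatched $0$s. It follows that every maximal run of $1$s of the cyclic string $x$ lies entirely within a single block and coincides there with a maximal run of $1$s of that block read linearly, and conversely every such run of a block is a maximal run of $1$s of $x$. Hence $d(x)=\sum_{y \text{ block in } x}\rho(y)$, and combining this with $W(x)=\bigcup_{y \text{ block in } x}W(y)$ and the lemma from the previous paragraph yields $|W(x)|=d(x)$, as desired.

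I expect no serious obstacle here: the real content is already packaged in Lemma~\ref{lem:VW}, and what remains is bookkeeping. The one place that needs a little care is the cyclic accounting in the last step — in particular a run of $1$s that wraps around the ends of $x$. This is handled cleanly by first rotating $x$ so that position $n$ is an unmatched $0$ (such a position exists since $n\geq 2k+1$, and $d$, $\nu$, and $W$ are all invariant under rotation), after which all blocks and all runs of $1$s may be read linearly.
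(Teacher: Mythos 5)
Your proposal is correct and follows essentially the same route as the paper's proof: reduce via Lemma~\ref{lem:VW} to showing $|W(x)|=d(x)$, observe that $\oplus 1$ preserves cardinality so the recursion~\eqref{eq:Wx} collapses at the level of cardinalities, and verify by induction that the same recursion governs the number of descents (you phrase it via maximal runs of $1$s, which the paper already notes is an equivalent count), then sum over blocks. The only difference is that you make the cyclic/linear bookkeeping and the summation over blocks explicit (via the rotation trick), whereas the paper leaves these implicit; this is a harmless presentational choice.
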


\begin{proof}
We prove inductively that $d(x)=|W(x)|$, and then the claim follows with the help of Lemma~\ref{lem:VW} and the definitions~\eqref{eq:nu} and~\eqref{eq:Vx}.
To show that $d(x)=|W(x)|$ we consider the recursive definition of~$W(x)$ in~\eqref{eq:Wx}, and we consider a substring $y\in D$, $y\neq \varepsilon$, of~$x$ that arises in this computation.

If~$y\in D\setminus D'$, $y\neq \varepsilon$, then we have $y=y_1\cdots y_\ell$ for $y_1,\ldots,y_\ell\in D'$ and by~\eqref{eq:Wx-multi} we have $|W(y)|=\sum_{i=1}^\ell |W(y_i)|$.
Furthermore, the number of occurrences of~$10$ in~$y$ is simply the sum of the number of such occurrences in each of the substrings~$y_i$, implying that $d(y)=\sum_{i=1}^\ell d(y_i)$.
By induction we know $d(y_i)=|W(y_i)|$ for all $i=1,\ldots,\ell$ and consequently $d(y)=|W(y)|$.

If~$y=1\,0\in D'$ then by the first case in~\eqref{eq:Wx-hill} we have $|W(y)|=1$.
Furthermore, we clearly have $d(y)=1$, so indeed $d(y)=|W(y)|$.
If~$y=1\,u\,0\in D'$ with $u\neq \varepsilon$ then by the second case in~\eqref{eq:Wx-hill} we have $|W(y)|=|W(u)\oplus 1|=|W(u)|$.
Furthermore, as the first and last bit of~$u$ are~1 and~0, respectively, the first~1 and last~0 in~$y$ do not create additional occurrences of~$10$, implying that $d(y)=d(u)$.
By induction we know $d(u)=|W(u)|$ and consequently $d(y)=|W(y)|$.
\end{proof}

\subsection{Free and trapped gliders}

We associate the recursive computation of $\Gamma(x)$ in~\eqref{eq:Gamma} with a \marginpar{forest $\Lambda(x)$} forest~$\Lambda(x)$ of rooted unordered trees as follows; see Figure~\ref{fig:tree}.
The vertex set of~$\Lambda(x)$ is the set of gliders~$\Gamma(x)$.
Moreover, for any glider~$\gamma\in\Gamma(x)$ we consider the recursion step~\eqref{eq:Gamma-hill} in which $\gamma=(A,B)$ is added to the set~$\Gamma(x)$, and we consider the corresponding hill~$y\in D'$ with bulges~$u_i$ and dents~$v_i$ as defined by~\eqref{eq:hill}.
The set of all descendants (direct or indirect) of~$\gamma$ in the tree of~$\Lambda(x)$ is the set~$\bigcup_{i=1}^{h-2}\Gamma(u_i)\,\cup\,\bigcup_{i=0}^{h-2} \Gamma(\ol{v_i})$, where $h$ is the height of~$y$.
Consequently, the direct descendants, i.e., the children, of~$\gamma$ are exactly the gliders computed in the \emph{next} recursion step for the $u_i$ and~$\ol{v_i}$.
Moreover, every tree in~$\Lambda(x)$ corresponds to a base hill~$y\in D'$ in~$\wh{x}$ (recall \eqref{eq:Gamma-base}), specifically the root~$\gamma$ of this tree satisfies $y=\wh{x}_{r(\gamma)}$.

\begin{figure}[h!]
\includegraphics{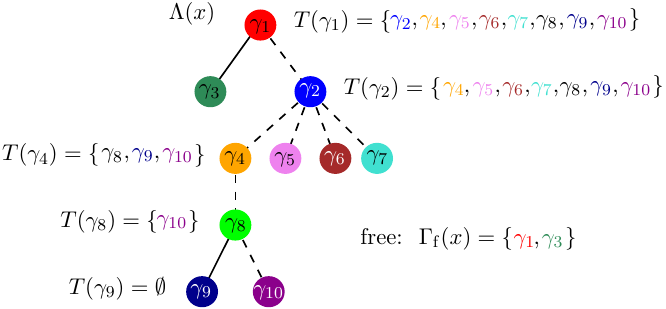}
\caption{Tree of free and trapped gliders present in the hill~$y\in D'$ of Figure~\ref{fig:Gamma}.}
\label{fig:tree}
\end{figure}

\begin{lemma}
\label{lem:dstep}
For any inverted glider~$\gamma\in\Gamma(x)$, the step at position~$s_2(\gamma)+1$ in~$\wh{x}$ is a $\dstep$-step, i.e., the corresponding bit in~$x$ is a matched~0.
\end{lemma}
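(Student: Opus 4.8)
\textbf{Proof plan for Lemma~\ref{lem:dstep}.}
The statement is a local claim about what happens immediately to the right of an inverted glider's last step. The plan is to trace through the recursive definition of $\Gamma(x)$ and locate precisely where an inverted glider can be born. Recall from Section~\ref{sec:glider} that a glider $\gamma=(A,B)$ is inverted exactly when $\wh{x}_{r(\gamma)}$ is a valley, which happens only when $\gamma$ is produced by a call $\Gamma(\ol{v_i})$ in~\eqref{eq:Gamma-hill}, i.e., $\gamma$ lives inside some dent $v_i$ of a hill $y\in D'$ (possibly after several layers of complementation, but the outermost complementation that makes it inverted is the one entering a dent). So the first step is to reduce to the case where $\gamma$ is a glider whose steps lie inside a maximal valley $v=v_i$ of $\wh{x}$, with $\ol{v}\in D$, and to observe that $s_2(\gamma)$, being $\max B$ with $B$ a set of $\ustep$-positions of the hill $\ol{v}$ (equivalently $\dstep$-positions of the valley $v$), is at most the position of the last step of $v$.

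The key step is then to show that $s_2(\gamma)$ cannot be the very last step of $v$, so that position $s_2(\gamma)+1$ still lies inside $v$; and moreover that the step of $\wh{x}$ at position $s_2(\gamma)+1$ is a $\dstep$-step. Applying $\Gamma$ to the hill $\ol{v}\in D$: if $\ol{v}\in D'$ we use the decomposition~\eqref{eq:hill} of $\ol{v}$ into bulges and dents around its leftmost highest point $p$. The glider $(A,B)$ added in this step has $B$ equal to the $\dstep$-positions of $\ol{v}$ not belonging to any dent of $\ol{v}$; by~\eqref{eq:hill} these are the $h-1$ trailing $\dstep$-steps (the two final $\dstep$-steps after $v_{h-2}$, one after each $v_i$, and so on), and in particular $\max B$ is the \emph{last} step of $\ol{v}$. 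Translating back through the complementation, the last step of $\ol{v}$ corresponds to the last step of the valley $v$, which is a $\dstep$-step of $\wh{x}$ (a valley ends going down, i.e., $v$ ends with a $0$ whose complement is a $\ustep$ of $\ol v$ — wait, one must be careful here: $\ol{v}$ ends with a $\dstep$, so $v$ ends with a $\ustep$, hence the last bit of $v$ is a matched $1$). Hence for the \emph{root} glider of the recursion on $\ol v$ we need a different argument: the relevant fact is that $v$ is a \emph{maximal} valley inside a hill $y$, so by~\eqref{eq:hill} the step of $\wh{x}$ immediately after $v$ is the next $\dstep$-step $0$ of $y$ (the $0$ separating consecutive dents, or one of the two closing $0$s), which is exactly a matched $\dstep$-step. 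For gliders born deeper inside $\ol v$ (inside a bulge or dent of $\ol v$), one recurses: the claim reduces to the same statement one level down, using Lemma~\ref{lem:bulge-dent} to control heights/depths and the fact that a bulge is immediately followed by a $1$ in the parent hill while a dent is immediately followed by a $0$ — and after passing through the relevant complementations the bit after $s_2(\gamma)+1$ comes out as a matched $0$, i.e.\ a $\dstep$-step.

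The cleanest way to organize this is therefore by induction on the depth of $\gamma$ in the forest $\Lambda(x)$, mirroring the five-case analysis already used in the proof of Lemma~\ref{lem:VW} (cases (a), (b1), (b2), (c1), (c2)): for an inverted glider $\gamma$ we are in case (b2) or (c2), i.e., $\gamma$ sits inside a dent of an inverted parent or inside a bulge of an inverted parent, or at the top inside a dent of a non-inverted parent; in each case one identifies the step immediately following $\wh{x}_{r(\gamma)}$ within the parent's decomposition~\eqref{eq:hill} and checks, using that $v(\gamma')>v(\gamma)$ and the structure of~\eqref{eq:hill}, that this step is the one at position $s_2(\gamma)+1$ and that it is a matched $0$ of $x$. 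The main obstacle I expect is bookkeeping the effect of nested complementations: each time we descend into a dent the roles of $0$ and $1$ swap, so ``the step after the glider is a $\dstep$ of $\wh{x}$'' must be re-derived in the correct (complemented) frame at each level, and one has to make sure the parity of the number of complementations lines up so that the final statement is about $\wh{x}$ itself and not about a complemented copy. Carefully phrasing the induction hypothesis so that it already accounts for whether the ambient subpath is a hill or a valley should make this routine.
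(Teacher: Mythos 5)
Your plan lands on the same core idea the paper uses: reduce to the parent $\gamma'$ of $\gamma$ in $\Lambda(x)$ and read off the step at $s_2(\gamma)+1$ from the decomposition~\eqref{eq:hill}. Two small corrections, though. First, the induction on depth that you propose is unnecessary: the paper's proof is a one-step reduction, splitting only on whether the \emph{immediate} parent $\gamma'$ is non-inverted or inverted. In the first case $\wh{x}_{r(\gamma)}$ sits inside a dent of $\wh{x}_{r(\gamma')}$, starting and ending at the dent's boundary height, and~\eqref{eq:hill} shows the next step is a $\dstep$: either the first step of a sibling $D'$-piece of the same dent or the $0$ closing that dent (which belongs to $\gamma'$). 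In the second case $\ol{\wh{x}_{r(\gamma)}}$ sits inside a bulge of $\ol{\wh{x}_{r(\gamma')}}$ and is followed by a $\ustep$ in the complemented picture, i.e.\ a $\dstep$ of $\wh{x}$. Because the step just to the right of $\gamma$ is always determined by the \emph{parent's own} bulge/dent structure, there is no accumulation of complementations to book-keep across multiple levels; the parity issue you worry about simply does not arise, since at most one complementation is ever in play. Second, a minor slip in your sketch: the inverted subcases in the five-way case analysis used in the proof of Lemma~\ref{lem:VW} are (c1) and (c2), not (b2) and (c2); (b2) is a non-inverted case (two complementations cancel there). With these adjustments, your final paragraph closes the proof along essentially the same lines as the paper.
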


As $\gamma$ is inverted, its last step at position~$s_2(\gamma)$ is an~$\ustep$-step, and the $\dstep$-step at position~$s_2(\gamma)+1$ is directly to its right.

\begin{proof}
As $\gamma$ is inverted, it cannot be the root of a tree in~$\Lambda(x)$.
Let $\gamma'$ be the parent of~$\gamma$ in the forest~$\Lambda(x)$.
If $\gamma'$ is non-inverted, then $y:=\wh{x}_{r(\gamma)}$ belongs to a dent of~$y':=\wh{x}_{r(\gamma')}$ and $y$ starts and ends on the same height as the starting point and endpoint of this dent.
From~\eqref{eq:hill} it follows that $y$ must be followed by a $\dstep$-step of~$y'$, which either belongs to the same dent or to~$\gamma'$.
If $\gamma'$ is inverted, then $\ol{y}$ belongs to a bulge of~$\ol{y'}$ and $\ol{y}$ starts and ends on the same height as the starting point and endpoint of this bulge.
From~\eqref{eq:hill} it follows that $\ol{y}$ must be followed by a $\ustep$-step of~$\ol{y'}$, which either belongs to the same bulge or to~$\gamma'$ (in the latter case this is a $\dstep$-step of~$\gamma'$, however).
In both cases~$y$ is followed by a $\dstep$-step in~$\wh{x}$.
\end{proof}

For a glider~$\gamma$ as before in the definition of~$\Lambda(x)$, we define a set $T(\gamma)\seq\Gamma(x)$ \marginpar{$T(\gamma)$} as
\begin{equation}
\label{eq:Tgamma}
T(\gamma):=\bigcup_{i=0}^{h-2} \Gamma(\ol{v_i}),
\end{equation}
and we refer to any glider~$\gamma'\in T(\gamma)$ as \emph{trapped by~$\gamma$}. \marginpar{trapped}
In the forest~$\Lambda(x)$, all descendants of~$\gamma$ in the subtrees on the gliders in~$\Gamma(\ol{v_i})$ are trapped by~$\gamma$.
In Figure~\ref{fig:tree}, children that are trapped by their parent are connected by dashed lines.
A glider that is not trapped by any of its predecessors in the forest~$\Lambda(x)$ is called~\emph{free}. \marginpar{free}
In particular, the gliders at the roots of the trees in~$\Lambda(x)$ are all free.
We write $\Gammaf(x)$ for the set of all free gliders. \marginpar{$\Gammaf(x)$}
Note that free gliders are always non-inverted, whereas trapped gliders can be inverted or non-inverted.
Specifically, a trapped glider is inverted if and only if it is trapped by an odd number of predecessors in its tree in~$\Lambda(x)$.

The following lemma describes the location and speed of gliders that are in an ancestor-descendant relationship in the forest~$\Lambda(x)$; cf.\ Figure~\ref{fig:tree}.

\begin{lemma}
\label{lem:child-props}
The set of descendants of~$\gamma \in \Gamma(x)$ in the forest~$\Lambda(x)$ is precisely the set of gliders~$\gamma'\in\Gamma(x)$ for which at least one step of~$\gamma'$ lies between two consecutive steps of~$\gamma$.
Every such glider~$\gamma'$ satisfies~$v(\gamma)>v(\gamma')$, all of its steps lie between two consecutive steps of~$\gamma$, and it is trapped by~$\gamma$ if and only if its steps lie between the last $\ustep$-step and the last $\dstep$-step of~$\gamma$.
\end{lemma}

\begin{proof}
We assume w.l.o.g.\ that $y:=\wh{x}_{r(\gamma)}$ is a hill in~$\wh{x}$.
Let $h$ denote the height of~$y$, and recall that $v(\gamma)=h$.
From Lemma~\ref{lem:bulge-dent} we obtain that the height of the $i$th bulge~$u_i$ of~$y$ is at most~$h-1-i\leq h-2$ and the depth of the $i$th dent~$v_i$ of~$y$ is at most~$h-1-i\leq h-1$.
It follows that $v(\gamma)>v(\gamma')$ for all $\gamma'\in\bigcup_{i=1}^{h-2}\Gamma(u_i)\cup\bigcup_{i=0}^{h-2}\Gamma(\ol{v_i})$, which are precisely the descendants of~$\gamma$ in the forest~$\Lambda(x)$.
By the definition~\eqref{eq:Gamma}, these descendants partition all steps between any two consecutive steps of~$\gamma$.
\end{proof}

\begin{lemma}
\label{lem:outer-steps}
For any base hill~$y$ in~$\wh{x}$ of the form~$y=1\,u\,0$, $u\in D$, the first and last step of~$y$ belong to the glider of the maximum speed in~$\Gamma(y)$.
\end{lemma}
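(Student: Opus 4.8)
The plan is to read off the claimed glider directly from the decomposition~\eqref{eq:hill}, and then use Lemma~\ref{lem:child-speed} to see that it has the largest speed in~$\Gamma(y)$.

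Since $y=1\,u\,0$ with $u\in D$, we have $y\in D'$, so $\Gamma(y)$ is produced by a single application of the recursion step~\eqref{eq:Gamma-hill}; write $h$ for the height of~$y$ and let $\gamma=(A,B)$ be the glider created in that step. First I would dispatch the case $h=1$: here $y=1\,0$, $\Gamma(y)=\{\gamma\}$, and $A$ and~$B$ are the singletons holding the positions of the single $\ustep$-step and the single $\dstep$-step of~$y$, so the claim is immediate. For $h\ge 2$ I would invoke the explicit form~\eqref{eq:hill}. By~\eqref{eq:Gamma-hill}, the set~$A$ (resp.~$B$) consists of the positions of precisely those 1s (resp.~0s) of~$y$ that lie in none of the bulges $u_1,\ldots,u_{h-2}$ or dents $v_0,\ldots,v_{h-2}$. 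In~\eqref{eq:hill} the leading~$1$ of~$y$ occurs strictly before every bulge and every dent, and the trailing~$0$ of~$y$ occurs strictly after every bulge and every dent; hence the position of the first step of~$y$ lies in~$A$ and the position of the last step of~$y$ lies in~$B$. Consequently both of these steps belong to the glider~$\gamma$.

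It remains to show that $\gamma$ is the glider of maximum speed in~$\Gamma(y)$. We have $v(\gamma)=|A|=h$, the height of~$y$. Because $y$ is a base hill and $y\in D'$, the glider~$\gamma$ is the root of the tree of~$\Lambda(x)$ whose vertex set is exactly~$\Gamma(y)$, as described in the paragraph preceding Lemma~\ref{lem:dstep} (indeed $\wh{x}_{r(\gamma)}=y$). Hence every other glider $\gamma'\in\Gamma(y)$ is a proper descendant of~$\gamma$ in that tree, and applying Lemma~\ref{lem:child-speed} successively along the parent-child path from~$\gamma$ down to~$\gamma'$ yields $v(\gamma')<v(\gamma)$. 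Thus $\gamma$ is the unique glider of maximum speed in~$\Gamma(y)$, which finishes the proof.

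I do not expect a real obstacle here: the only step needing any care is checking from the written form of~\eqref{eq:hill} that the extreme bits of~$y$ are not absorbed into a bulge or a dent, and the maximality of the speed is then just a transitive application of Lemma~\ref{lem:child-speed}.
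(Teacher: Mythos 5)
Your proof is correct and follows essentially the same route as the paper's: identify the top-level glider~$\gamma$ produced by~\eqref{eq:Gamma-hill} as the one owning the outer $1$ and $0$, note that $\gamma$ is the root of the tree in~$\Lambda(x)$ whose vertex set is~$\Gamma(y)$, and conclude via (iterated) Lemma~\ref{lem:child-speed}. The paper's version is merely terser, skipping the separate $h=1$ case and the explicit check that the extreme bits avoid every bulge and dent.
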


\begin{proof}
From~\eqref{eq:Gamma} we see that the first and last step of~$y$ belong to the same glider~$\gamma\in\Gamma(x)$.
Furthermore, $\gamma$ is the root of a tree in~$\Lambda(x)$ and every other glider in~$\Gamma(y)$ is a descendant of~$\gamma$ in this tree.
The statement hence follows from Lemma~\ref{lem:child-props}.
\end{proof}

We say that a glider~$\gamma\in\Gamma(x)$ is \emph{clean}, \marginpar{clean} if $\wh{x}_{r(\gamma)}=1^{v(\gamma)}0^{v(\gamma)}$ or $\wh{x}_{r(\gamma)}=0^{v(\gamma)}1^{v(\gamma)}$; see Figure~\ref{fig:coupled}.
In other words, in the infinite Motzkin path~$\wh{x}$, the steps belonging to~$\gamma$ are all consecutive, i.e., all steps of~$\wh{x}_{r(\gamma)}$ belong to~$\gamma$.
These are $2v(\gamma)$ steps in total, $v(\gamma)$ many $\ustep$-steps followed by $v(\gamma)$ many $\dstep$-steps if~$\gamma$ is non-inverted, and vice versa if $\gamma$ is inverted.

\begin{lemma}
\label{lem:clean}
If $\gamma\in\Gamma(x)$ has the minimum speed~$v(\gamma)=\min V(x)$, then it is clean.
\end{lemma}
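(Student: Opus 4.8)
The plan is to argue that a glider~$\gamma$ of minimum speed has no bulges and no dents, from which cleanness follows immediately via the decomposition~\eqref{eq:hill}. First I would reduce to the non-inverted case: replacing~$\gamma$ by the complemented subpath (as is done throughout Section~\ref{sec:glider}) swaps the roles of~$\ustep$- and~$\dstep$-steps but changes neither~$v(\gamma)$ nor the combinatorial structure of bulges and dents, so it suffices to treat~$\gamma$ non-inverted, i.e.\ $y := \wh{x}_{r(\gamma)} \in D'$ is a hill of height~$h = v(\gamma)$. Write~$y$ in the canonical form~\eqref{eq:hill}. The steps of~$y$ that do not belong to any bulge~$u_i$ or dent~$v_i$ are exactly the steps of~$\gamma$ itself, so cleanness is equivalent to the assertion that all the~$u_i$ and all the~$v_i$ are empty.

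The key step is the following. Suppose for contradiction some bulge~$u_i$ or some dent~$v_i$ is nonempty. Each bulge~$u_i$ is itself a hill, hence (writing~$u_i$ as a concatenation of elements of~$D'$ as in~\eqref{eq:Gamma-multi}) contains at least one hill~$y' \in D'$ whose first and last step belong to some glider~$\gamma' \in \Gamma(u_i) \subseteq \Gamma(x)$; likewise each nonempty dent~$v_i$ has~$\ol{v_i}$ a nonempty hill, producing a glider~$\gamma' \in \Gamma(\ol{v_i}) \subseteq \Gamma(x)$. In either case~$\gamma'$ is a child of~$\gamma$ in the forest~$\Lambda(x)$ (by the definition of~$\Lambda(x)$ via~\eqref{eq:Gamma-hill}), so Lemma~\ref{lem:child-speed} gives $v(\gamma') < v(\gamma) = \min V(x)$. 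But~$v(\gamma')$ is the speed of a genuine glider of~$x$, so it is an element of the speed multiset, contradicting minimality of~$v(\gamma)$. (One should note that~$\gamma'$ and~$\gamma$ lie in the same equivalence class only if they are literally equal, which a child never is; so~$v(\gamma')$ does appear in~$V(x)$ as defined in~\eqref{eq:Vx}. Alternatively, and more cleanly, observe that the inequality $v(\gamma') < v(\gamma) = \min V(x)$ is already absurd since speeds are positive integers bounded below by~$\min V(x)$ along the whole tree of~$\gamma$, by iterating Lemma~\ref{lem:child-speed}.) Hence every~$u_i$ and~$v_i$ is empty, so by~\eqref{eq:hill} we have $y = 1^h 0^h = 1^{v(\gamma)} 0^{v(\gamma)}$, i.e.\ $\gamma$ is clean.

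I do not anticipate a serious obstacle here; the content is essentially a one-line consequence of Lemma~\ref{lem:child-speed} once one unwinds the definitions. The only mildly delicate point worth stating carefully is the reduction to the non-inverted case and the observation that "child" in~$\Lambda(x)$ is precisely what~\eqref{eq:Gamma-hill} records, so that a nonempty bulge or dent really does witness a strictly slower glider; everything else is bookkeeping with the decomposition~\eqref{eq:hill}.
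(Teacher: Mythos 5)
Your proof is correct and follows exactly the paper's route: Lemma~\ref{lem:child-speed} forces any glider of minimum speed to be a leaf of~$\Lambda(x)$, which in the decomposition~\eqref{eq:hill} means all bulges and dents are empty; the paper simply states this in two sentences without the explicit contradiction phrasing or the reduction to the non-inverted case that you spell out.
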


\begin{proof}
As $\gamma$ has the minimum speed, Lemma~\ref{lem:child-props} yields that all of its steps must be consecutive.
\end{proof}

We say that two gliders~$\gamma,\gamma'\in\Gamma(x)$ of the same speed $v(\gamma)=v(\gamma')$ are \emph{coupled}, \marginpar{coupled} if $s(\gamma)<s(\gamma')$ and all steps of~$\wh{x}$ between the last step of~$\gamma$ and the first step of~$\gamma'$ belong to gliders of strictly smaller speed; see Figure~\ref{fig:coupled}.
In particular, no $\fstep$-steps are allowed between~$\gamma$ and~$\gamma'$.
Note that if $v(\gamma)=v(\gamma')=\min V(x)$, then the first step of~$\gamma'$ must directly follow the last step of~$\gamma$.
For the following definition we introduce the abbreviation~$[a,b]:=[s_2(\gamma)+1,s_0(\gamma')-1]$ for the interval between the last step of~$\gamma$ and the first step of~$\gamma'$.
Note that if a step of some glider~$\gammatilde\in\Gamma(x)$ lies in the interval~$[a,b]$, then $v(\gammatilde)<v(\gamma)=v(\gamma')$ by definition, and therefore by Lemma~\ref{lem:child-props} all steps of~$\gammatilde$ lie in~$[a,b]$, i.e., we have $r(\gammatilde)\seq[a,b]$.
Consequently, $\wh{x}_{[a,b]}$ is partitioned into such gliders.
We write $B(\gamma,\gamma')$ \marginpar{$B(\gamma,\gamma')$} for the set of those gliders between~$\gamma$ and~$\gamma'$ with inclusion maximal range, formally,
\begin{multline}
\label{eq:Bgamma}
B(\gamma,\gamma'):=\big\{\gammatilde\in\Gamma(x)\mid r(\gammatilde)\seq [a,b]\text{ and there is no } \gammacheck\in \Gamma(x) \\
\text{ with } r(\gammacheck)\seq [a,b] \text{ and } r(\gammatilde)\subsetneq r(\gammacheck) \Big\}.
\end{multline}
In the forest~$\Lambda(x)$, the gliders~$\gammatilde\in B(\gamma,\gamma')$ are the gliders with~$r(\gammatilde)\seq[a,b]$ that are closest to a root.

\begin{lemma}
\label{lem:coupled-trapped}
Let $\gamma,\gamma',\gamma''\in \Gamma(x)$ such that $\gamma'$ and~$\gamma''$ are coupled.
Then either $\gamma'$, $\gamma''$ and all gliders in the set~$B(\gamma',\gamma'')$ defined in~\eqref{eq:Bgamma} are trapped by~$\gamma$ or none of them.
\end{lemma}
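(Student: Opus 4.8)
The plan is to read the conclusion off the laminar structure of glider ranges inside the recursion forest $\Lambda(x)$. First I would record four structural facts, each following directly from the recursion~\eqref{eq:Gamma}, the hill decomposition~\eqref{eq:hill}, and the observation that complementation leaves horizontal indices untouched (so an inverted glider is analysed via the complemented hill $\ol{\wh{x}_{r(\gamma)}}$ exactly as a non-inverted one). (i) The ranges $r(\eta)$, $\eta\in\Gamma(x)$, form a laminar family, with $r(\eta)\seq r(\zeta)$ holding precisely when $\eta$ is a descendant of $\zeta$ in $\Lambda(x)$ (or $\eta=\zeta$); in particular, gliders that are incomparable in $\Lambda(x)$ have disjoint ranges. (ii) Writing $p_\gamma:=s_1(\gamma)$ for the index of the step of $\gamma$ landing on the extremal point $p$ of the decomposition~\eqref{eq:hill} that creates $\gamma$, every descendant $\eta\neq\gamma$ of $\gamma$ lies entirely on one side of $p_\gamma$: if $\eta$ is trapped by $\gamma$ then $s_0(\eta)>p_\gamma$, and if $\eta$ is a descendant of $\gamma$ not trapped by $\gamma$ (i.e.\ it sits in a bulge of $\gamma$) then $s_2(\eta)<p_\gamma$. (iii) The glider $\gamma$ itself owns the steps at the indices $s_0(\gamma)$, $p_\gamma$ and $s_2(\gamma)$, as well as the step immediately to the right of each of its dents, and all of these have speed $v(\gamma)$. (iv) By Lemma~\ref{lem:child-speed}, speed strictly decreases along ancestor chains in $\Lambda(x)$, so $\gamma'$ and $\gamma''$, having the common speed $v:=v(\gamma')=v(\gamma'')$, are incomparable in $\Lambda(x)$, and anything trapped by $\gamma$ has speed strictly below $v(\gamma)$.

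Next, using the asymmetry in the definition of coupling I may assume $s_2(\gamma')<s_0(\gamma'')$, so that every step at an index strictly between $s_2(\gamma')$ and $s_0(\gamma'')$ belongs to a glider of speed $<v$. Suppose, for contradiction, that exactly one of $\gamma',\gamma''$ is trapped by $\gamma$; then $v(\gamma)>v$ and the trapped glider is a proper descendant of $\gamma$. I would then run a short case analysis, splitting first on which of the two gliders is trapped and then on whether the other glider is a (non-trapped) descendant of $\gamma$ or is incomparable to $\gamma$ --- in the latter case its range is disjoint from $r(\gamma)$ by (i) and (iv), so it lies entirely to the left of, or entirely to the right of, $r(\gamma)$. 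In each of the resulting handful of configurations, facts (ii) and (iii) pin down the relevant indices so that one of two things occurs: either the inequalities force $s_0(\gamma'')\le s_2(\gamma')$, contradicting the assumed coupling order; or fact~(iii) furnishes a step of $\gamma$ at an index strictly between $s_2(\gamma')$ and $s_0(\gamma'')$ --- for instance the step at $p_\gamma$ when the trapped glider lies to the right of $p_\gamma$ and the other glider to its left, or the step of $\gamma$ just after the dent containing the trapped glider when the other glider lies to the right of $r(\gamma)$ --- which has speed $v(\gamma)>v$ and hence contradicts coupling. Either way we reach a contradiction, which shows that $\gamma'$ and $\gamma''$ are either both trapped by $\gamma$ or both not.

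The step that needs genuine care, as opposed to routine index chasing, is fact (ii): one has to make precise that ``$\gamma'$ is trapped by $\gamma$'' is the same as ``$r(\gamma')$ is contained in one of the dents of $\gamma$'', and hence lies to the right of $p_\gamma$, whereas every other descendant of $\gamma$ sits to the left of $p_\gamma$ inside a bulge; and one has to verify this uniformly whether or not $\gamma$ is inverted, i.e.\ check that the bulges and dents of the decomposition~\eqref{eq:hill} of the hill $\ol{\wh{x}_{r(\gamma)}}$ play the expected roles. Establishing the laminarity in (i) takes a similarly short argument: an index lying in two ranges forces a glider that is a descendant of both, hence forces the two gliders to be comparable in $\Lambda(x)$. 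Once (i)--(iv) are in place, the case analysis itself is entirely mechanical.
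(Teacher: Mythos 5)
Your proposal is correct and follows essentially the same line as the paper's own proof: both arguments hinge on the observation that a glider trapped by~$\gamma$ sits inside a dent of the hill $\wh{x}_{r(\gamma)}$, and the dents are flanked by the steps of~$\gamma$ at positions~$s_1(\gamma)$ and~$s_2(\gamma)$, which have speed~$v(\gamma)>v(\gamma')=v(\gamma'')$ by Lemma~\ref{lem:child-speed} and hence cannot lie between two coupled gliders. The paper states this in one sentence; your version unpacks it via laminarity of ranges and an explicit case split, which is more verbose but not a different argument.
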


\begin{proof}
As mentioned before, every step in the interval~$[a,b]:=[s_2(\gamma')+1,s_0(\gamma'')-1]$ belongs to a glider that has all of its steps in this interval.
Consequently, by the definition of~$B(\gamma',\gamma'')$, the ranges~$r(\gammatilde)$ of the gliders~$\gammatilde\in\{\gamma',\gamma''\}\cup B(\gamma',\gamma'')$ form consecutive intervals that partition~$[a,b]$, i.e., the last step of one glider is followed directly by the first step of the next glider.
Therefore, if one of the gliders~$\{\gamma',\gamma''\}\cup B(\gamma',\gamma'')$ is trapped by some glider~$\gamma\in\Gamma(x)$, then two consecutive steps of~$\gamma$ must enclose the entire interval~$[a-1,b+1]$, so all gliders in~$\{\gamma',\gamma''\}\cup B(\gamma',\gamma'')$ must be trapped by~$\gamma$ (recall Lemma~\ref{lem:child-props}).
\end{proof}

\begin{figure}
\includegraphics{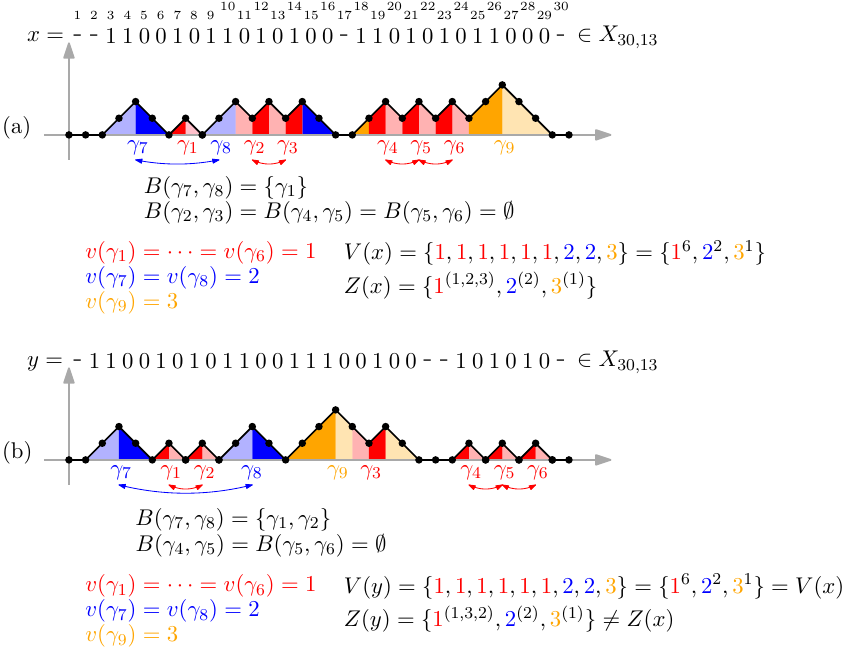}
\caption{Illustration of clean and coupled gliders, and of trains.
In (a), all gliders except~$\gamma_8$ and~$\gamma_9$ are clean, and in (b), all gliders except $\gamma_9$ are clean.
In both figures, pairs of coupled gliders are connected by double arrows.}
\label{fig:coupled}
\end{figure}

\section{Analysis via gliders: dynamic properties}

In the previous section, we have partitioned certain steps of~$\wh{x}$ into gliders, and defined various properties for them, such as position, speed etc.
All of these definitions were `static', i.e., for one fixed vertex~$x\in X_{n,k}$.
In this section we investigate the behavior of gliders as we move from~$x$ to~$f(x)$ along the cycle~$C(x)$, i.e., we aim to understand the `dynamic' behavior of gliders over one time step.

\subsection{Capturing relation}
\label{sec:cap-relation}

As long as a glider does not interact with other gliders, its motion is uniform with the corresponding speed; recall Figure~\ref{fig:gliders}~(a)+(b).
However, two gliders of different speeds interact with each other.
Specifically, they participate in an overtaking, which boosts the faster glider that is overtaking, and delays the slower glider that is being overtaken; see Figure~\ref{fig:gliders}~(c)+(d).
During an overtaking, the glider being overtaken is trapped by the overtaking glider and does not change its position.
However, its bits are complemented, i.e., if it was non-inverted, it becomes inverted, and vice versa.
In the following, we determine which free gliders participate in overtakings in the next time step (trapped gliders do not move, so they are ignored).

\begin{figure}
\makebox[0cm]{ 
\includegraphics[page=1]{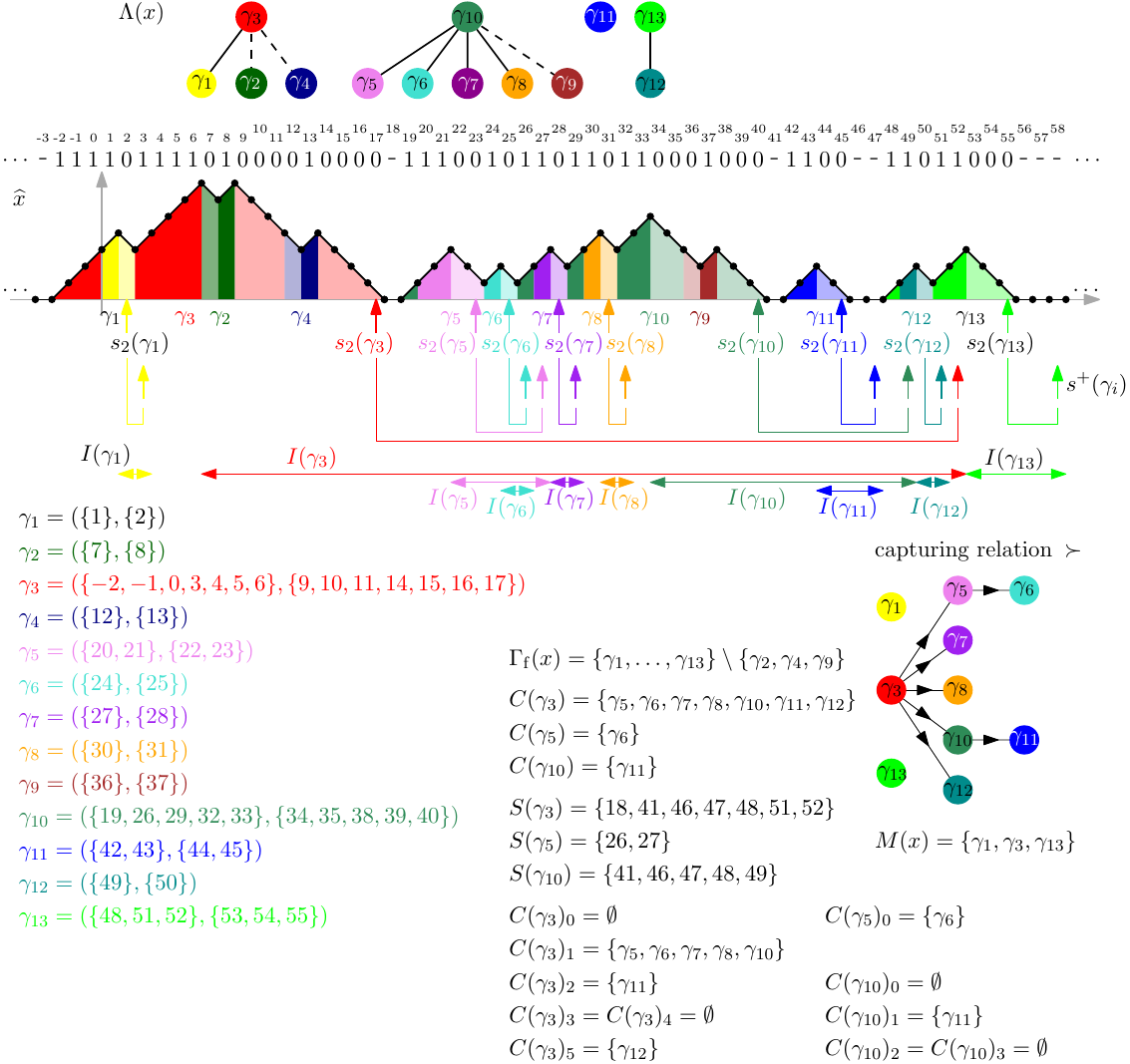}
}
\caption{Illustration of the capturing relation and related definitions.
For all pairs of gliders~$\gamma\succ\gamma'$, the diagram on the bottom right shows an arrow directed from~$\gamma$ to~$\gamma'$.}
\label{fig:capture}
\end{figure}

The following definitions are illustrated in Figure~\ref{fig:capture}.
We first define a binary relation~$\succ$ on the set of free gliders~$\Gammaf(x)$ as follows.
For any $i \in \mathbb{Z}$ we define
\marginpar{$w(i),w(I)$}
\begin{equation*}
w(i) = \begin{cases}
  +1 & \text{if }\wh{x}_i\in\{\fstep,\ustep\}, \\
  -1 & \text{if }\wh{x}_i=\dstep,
\end{cases}
\end{equation*}
and for any interval~$I$ we define
\begin{equation*}
w(I):=\sum_{i\in I} w(i).
\end{equation*}

For a free glider~$\gamma\in\Gammaf(x)$, we consider the subpath of~$\wh{x}$ strictly to the right of position~$s_2(\gamma)$, i.e., starting from $a:=s_2(\gamma)+1$.
We consider the minimum index~$b$ such that $w([a,b])=v(\gamma)$, which exists as $w([a,a-1])=0$, $w([a,p])=w([a,p-1])\pm 1$ for all $p\geq a$, and $w([a,a+n])=n-2k>0$.
Formally, we define
\marginpar{$s^+(\gamma)$}
\begin{equation}
\label{eq:s+gamma}
s^+(\gamma) := \min \{b \mid w([s_2(\gamma) + 1, b]) = v(\gamma) \}.
\end{equation}
From this definition we see that $w([s_2(\gamma) + 1, s^+(\gamma)])=v(\gamma)$.
Intuitively, $s^+(\gamma)$ is the rightmost position that the glider~$\gamma$ would occupy after its movement with speed~$v(\gamma)$ in one time step, assuming that it moves independently of all other free gliders (i.e., if there are no faster gliders that overtake~$\gamma$ in this time step).
For two free gliders $\gamma,\gamma'\in\Gammaf(x)$ such that $s_1(\gamma)<s_1(\gamma')$, we say that \emph{$\gamma$ captures $\gamma'$}, \marginpar{capture $\succ$} which we denote by $\gamma\succ\gamma'$, if $s^+(\gamma)>s^+(\gamma')$; see Figure~\ref{fig:capture}.
It follows directly from this definition that the capturing relation~$\succ$ is transitive, i.e., $\gamma\succ\gamma'$ and $\gamma'\succ\gamma''$ implies that $\gamma\succ\gamma''$.

The next lemma describes a transitivity property for coupled gliders.

\begin{lemma}
\label{lem:coupled-capture}
Let $\gamma,\gamma',\gamma''\in \Gammaf(x)$ such that $\gamma'$ and~$\gamma''$ are coupled.
If $\gamma\succ \gamma'$, then we have $\gamma\succ\gamma''$, and $\gamma\succ\gammatilde$ for all $\gammatilde\in B(\gamma',\gamma'')$ for the set~$B(\gamma',\gamma'')$ defined in~\eqref{eq:Bgamma}.
\end{lemma}

\begin{proof}
This proof is illustrated in Figure~\ref{fig:coupled-capture}.
We define $a:=s_2(\gamma')+1$, $b:=s_0(\gamma'')-1$, $c:=s_1(\gamma'')$, and~$d:=s_2(\gamma'')$.
As~$\gamma'$ and~$\gamma''$ are free, they are non-inverted.
Furthermore, as~$\gamma'$ and~$\gamma''$ are coupled, all steps of $\wh{x}_{[a,b]}$ belong to gliders of strictly smaller speed, and all steps of those gliders are in the interval~$[a,b]$ by Lemma~\ref{lem:child-props}.
By Lemma~\ref{lem:coupled-trapped}, all gliders in~$B(\gamma',\gamma'')$ are also free and hence non-inverted, and consequently~$\wh{x}_{[a,b]}$ is a hill.
Its height is less than~$v(\gamma')=v(\gamma'')$, and as no $\fstep$-steps occur in the interval~$[a,b]$, we have $w([a,p])<v(\gamma')$ for all $p\in[a,c-1]$ and $w([a,c])=v(\gamma')$, and therefore $s^+(\gamma')=c$.
From the assumption~$\gamma\succ\gamma'$ we know that $s^+(\gamma)>s^+(\gamma')=c$ and therefore $v(\gamma)>w([s_2(\gamma)+1,c])$.
As $w([c+1,d])=-v(\gamma'')$, we obtain $v(\gamma)-v(\gamma'')>w([s_2(\gamma)+1,d])$.
Using the definition of~$s^+(\gamma)$ and~$s^+(\gamma'')$ from~\eqref{eq:s+gamma}, this inequality implies~$s^+(\gamma)>s^+(\gamma'')$, and therefore $\gamma\succ\gamma''$, as claimed.
To complete the proof, note that for all $\gammatilde\in B(\gamma',\gamma'')$ we have $v(\gammatilde)<v(\gamma')=v(\gamma'')$ and therefore $s^+(\gammatilde)<c$, which also yields~$\gamma\succ\gammatilde$.
\end{proof}

\begin{figure}[h!]
\includegraphics[page=3]{capture}
\caption{Illustration of the proof of Lemma~\ref{lem:coupled-capture}.}
\label{fig:coupled-capture}
\end{figure}

The following lemma describes another transitivity property, namely that when a glider~$\gamma$ captures another glider~$\gamma'$, then $\gamma$ also captures the free descendants of~$\gamma'$ in~$\Lambda(x)$.

\begin{lemma}
\label{lem:child-capture}
Let $\gamma,\gamma',\gamma''\in\Gammaf(x)$.
If $\gamma\succ \gamma'$ and $\gamma''$ is a child~of~$\gamma'$ in~$\Lambda(x)$, then we have $\gamma\succ\gamma''$.
\end{lemma}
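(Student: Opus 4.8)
The plan is to reduce the statement about $\gamma''$, an arbitrary child of $\gamma'$ in $\Lambda(x)$, to the already-established fact that $\gamma \succ \gamma'$, by comparing the quantities $s^+(\gamma')$ and $s^+(\gamma'')$ and showing $s^+(\gamma') \geq s^+(\gamma'')$; combined with $s^+(\gamma) > s^+(\gamma')$ and the observation that $s_1(\gamma) < s_1(\gamma') < s_1(\gamma'')$ (which holds since the child $\gamma''$ lies inside the range $r(\gamma')$, to the right of its opening step), this gives $\gamma \succ \gamma''$ directly from the definition of the capturing relation. So the whole proof hinges on the geometric inequality $s^+(\gamma') \geq s^+(\gamma'')$.

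To prove $s^+(\gamma') \geq s^+(\gamma'')$, recall that $s^+(\delta)$ is defined via $w([s_2(\delta)+1, s^+(\delta)]) = v(\delta)$, and that by Lemma~\ref{lem:child-speed} we have $v(\gamma') > v(\gamma'')$. I would analyze this through the decomposition~\eqref{eq:hill} of the hill (or complemented hill) $\wh{x}_{r(\gamma')}$: the child $\gamma''$ is either a bulge $u_i$ to the left of the peak $p$ of $\gamma'$, or (the complement of) a dent $v_i$ to the right of $p$. In the case where $\gamma''$ sits in a dent, $s_2(\gamma'')$ lies in the interior of $\wh{x}_{r(\gamma')}$ and is followed by at least one further $\dstep$-step of the parent or the dent (this is exactly Lemma~\ref{lem:dstep} when $\gamma''$ is inverted, and a similar $\eqref{eq:hill}$ argument otherwise), so scanning rightward from $s_2(\gamma'')+1$ we first traverse the remaining $\dstep$-steps of the parent hill before $w$ can accumulate to $v(\gamma'')$; one then checks that the net rise $w$ accumulates from $s_2(\gamma')+1$ up to $s^+(\gamma')$ is reached no earlier than the corresponding point for $\gamma''$, because $\gamma'$ has strictly larger speed and its last $\ustep$-step at $s_1(\gamma')$ lies to the left. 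The case where $\gamma''$ is a bulge is cleaner: then $s_2(\gamma'') < s_1(\gamma') = $ (position of the $\ustep$-steps of $\gamma'$ at the peak), so scanning from $s_2(\gamma'')+1$ we immediately pass through $\ustep$-steps belonging to $\gamma'$, and since $v(\gamma'') \leq v(\gamma') - 1$ (indeed Lemma~\ref{lem:bulge-dent} gives even more room), $s^+(\gamma'')$ is reached while still inside the peak region, hence to the left of — or at worst equal to — $s^+(\gamma')$.

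The main obstacle I anticipate is the bookkeeping of the weight function $w$ across the boundary between $\wh{x}_{r(\gamma')}$ and the portion of $\wh{x}$ lying to its right: one must verify that the "extra" steps $\gamma'$ needs to traverse beyond $s_2(\gamma')+1$ to accumulate weight $v(\gamma')$ are positioned so that $\gamma''$, with its smaller target weight but starting from a position $s_2(\gamma'')+1$ that is either inside $r(\gamma')$ or coincides with part of it, cannot "escape" further right than $\gamma'$. Concretely, the delicate point is handling the case where the dent containing $\gamma''$ is immediately followed by $\dstep$-steps of $\gamma'$: here $s_2(\gamma'')+1$ points at a $\dstep$-step, so $w$ starts by \emph{decreasing}, forcing $s^+(\gamma'')$ much further to the right — one has to confirm this does not overshoot $s^+(\gamma')$. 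I would make this rigorous by observing that $w([s_2(\gamma')+1, m]) = w([s_2(\gamma'')+1, m]) - w([s_2(\gamma'')+1, s_2(\gamma')])$ for any $m \geq s_2(\gamma')$, controlling the correction term $w([s_2(\gamma'')+1, s_2(\gamma')])$ via the shape of the dent/bulge and Lemma~\ref{lem:bulge-dent}, and comparing the two threshold-crossing indices; the transitivity of $\succ$ then closes the argument. A possible slicker alternative, which I would try first, is to reduce to the case of a single child-link and chain it with Lemma~\ref{lem:coupled-capture} for coupled gliders, but I expect the direct weight computation to be unavoidable.
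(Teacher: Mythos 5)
The approach you take — establishing $s^+(\gamma'')\leq s^+(\gamma')$ and combining with $\gamma\succ\gamma'$ — is the same as the paper's, but your sketch has a genuine gap that stems from not exploiting the hypothesis $\gamma''\in\Gammaf(x)$ fully. A free child of a free glider in $\Lambda(x)$ cannot sit in a dent of $\gamma'$: the gliders in the dents are precisely those in $T(\gamma')$ (see~\eqref{eq:Tgamma}), i.e.\ the gliders trapped by $\gamma'$, and trapped gliders are by definition not free. So the ``dent'' case you identify as the delicate one — and hand over to an unverified weight computation (``one has to confirm this does not overshoot $s^+(\gamma')$'') — is vacuous. Once you notice this, $\gamma''$ lies in a bulge of height $<v(\gamma')$ by Lemma~\ref{lem:bulge-dent}, and~\eqref{eq:s+gamma} gives the stronger inequality $s^+(\gamma'')<s_1(\gamma')<s^+(\gamma')$ immediately, with no delicate boundary bookkeeping. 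As written, your proof leaves the hard half unresolved and spends most of its effort on a case that never arises.

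There is also a false intermediate claim: you assert $s_1(\gamma')<s_1(\gamma'')$ ``since the child $\gamma''$ lies inside $r(\gamma')$, to the right of its opening step.'' You are conflating $s_0(\gamma')$ (position of the first $\ustep$-step) with $s_1(\gamma')$ (position of the last $\ustep$-step, at the peak). Since $\gamma''$ sits in a bulge, which lies strictly to the \emph{left} of $\gamma'$'s peak in the decomposition~\eqref{eq:hill}, we actually have $s_1(\gamma'')<s_1(\gamma')$. What you need for $\gamma\succ\gamma''$ is $s_1(\gamma)<s_1(\gamma'')$, which follows (as in the paper) from the fact that $\gamma\succ\gamma'$ forces $r(\gamma)$ and $r(\gamma')$ to be disjoint with $r(\gamma)$ entirely on the left, giving $s_1(\gamma)<s_2(\gamma)<s_0(\gamma')<s_1(\gamma'')$; your chain of inequalities does not establish this and relies on the wrong ordering of $s_1(\gamma')$ and $s_1(\gamma'')$.
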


\begin{proof}
This proof is illustrated in Figure~\ref{fig:child-capture}.
Consider two gliders $\gamma',\gamma''\in\Gammaf(x)$ such that $\gamma''$ is a child of~$\gamma'$ in~$\Lambda(x)$.
We have that $\wh{x}_{r(\gamma'')}$ is a bulge of the hill~$\wh{x}_{r(\gamma')}$, and hence by Lemma~\ref{lem:bulge-dent} the highest point of the hill~$\wh{x}_{r(\gamma'')}$ is strictly lower than the highest point of the hill~$\wh{x}_{r(\gamma')}$.
From~\eqref{eq:s+gamma} we see that the height of the hill~$\wh{x}_{r(\gamma'')}$ equals $v(\gamma'')=w([s_2(\gamma'')+1, s^+(\gamma'')])$.
Combining these observations shows that $s^+(\gamma'')<s_1(\gamma')$, in particular $\gamma''\not\succ\gamma'$.
More generally, if $\gamma',\gamma''\in\Gammaf(x)$ are such that $\gamma''$ is a descendant of~$\gamma'$ in~$\Lambda(x)$, then for the sequence of free gliders~$\gamma_1=\gamma',\gamma_2,\gamma_3,\ldots,\gamma_\ell=\gamma''$ on the path from~$\gamma'$ to~$\gamma''$ in their common tree of~$\Lambda(x)$ we obtain $s^+(\gamma_i)<s_1(\gamma_{i-1})<s_2(\gamma_{i-1})<s^+(\gamma_{i-1})$ for $i=\ell,\ell-1,\ldots,2$, and consequently $\gamma''\not\succ\gamma'$.

Now let $\gamma,\gamma',\gamma''\in\Gammaf(x)$ be as in the lemma.
We argued before that $s^+(\gamma'')<s_1(\gamma')<s^+(\gamma')$.
As $\gamma\succ\gamma'$, we have $s^+(\gamma)>s^+(\gamma')$ and therefore $s^+(\gamma)>s^+(\gamma'')$.
The arguments before show that $\gamma$ is not a descendant of~$\gamma'$ in~$\Lambda(x)$, so $s_1(\gamma)<s_2(\gamma)<s_0(\gamma')<s_1(\gamma'')$.
Combining these observations shows that $\gamma\succ\gamma''$, as claimed.
\end{proof}

\begin{figure}[h!]
\includegraphics[page=4]{capture}
\caption{Illustration of the proof of Lemma~\ref{lem:child-capture}.}
\label{fig:child-capture}
\end{figure}

We introduce a few more definitions, illustrated in Figure~\ref{fig:capture}.

For any free glider $\gamma \in \Gammaf(x)$, we define a set $C(\gamma)\seq\Gammaf(x)$ as \marginpar{$C(\gamma)$}
\begin{equation}
\label{eq:Cgamma}
C(\gamma) := \big\{\gamma'\in\Gammaf(x) \mid \gamma \succ \gamma' \},
\end{equation}
and we refer to any glider~$\gamma'\in C(\gamma)$ as \emph{captured by~$\gamma$}. \marginpar{captured}
By Lemma~\ref{lem:child-capture}, $C(\gamma)$ is a downset of vertices of the subforest of~$\Lambda(x)$ induced by free gliders.

For any glider~$\gamma\in\Gammaf(x)$, we let~$S(\gamma)$ be the set of positions in the interval~[$s_2(\gamma)+1, s^+(\gamma)$] not belonging to the range of any glider in~$C(\gamma)$.
\marginpar{$S(\gamma)$}
Formally, we define
\begin{equation}
\label{eq:Sgamma}
S(\gamma):=[s_2(\gamma)+1, s^+(\gamma)]\setminus\bigcup_{\gamma'\in C(\gamma)} r(\gamma').
\end{equation}
Furthermore, we define a partition of the set~$C(\gamma)$ of captured gliders as follows:
\marginpar{$C(\gamma)_i$}
We let $C(\gamma)_0$ be the subset of gliders~$\gamma'$ from~$C(\gamma)$ for which the maximum of~$r(\gamma')$ is smaller than the minimum of~$S(\gamma)$.
Furthermore, we let $C(\gamma)_i$, $i\geq 1$, be the subset of gliders~$\gamma'$ from~$C(\gamma)$ for which $r(\gamma')$ is between the $i$th and $(i+1)$th smallest element of~$S(\gamma)$.
Lastly, for any glider $\gamma \in \Gammaf(x)$ we define
\marginpar{$I(\gamma)$}
\begin{equation}
\label{eq:Igamma}
I(\gamma) := [s_1(\gamma) + 1, s^+(\gamma)].
\end{equation}

The next lemma describes a number of crucial properties of the capturing relation and the corresponding concepts defined before.

\begin{lemma}
\label{lem:capture}
Any free glider $\gamma\in\Gammaf(x)$ has the following properties:
\begin{enumerate}[label=(\roman*),leftmargin=8mm]
\item
For any glider~$\gamma'\in C(\gamma)$ for the set~$C(\gamma)$ defined in~\eqref{eq:Cgamma} we have $r(\gamma')\seq [s_2(\gamma)+1,s^+(\gamma)-1]$.

\item
The set~$S(\gamma)$ defined in~\eqref{eq:Sgamma} contains the positions of all $\fstep$-steps of~$\wh{x}$ on the interval~$[s_2(\gamma)+1,s^+(\gamma)]$, followed by positions of 0 or more $\ustep$-steps.
We have $s^+(\gamma)\in S(\gamma)$ and $|S(\gamma)|=v(\gamma)$, i.e., the cardinality of~$S(\gamma)$ equals the speed of~$\gamma$.

\item
For any glider~$\gamma'$ in the set~$C(\gamma)_i$ defined after~\eqref{eq:Sgamma}, we have $v(\gamma')<v(\gamma)-i$.
Consequently, $\wh{x}_{R_i}$ for the interval $R_i:=\bigcup_{\gamma'\in C(\gamma)_i} r(\gamma')$ is a hill of height at most~$v(\gamma)-i-1$.

\item
For any glider $\gamma'\in\Gammaf(x)$ such that $s_1(\gamma)<s_1(\gamma')$, we have $I(\gamma)\supseteq I(\gamma')$ if $\gamma\succ\gamma'$, and $I(\gamma)\cap I(\gamma')=\emptyset$ if $\gamma\not\succ\gamma'$, i.e., any two of the intervals defined in~\eqref{eq:Igamma} are either disjoint or nested.

\item
If for $\gamma,\gamma'\in\Gammaf(x)$ the interval~$I(\gamma')$ is closest to the right of~$I(\gamma)$ and $v(\gamma)<v(\gamma')$, then the intervals are separated by at least one element, i.e., $s^+(\gamma)<s_1(\gamma')$.
\end{enumerate}
\end{lemma}

Item~(iii) shows in particular that $v(\gamma')<v(\gamma)$, i.e., a glider can only capture slower gliders, and a glider can only be captured by faster gliders.

\begin{proof}
Let $c:=s_2(\gamma)+1$ and $d:=s^+(\gamma)$, and consider the subpath of~$\wh{x}$ on the interval~$J:=[c,d]$.

To prove~(i), let $\gamma'\in C(\gamma)$, i.e., we have $\gamma\succ\gamma'$.
The definition of the capturing relation gives $s_1(\gamma)<s_1(\gamma')$.
As $\gamma'$ is free, none of its steps is in the interval~$[s_1(\gamma)+1,s_2(\gamma)]$ (recall Lemma~\ref{lem:child-props}), and therefore $s_2(\gamma)+1\leq s_0(\gamma')$.
As $s^+(\gamma)>s^+(\gamma')>s_2(\gamma')$ we conclude that $r(\gamma')=[s_0(\gamma'),s_2(\gamma')]\seq [c,d-1]$, as claimed.

We now prepare the proofs for (ii)-(v).
From~\eqref{eq:s+gamma} we obtain the following
\textit{Observation~H:} For any hill~$y=\wh{x}_{[a,b]}$ of any height~$h$ with $a\geq c$ and with the $\ustep$-step at position $p\in[a,b]$ leading to the leftmost highest point of~$y$, if $d>p$ then we have $d>b$ and
\begin{equation}
\label{eq:ellcb}
w([c,b])<v(\gamma)-h.
\end{equation}

\begin{figure}
\includegraphics[page=5]{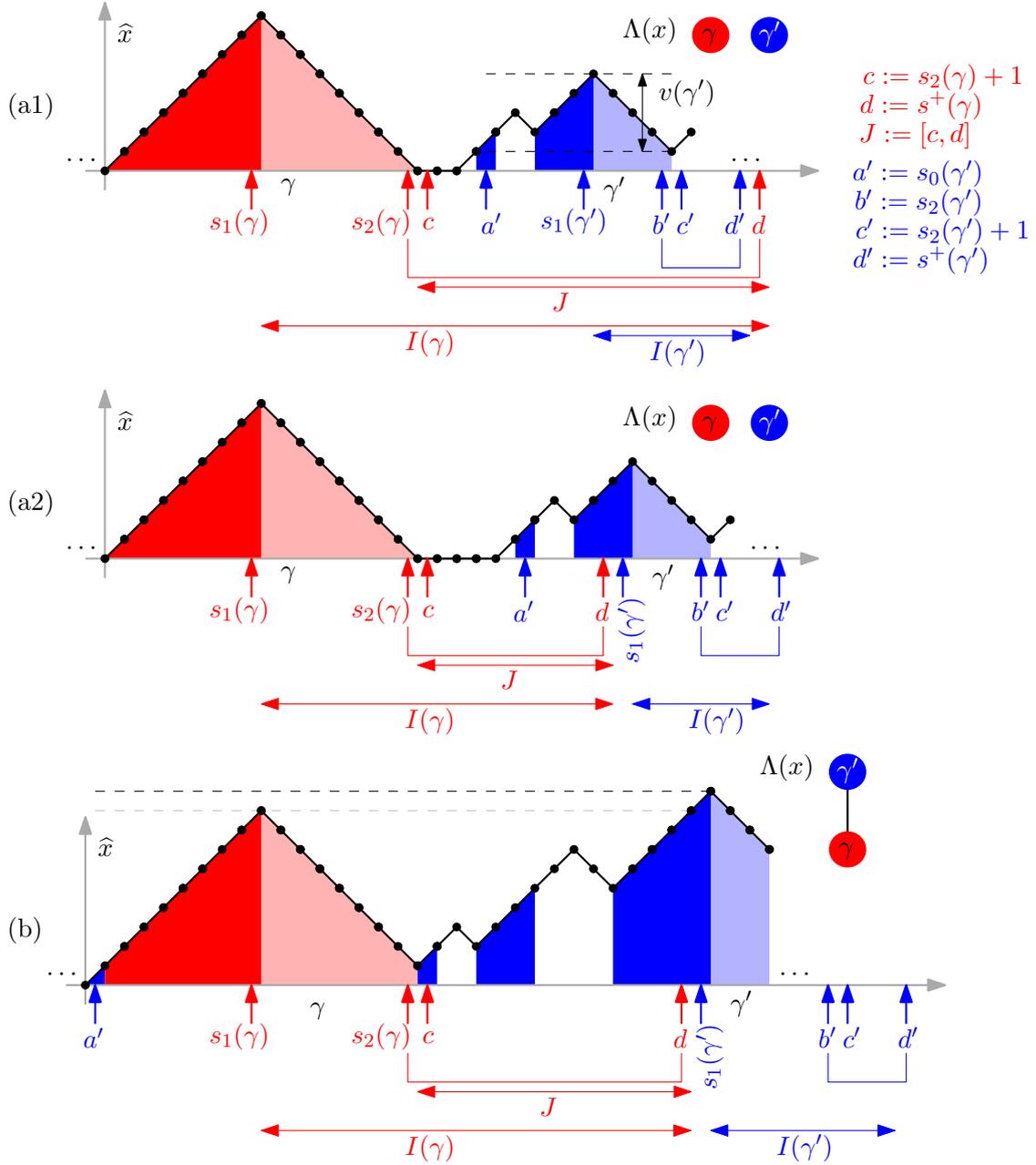}
\caption{Illustration of the three cases in the proof of Lemma~\ref{lem:capture}.}
\label{fig:capture-prop}
\end{figure}

The following arguments are illustrated in Figure~\ref{fig:capture-prop}.
We consider any glider~$\gamma'\in\Gammaf(x)$ with $r(\gamma')\cap J\neq \emptyset$, and define $[a',b']:=r(\gamma')=[s_0(\gamma'),s_2(\gamma')]$, $c':=s_2(\gamma')+1=b'+1$, and $d':=s^+(\gamma')$.
From~\eqref{eq:s+gamma} we know that
\begin{equation}
\label{eq:ellc'd'}
w([c',d'])=v(\gamma').
\end{equation}

Note that $\gamma'$ is not a descendant of~$\gamma$ in~$\Lambda(x)$, otherwise we would have $r(\gamma')\cap J=\emptyset$.
It remains to consider two possible cases:

{\bf Case (a):} $\gamma$ is not a descendant of~$\gamma'$ in~$\Lambda(x)$.
In this case, we have $r(\gamma')\cap r(\gamma)=\emptyset$ and therefore $a'\in J$.
We apply Observation~H to the hill~$\wh{x}_{r(\gamma')}=\wh{x}_{[a',b']}$ of height~$h=v(\gamma')$.

{\bf Case (a1):} If $b'\in J$, then \eqref{eq:ellcb} gives
\begin{equation}
\label{eq:wcb'}
w([c,b'])<v(\gamma)-v(\gamma'),
\end{equation}
which combined with \eqref{eq:ellc'd'} and $c'=b'+1$ yields
\begin{equation}
\label{eq:vgamma}
w([c,d'])=w([c,b'])+w([c',d'])<v(\gamma).
\end{equation}
Combining~\eqref{eq:s+gamma} and~\eqref{eq:vgamma} shows that $d>d'$, i.e., we have $s^+(\gamma)>s^+(\gamma')$, which implies $\gamma\succ\gamma'$.
We also see from~\eqref{eq:Igamma} that $I(\gamma)\supseteq I(\gamma')$.
By the definition~\eqref{eq:Cgamma}, the glider~$\gamma'$ is contained in the set~$C(\gamma)$, and hence none of the positions in~$r(\gamma')=[a',b']$ are contained in the set~$S(\gamma)$ by~\eqref{eq:Sgamma}.

{\bf Case (a2):} If $b'\notin J$, then by Observation~H we have $d\leq s_1(\gamma')$.
It follows that $\gamma\not\succ\gamma'$, and therefore $\gamma'\notin C(\gamma)$.
We also see that $I(\gamma)\cap I(\gamma')=\emptyset$.
Furthermore, as all $\dstep$-steps of~$\gamma'$ are at positions strictly to the right of~$d=s^+(\gamma)$, all steps of~$\gamma'$ at positions contributed to~$S(\gamma)$ are $\ustep$-steps.
As there are no $\fstep$-steps in $\wh{x}_{r(\gamma')}$ and the interval~$r(\gamma')$ ends after the interval~$J$, the positions of all $\ustep$-steps of~$\gamma'$ contributed to~$S(\gamma)$ cannot be interleaved or followed by positions of any~$\fstep$-steps of~$\wh{x}$.

{\bf Case (b):} $\gamma$ is a descendant of~$\gamma'$ in~$\Lambda(x)$.
By Lemma~\ref{lem:bulge-dent} the highest point of the hill~$\wh{x}_{r(\gamma)}$ is strictly lower than the highest point of the hill~$\wh{x}_{r(\gamma')}$.
From~\eqref{eq:s+gamma} we conclude that $d=s^+(\gamma)<s_1(\gamma')$, in particular $\gamma\not\succ\gamma'$ and therefore $\gamma'\notin C(\gamma)$.
We also see that $I(\gamma)\cap I(\gamma')=\emptyset$.
Furthermore, all steps of~$\gamma'$ at positions contributed to~$S(\gamma)$ are $\ustep$-steps, and as there are no $\fstep$-steps in $\wh{x}_{r(\gamma')}$ and the interval~$r(\gamma')$ contains the interval~$J$, the set~$S(\gamma)$ contains no positions of any $\fstep$-steps of~$\wh{x}$ at all.

We are now in position to prove (ii)-(v).

To prove the first part of~(ii), we first argue that no positions of any steps of a trapped glider~$\gamma''\in\Gamma(x)\setminus \Gammaf(x)$ contribute to the set~$S(\gamma)$.
Indeed, if~$i\in S(\gamma)$ and step~$i$ of~$\wh{x}$ belongs to a trapped glider~$\gamma''\in\Gamma(x)\setminus \Gammaf(x)$, then we let $\gamma'\in\Gammaf(x)$ be a free ancestor of~$\gamma''$ in the forest~$\Lambda(x)$, i.e., we have $\gamma''\in T(\gamma')$.
Note that~$r(\gamma')\supseteq r(\gamma'')$, and consequently $\gamma'$ satisfies $r(\gamma')\cap J\neq \emptyset$, showing that~$\gamma'$ is a glider as considered in the case distinction before.
However, in each of the three cases above, no steps of any gliders trapped by~$\gamma'$ contribute to the set~$S(\gamma)$.
In case~(a1) this is because none of the positions in~$r(\gamma')\supseteq r(\gamma'')$ contributes to~$S(\gamma)$.
In cases~(a2) and~(b) this is because all dents of~$\wh{x}_{r(\gamma')}$ lie strictly to the right of position~$s_1(\gamma')\geq d$.
We conclude that the set~$S(\gamma)$ can only contain positions of steps that belong to gliders~$\gamma'\in\Gammaf(x)$ with~$r(\gamma')\cap J\neq \emptyset$ as considered before, and of~$\fstep$-steps of~$\wh{x}$.
In case~(a1) we have $\gamma'\in C(\gamma)$ and hence none of the positions in~$r(\gamma')$ contributes to~$S(\gamma)$.
In cases~(a2) and~(b) we have $\gamma'\notin C(\gamma)$ and hence $\gamma'$ contributes to~$S(\gamma)$ the positions of all of its $\ustep$-steps in~$J$, with the additional property that no $\fstep$-step of~$\wh{x}$ at a position in~$J\supseteq S(\gamma)$ can come after any of these $\ustep$-steps.
Using the fact that $\wh{x}_{r(\gamma')}$ contains no $\fstep$-steps for any $\gamma'\in C(\gamma)$, we see from~\eqref{eq:Sgamma} that $S(\gamma)$ contains the positions of all $\fstep$-steps on the interval~$J$. 
This proves the first part of~(ii).
To prove the second part, first note that $s^+(\gamma)\in S(\gamma)$ is an immediate consequence of~(i).
It remains to argue that~$|S(\gamma)|=v(\gamma)$.
For this consider the subpath~$\wh{x}_J$.
By~\eqref{eq:s+gamma}, we have $w(J)=w([s_2(\gamma)+1,s^+(\gamma)])=v(\gamma)$.
Any glider~$\gamma'\in C(\gamma)$ is free and hence non-inverted, and so the intervals~$r(\gamma')$ with $\gamma'\in C(\gamma)$ that are removed from~$J$ to obtain~$S(\gamma)$ as defined in~\eqref{eq:Sgamma} correspond to hills~$\wh{x}_{r(\gamma')}$ that satisfy
\begin{equation}
\label{eq:wgamma'}
w(r(\gamma'))=0,
\end{equation}
which implies $w(J)=\sum_{i\in S(\gamma)} w(i)=v(\gamma)$.
By the first part of~(ii), for every~$i\in S(\gamma)$ we have $\wh{x}_i\in\{\fstep,\ustep\}$ and therefore $w(i)=1$, implying that $\sum_{i\in S(\gamma)} w(i)=|S(\gamma)|=v(\gamma)$, as claimed.
This completes the proof of~(ii).

To prove~(iii), consider a glider $\gamma'\in C(\gamma)_i$ and define~$[a',b']:=r(\gamma')=[s_0(\gamma'),s_2(\gamma')]$.
From~\eqref{eq:wcb'} and~\eqref{eq:wgamma'} we obtain~$w([c,a'-1])=w([s_2(\gamma)+1,s_0(\gamma')-1])<v(\gamma)-v(\gamma')$.
We write $S(\gamma)_i$ for the set of the $i$ smallest numbers from~$S(\gamma)$.
Using~\eqref{eq:wgamma'} and~(ii) similarly to before we obtain $w([c,a'-1])=\sum_{j\in S(\gamma)_i}w(j)=|S(\gamma)_i|=i$.
Combining these observations yields~$i<v(\gamma)-v(\gamma')$, as desired.
This argument also works in the special case~$i=0$, when the interval~$[c,a'-1]$ is empty and therefore $w([c,a'-1])=0$.
We mentioned before that $\wh{x}_{r(\gamma')}$ is a hill for all $\gamma'\in C(\gamma)_i$ and from~(i) we have $r(\gamma')\seq J$.
As $v(\gamma')<v(\gamma)-i$, the height of the hill is at most~$v(\gamma)-i-1$.
By the definition of~$C(\gamma)_0$, every step of~$\wh{x}$ at positions in~$J$ smaller than the minimum of~$S(\gamma)$ belongs to a glider~$\gamma'\in C(\gamma)_0$.
Similarly, for $i\geq 1$ every step of~$\wh{x}$ between the $i$th and $(i+1)$th smallest element of~$S(\gamma)$ belongs to a glider~$\gamma'\in C(\gamma)_i$.
Consequently, for every $i\geq 0$ the set $R_i:=\bigcup_{\gamma'\in C(\gamma)_i}r(\gamma')$ is an interval inside~$J$.
As $\wh{x}_{r(\gamma')}$ for all $\gamma'\in C(\gamma)_i$ is a hill of height at most~$v(\gamma)-i-1$, we obtain that $\wh{x}_{R_i}$ is a hill of height at most~$v(\gamma)-i-1$.

To prove~(iv), note that any glider~$\gamma'\in\Gammaf(x)$ as in case~(a1) satisfies $\gamma\succ\gamma'$ and $I(\gamma)\supseteq I(\gamma')$, whereas any glider~$\gamma'\in\Gammaf(x)$ as in case~(a2) or case~(b) satisfies $\gamma\not\succ\gamma'$ and $I(\gamma)\cap I(\gamma')=\emptyset$.

To prove~(v), let~$\gamma,\gamma'\in\Gammaf(x)$ such that the interval~$I(\gamma')$ is closest to the right of~$I(\gamma)$ and $v(\gamma)<v(\gamma')$, and suppose for the sake of contradiction that $I(\gamma)$ and~$I(\gamma')$ were not separated by at least one element but $s^+(\gamma)=s_1(\gamma')$.
This means that the rightmost $\ustep$-step of~$\gamma'$ is at position~$s^+(\gamma)$.
As the intervals~$I(\gamma)$ and~$I(\gamma')$ are disjoint we have $\gamma\not\succ \gamma'$ by~(iv) and therefore $\gamma'\notin C(\gamma)$, i.e., every $\ustep$-step of~$\gamma'$ in the interval~$[s_2(\gamma)+1,s^+(\gamma)]$ contributes to the cardinality of~$S(\gamma)$.
As $|S(\gamma)|=v(\gamma)$ by~(ii) and $v(\gamma)<v(\gamma')$, at least one of the $\ustep$-steps of~$\gamma'$ must be to the left of~$r(\gamma)$, i.e., $\gamma$ is a descendant of~$\gamma'$ in~$\Lambda(x)$.
Specifically, let $i\in\{1,2,\ldots,v(\gamma')-2\}$ be such that $\wh{x}_{r(\gamma)}$ belongs to the $i$th bulge of the hill~$y:=\wh{x}_{r(\gamma')}$.
As this bulge has height at least~$v(\gamma)$ and at most $v(\gamma')-1-i$ by Lemma~\ref{lem:bulge-dent}, we obtain $v(\gamma)+1\leq v(\gamma')-i$.
The $i$th bulge of~$y$ is followed by~$v(\gamma')-i\geq v(\gamma)+1$ many $\ustep$-steps of~$\gamma'$, which must all contribute to~$S(\gamma)$, contradicting the fact that $|S(\gamma)|=v(\gamma)$ by~(ii).

This completes the proof of the lemma.
\end{proof}

\subsection{Movement of gliders in one time step}

We now define a bijection~$g$ between gliders in the sets~$\Gamma(x)$ and~$\Gamma(f(x))$, so that repeatedly applying this bijection will enable us to track the movement of each glider over time.
Whereas the mapping~$f$ defined in Section~\ref{sec:factor} describes the changes from~$x$ to~$f(x)$ on the level of~0s and~1s, the bijection~$g$ describes the changes on the level of gliders.

To define the bijection $g:\Gamma(x)\rightarrow \Gamma(f(x))$, we let $M(x)\seq \Gammaf(x)$ be the set of free gliders that are not captured by any other free glider, i.e.,
\marginpar{$M(x)$}
\begin{subequations}
\label{eq:g}
\begin{equation}
\label{eq:Mx}
M(x):=\{\gamma \in \Gammaf(x) \mid \text{there is no $\gamma' \in \Gammaf(x)$ such that $\gamma' \succ \gamma$} \}.
\end{equation}
For any glider $\gamma=:(A,B)\in M(x)$ we consider the set~$S(\gamma)$ defined in~\eqref{eq:Sgamma} and define
\marginpar{$g(\gamma)$}
\begin{equation}
\label{eq:g1}
g(\gamma):=(B,S(\gamma)),
\end{equation}
whereas for any other glider $\gamma\in\Gamma(x)\setminus M(x)$ we define
\begin{equation}
\label{eq:g2}
g(\gamma):=\gamma.
\end{equation}
\end{subequations}
In words, the gliders in~$M(x)$ move forward according to~\eqref{eq:g1}, i.e., the $\dstep$-steps of~$\gamma$ in~$\wh{x}$, which are at the positions in~$B$, become the $\ustep$-steps of~$g(\gamma)$ in~$\wh{f(x)}$, and the $\dstep$-steps of~$g(\gamma)$ in~$\wh{f(x)}$ are at the positions in~$S(\gamma)$.
On the other hand, by~\eqref{eq:g2} none of the gliders in~$\Gamma(x)\setminus M(x)$ changes its position, i.e., we have $g(\gamma)=\gamma$ if and only if $\gamma\notin M(x)$.
However, we will see that gliders that do not move change from being non-inverted to inverted, or vice versa.
From~\eqref{eq:g} and Lemma~\ref{lem:capture}~(ii) we immediately see that if $g(\gamma)=\gamma'$, then $v(\gamma')=v(\gamma)$, i.e., the function~$g$ preserves the speed of gliders.
See Figure~\ref{fig:move} for an example illustrating these definitions.

We need to show that the function~$g$ defined in~\eqref{eq:g} is well-defined, i.e., that~$g$ maps a glider from~$\Gamma(x)$ to a glider in~$\Gamma(f(x))$, and that this mapping is bijective.
This is established in Lemma~\ref{lem:g-welldef} below.

\begin{figure}
\makebox[0cm]{ 
\includegraphics[page=2]{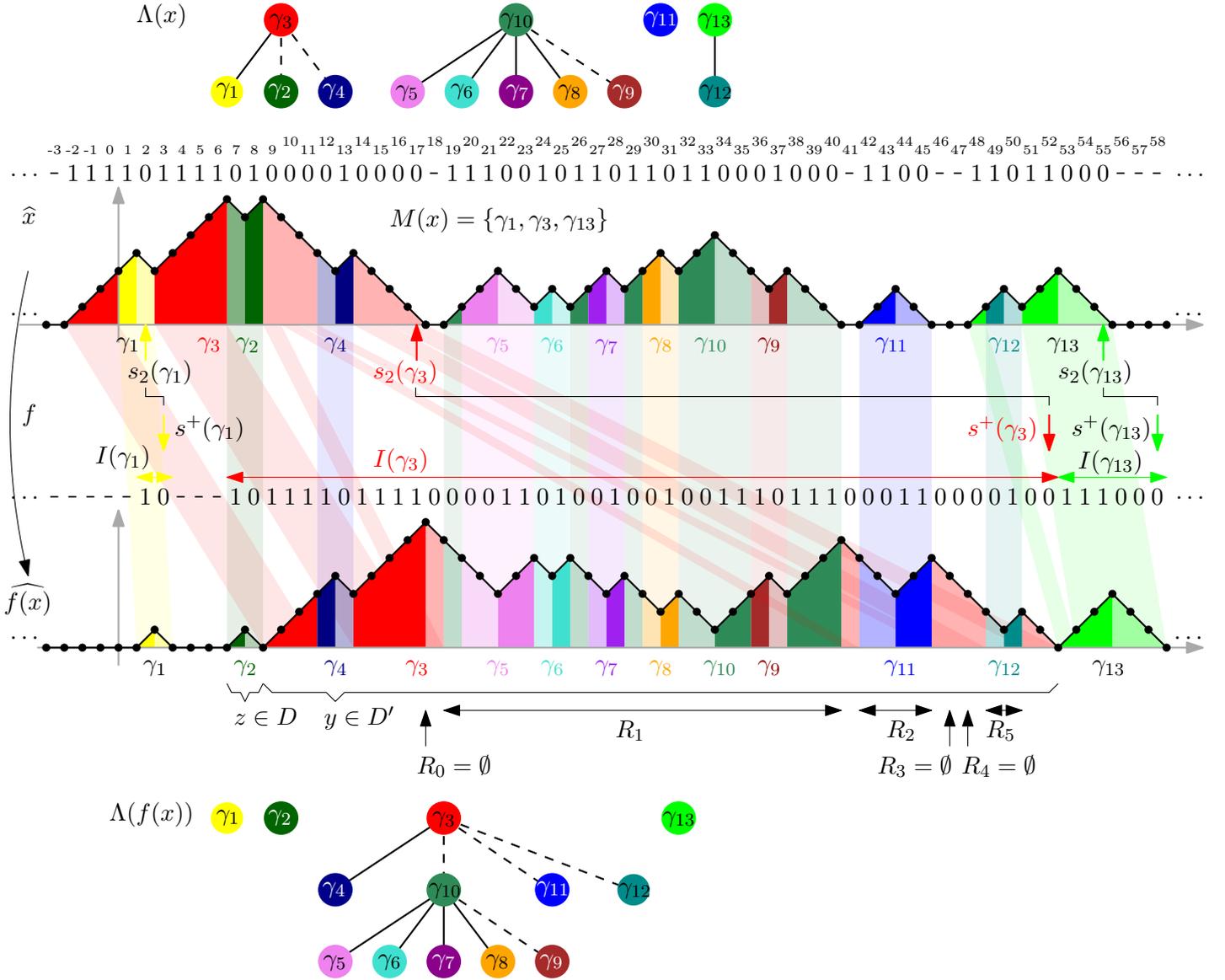}
}
\caption{One application of~$f$ and corresponding movement of gliders, captured by the bijection~$g$.
The pairs of gliders mapped to each other under~$g$ are denoted by the same index~$\gamma_i$ in~$\wh{x}$ and~$\wh{f(x)}$, and connected by shaded trapezoids.
Only the three gliders in~$M(x)=\{\gamma_1,\gamma_3,\gamma_{13}\}$ move forward in this step, whereas all others do not change position.
This figure continues the example from Figure~\ref{fig:capture}.
}
\label{fig:move}
\end{figure}

For the following arguments we rely on the definitions provided in Section~\ref{sec:cap-relation}.
We first establish a simple property about gliders that do not move, i.e., fixed points of the function~$g$.

\begin{lemma}
\label{lem:g-fixed}
For any $\gamma\in\Gamma(x)\setminus M(x)$ and any $\gamma'\in\Gamma(x)$ with $r(\gamma')\seq r(\gamma)$ we have~$g(\gamma')=\gamma'$.
\end{lemma}

\begin{proof}
Let $\gamma\in\Gamma(x)\setminus M(x)$.
The definition~\eqref{eq:g2} gives $g(\gamma)=\gamma$.
If $\gamma\notin\Gammaf(x)$, then clearly all descendants~$\gamma'$ of~$\gamma$ in the forest~$\Lambda(x)$ satisfy~$\gamma'\notin\Gammaf(x)$ and therefore $\gamma'\notin M(x)$ and $g(\gamma')=\gamma'$.
It remains to consider the case $\gamma\in\Gammaf(x)$ and $\gammatilde\succ\gamma$ for some $\gammatilde\in M(x)$.
Consider any descendant~$\gamma'$ of~$\gamma$ in the forest~$\Lambda(x)$.
If $\gamma'\in T(\gamma)$, then $\gamma'\notin \Gammaf(x)$ and hence $g(\gamma')=\gamma'$.
On the other hand, if $\gamma'\in\Gammaf(x)$ then by Lemma~\ref{lem:child-capture} we have~$\gammatilde\succ\gamma'$ and hence~$\gamma'\notin M(x)$, which implies $g(\gamma')=\gamma'$.
\end{proof}

We also need the following auxiliary construction and lemma.
Specifically, given the Motzkin path~$\wh{x}$, we first define another infinite Motzkin path~$\varphi(\wh{x})$.
Lemma~\ref{lem:move} below shows that actually $\varphi(\wh{x})=\wh{f(x)}$, and the lemma establishes additional properties about gliders in the Motzkin paths~$\wh{x}$ and $\wh{f(x)}$; see Figure~\ref{fig:move}.

The Motzkin path~$\varphi(\wh{x})$ is defined as follows:
For all steps $i\in\bigcup_{\gamma \in M(x)} I(\gamma)$ we define
\marginpar{$\varphi(\wh{x})$}
\begin{subequations}
\label{eq:phix}
\begin{equation}
\label{eq:phixI}
\varphi(\wh{x})_i := \begin{cases}
  \ustep & \text{if } \wh{x}_i = {}\dstep, \\
  \dstep & \text{if } \wh{x}_i \in \{\fstep, \ustep\},
\end{cases}
\end{equation}
and otherwise we define
\begin{equation}
\label{eq:phixnotI}
\varphi(\wh{x})_i := {}\fstep.
\end{equation}
\end{subequations}

Recall from Lemma~\ref{lem:capture}~(iv) that for any two gliders $\gamma,\gamma' \in M(x)$, the intervals~$I(\gamma)$ and~$I(\gamma')$ are disjoint.

\begin{lemma}
\label{lem:move}
For any $x \in X_{n,k}$, the Motzkin path~$\varphi(\wh{x})$ defined in~\eqref{eq:phix} satisfies $\varphi(\wh{x})=\wh{f(x)}$.
Moreover, for any glider~$\gamma=:(A,B)\in M(x)$, we have $\varphi(\wh{x})_{I(\gamma)}=z\,y$, where $z\in D$ and $y\in D'$ are base hills in~$\varphi(\wh{x})$ and $y$ has height~$h:=v(\gamma)$, satisfying the following conditions:
\begin{enumerate}[label=(\roman*),leftmargin=8mm]
\item $z$ is the complement of the 0th dent of $\wh{x}_{r(\gamma)}$;
\item the $i$th bulge of~$y$ is the complement of the $i$th dent of~$\wh{x}_{r(\gamma)}$, for all $i=1,\ldots,h-2$;
\item the $i$th dent of~$y$ is the complement of $\wh{x}_{R_i}$, where $R_i:=\bigcup_{\gamma'\in C(\gamma)_i}r(\gamma')$, for all $i=0,\ldots,h-2$;
\item the $\ustep$-steps and $\dstep$-steps of~$y$ not belonging to any of its bulges or dents are at the positions in~$B$ or~$S(\gamma)$, respectively.
\end{enumerate}
\end{lemma}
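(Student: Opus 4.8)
The plan is to establish both assertions at once: we first determine $\varphi(\wh{x})_{I(\gamma)}$ explicitly for each $\gamma\in M(x)$, which yields the structural claims~(i)--(iv) and at the same time shows that $\varphi(\wh{x})$ is a valid Motzkin path whose underlying bitstring is $f(x)$; the identity $\varphi(\wh{x})=\wh{f(x)}$ then follows because the step classification of a valid Motzkin path is forced by its bitstring (every $1$ is a $\ustep$-step, a $0$ at positive height must be a $\dstep$-step, a $0$ at height~$0$ an $\fstep$-step), so $\wh{f(x)}$ is the unique such representation. By Lemma~\ref{lem:capture}(iv), and since no two gliders of $M(x)$ capture each other, the intervals $I(\gamma)$, $\gamma\in M(x)$, are pairwise disjoint; hence $\varphi(\wh{x})$ is well defined and it suffices to analyse it interval by interval.

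Fix $\gamma=(A,B)\in M(x)$ with $v(\gamma)=h$, and split $I(\gamma)=[s_1(\gamma)+1,s_2(\gamma)]\cup[s_2(\gamma)+1,s^+(\gamma)]$. On the first part, $s_1(\gamma)=\max A$ is the last $\ustep$-step of~$\gamma$, which leads to the leftmost highest point~$p$ of the hill~$\wh{x}_{r(\gamma)}$; so by the decomposition~\eqref{eq:hill}, $\wh{x}$ reads $v_0\,0\,v_1\,0\cdots0\,v_{h-2}\,0\,0$ here, where $v_0,\dots,v_{h-2}$ are the dents of $\wh{x}_{r(\gamma)}$ and the $h$ explicit $\dstep$-steps sit at the positions~$B$. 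On the second part, \eqref{eq:Sgamma} partitions the interval into $S(\gamma)$ and $\bigcup_{\gamma'\in C(\gamma)}r(\gamma')$; by Lemma~\ref{lem:capture}(i) the $S(\gamma)$-steps are $\fstep$-steps followed by $\ustep$-steps, and the definition of the sets $C(\gamma)_i$ arranges this part as $\wh{x}_{R_0}\,\sigma_1\,\wh{x}_{R_1}\,\sigma_2\cdots\wh{x}_{R_{h-1}}\,\sigma_h$, where $R_i:=\bigcup_{\gamma'\in C(\gamma)_i}r(\gamma')$ and $\sigma_1,\dots,\sigma_h$ are the $S(\gamma)$-steps from left to right (with $|S(\gamma)|=h$ by Lemma~\ref{lem:capture}(ii)). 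By Lemma~\ref{lem:capture}(iii) we have $R_{h-1}=\emptyset$ (a glider in $C(\gamma)_{h-1}$ would have speed $<1$), and each $\wh{x}_{R_i}$ is a concatenation of hills of non-inverted captured gliders, hence a Dyck subpath of height $<h-i$.

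Applying the complementation~\eqref{eq:phixI} on~$I(\gamma)$ turns the first part into $\ol{v_0}\,1\,\ol{v_1}\,1\cdots1\,\ol{v_{h-2}}\,1\,1$ and the second part into $\ol{\wh{x}_{R_0}}\,0\,\ol{\wh{x}_{R_1}}\,0\cdots\ol{\wh{x}_{R_{h-2}}}\,0\,0$, since each $S(\gamma)$-step --- $\fstep$ or $\ustep$ alike --- becomes a $\dstep$-step. Put $z:=\ol{v_0}$, which lies in~$D$ since $v_0$ is a dent, and let $y$ be the remaining suffix. Matching $y$ against~\eqref{eq:hill}, using Lemma~\ref{lem:bulge-dent} (the dent $v_i$, hence $\ol{v_i}$, has height $\le h-1-i$) and the bound $<h-i$ on the height of $\wh{x}_{R_i}$, one reads off that $y\in D'$ is a hill of height~$h$ with $i$th bulge $\ol{v_i}$ and $i$th dent $\ol{\wh{x}_{R_i}}$, whose top-level $\ustep$- and $\dstep$-steps sit at the positions $B$ and $S(\gamma)$; this is precisely~(i)--(iv). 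Since each $\varphi(\wh{x})_{I(\gamma)}=z\,y$ is a Dyck path containing no $\fstep$-steps, whereas there are $n-2k>0$ $\fstep$-steps per period and they all lie outside the~$I(\gamma)$'s, $\varphi(\wh{x})$ is a concatenation of Dyck paths separated by $\fstep$-steps on the abscissa --- a valid Motzkin path. Moreover every $\ustep$-step of $\varphi(\wh{x})$ arises by flipping a $\dstep$-step (a matched~$0$) of~$\wh{x}$, so the underlying bitstring of $\varphi(\wh{x})$ equals~$f(x)$ once we know that every matched~$0$ of~$\wh{x}$ lies in some~$I(\gamma)$, $\gamma\in M(x)$. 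To see this, take a $\dstep$-step of~$\wh{x}$ belonging to a glider~$\delta$, and walk up the tree of~$\Lambda(x)$ to the last free ancestor~$\gamma_0$ of~$\delta$ (possibly $\gamma_0=\delta$). The step lies in $[s_1(\gamma_0)+1,s_2(\gamma_0)]$: this is clear if $\gamma_0=\delta$ (it is a $\dstep$-step of the non-inverted glider~$\delta$), and otherwise the child of~$\gamma_0$ on the path to~$\delta$ is trapped, hence contained in a dent of~$\gamma_0$. If $\gamma_0\in M(x)$ this gives $[s_1(\gamma_0)+1,s_2(\gamma_0)]\seq I(\gamma_0)$ and we are done; otherwise $\gamma_0$ is free but captured, so by transitivity of~$\succ$ and finiteness it is captured by some $\gamma\in M(x)$, whence $\gamma_0\in C(\gamma)$ and $[s_1(\gamma_0)+1,s_2(\gamma_0)]\seq r(\gamma_0)\seq I(\gamma)$ (cf.\ Lemma~\ref{lem:capture} and~\eqref{eq:Sgamma}). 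This completes the argument that $\varphi(\wh{x})=\wh{f(x)}$.

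The main obstacle is the bookkeeping in the middle step: describing $\wh{x}_{[s_2(\gamma)+1,s^+(\gamma)]}$ accurately in terms of $S(\gamma)$, the sets~$C(\gamma)_i$ and the blocks~$R_i$, and --- for the bitstring identity --- tracking through the forest~$\Lambda(x)$ which steps of the (possibly deeply nested) free-and-captured or trapped gliders get swallowed by which interval~$I(\gamma)$. Claims~(i)--(iv) themselves then fall out immediately once $\varphi(\wh{x})_{I(\gamma)}$ has been computed.
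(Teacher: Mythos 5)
Your proof follows the same interval-by-interval computation as the paper: split $I(\gamma)$ at $s_2(\gamma)$, read off the two halves from the hill decomposition~\eqref{eq:hill} and from Lemma~\ref{lem:capture}, complement via~\eqref{eq:phixI}, and recognize the result as $z\,y$ with $z\in D$, $y\in D'$ using the height/depth bounds of Lemma~\ref{lem:bulge-dent} and Lemma~\ref{lem:capture}(iii). The one place you go beyond the paper is the last step: you give an explicit argument (walking up $\Lambda(x)$ to the last free ancestor and then following the capturing chain into $M(x)$) that every $\dstep$-step of $\wh{x}$ lies in some $I(\gamma)$, $\gamma\in M(x)$, which is needed to conclude $\mu_1(x')=\mu_0(x)$; the paper simply asserts "the 1s in $x'$ are precisely at the positions of the matched~0s in~$x$" without spelling this out, so your version is slightly more careful at that point, while otherwise identical in substance.
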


In the proof of Lemma~\ref{lem:move} we denote steps of the Motzkin path~$\varphi(\wh{x})$ by~$\ustep$, $\dstep$, or $\fstep$, as given by the definition~\eqref{eq:phix}.
Once we have established that $\varphi(\wh{x})=\wh{f(x)}$, it is clear that these correspond to matched~1s, matched~0s, or unmatched~0s ($\hyph$) in~$f(x)$, respectively.

\begin{proof}
Let $\gamma=:(A,B)\in M(x)$ and define~$h:=v(\gamma)$.
We split the interval $I(\gamma)=[s_1(\gamma)+1,s^+(\gamma)]$ into the two smaller intervals $[a,b]:=[s_1(\gamma)+1,s_2(\gamma)]$ and $[c,d]:=[s_2(\gamma)+1,s^+(\gamma)]$.

We first consider the steps of~$\varphi(\wh{x})$ in the interval~$[a,b]$.
If $h=1$, then we have $\gamma=(A,B)=(\{a-1\},\{a\})$, $[a,b]=\{a\}$, and $\wh{x}_{[a,b]}={}\dstep$, and consequently
\begin{equation}
\label{eq:whab1}
\varphi(\wh{x})_{[a,b]}={}\ustep
\end{equation}
by~\eqref{eq:phixI}.
If $h\geq 2$, then let $v_i'$, $i=0,\ldots,h-2$, be the $i$th dent of~$\wh{x}_{r(\gamma)}$, and let $I_i$ be its support.
As $v_0'$ is a valley in~$\wh{x}$, we obtain from~\eqref{eq:phixI} that $z:=\varphi(\wh{x})_{I_0}=\ol{v_0'}$ is a hill, i.e., $z\in D$.
Similarly, for $i=1,\ldots,h-2$ we have that $v_i'$ is a valley of depth at most~$h-1-i$ in~$\wh{x}$ by Lemma~\ref{lem:bulge-dent}, and consequently $u_i:=\varphi(\wh{x})_{I_i}=\ol{v_i'}$ is a hill of height at most~$h-1-i$ in~$\varphi(\wh{x})$.
The remaining steps of~$\wh{x}$ in~$[a,b]$ not belonging to any of the intervals~$I_i$, $i=0,\ldots,h-2$, are at the positions in~$B$ (recall~\eqref{eq:Gamma-hill}), and these are $v(\gamma)=h$ many $\dstep$-steps (recall~\eqref{eq:speed}), so in~$\varphi(\wh{x})$ there are $h$ many $\ustep$-steps at the positions in~$B$.
Combining these observations shows that
\begin{equation}
\label{eq:whab2}
\varphi(\wh{x})_{[a,b]}=z\,\ustep u_1\,\ustep u_2\,\cdots \,\ustep u_{h-2}\,\ustep\ustep
\end{equation}
with $z\in D$ and hills~$u_i\in D$ of height at most~$h-1-i$ for all $i=1,\ldots,h-2$.

We now consider the steps of~$\varphi(\wh{x})$ in the interval~$[c,d]$.
If $h=1$, then we have $[c,d]=S(\gamma)=\{a+1\}$ and $\wh{x}_{[c,d]}\in\{\fstep,\ustep\}$ by Lemma~\ref{lem:capture}~(ii), and consequently
\begin{equation}
\label{eq:whcd1}
\varphi(\wh{x})_{[c,d]}={}\dstep
\end{equation}
by~\eqref{eq:phixI}.
If $h\geq 2$, then let $R_i:=\bigcup_{\gamma'\in C(\gamma)_i}r(\gamma')$ for $i=0,\ldots,h-2$.
By Lemma~\ref{lem:capture}~(i)+(iii), $\wh{x}_{R_i}$ is a hill of height at most~$v(\gamma)-i-1=h-i-1$ with support in the interval~$[c,d]$, and therefore $v_i:=\varphi(\wh{x}_{R_i})=\ol{\wh{x}_{R_i}}$ is a valley of depth at most~$h-i-1$ in~$\varphi(\wh{x})$.
The remaining steps of~$\wh{x}$ in~$[c,d]$ not belonging to any of the intervals~$R_i$, $i=0,\ldots,h-2$, are at the positions in~$S(\gamma)$, and these are $|S(\gamma)|=v(\gamma)=h$ many $\fstep$-steps or $\ustep$-steps by Lemma~\ref{lem:capture}~(ii), so in~$\varphi(\wh{x})$ there are $h$ many $\dstep$-steps at these positions from~$S(\gamma)$.
Combining these observations shows that
\begin{equation}
\label{eq:whcd2}
\varphi(\wh{x})_{[c,d]}=v_0\dstep\,v_1\dstep\,\cdots\, v_{h-2}\dstep\,\dstep
\end{equation}
with valleys~$v_i$, i.e., $\ol{v_i}\in D$, of depth at most~$h-1-i$ for all $i=0,\ldots,h-2$.

If $h=1$, then from~\eqref{eq:whab1} and~\eqref{eq:whcd1} we obtain
\begin{equation}
\label{eq:phihx1}
\varphi(\wh{x})_{I(\gamma)}={}\ustep\,\dstep{}=\underbrace{1\,0}_{=:y},
\end{equation}
i.e., the statements in the lemma are trivially satisfied with $z:=\varepsilon$.
In particular, the $\ustep$-step and the $\dstep$-step of $y=10$ are at the positions~$a\in\{a\}=B$ and $a+1\in\{a+1\}=S(\gamma)$.
If $h\geq 2$, then from~\eqref{eq:whab2} and~\eqref{eq:whcd2}, using the stated constraints on the height of the hills~$u_i$ and the depths of the valleys~$v_i$ we obtain
\begin{equation}
\label{eq:phihx2}
\varphi(\wh{x})_{I(\gamma)}=z\,\underbrace{1\,u_1\,1\,u_2\,\cdots\, 1\,u_{h-2}\,1\,1\,v_0\,0\,v_1\,0\,\cdots\,0\,v_{h-2}\,0\,0}_{=:y},
\end{equation}
i.e., we indeed have $z\in D$ and $y\in D'$ (recall~\eqref{eq:hill} and Lemma~\ref{lem:bulge-dent}).
Note that the 0s and~1s in~\eqref{eq:phihx1} and~\eqref{eq:phihx2} denote bits that are matched to another bit inside~$y$.
Moreover, $z=\ol{v_0'}$ is the complement of the 0th dent of~$\wh{x}_{r(\gamma)}$, the $i$th bulge of~$y$ is~$u_i=\ol{v_i'}$, which is the complement of the $i$th dent of~$\wh{x}_{r(\gamma)}$, and the $i$th dent of~$y$ is $v_i=\ol{\wh{x}_{R_i}}$.
Furthermore, the $\ustep$-steps or $\dstep$-steps of~$y$ not belonging to any of its bulges or dents are at the positions in~$B$ or~$S(\gamma)$, respectively.
This proves statements~(i)--(iv) in the lemma.

As every $\gamma\in M(x)$ satisfies~\eqref{eq:phihx1} or~\eqref{eq:phihx2}, and the steps of~$\varphi(\wh{x})$ not belonging to any of the intervals~$I(\gamma)$, $\gamma\in M(x)$, are $\fstep$-steps by~\eqref{eq:phixnotI}, we see that $\varphi(\wh{x})$ is a Motzkin path that never moves below the abscissa and all of whose $\fstep$-steps are on the abscissa.
We also see that $\varphi(\wh{x})_{I(\gamma)}$ is a base hill in~$\varphi(\wh{x})$ for all~$\gamma\in M(x)$.
Clearly, every subpath of~$\wh{x}$ on some interval~$I$ of length~$n$ has exactly $k$ many $\dstep$-steps, and therefore~\eqref{eq:phixI} yields that the subpath of~$\varphi(\wh{x})$ on~$I$ has exactly $k$ many $\ustep$-steps.
Using this and the observation that $\varphi(\wh{x})$ has periodicity~$n$, it follows that $\varphi(\wh{x})=\wh{x'}$ for some $x'\in X_{n,k}$.
As the 1s in~$x'$ (corresponding to $\ustep$-steps on the Motzkin path $\varphi(\wh{x})$) are precisely at the positions of the matched~0s in~$x$ (corresponding to $\dstep$-steps on the Motzkin path~$\wh{x}$), we indeed have $x'=f(x)$, i.e., $\varphi(\wh{x})=\wh{f(x)}$, as claimed.
\end{proof}

For the next lemma, recall the definition of the equivalence relation~$\sim$ between gliders stated before~\eqref{eq:equiv-classes}.

\begin{lemma}
\label{lem:g-welldef}
For any $x\in X_{n,k}$, the mapping $g:\Gamma(x)\rightarrow \Gamma(f(x))$ defined in~\eqref{eq:g} is a bijection with the following properties:
\begin{enumerate}[label=(\roman*),leftmargin=8mm]
\item
For two gliders $\gamma,\gamma'\in\Gamma(x)$ with $\gamma\sim\gamma'$ we have $s(\gamma)-s(\gamma')=s(g(\gamma))-s(g(\gamma'))$.
\item
For two gliders $\gamma,\gamma'\in\Gamma(x)$ with $v(\gamma)=v(\gamma')$ and $s(\gamma)<s(\gamma')$ we have $s(g(\gamma))<s(g(\gamma'))$.
\item
For two gliders $\gamma,\gamma'\in \Gamma(x)$ with $s_2(\gamma)<s_2(\gamma')$ (in particular, $\gamma'$ is not trapped by~$\gamma$) we have $s_2(g(\gamma))>s_2(g(\gamma'))$ if and only if $g(\gamma')$ is trapped by~$g(\gamma)$.
\end{enumerate}
\end{lemma}

In words, property~(i) asserts that the mapping~$g$ is $n$-periodic, i.e., for gliders whose positions differ in a multiple of~$n$ their images under~$g$ also have the same distance.
Property~(ii) asserts that the relative positions of gliders of the same speed are the same in~$\wh{x}$ and~$\wh{f(x)}$.
Property~(iii) describes how the positions and trappedness relations of two gliders change during an overtaking.

\begin{proof}
Let $\gamma=:(A,B)\in M(x)$ be as in Lemma~\ref{lem:move}.

By Lemma~\ref{lem:move}~(iv), the Motzkin path~$\wh{f(x)}$ has a base hill~$y\in D'$, and the $\ustep$-steps and $\dstep$-steps not belonging to any of its bulges or dents are at the positions in~$B$ or~$S(\gamma)$, respectively.
Consequently, from~\eqref{eq:Gamma-hill} we see that~$(B,S(\gamma))$ is indeed a glider of~$\Gamma(f(x))$ and hence the definition~\eqref{eq:g1} is well-defined.

Now consider the 0th dent of~$\wh{x}_{r(\gamma)}$, and let $I_0$ be its support.
By Lemma~\ref{lem:move}~(i), $z:=\wh{f(x)}_{I_0}=\ol{\wh{x}_{I_0}}\in D$ is a base hill of the Motzkin path~$\wh{f(x)}$, so by~\eqref{eq:Gamma} the recursion~$\Gamma(z)$ yields the same results for~$\wh{x}$ and~$\wh{f(x)}$.
It follows that for gliders~$\gamma'\in\Gamma(x)\setminus\Gammaf(x)$ with $r(\gamma')\seq I_0$ the definition~$g(\gamma')=\gamma'$ given in~\eqref{eq:g2} is well-defined, and it is a bijection between those sets of gliders.

Now consider the $i$th dent of~$\wh{x}_{r(\gamma)}$ for some $i=1,\ldots,h-2$, and let $I_i$ be its support.
By Lemma~\ref{lem:move}~(ii), $u_i:=\wh{f(x)}_{I_i}=\ol{\wh{x}_{I_i}}\in D$ is the $i$th bulge of the base hill~$y\in D'$ of the Motzkin path~$\wh{f(x)}$, so by~\eqref{eq:Gamma} the recursion~$\Gamma(u_i)$ yields the same results for~$\wh{x}$ and~$\wh{f(x)}$.
It follows that for gliders~$\gamma'\in\Gamma(x)\setminus\Gammaf(x)$ with $r(\gamma')\seq I_i$ the definition~$g(\gamma')=\gamma'$ given in~\eqref{eq:g2} is a well-defined bijection between those sets of gliders.

Now consider the interval~$R_i:=\bigcup_{\gamma'\in C(\gamma)_i}r(\gamma')$ for some $i=0,\ldots,h-2$.
By Lemma~\ref{lem:capture}~(iii), $\wh{x}_{R_i}\in D$ is a hill in~$\wh{x}$, either a base hill or a bulge of some base hill.
By Lemma~\ref{lem:move}~(iii), $\wh{f(x)}_{R_i}=\ol{\wh{x}_{R_i}}$ is the $i$th dent of the base hill~$y\in D'$ of the Motzkin path~$\wh{f(x)}$, so by~\eqref{eq:Gamma} the recursion~$\Gamma(\wh{x}_{R_i})$ yields the same results for~$\wh{x}$ and~$\wh{f(x)}$.
It follows that for gliders~$\gamma'\in\Gamma(x)\setminus\Gammaf(x)$ with $r(\gamma')\seq R_i$ the definition~$g(\gamma')=\gamma'$ given in~\eqref{eq:g2} is a well-defined bijection between those sets of gliders.

We have described how the function~$g$ maps all gliders in~$M(x)\seq \Gammaf(x)$, the gliders in~$\Gammaf(x)\setminus M(x)$ (they are in the set~$C(\gamma)$ for some~$\gamma\in M(x)$, so their range lies in~$I(\gamma)$), and those in~$\Gamma(x)\setminus\Gammaf(x)$ (their range lies in~$I(\gamma)$ for some~$\gamma\in\Gammaf(x)$).
This proves that $g$ is a bijection.

To verify property~(i), consider two gliders $\gamma,\gamma'\in\Gamma(x)$ with $\gamma\sim\gamma'$.
As $\wh{x}$ has periodicity~$n$, the set~$S(\gamma)$ is obtained from~$S(\gamma')$ by adding $s(\gamma)-s(\gamma')$ to all elements, i.e., $S(\gamma)=S(\gamma')+s(\gamma)-s(\gamma')$.
As $S(\gamma)$ and $S(\gamma')$ determine the images~$g(\gamma)$ and~$g(\gamma')$, we obtain the desired statement.

We now prove~(iii).
If $g(\gamma')$ is trapped by~$g(\gamma)$, then Lemma~\ref{lem:child-props} directly gives that $s_2(g(\gamma))>s_2(g(\gamma'))$.
To prove the converse, we assume that $s_2(g(\gamma))>s_2(g(\gamma'))$.
Clearly we have $s_2(g(\gamma))\geq s_2(\gamma)$ and $s_2(g(\gamma'))\geq s_2(\gamma')$ with equality if and only if $\gamma\notin M(x)$ and $\gamma'\notin M(x)$, respectively.
From $s_2(g(\gamma))>s_2(g(\gamma'))\geq s_2(\gamma')>s_2(\gamma)$ we thus obtain $\gamma\in M(x)$.
As the position~$s_2(g(\gamma'))$ lies strictly between $s_1(g(\gamma))=s_2(\gamma)$ and $s_2(g(\gamma))$, Lemma~\ref{lem:child-props} yields that $g(\gamma')$ is trapped by~$g(\gamma)$.

It remains to prove~(ii).
For two gliders~$\gamma,\gamma'\in\Gamma(x)$ with $v(\gamma)=v(\gamma')$, Lemma~\ref{lem:child-props} gives that $r(\gamma)\cap r(\gamma')=\emptyset$, which implies that the three conditions $s_1(\gamma)<s_1(\gamma')$, $s_2(\gamma)<s_2(\gamma')$ and $s(\gamma)<s(\gamma')$ are all equivalent, and also neither of the two gliders~$\gamma$ and~$\gamma'$ is trapped by the other.
The same statements hold for $g(\gamma)$ and~$g(\gamma')$, as $v(g(\gamma))=v(\gamma)=v(\gamma')=v(g(\gamma'))$, and so the claim follows from~(iii).

This completes the proof of the lemma.
\end{proof}

The next lemma describes how the bijection~$g$ affects the trappedness relations between gliders from~$\Gamma(x)$ and~$\Gamma(f(x))$; see Figure~\ref{fig:move}.

\begin{lemma}
\label{lem:move-trapped}
Let $T_x(\gamma)$ and $C_x(\gamma)$ \marginpar{$T_x(\gamma)$ $C_x(\gamma)$} be the sets~$T(\gamma)$ and~$C(\gamma)$ of gliders trapped and captured by~$\gamma$, respectively, defined in~\eqref{eq:Tgamma} and~\eqref{eq:Cgamma} for a given bitstring~$x\in X_{n,k}$.
Then we have:
\begin{enumerate}[label=(\roman*),leftmargin=8mm]
\item
For any glider $\gamma\in M(x)$, we have $g(\gamma)\in\Gammaf(f(x))$ and $T_{f(x)}(g(\gamma))=C_x(\gamma)\cup \bigcup_{\gamma'\in C_x(\gamma)} T_x(\gamma')$.
In words, the glider~$g(\gamma)$ is free and the gliders trapped by~$g(\gamma)$ are precisely the free gliders captured by~$\gamma$, and the gliders trapped by them.
In particular, $T_x(\gamma)\cap T_{f(x)}(g(\gamma))=\emptyset$, i.e., none of the gliders trapped by~$\gamma$ is trapped by~$g(\gamma)$.
\item
For any glider $\gamma\in\Gamma(x)\setminus M(x)$ we have $T_{f(x)}(g(\gamma))=T_x(\gamma)$.
In words, the gliders trapped by~$\gamma$ and~$g(\gamma)$ are the same.
\end{enumerate}
\end{lemma}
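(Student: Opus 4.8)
The plan is to unfold the definitions of the bijection~$g$ in~\eqref{eq:g} and of $T(\gamma)$ in~\eqref{eq:Tgamma}, and to read off all trappedness relations in~$\wh{f(x)}$ directly from the structural description of $\wh{f(x)}=\varphi(\wh{x})$ furnished by Lemma~\ref{lem:move}; no tools beyond Lemmas~\ref{lem:child-capture}, \ref{lem:capture}, \ref{lem:move} and~\ref{lem:g-welldef} will be needed.

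For part~(i), fix $\gamma=(A,B)\in M(x)$ with $v(\gamma)=h$ and adopt the notation of Lemma~\ref{lem:move}, so that $R_i:=\bigcup_{\gamma'\in C(x,\gamma)_i}r(\gamma')$ for $i=0,\dots,h-2$. By~\eqref{eq:g1} we have $g(\gamma)=(B,S(\gamma))$, and by Lemma~\ref{lem:move}~(iv) this is exactly the topmost glider of the base hill $y\in D'$ of~$\wh{f(x)}$ produced by Lemma~\ref{lem:move}; being the root of a tree of~$\Lambda(f(x))$, it is free, giving $g(\gamma)\in\Gammaf(f(x))$. Applying~\eqref{eq:Tgamma} to $g(\gamma)$ in~$f(x)$ and using Lemma~\ref{lem:move}~(iii) (the $i$th dent of~$y$ equals $\ol{\wh{x}_{R_i}}$) gives $T(f(x),g(\gamma))=\bigcup_{i=0}^{h-2}\Gamma(\wh{x}_{R_i})$. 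Now each $\wh{x}_{R_i}$ is a hill of~$\wh{x}$ that occurs as a base hill or a bulge inside the recursion~\eqref{eq:Gamma} (as noted in the proof of Lemma~\ref{lem:g-welldef}), so $\Gamma(\wh{x}_{R_i})$ is precisely the set of gliders of $\Gamma(x)$ whose range lies in~$R_i$, with the same trappedness relations among them. Since $R_i$ is by definition the union of the ranges of the gliders of~$C(x,\gamma)_i$, and since any free glider with range contained in $R_i\subseteq[s_2(\gamma)+1,s^+(\gamma)]$ is captured by~$\gamma$ — this is case~(a1) in the proof of Lemma~\ref{lem:capture}: such a glider lies to the right of~$\gamma$, and as its range is balanced, $w([s_2(\gamma)+1,\,\cdot\,])$ has not reached $v(\gamma)$ at its right endpoint, forcing $s^+(\gamma)>s^+(\gamma')$ — the free gliders with range in~$R_i$ are exactly those of~$C(x,\gamma)_i$. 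As $C(x,\gamma)$ is a downset of the free-glider subforest of~$\Lambda(x)$ (Lemma~\ref{lem:child-capture}), it follows that the gliders with range in~$R_i$ are precisely those of $C(x,\gamma)_i$ together with the gliders trapped by them, and taking the union over~$i$ yields the claimed identity $T(f(x),g(\gamma))=\bigcup_{\gamma'\in C(x,\gamma)}\bigl(\{\gamma'\}\cup T(x,\gamma')\bigr)$. Finally, every glider in $T(x,\gamma)$ has its range inside a dent of $\wh{x}_{r(\gamma)}$, hence inside $[s_1(\gamma)+1,s_2(\gamma)]$, whereas every glider in $T(f(x),g(\gamma))$ has range inside $\bigcup_i R_i\subseteq[s_2(\gamma)+1,s^+(\gamma)]$; these intervals are disjoint, whence $T(x,\gamma)\cap T(f(x),g(\gamma))=\emptyset$.

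For part~(ii), take $\gamma\in\Gamma(x)\setminus M(x)$, so $g(\gamma)=\gamma$ by~\eqref{eq:g2}. As $\gamma$ is not a free uncaptured glider, it is either free and captured — then captured by a $\succ$-maximal glider capturing it, which necessarily lies in~$M(x)$ — or trapped, in which case the ancestor of~$\gamma$ in~$\Lambda(x)$ closest to the root of its tree among those trapping~$\gamma$ is itself free (the path from it to the root uses only bulge edges) and hence lies in~$M(x)$ or is captured by an $M(x)$-glider. In every case $r(\gamma)$ is contained in one of the intervals $R_j$ or one of the dent-supports of $\wh{x}_{r(\gamma_1)}$ attached to some $\gamma_1\in M(x)$ in Lemma~\ref{lem:move}; on each such interval~$K$, parts~(i)--(iii) of Lemma~\ref{lem:move} show that $\varphi$ acts as complementation, i.e.\ $\wh{f(x)}_K=\ol{\wh{x}_K}$. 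Consequently $\wh{f(x)}_{r(\gamma)}=\ol{\wh{x}_{r(\gamma)}}$, which also flips the inverted/non-inverted status of~$\gamma$; therefore the hill $y\in D'$ whose decomposition~\eqref{eq:hill} defines the bulges and dents of~$\gamma$ — namely $\wh{x}_{r(\gamma)}$ if $\gamma$ is non-inverted and $\ol{\wh{x}_{r(\gamma)}}$ if it is inverted — is literally the same string whether computed from~$x$ or from~$f(x)$. Hence $\gamma$ has the same dents in both, and $T(f(x),\gamma)=T(x,\gamma)$.

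The main obstacle is the identification of $\bigcup_i\Gamma(\wh{x}_{R_i})$ in part~(i): one has to verify that the ranges of the gliders captured by~$\gamma$ decompose exactly into the intervals $R_0,\dots,R_{h-2}$, that each $\wh{x}_{R_i}$ is a self-contained sub-instance of the recursion~\eqref{eq:Gamma}, and that no free glider placed inside some~$R_i$ can escape being captured by~$\gamma$. This is bookkeeping layered on Lemmas~\ref{lem:capture} and~\ref{lem:move}, but it is the only point where the capturing relation genuinely enters; everything else reduces to substitution into the definitions.
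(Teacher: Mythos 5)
Your proof is correct and takes essentially the same approach as the paper: both parts rest on Lemma~\ref{lem:move} together with the observation, already made in the proof of Lemma~\ref{lem:g-welldef}, that the $\Gamma$-recursion yields identical results on the complemented subintervals $R_i$ and the dent supports, so that the subtrees of $\Lambda$ supported there are carried over unchanged. The only cosmetic difference is that the paper proves~(ii) first and invokes it inside~(i), whereas you prove~(i) self-contained (re-using the $R_i$ bookkeeping directly) and give a somewhat longer case analysis for~(ii); the underlying mechanism is the same.
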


\begin{proof}
Let $\gamma\in\Gamma(x)\setminus M(x)$ and define~$\Gamma(\gamma):=\{\gamma'\in\Gamma(x)\mid r(\gamma')\seq r(\gamma)\}$ as the set of all descendants of~$\gamma$ in the forest~$\Lambda(x)$, including~$\gamma$ itself.
By Lemma~\ref{lem:g-fixed} we have $g(\gamma')=\gamma'$ for all $\gamma'\in \Gamma(\gamma)$, in particular $g(\gamma)=\gamma$.
This proves that the set~$\Gamma(\gamma)$ has the same ancestor-descendant relations in the forests~$\Lambda(x)$ and~$\Lambda(f(x))$, respectively, and the same trappedness relations.

This observation directly gives~(ii).

To prove~(i), let $\gamma\in M(x)$.
By~\eqref{eq:g1} and Lemma~\ref{lem:move}~(iv), $y:=\wh{f(x)}_{r(g(\gamma))}\in D'$ is a base hill in~$\wh{f(x)}$, and thus $g(\gamma)$ is the root of the tree in the forest~$\Lambda(f(x))$ that arises from the recursive computation of~$\Gamma(y)$.
Consequently, $g(\gamma)$ is not trapped by any other glider and therefore~$g(\gamma)\in\Gammaf(f(x))$.
Furthermore, by Lemma~\ref{lem:move}~(iii), for every $i=0,\ldots,h-2$ the support of the $i$th dent of~$y$ is the interval~$R_i:=\bigcup_{\gamma'\in C_x(\gamma)_i} r(\gamma')$.
Every such glider~$\gamma'\in C_x(\gamma)$ satisfies~$\gamma'\notin M(x)$ and therefore the observation from the beginning applies to the set~$\Gamma(\gamma')$.
The gliders from~$\Gamma(f(x))$ trapped by~$g(\gamma)$ are exactly the gliders whose range lies in one of the dents of~$y$, so this is exactly the set~$C_x(\gamma)\cup \bigcup_{\gamma'\in C_x(\gamma)}T_x(\gamma')$, as claimed in~(i).
\end{proof}

We say that a glider~$\gamma\in\Gamma(x)$ is \emph{open}, \marginpar{open} if in the Motzkin path~$\wh{x}$, the step to the right of~$\gamma$ at position~$s_2(\gamma)+1$ is a $\fstep$-step, i.e., the corresponding bit in~$x$ is an unmatched~0.
Note that if~$\gamma$ is open, then it must be free and therefore non-inverted.
The next lemma asserts that if a glider is open, then it has moved in the preceding time step.

\begin{lemma}
\label{lem:open-move}
Let $\gamma\in\Gamma(x)$ be such that $g(\gamma)$ is open in~$\wh{f(x)}$.
Then we have $\gamma\in M(x)$.
\end{lemma}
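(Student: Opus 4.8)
The plan is to argue by contradiction. Suppose $g(\gamma)$ is open but $\gamma\notin M(x)$. By~\eqref{eq:g2} we then have $g(\gamma)=\gamma$, and since every open glider is free, $\gamma$ must be free as a glider of~$\wh{f(x)}$. We show that for \emph{every} glider $\gamma\in\Gamma(x)\setminus M(x)$ the opposite holds: either $\gamma$ is not free in~$\wh{f(x)}$, or it is free in~$\wh{f(x)}$ but the step of~$\wh{f(x)}$ immediately to its right, at position $s_2(\gamma)+1$, is not an $\fstep$-step. In either case $g(\gamma)=\gamma$ cannot be open.

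\emph{Case 1: $\gamma\in\Gammaf(x)$.} As $\gamma\notin M(x)$, it is captured by some free glider. Following the capturing relation upward from~$\gamma$ strictly increases the speed by Lemma~\ref{lem:capture}~(iii), so this terminates at some $\delta\in M(x)$ with $\delta\succ\gamma$ (using transitivity of~$\succ$), i.e.\ $\gamma\in C(x,\delta)$. By Lemma~\ref{lem:move-trapped}~(i) this gives $\gamma\in T(f(x),g(\delta))$, so $\gamma=g(\gamma)$ is trapped, hence not free, in~$\wh{f(x)}$.

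\emph{Case 2: $\gamma$ is trapped in~$\wh{x}$.} Let $\gamma'$ be the lowest ancestor of~$\gamma$ in~$\Lambda(x)$ that traps~$\gamma$, so $\gamma\in T(x,\gamma')$. If $\gamma'\notin M(x)$, then $g(\gamma')=\gamma'$ by~\eqref{eq:g2} and $T(f(x),g(\gamma'))=T(x,\gamma')\ni\gamma$ by Lemma~\ref{lem:move-trapped}~(ii), so again $\gamma=g(\gamma)$ is trapped, hence not free, in~$\wh{f(x)}$. If $\gamma'\in M(x)$, then $\gamma'$ is free and non-inverted, so $\wh{x}_{r(\gamma')}$ is a hill, and by $\gamma\in T(x,\gamma')$ and~\eqref{eq:Tgamma} the glider~$\gamma$ lies in~$\Gamma(\ol v)$ for some dent~$v$ of this hill; write~$I$ for the support of~$v$, so $r(\gamma)\seq I$ and $s_2(\gamma)\leq\max I$. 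Applying Lemma~\ref{lem:move} to~$\gamma'$, the dent~$v$ is turned either into the base hill~$z$ (if it is the $0$th dent) or into a bulge of the base hill~$y$ of~$\wh{f(x)}$ (otherwise); in both cases $\wh{f(x)}_I=\ol v\in D$ is a hill, so it contains no $\fstep$-step, and the step of~$\wh{f(x)}$ at position $\max I+1$ is an $\ustep$-step of~$g(\gamma')$ --- the first step of~$y$, respectively the $\ustep$-step of~$y$ immediately after that bulge (note $v(\gamma')\geq 2$ since $\gamma'$ has a dent). As $s_2(\gamma)+1\leq\max I+1$, the step of~$\wh{f(x)}$ at position $s_2(\gamma)+1$ either lies in the hill~$\wh{f(x)}_I$ or is that $\ustep$-step, hence is not an $\fstep$-step.

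The crux is the final sub-case, where $\gamma$ is trapped by a glider of~$M(x)$: here $\gamma$ genuinely may become free in~$\wh{f(x)}$ --- the dent trapping it gets carved out into a fresh base hill or turned into a bulge --- so one cannot derive the contradiction from trappedness and must instead read off from Lemma~\ref{lem:move} exactly which hill~$\gamma$ ends up in and which step sits to its right. The other situations reduce cleanly to ``$\gamma$ stays or becomes trapped'' via the bookkeeping of Lemma~\ref{lem:move-trapped}.
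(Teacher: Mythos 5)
Your argument is correct, but it takes a genuinely different and considerably longer route than the paper. The paper's proof is three lines: since $\gamma\notin M(x)$ means $g(\gamma)=\gamma$ as a pair $(A,B)$ but with all its matched bits complemented, the inversion status flips; so $g(\gamma)$ being open (hence free and non-inverted) forces $\gamma$ to be \emph{inverted} in $\wh{x}$. Lemma~\ref{lem:dstep} then places a $\dstep$-step at position $s_2(\gamma)+1$ in~$\wh{x}$; after applying~$f$ this matched~$0$ becomes a matched~$1$, i.e.\ an $\ustep$-step in $\wh{f(x)}$ at the same position, contradicting openness of~$g(\gamma)$. Your proof instead bypasses Lemma~\ref{lem:dstep} entirely and does a three-way case analysis (free-but-captured; trapped with lowest trapper not in~$M(x)$; trapped with lowest trapper in~$M(x)$). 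The first two cases reduce to ``$g(\gamma)$ stays or becomes trapped'' via Lemma~\ref{lem:move-trapped}, and the third, genuinely delicate case is handled correctly by tracing the dent-to-bulge/base-hill surgery of Lemma~\ref{lem:move} to read off that the step at $s_2(\gamma)+1$ in~$\wh{f(x)}$ sits inside $\ol v\in D$ or is the $\ustep$-step immediately after it, hence is not an $\fstep$-step. Both approaches work; the paper's observation about the inversion flip is much more economical and makes Lemma~\ref{lem:dstep} do all the geometric work that your case~(2) with $\gamma'\in M(x)$ has to redo by hand, while your version gives a more explicit picture of where $\gamma$ lands in~$\wh{f(x)}$.
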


\begin{proof}
For the sake of contradiction suppose that $\gamma\notin M(x)$, i.e., $g(\gamma)=\gamma$.
In this case $\gamma$ must be inverted, and by Lemma~\ref{lem:dstep} the step of~$\wh{x}$ at position~$s_2(\gamma)+1$ is a $\dstep$-step, i.e., the corresponding bit in~$x$ is a matched~0.
This however implies that the step of~$\wh{f(x)}$ at this position is an $\ustep$-step (matched~1 in~$x$), contradicting the assumption that $g(\gamma)$ is open.
\end{proof}

\subsection{Cycle invariants}

Clearly, the bijection~$g$ defined in~\eqref{eq:g} preserves the speeds of all gliders.
Together with the `uniformity' property of~$g$ stated in Lemma~\ref{lem:g-welldef}~(i), we obtain that the speed set~$V(x)$ defined in~\eqref{eq:Vx} is invariant along the cycle~$C(x)$.

\begin{lemma}
\label{lem:Vx-invariant}
For any $x\in X_{n,k}$ we have $V(x)=V(f(x))$.
Consequently, for any vertex~$y$ on the cycle~$C(x)$, we have $V(x)=V(y)$.
In words, the speed set is invariant along each cycle.
\end{lemma}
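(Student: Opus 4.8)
The plan is to reduce to a single step of the map~$f$: since by~\eqref{eq:Cx} every vertex~$y$ on the cycle~$C(x)$ is of the form $y=f^j(x)$ for some $j\geq 0$, it suffices to prove $V(x)=V(f(x))$ and then iterate to obtain $V(x)=V(f(x))=V(f^2(x))=\cdots=V(y)$.

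To compare $V(x)$ with $V(f(x))$ I would use the bijection $g:\Gamma(x)\to\Gamma(f(x))$ from Lemma~\ref{lem:g-welldef}. The first thing to check is that $g$ respects the equivalence relation~$\sim$, so that it acts on the \emph{finite} index set over which $V(\cdot)$ is defined rather than on the infinite set~$\Gamma(\cdot)$. This holds because every ingredient in the definition~\eqref{eq:g} of~$g$ is $n$-periodic: the weight function~$w$ is $n$-periodic since $\wh{x}$ has period~$n$, hence so are the quantities~$s^+(\gamma)$ and the capturing relation~$\succ$, and therefore the sets $M(x)$, $C(\gamma)$ and $S(\gamma)$ all transform compatibly under shifting indices by~$n$. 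Consequently $g$ commutes with the shift-by-$n$ action on gliders and descends to a bijection $\bar g:\Gamma(x)/{\sim}\to\Gamma(f(x))/{\sim}$; in particular $\nu(x)=\nu(f(x))$.

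Next I would verify that $g$ preserves speeds, i.e., $v(g(\gamma))=v(\gamma)$ for every $\gamma\in\Gamma(x)$. If $\gamma\in\Gamma(x)\setminus M(x)$, then $g(\gamma)=\gamma$ by~\eqref{eq:g2}, and since the speed~\eqref{eq:speed} depends only on the common cardinality of the two index sets forming the glider (and not on whether it is inverted or non-inverted), we get $v(g(\gamma))=v(\gamma)$ trivially. If $\gamma=:(A,B)\in M(x)$, then $g(\gamma)=(B,S(\gamma))$ by~\eqref{eq:g1}, so $v(g(\gamma))=|B|$; since $|B|=|A|=v(\gamma)$ and also $|S(\gamma)|=v(\gamma)$ by Lemma~\ref{lem:capture}~(ii), this again equals~$v(\gamma)$.

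Combining these two points, the bijection~$\bar g$ carries the multiset $V(x)=\{v(\gamma)\mid[\gamma]\in\Gamma(x)/{\sim}\}$ onto $\{v(g(\gamma))\mid[\gamma]\in\Gamma(x)/{\sim}\}=\{v(\gamma')\mid[\gamma']\in\Gamma(f(x))/{\sim}\}=V(f(x))$, so $V(x)=V(f(x))$ as multisets, and iterating along~$C(x)$ finishes the proof. There is no genuine obstacle here beyond bookkeeping; the only point requiring a moment of care is the verification that $g$ truly respects~$\sim$, which is what legitimizes passing from the infinite glider set to the finite set of equivalence classes that~$V(\cdot)$ records.
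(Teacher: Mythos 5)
Your proof is correct and takes the same route the paper does: the paper's argument is the single remark that the bijection~$g$ of~\eqref{eq:g} preserves glider speeds, and you have simply spelled out the two small points the paper leaves implicit, namely that $g$ is $n$-periodic and hence descends to a bijection on equivalence classes, and that $v(g(\gamma))=v(\gamma)$ in both branches of~\eqref{eq:g} (using $|B|=|A|$, or equivalently Lemma~\ref{lem:capture}~(ii)).
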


Lemma~\ref{lem:Vx-invariant} shows that if $V(x)\neq V(y)$, then $x$ and $y$ are not on the same cycle.
However, if $V(x)=V(y)$ then $x$ and $y$ may still be on different cycles.
For example, for $x=1010\hyph\hyph\in X_{6,2}$ and $y=10\hyph10\hyph\in X_{6,2}$ we have $V(x)=V(y)=\{1,1\}$, but in both cycles~$C(x)$ and~$C(y)$ the two gliders of speed~1 are in different relative distances to each other.
Furthermore, these distances do not change along the cycles, as all gliders have the same speed.

We can derive the following somewhat refined condition from Lemma~\ref{lem:Vx-invariant}, stated in Lemma~\ref{lem:Zx-invariant} below.
For this we write $V^-(x)$ \marginpar{$V^-(x)$} for the set obtained from the multiset~$V(x)$ by removing duplicates, and for each $v\in V^-(x)$ we write $\nu_v(x)$ \marginpar{$\nu_v(x)$} for the number of occurrences of~$v$ in~$V(x)$, i.e., $\nu_v(x)$ is the number of gliders of speed~$v$.
We can then write $V(x)$ compactly as
\begin{equation}
\label{eq:Vx-compact}
V(x)=\{v^{\nu_v(x)}\mid v\in V^-(x)\},
\end{equation}
i.e., we write the elements of~$V^-(x)$ with their multiplicities~$\nu_v(x)$ as exponents to indicate repetition.
We refer to an inclusion maximal sequence of coupled gliders as a \emph{train}, and its \emph{size} is the number of gliders belonging to this train. \marginpar{train}
For example, in Figure~\ref{fig:coupled}~(a), the gliders $\gamma_2,\gamma_3$ form a train of size~2, the gliders $\gamma_4,\gamma_5,\gamma_6$ form a train of size~3, and the gliders $\gamma_7,\gamma_8$ form a train of size~2, whereas $\gamma_1$ and~$\gamma_9$ are both their own train of size~1 each.
We consider the train sizes formed by all equivalence classes of gliders of speed~$v$ from left to right, and we write $z_v(x)$ \marginpar{$z_v(x)$} for the corresponding cyclic composition of~$\nu_v(x)$, i.e., for the partition of~$\nu_v(x)$ that is ordered up to cyclic shifts (orderings that differ only by cyclic shift are considered equivalent).
In the example shown in Figure~\ref{fig:coupled}, we have $z_1(x)=(1,2,3)=(2,3,1)=(3,1,2)$, $z_2(x)=(2)$, $z_3(x)=(1)$, $z_1(y)=(2,1,3)=(1,3,2)=(3,2,1)$, $z_2(y)=(2)$, and $z_3(y)=(1)$.
We then define
\marginpar{train \\ compo\-sition}
\marginpar{$Z(x)$}
\begin{equation*}
Z(x):=\{v^{z_v(x)}\mid v\in V^-(x)\},
\end{equation*}
which we refer to as \emph{train composition}.
Note that $Z(x)$ is obtained from~\eqref{eq:Vx-compact} by replacing each exponent~$\nu_v(x)$ by a cyclic composition of it, i.e., by specifying how the gliders of speed~$v$ are grouped into trains.

\begin{lemma}
\label{lem:Zx-invariant}
For any $x\in X_{n,k}$, the gliders $\gamma,\gamma'\in \Gamma(x)$ are coupled if and only if $g(\gamma),g(\gamma')\in\Gamma(f(x))$ are coupled.
Furthermore, for any vertex~$y$ on the cycle~$C(x)$, we have $Z(x)=Z(y)$.
In words, the relative positions of gliders in their trains and the cyclic order of trains are invariant along each cycle.
\end{lemma}

\begin{proof}
Let $\gamma=:(A,B)\in\Gamma(x)$ and~$\gamma'=:(A',B')\in\Gamma(x)$ be two coupled gliders.
By Lemma~\ref{lem:coupled-trapped}, $\gamma$ and~$\gamma'$ together with all gliders in the set~$B(\gamma,\gamma')$ defined in~\eqref{eq:Bgamma}, are trapped by the same gliders.
In particular, they are either all free or all trapped.
Furthermore, by Lemma~\ref{lem:coupled-capture} and the fact that all steps of~$\wh{x}$ on the interval~$[s_0(\gamma),s_2(\gamma')]$ belong to gliders of speed at most~$v(\gamma)=v(\gamma')$, if one of~$\gamma$ or~$\gamma'$ is captured by some glider (of larger speed), then both~$\gamma$, $\gamma'$ and all gliders in~$B(\gamma,\gamma')$ are captured by the same glider.
We conclude that if $\gamma,\gamma'\in\Gammaf(x)\setminus M(x)$, then all $\gammatilde\in B(\gamma,\gamma')$ satisfy $\gammatilde\notin M(x)$ and $g(\gammatilde)=\gammatilde$, and therefore the steps of~$\wh{f(x)}$ between the last step of~$g(\gamma)=\gamma$ and the first step of~$g(\gamma')=\gamma'$ belong to gliders of strictly smaller speed (recall Lemma~\ref{lem:g-fixed}).
On the other hand, if $\gamma,\gamma'\in M(x)$, then $\wh{x}_{[s_2(\gamma)+1,s_0(\gamma')-1]}$ is a hill of height less than~$v(\gamma)=v(\gamma')$ and hence we have~$s^+(\gamma)=s_1(\gamma')$, $C(\gamma)\supseteq B(\gamma,\gamma')$ and~$S(\gamma)=A'$, which implies that~$g(\gamma)=(B,A')$ and~$g(\gamma')=(B',S(\gamma'))$.
Consequently, the steps of~$\wh{f(x)}$ between the last step of~$g(\gamma)$ and the first step of~$g(\gamma')$ are obtained by complementation from the 0th dent of~$\wh{x}_{r(\gamma')}$ (Lemma~\ref{lem:move}~(i)), i.e., these steps belong to gliders $\gammatilde\in\Gamma(x)\setminus M(x)$ with $g(\gammatilde)=\gammatilde$ that have strictly smaller speed in~$\wh{x}$ and~$\wh{f(x)}$.
We have argued that in both cases $g(\gamma)$ and~$g(\gamma')$ are coupled.
To complete the proof of the first part of the lemma, it remains to show the reverse implication.
This follows by observing that~$f^i(f(x))=x$ for some sufficiently large integer~$i>0$, so the number of trains of some fixed speed~$v\in V^-(x)$ cannot decrease from~$\wh{x}$ to~$\wh{f(x)}$, i.e., $z_v(x)$ and~$z_v(f(x))$ have the same length and the same entries, but possibly in a different cyclic order.
By Lemma~\ref{lem:g-welldef}~(ii) the cyclic order of trains must in fact be the same, i.e., we have $z_v(x)=z_v(f(x))$, which proves the second part of the lemma.
\end{proof}

For the bitstrings~$x$ and~$y$ shown in Figure~\ref{fig:coupled}~(a) and~(b), respectively, we have $V(x)=V(y)$, but $Z(x)\neq Z(y)$, as the train sizes of the equivalence classes of gliders of speed~1 appear in different cyclic order, so $x$ and~$y$ lie on different cycles.

It seems that one should be able to give a complete combinatorial interpretation of the set of cycles of the factor~$\cC_{n,k}$ defined in~\eqref{eq:factor} via the speed sets of gliders, their train composition, and the relative distances of the trains, but unfortunately we do not have such an interpretation.
Specifically, given two vertices~$x$ and~$y$ with the same train compositions $Z(x)=Z(y)$, in general we do not have an efficient way to decide whether $x$ and~$y$ lie on the same cycle, other than computing $f^i(x)$ for $i=0,1,\ldots$ and checking if $y=f^i(x)$ holds for one of them.

We can also derive the following somewhat coarser condition from Lemma~\ref{lem:Vx-invariant}.
It uses the number of gliders~$\nu(x)$ defined in~\eqref{eq:nu}, which satisfies $\nu(x)=|V(x)|$, and which can be computed very easily as the number~$d(x)$ of descents in~$x$ by Lemma~\ref{lem:trans}, without invoking the somewhat intricate glider partition introduced in Section~\ref{sec:glider}.

\begin{lemma}
\label{lem:trans-invariant}
For any $x\in X_{n,k}$ and any vertex~$y$ on the cycle~$C(x)$, we have~$\nu(x)=\nu(y)$ and consequently $d(x)=d(y)$.
In words, the number of descents is invariant along each cycle.
\end{lemma}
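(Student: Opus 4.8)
The plan is to obtain this statement as an immediate corollary of results already in hand, with essentially no new work. First I would recall that, by the definitions~\eqref{eq:nu} and~\eqref{eq:Vx}, the quantity $\nu(x)$ is the number $|\Gamma(x)/{\sim}|$ of equivalence classes of gliders, which is by construction exactly the cardinality of the speed multiset~$V(x)$ (counted with multiplicity).

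Next I would invoke Lemma~\ref{lem:Vx-invariant}, which states that the speed set is invariant along each cycle, i.e.\ $V(x)=V(y)$ whenever $y$ lies on~$C(x)$. Passing to cardinalities on both sides immediately gives $\nu(x)=|V(x)|=|V(y)|=\nu(y)$. Finally, applying Lemma~\ref{lem:trans} to both $x$ and~$y$ yields $d(x)=\nu(x)=\nu(y)=d(y)$, which is the second assertion. This completes the proof.

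There is no genuine obstacle at this point: all the substance has already been absorbed into the lemmas being cited. The real content of $V(x)=V(y)$ is that the bijection~$g$ of~\eqref{eq:g} preserves speeds — indeed for $\gamma=(A,B)\in M(x)$ one has $g(\gamma)=(B,S(\gamma))$ with $|B|=v(\gamma)=|S(\gamma)|$ by Lemma~\ref{lem:capture}~(ii), while gliders in $\Gamma(x)\setminus M(x)$ are fixed by~$g$ — and the real content of $d(x)=\nu(x)$ is Lemma~\ref{lem:trans}, proved by the Motzkin/Dyck recursion for $W(x)$. The present lemma merely records the consequence of combining these two facts, and is worth stating separately only because $d(x)$ (the descent count) is trivially computable, unlike the glider partition it counts.
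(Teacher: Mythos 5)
Your proof is correct and matches the paper's (implicit) argument exactly: the paper states the lemma as a direct consequence of Lemma~\ref{lem:Vx-invariant} together with Lemma~\ref{lem:trans}, which is precisely the combination you use. Nothing more is required.
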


For example, the number of descents in all bitstrings in the four cycles shown in Figure~\ref{fig:gliders}~(a)--(d) is 1, 1, 2, 3 respectively.
As mentioned before, the number of descents equals the number of inclusion maximal substrings of~1s (or 0s), so this quantity is also a cycle invariant.
As Figure~\ref{fig:gliders}~(d) shows, the lengths of those inclusion maximal substrings of 1s are not invariant, however (neither for 0s).

\subsection{Movement of slowest gliders}
\label{sec:slow-move}

Gliders with the minimum speed play an important role in our later arguments, and we will now analyze their movement.

The next lemma can be seen as the converse of Lemma~\ref{lem:open-move} for the case of gliders of the minimum speed.
Specifically, it asserts that if certain gliders of the minimum speed move in some step, then after this step they are open.

\begin{lemma}
\label{lem:move-open}
Let $\gamma\in M(x)$ be the rightmost glider in a train of the minimum speed~$v(\gamma)=\min V(x)$.
Then $g(\gamma)$ is open in~$\wh{f(x)}$.
\end{lemma}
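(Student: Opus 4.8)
The plan is to exploit the explicit description of $\wh{f(x)}$ near the image glider $g(\gamma)$ provided by Lemma~\ref{lem:move}, together with the structural facts about slowest gliders from Lemma~\ref{lem:clean} and the coupling definitions. First I would record what it means for $\gamma$ to be the rightmost glider of a train of minimum speed $v:=v(\gamma)=\min V(x)$: by the remark after the definition of coupling, the last step of the preceding coupled glider (if any) is directly adjacent to the first step of $\gamma$, and crucially, since there is no glider coupled to $\gamma$ on its right, the step of $\wh{x}$ at position $s_2(\gamma)+1$ is \emph{not} the first step of a glider of speed $v$; any glider starting there has speed strictly less than $v$, which is impossible as $v$ is the minimum speed. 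Hence either position $s_2(\gamma)+1$ carries an $\fstep$-step of $\wh{x}$, or it is a step belonging to a glider of speed $>v$, or it is a $\dstep$-step. Moreover, since $\gamma$ has minimum speed it is a leaf in $\Lambda(x)$ (Lemma~\ref{lem:child-speed}), hence clean by Lemma~\ref{lem:clean}, so $\wh{x}_{r(\gamma)}=1^v0^v$ and $\gamma$ has no bulges and no dents.

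Next I would apply Lemma~\ref{lem:move}. Since $\gamma\in M(x)$ and has height $h=v$, with all its dents empty, the lemma gives $\varphi(\wh{x})_{I(\gamma)}=z\,y=\wh{f(x)}_{I(\gamma)}$ where $y\in D'$ has height $h$; by parts (i)--(iii), $z$ is the complement of the (empty) $0$th dent, hence $z=\varepsilon$, all bulges of $y$ are complements of the (empty) dents of $\wh{x}_{r(\gamma)}$, hence empty, and the $i$th dent of $y$ is the complement of $\wh{x}_{R_i}$ where $R_i=\bigcup_{\gamma'\in C(\gamma)_i}r(\gamma')$. So $y = 1\,1\,v_0\,0\,v_1\,0\cdots 0\,v_{h-2}\,0\,0$ with each $v_i$ the complement of $\wh{x}_{R_i}$. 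The last step of $y$, i.e.\ the step at position $s_2(g(\gamma))=s^+(\gamma)$, is a $\dstep$-step, and $g(\gamma)$ is non-inverted (it is free, being the root of its tree in $\Lambda(f(x))$, by Lemma~\ref{lem:g-welldef} / Lemma~\ref{lem:move-trapped}(i)). To show $g(\gamma)$ is open I must show that the step of $\wh{f(x)}$ at position $s^+(\gamma)+1$ is an $\fstep$-step.

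The key computational step is to pin down what lies at position $s^+(\gamma)+1=d+1$ in $\wh{f(x)}$. By the definition of $\varphi$ in~\eqref{eq:phix}, position $d+1$ gets an $\fstep$-step unless it lies in $I(\gamma')$ for some $\gamma'\in M(x)$. Now $d=s^+(\gamma)$ is by~\eqref{eq:s+gamma} the minimal index with $w([s_2(\gamma)+1,d])=v$, and by Lemma~\ref{lem:capture}(i)--(ii) the subpath of $\wh{x}$ on $S(\gamma)$ consists of $\fstep$-steps followed by $\ustep$-steps, with $|S(\gamma)|=v$; minimality of $d$ forces the last step of this window, at position $d$, to be an $\ustep$-step of $\wh{x}$ lying in $S(\gamma)$ (it cannot be an $\fstep$-step, else $w([s_2(\gamma)+1,d-1])$ already equals $v$, and it cannot belong to a captured glider $\gamma'\in C(\gamma)$, since $w(r(\gamma'))=0$ by~\eqref{eq:wgamma'} so the window would again close earlier). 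So in $\wh{x}$ the step at $d$ is an $\ustep$. I would then argue that the step at $d+1$ in $\wh{x}$ is an $\fstep$-step: if it were an $\ustep$- or $\dstep$-step it would belong to some glider $\delta$, and I can rule out each possibility — $\delta$ cannot be a captured glider of $\gamma$ (its range would intersect $[s_2(\gamma)+1,d]$ with the step at $d$ not in $r(\delta)$, contradicting that $C(\gamma)$-gliders have balanced weight on their ranges and the case analysis in the proof of Lemma~\ref{lem:capture}, cases (a1)/(a2)), and $\delta$ cannot be a faster uncaptured glider or lie in a further tree of $\Lambda(x)$ without forcing $w([s_2(\gamma)+1,d])<v$ by Observation~H in that proof. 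This is the main obstacle: making the local argument at position $d+1$ fully rigorous requires re-deriving, from the proof of Lemma~\ref{lem:capture}, that immediately to the right of $S(\gamma)$'s last $\ustep$-step the Motzkin path $\wh{x}$ has an $\fstep$-step (equivalently, $s^+(\gamma)$ is not strictly inside any glider's range and is the right endpoint of a maximal run of $\ustep$-steps over the abscissa gap). Once that is established, since $d+1\notin I(\gamma)$ (as $d$ is the right endpoint of $I(\gamma)$) and $d+1$ lies in no other $I(\gamma')$ for $\gamma'\in M(x)$ — because $\gamma'$ would have to capture or be captured by $\gamma$, handled by the disjointness/nesting in Lemma~\ref{lem:capture}(iv) — formula~\eqref{eq:phixnotI} gives $\wh{f(x)}_{d+1}=\fstep$, i.e.\ the bit of $f(x)$ at position $s_2(g(\gamma))+1$ is an unmatched $0$, so $g(\gamma)$ is open, completing the proof.
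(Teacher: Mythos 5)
The decisive step in your plan — arguing that the step of $\wh{x}$ at position $d+1=s^+(\gamma)+1$ is an $\fstep$-step — is simply false in general, and this is where your proof breaks.

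The lemma's conclusion concerns the Motzkin path $\wh{f(x)}$, not $\wh{x}$. By \eqref{eq:phixnotI}, $\wh{f(x)}_{d+1}$ is an $\fstep$-step if and only if $d+1$ lies outside every interval $I(\gamma')$ with $\gamma'\in M(x)$, regardless of what $\wh{x}_{d+1}$ is. When $\wh{x}_{r(\gamma)}$ is \emph{not} a base hill (Case~(b) of the paper's proof), the slowest glider $\gamma$ sits inside a bulge of some ancestor $\gamma''$ in $\Lambda(x)$; then $S(\gamma)$ consists entirely of $\ustep$-steps and $\wh{x}_{d+1}$ is an $\ustep$- or $\dstep$-step of $\gamma''$ or of a sibling bulge — certainly not an $\fstep$-step. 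The lemma is still true there because $d+1$ falls in the \emph{gap} between $I(\gamma)$ and the next interval $I(\gamma')$, $\gamma'\in M(x)$, so $\varphi$ places an $\fstep$-step at $d+1$ even though $\wh{x}_{d+1}$ was not one. Your claim that ``$\delta$ cannot be a faster uncaptured glider or lie in a further tree of $\Lambda(x)$ without forcing $w([s_2(\gamma)+1,d])<v$ by Observation~H'' is therefore an overreach; that weight is always exactly $v$ by the definition of $s^+(\gamma)$, and Observation~H gives no contradiction when $\delta$ is an ancestor of $\gamma$.

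There is also a smaller error en route: you assert the step at $d$ ``cannot be an $\fstep$-step, else $w([s_2(\gamma)+1,d-1])$ already equals $v$''. Since $w(\fstep)=+1$, if the step at $d$ is an $\fstep$-step then $w([s_2(\gamma)+1,d-1])=v-1<v$, consistent with minimality of $d$; the case $b=0$ (all $\fstep$-steps in $S(\gamma)$) is perfectly possible. Finally, your closing justification ``$d+1$ lies in no other $I(\gamma')$ because $\gamma'$ would have to capture or be captured by $\gamma$'' is insufficient: Lemma~\ref{lem:capture}(iv) allows $I(\gamma')$ to be disjoint from $I(\gamma)$ and begin precisely at $d+1$, in which case $d+1\in I(\gamma')$ with no capture relation between $\gamma$ and $\gamma'$. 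Excluding this is exactly where the hypothesis that $\gamma$ is the \emph{rightmost} glider in its train must be invoked quantitatively — in the paper's proof this is the point of the case analysis (a)/(b1)/(b2), which shows $s_1(\gamma')>d$ for the next glider $\gamma'\in M(x)$, and it does not reduce to the observation you make in your first paragraph about the step at $s_2(\gamma)+1$.
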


Lemma~\ref{lem:move-open} is illustrated in Figure~\ref{fig:open}.
Intuitively, in the step from~$x$ to~$f(x)$, the glider~$\gamma$ moves at the minimum speed, whereas all other moving gliders either move at a strictly faster speed, or they belong to another train of the same speed, i.e., a train that is separated from~$\gamma$'s train by a positive number of steps.
This creates a `gap' to the right of~$g(\gamma)$, i.e., a step that does not belong to any other glider.

\begin{proof}
Let $\gamma \in M(x)$ be a glider of the minimum speed~$v(\gamma)=\min V(x)$.
As $V(x)=V(f(x))$ by Lemma~\ref{lem:Vx-invariant}, $g(\gamma)$ also has the minimum speed among gliders from~$\Gamma(f(x))$.
Consequently, $\gamma$ and~$g(\gamma)$ are both clean by Lemma~\ref{lem:clean} and therefore we have $I(\gamma)=r(g(\gamma))=r(\gamma)+v(\gamma)$ and $\wh{x}_{r(\gamma)}=\wh{f(x)}_{r(g(\gamma))}=1^{v(\gamma)}0^{v(\gamma)}$.

Now let $\gamma \in M(x)$ be the rightmost glider in a train of the minimum speed, and let $I(\gamma')$, $\gamma'\in M(x)$, be the closest interval to the right of~$I(\gamma)$.
We claim that $I(\gamma')$ is separated from~$I(\gamma)$ by at least one element.
Indeed, if $v(\gamma')=v(\gamma)=\min V(x)$, this is true because otherwise $g(\gamma)$ and~$g(\gamma')$ were coupled in~$\wh{f(x)}$ but $\gamma$ and~$\gamma'$ were not coupled in~$\wh{x}$, violating Lemma~\ref{lem:Zx-invariant}.
On the other hand, if $v(\gamma')>v(\gamma)$, then the claim is Lemma~\ref{lem:capture}~(v).
Consequently, there is a step of~$\wh{x}$ at a position between~$I(\gamma)$ and~$I(\gamma')$, and by~\eqref{eq:phixnotI} every step of~$\wh{f(x)}$ at such a position is an $\fstep$-step, which proves the lemma.
\end{proof}

Given a vertex~$x\in X_{n,k}$ and a glider~$\gamma\in\Gamma(x)$ of the minimum speed, we now introduce a slightly modified vertex~$h_\gamma(x)$ that differs from~$x$ only in `pushing' the glider~$\gamma$ one step to the right, together with all gliders in its equivalence class~$[\gamma]$.
Formally, for any $x\in X_{n,k}$ and the rightmost glider~$\gamma\in\Gamma(x)$ in a train of the minimum speed~$v(\gamma)=\min V(x)$, we let $g':\Gamma(f^{-1}(x))\rightarrow\Gamma(x)$ be the bijection defined in~\eqref{eq:g} for the string~$f^{-1}(x)$, and we let $\gamma'\in\Gamma(f^{-1}(x))$ be such that $g'(\gamma')=\gamma$.
By Lemma~\ref{lem:Zx-invariant}, $\gamma'$ is the rightmost glider in a train of the minimum speed.
We define $h_\gamma(x)\in X_{n,k}$ \marginpar{$h_\gamma(x)$} as the bitstring obtained from~$x$ by transposing the two bits at positions~$s_1(\gamma')+1$ and~$s_2(\gamma')+1$; see Figures~\ref{fig:open} and~\ref{fig:push}.
We emphasize that the transposed positions are computed based on~$f^{-1}(x)$ (specifically, based on $\gamma'\in\Gamma(f^{-1}(x))$), but the bits are transposed in~$x$.
In fact, we will see that the transposed bits in~$x$ are always a (matched)~1 and a matched~0.
However, when applying parenthesis matching to the modified string~$h_\gamma(x)$, the transposed~0 may be unmatched, and another 0-bit may instead be matched.

Observe that if~$\gamma'\in M(f^{-1}(x))$, then by Lemma~\ref{lem:move-open} the glider~$\gamma$ is open in~$\wh{x}$, i.e., the subpaths of~$\wh{x}$ and~$\wh{h_\gamma(x)}$ on the interval~$[s_0(\gamma),s_2(\gamma)+1]$ are $y\hyph$ and $\hyph y$ for $y:=1^{v(\gamma)}0^{v(\gamma)}$, respectively.
The $\hyph$ indicates an $\fstep$-step, i.e., $y$ is a base hill in both cases, which implies that $\Gamma(h_\gamma(x))=(\Gamma(x)\setminus[\gamma])\cup ([\gamma]+1)$ where $[\gamma]+1:=\big\{(A+1,B+1)\mid (A,B)\in[\gamma]\big\}$, i.e., the gliders in~$\Gamma(h_\gamma(x))$ are obtained from the gliders of~$\Gamma(x)$ by pushing the gliders in the equivalence class~$[\gamma]$ one step to the right, while leaving all other gliders unchanged.
In Figure~\ref{fig:open}~(b), these steps are marked by little arrows on the right.

\begin{figure}
\includegraphics[page=1]{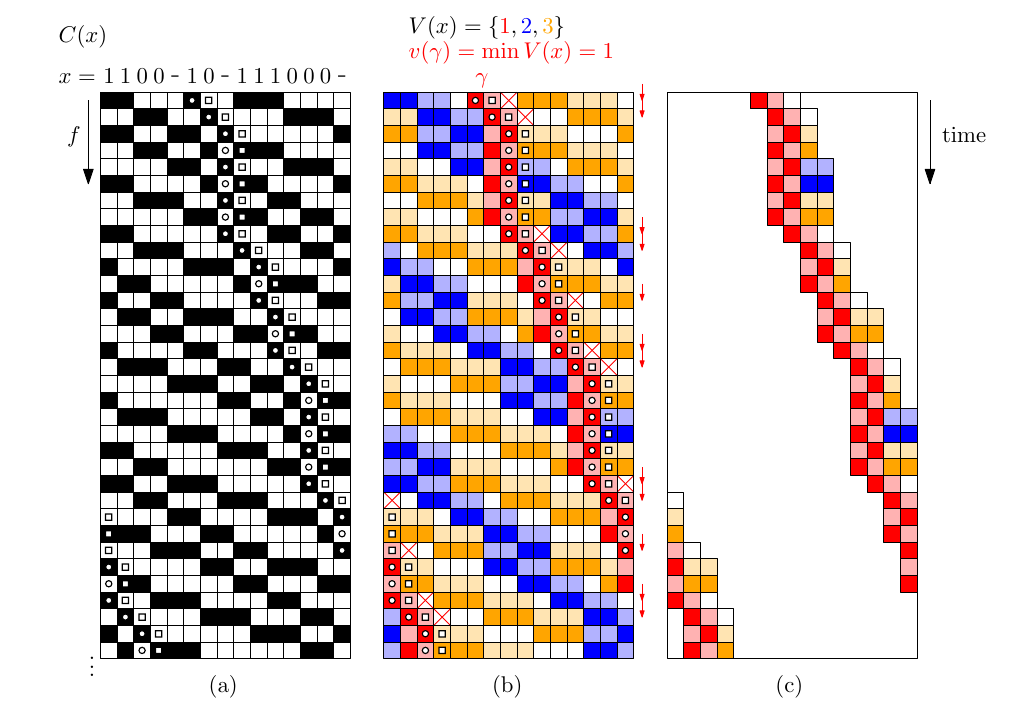}
\caption{Illustration of Lemmas~\ref{lem:move-open} and~\ref{lem:flip-vertex}.
The glider~$\gamma$ has the minimum speed~$v(\gamma)=\min V(x)=1$.
In~(b), the steps in which~$\gamma$ moves are marked by little arrows on the right, and the corresponding unmatched 0-bit guaranteed by the lemmas is marked by a cross.
Furthermore, transposing the two marked bits yields the cycle $C(h_\gamma(x))$ shown in Figure~\ref{fig:push} for the bitstring~$h_\gamma(x)$ that is obtained from~$x$ by pushing the glider~$\gamma$ one position to the right.
Part~(c) shows only the steps in~(b) that are different from the ones in Figure~\ref{fig:push}~(b).
}
\label{fig:open}
\end{figure}

\begin{figure}
\includegraphics[page=2]{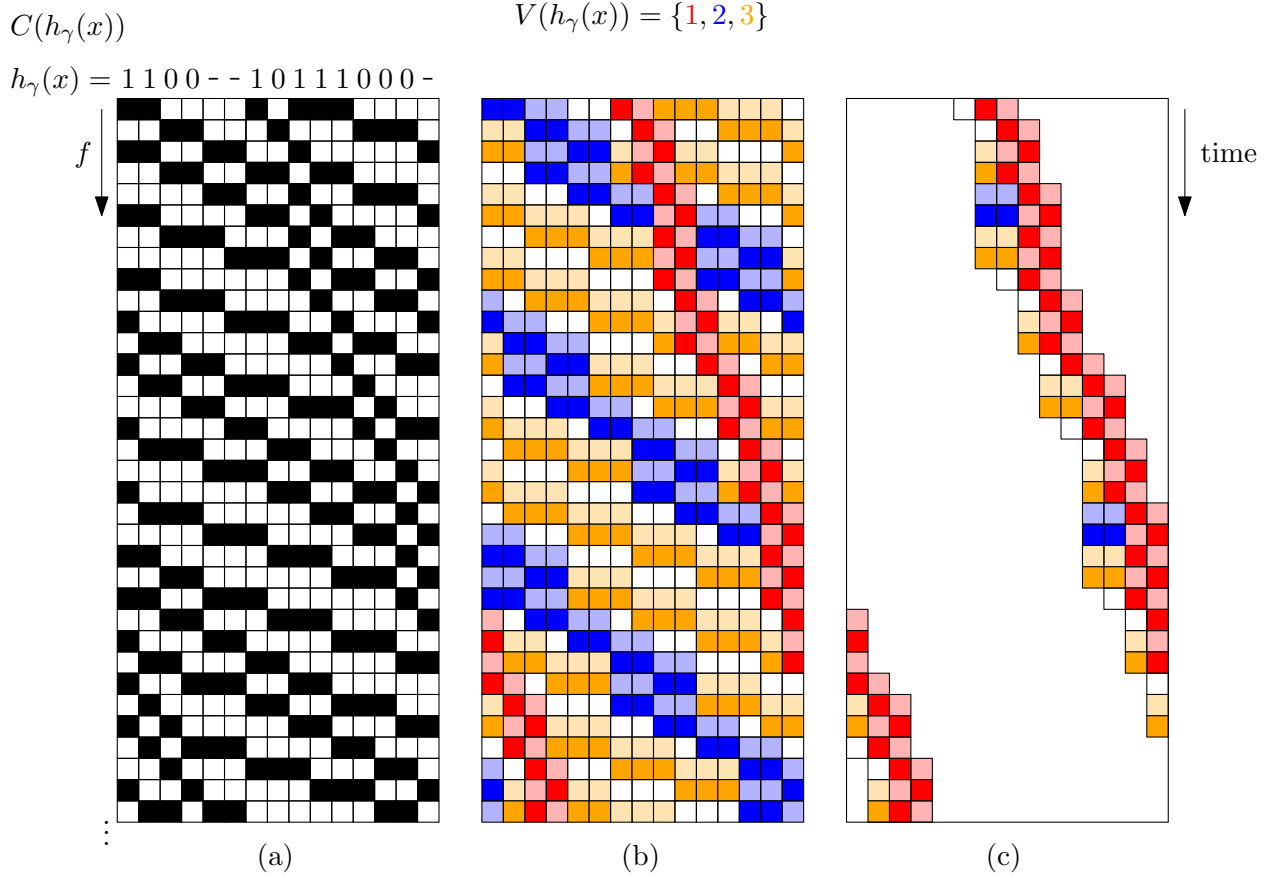}
\caption{Illustration of Lemma~\ref{lem:push}.
This figure continues Figure~\ref{fig:open}.
Part~(c) shows only the steps in~(b) that are different from the ones in Figure~\ref{fig:open}~(b).
To fully appreciate the two figures, the reader is encouraged to print both pages in color and alternately look at them like in a flip-book.
Alternatively, the reader may want to flip back and forth between both pages on a screen reader, with fixed alignment of the page boundaries.}
\label{fig:push}
\end{figure}

On the other hand, if~$\gamma'\notin M(f^{-1}(x))$, then steps of more than one glider may change in~$\Gamma(h_\gamma(x))$ compared to~$\Gamma(x)$, and in particular the position of~$\gamma$ may change by more than~$+1$; see Figures~\ref{fig:open}~(c) and~\ref{fig:push}~(c).
Luckily, in this case we do not need to understand the more complicated effect of applying~$h_\gamma$ on the glider decomposition.

The next lemma shows that the cycle~$C(h_\gamma(x))$ is obtained from the cycle~$C(x)$ by pushing the glider~$\gamma$ and its images under repeated applications of~$g$ one step to the right.
As mentioned before, this intuition is literally true only for steps in~$C(x)$ in which the glider has moved.
Nonetheless, it will be the case that $V(x)=V(h_\gamma(x))$, i.e., the speed sets of gliders in the cycles~$C(x)$ and~$C(h_\gamma(x))$ are identical; recall Lemma~\ref{lem:Vx-invariant}.

\begin{lemma}
\label{lem:push}
Let $\gamma\in\Gamma(x)$ be the rightmost glider in a train of the minimum speed~$v(\gamma)=\min V(x)$.
Then we have $f(h_\gamma(x))=h_{g(\gamma)}(f(x))$.
\end{lemma}

By Lemma~\ref{lem:Zx-invariant}, the relative positions of gliders in their trains is preserved when applying~$g$, so Lemma~\ref{lem:push} can be applied repeatedly to the entire cycle~$C(x)$.

\begin{proof}
In the proof we also consider the glider~$\gamma'\in\Gamma(f^{-1}(x))$ defined before the lemma.
Throughout this proof, we will repeatedly use that $\gamma$ and its images under~$g$ are clean by Lemma~\ref{lem:clean}.
Let $(A,B):=\gamma$, and define the abbreviations $a:=s_0(\gamma)=\min A$, $b:=s_1(\gamma)+1=\max A+1=\min B=a+v(\gamma)$, and $c:=s_2(\gamma)+1=\max B+1=b+v(\gamma)=a+2v(\gamma)$.

The idea is to start with the relation
\begin{equation}
\label{eq:mu1mu0}
\mu_1(f(x))=\mu_0(x)
\end{equation}
and to argue that the sets on the left and right hand side change in the same way, namely by removing one element and adding another, when replacing~$f(x)$ by~$h_{g(\gamma)}(f(x))$ and~$x$ by~$h_\gamma(x)$, respectively, which then proves the lemma.

{\bf Case (a):}
We first consider the case that $s(\gamma)>s(\gamma')$; see Figure~\ref{fig:push-proof}~(a).
Clearly, we have
\begin{equation*}
s_1(\gamma')+1=a \quad \text{and} \quad s_2(\gamma')+1=b.
\end{equation*}
Applying the definition of~$h_\gamma$, we see that $h_\gamma(x)$ is obtained from~$x$ by transposing the bits at positions~$a$ and~$b$.

In~$x$, the 1-bits and 0-bits at the positions in~$A$ and~$B$, respectively, are matched to each other, i.e., we have $A\seq \mu_1(x)$ and $B\seq \mu_0(x)$.
Furthermore, by Lemma~\ref{lem:move-open} the bit at position~$c$ is an unmatched~0, i.e., we have~$c\notin \mu_0(x)$.
Combining these two observations we obtain
\begin{equation}
\label{eq:mu0xa}
\mu_0(h_\gamma(x))=\big(\mu_0(x)\setminus \{b\}\big)\cup \{c\}.
\end{equation}
As $b\in\mu_0(x)$, there is a 1-bit at position~$b$ in~$f(x)$.
Furthermore, from~$c\notin\mu_0(x)$ we see that there is a 0-bit at position~$c$ in~$f(x)$.
These two bits are swapped in~$f(x)$ to obtain~$h_{g(\gamma)}(f(x))$, and consequently we have
\begin{equation}
\label{eq:mu1fxa}
\mu_1\big(h_{g(\gamma)}(f(x))\big)=\big(\mu_1(f(x))\setminus \{b\}\big)\cup\{c\}.
\end{equation}
Combining~\eqref{eq:mu1mu0}, \eqref{eq:mu0xa} and~\eqref{eq:mu1fxa} proves that $f(h_\gamma(x))=h_{g(\gamma)}(f(x))$.

{\bf Case (b):}
We now consider the case that $s(\gamma)=s(\gamma')$.
Clearly, we have
\begin{equation*}
s_1(\gamma')+1=s_1(\gamma)+1=b \quad \text{and} \quad s_2(\gamma')+1=s_2(\gamma)+1=c,
\end{equation*}
i.e., $h_\gamma(x)$ is obtained from~$x$ by transposing the bits at positions~$b$ and~$c$.

\begin{figure}
\includegraphics[page=3]{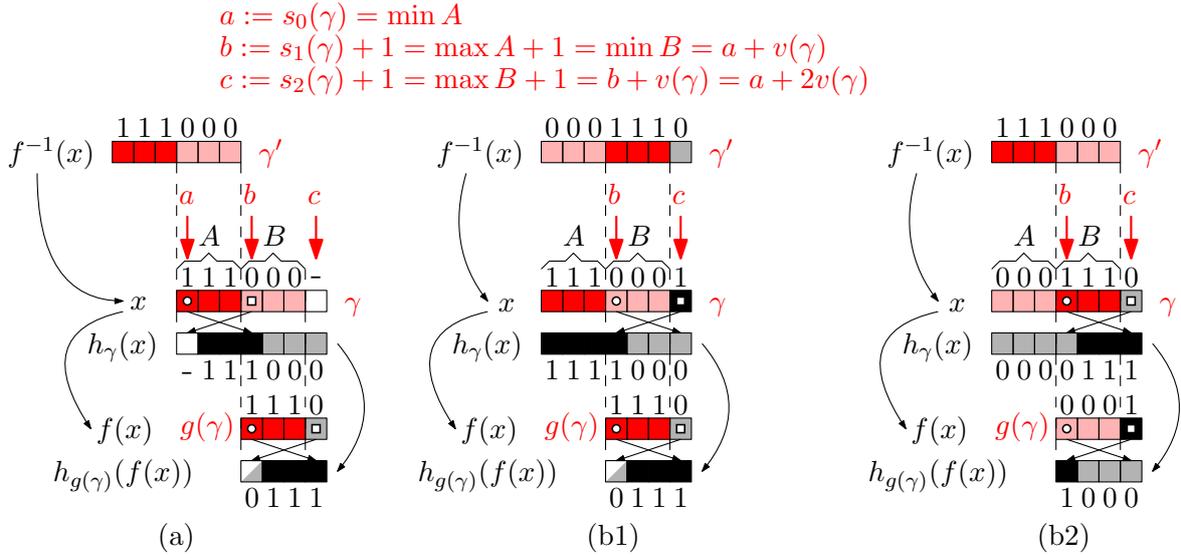}
\caption{Illustration of the proof of Lemma~\ref{lem:push}.
For the bits in~$h_\gamma(x)$ and~$h_{g(\gamma)}(f(x))$, no membership to certain gliders is specified, so they are drawn black (1-bits) and gray (matched 0-bits).
The 0-bits corresponding to half-white/half-gray cells may be matched or unmatched.
}
\label{fig:push-proof}
\end{figure}

We distinguish the subcases whether $\gamma$ is inverted or non-inverted.

{\bf Case (b1):} $\gamma$ is non-inverted; see Figure~\ref{fig:push-proof}~(b1).
In this case~$\gamma'$ is inverted, and by Lemma~\ref{lem:dstep} we have $c\in\mu_0(f^{-1}(x))$ and therefore $c\in\mu_1(x)$.
Furthermore, in~$x$ the 1-bits and 0-bits at the positions in~$A$ and~$B$, respectively, are matched to each other, i.e., we have $A\seq \mu_1(x)$ and $B\seq \mu_0(x)$.
Combining these two observations we obtain
\begin{equation}
\label{eq:mu0xb2}
\mu_0(h_\gamma(x))=\big(\mu_0(x)\setminus\{b\}\big)\cup\{c\}.
\end{equation}
As $b\in\mu_0(x)$, there is a 1-bit at position~$b$ in~$f(x)$.
Furthermore, as $c\in\mu_1(x)$ there is a 0-bit at position~$c$ in~$f(x)$.
These two bits are swapped in~$f(x)$ to obtain~$h_{g(\gamma)}(f(x))$, and consequently we have
\begin{equation}
\label{eq:mu1fxb2}
\mu_1\big(h_{g(\gamma)}(f(x))\big)=\big(\mu_1(f(x))\setminus \{b\}\big)\cup \{c\}.
\end{equation}
Combining~\eqref{eq:mu1mu0}, \eqref{eq:mu0xb2} and~\eqref{eq:mu1fxb2} proves that $f(h_\gamma(x))=h_{g(\gamma)}(f(x))$.

{\bf Case (b2):} $\gamma$ is inverted; see Figure~\ref{fig:push-proof}~(b2).
By Lemma~\ref{lem:dstep} we have $c\in\mu_0(x)$.
Note also that the valley~$\wh{x}_{r(\gamma)}$ does not touch the abscissa, as otherwise $A$ and~$B$ would not belong to the same glider.
Combining these two observations we obtain
\begin{equation}
\label{eq:mu0xb1}
\mu_0(h_\gamma(x))=\big(\mu_0(x)\setminus\{c\}\big)\cup \{b\}.
\end{equation}
As $b\in\mu_1(x)$, there is a 0-bit at position~$b$ in~$f(x)$.
Furthermore, as $c\in\mu_0(x)$ there is a 1-bit at position~$c$ in~$f(x)$.
These two bits are swapped in~$f(x)$ to obtain~$h_{g(\gamma)}(f(x))$, and consequently we have
\begin{equation}
\label{eq:mu1fxb1}
\mu_1\big(h_{g(\gamma)}(f(x))\big)=\big(\mu_1(f(x))\setminus\{c\}\big)\cup\{b\}.
\end{equation}
Combining~\eqref{eq:mu1mu0}, \eqref{eq:mu0xb1} and~\eqref{eq:mu1fxb1} proves that $f(h_\gamma(x))=h_{g(\gamma)}(f(x))$.

This completes the proof of the lemma.
\end{proof}

\subsection{Equations of motion}

The main result of this section, Proposition~\ref{prop:motion} below, describes equations of motion that capture the movement of gliders over time, including their interactions.

For any $x\in X_{n,k}$ we define $x^t:=f^t(x)$  \marginpar{$x^t$} for all $t\geq 0$, and we refer to the parameter~$t$ as \emph{time}.
We let $g^t$ be the bijection defined in~\eqref{eq:g} between the sets~$\Gamma(x^{t-1})$ and~$\Gamma(x^t)$.
Furthermore, for any $\gamma\in\Gamma(x)=\Gamma(x^0)$ we define $\gamma^0:=\gamma$ and \marginpar{$\gamma^t$} $\gamma^t:=g^t(\gamma^{t-1})$ for~$t\geq 1$.

Recall that~$g^t$ preserves the speeds of all gliders, so $v(\gamma^t)=v(\gamma^0)=v(\gamma)$ for all $\gamma\in\Gamma(x)$ and $t\geq 0$.
However, to describe the movement of gliders, i.e., the change of positions~$s(\gamma^t)$, over time correctly, we need to take into account that faster gliders may overtake slower gliders; see Figure~\ref{fig:ct}.
An overtaking happens in two steps, namely first the slower glider is trapped by the faster glider (see the step $t-1\rightarrow t$ in the figure), and later the slower glider is released again, i.e., it is not trapped anymore by the faster glider (see the step $t\rightarrow t+1$ in the figure).
Note that these two steps need not happen directly consecutively, as in between both gliders may get trapped temporarily by an even faster glider.

Formally, we consider the equivalence classes of gliders defined in~\eqref{eq:equiv-classes}, namely gliders that differ by multiples of~$n$.
\marginpar{get trapped/}
\marginpar{released}
We say that \emph{$[\gamma']$ gets trapped by~$[\gamma]$ in step~$t$}, if there are representatives $\gammatilde'\in[\gamma']$ and $\gammatilde\in[\gamma]$ such that $\gammatilde'^{t-1}$ is not trapped by $\gammatilde^{t-1}$ in $x^{t-1}$ and $\gammatilde'^t$ is trapped by $\gammatilde^t$ in~$x^t$.
Similarly, we say that \emph{$[\gamma']$ gets released by~$[\gamma]$ in step~$t$}, if there are representatives $\gammatilde'\in[\gamma']$ and $\gammatilde\in[\gamma]$ such that $\gammatilde'^{t-1}$ is trapped by $\gammatilde^{t-1}$ in $x^{t-1}$ and $\gammatilde'^t$ is not trapped by $\gammatilde^t$ in~$x^t$.

\begin{figure}[b!]
\makebox[0cm]{ 
\includegraphics[page=1]{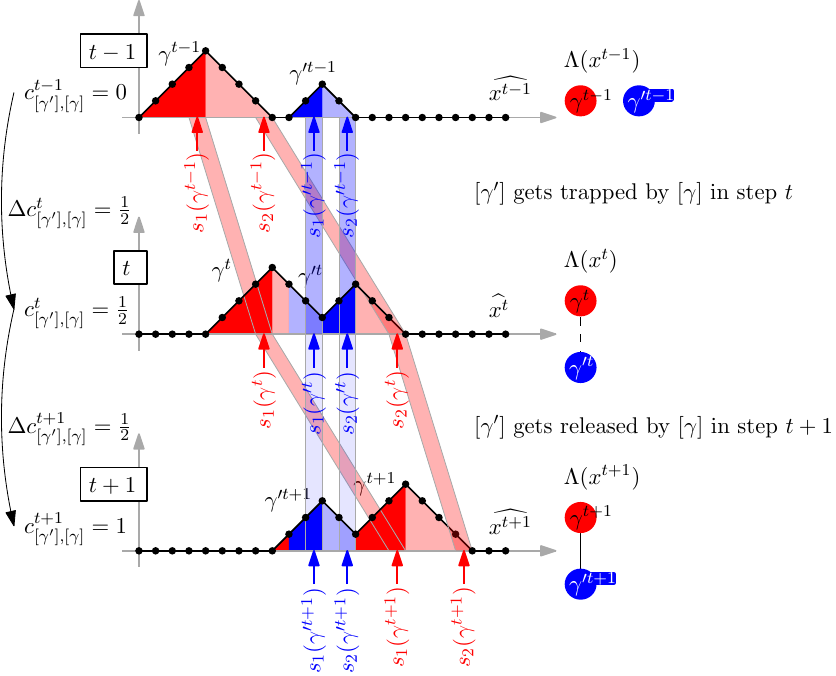}
}
\caption{Illustration of the overtaking counter~$c_{[\gamma'],[\gamma]}^t$.}
\label{fig:ct}
\end{figure}

To track the trapped/released events over time we introduce half-integral parameters~$\Delta c_{[\gamma'],[\gamma]}^t$ and~$c_{[\gamma'],[\gamma]}^t$, defined for any two equivalence classes~$[\gamma],[\gamma']\in\Gamma(x)/{\sim}$.
Specifically, we define
\marginpar{$c^t_{[\gamma'],[\gamma]}$}
\begin{subequations}
\label{eq:ct}
\begin{equation}
\label{eq:dct}
\Delta c_{[\gamma'],[\gamma]}^t:=
\begin{cases}
\frac{1}{2} & \text{$[\gamma']$ gets trapped by~$[\gamma]$ in step~$t$,} \\
\frac{1}{2} & \text{$[\gamma']$ gets released by~$[\gamma]$ in step~$t$,} \\
0 & \text{otherwise,}
\end{cases}
\end{equation}
for $t\geq 1$.
Based on this we define
\begin{equation}
c_{[\gamma'],[\gamma]}^t := \begin{cases}
0 & \text{if $t=0$}, \\
c_{[\gamma'],[\gamma]}^{t-1} + \Delta c_{[\gamma'],[\gamma]}^t & \text{if $t\geq 1$}.
\end{cases}
\end{equation}
\end{subequations}
Figure~\ref{fig:time} shows the evolution of these counters over several time steps for an example with four gliders.

\begin{figure}
\makebox[0cm]{ 
\includegraphics[page=2]{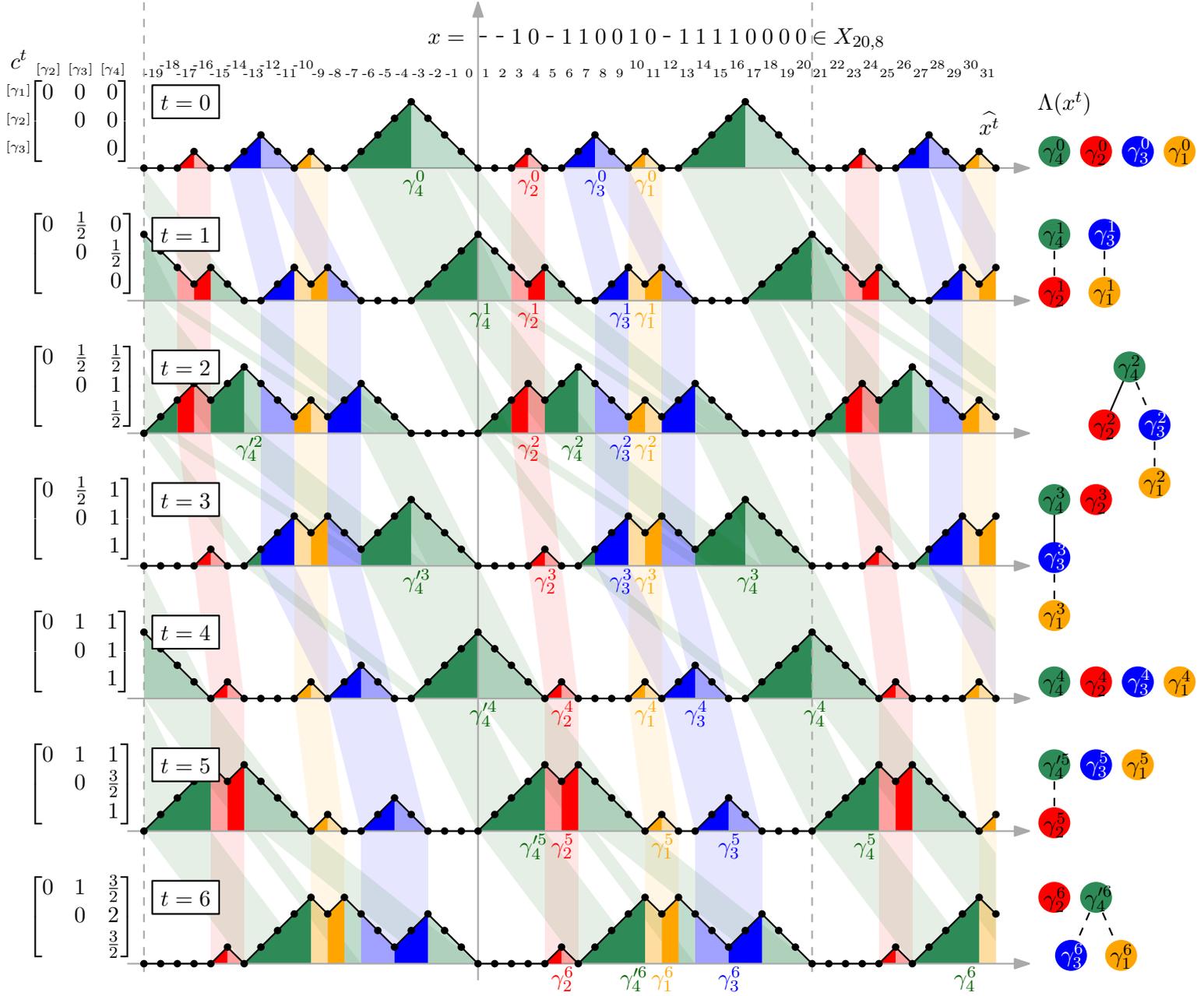}
}
\caption{Illustration of movement of gliders over time.}
\label{fig:time}
\end{figure}

We are now in position to formulate equations of motion that describe the movement of each glider as a function of time~$t$.
These equations involve the position~$s(\gamma)$ and the speed~$v(\gamma)$ of a glider~$\gamma$ introduced in Section~\ref{sec:pos-speed}, and the counters defined in~\eqref{eq:ct}.

\begin{proposition}
\label{prop:motion}
For any $x\in X_{n,k}$ and $\gamma\in\Gamma(x)$, the position of the glider~$\gamma$ at time~$t$ is given by
\begin{equation}
\label{eq:svt}
s(\gamma^t) = s(\gamma^0) + v(\gamma)\cdot t+\sum_{[\gamma']\in \Gamma(x)/{\sim}} 2v(\gamma') c_{[\gamma'],[\gamma]}^t-\sum_{[\gamma']\in \Gamma(x)/{\sim}} 2v(\gamma) c_{[\gamma],[\gamma']}^t.
\end{equation}
\end{proposition}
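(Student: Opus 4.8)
The plan is to prove the equation~\eqref{eq:svt} by induction on~$t$. The base case $t=0$ is immediate: all counters $c_{[\gamma'],[\gamma]}^0$ vanish and the right-hand side collapses to $s(\gamma^0)$. For the induction step, by~\eqref{eq:ct} it suffices to establish the following \emph{one-step identity} for every $\gamma\in\Gamma(x^{t-1})$, since then~\eqref{eq:svt} at time~$t$ follows by adding it to~\eqref{eq:svt} at time $t-1$:
\begin{equation*}
s(\gamma^t)-s(\gamma^{t-1})=v(\gamma)+\sum_{[\gamma']\in\Gamma(x)/{\sim}}2v(\gamma')\,\Delta c_{[\gamma'],[\gamma]}^t-\sum_{[\gamma']\in\Gamma(x)/{\sim}}2v(\gamma)\,\Delta c_{[\gamma],[\gamma']}^t .
\end{equation*}
I would prove this by distinguishing whether the glider moves in this step, i.e.\ whether $\gamma^{t-1}\in M(x^{t-1})$ or not, computing the left-hand side from the definition of~$g$ in~\eqref{eq:g} and the right-hand side from Lemma~\ref{lem:move-trapped}. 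Throughout both cases, all capturing and trapping data refers to the bitstring $x^{t-1}$, and one should read $\Delta c_{[\gamma'],[\gamma]}^t$ as counting, with weight~$\tfrac12$ each, the trap and release events between $[\gamma']$ and $[\gamma]$ occurring in step~$t$.

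\textbf{Moving glider.} Suppose $\gamma^{t-1}=:(A,B)\in M(x^{t-1})$. Then by~\eqref{eq:g1} we have $\gamma^t=(B,S(\gamma^{t-1}))$, so $s_1(\gamma^t)=\max B=s_2(\gamma^{t-1})$, and $s_2(\gamma^t)=\max S(\gamma^{t-1})=s^+(\gamma^{t-1})$ (the last equality holds because $s^+(\gamma^{t-1})>s_2(\gamma')\geq\max r(\gamma')$ for every captured glider $\gamma'$, so $s^+(\gamma^{t-1})$ is not removed in~\eqref{eq:Sgamma}). Hence $s(\gamma^t)-s(\gamma^{t-1})=\tfrac12\bigl(s^+(\gamma^{t-1})-s_1(\gamma^{t-1})\bigr)=\tfrac12|I(\gamma^{t-1})|$. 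Splitting $I(\gamma^{t-1})=[s_1(\gamma^{t-1})+1,s_2(\gamma^{t-1})]\cup[s_2(\gamma^{t-1})+1,s^+(\gamma^{t-1})]$ and using the decomposition~\eqref{eq:hill} of the hill $\wh{x}_{r(\gamma^{t-1})}$ together with Lemma~\ref{lem:capture}(i)--(ii) and~\eqref{eq:Sgamma}: the first interval consists of the $v(\gamma)$ downsteps of~$\gamma$ and the dents of $\wh{x}_{r(\gamma^{t-1})}$, hence has $v(\gamma)+\sum_{\delta\in T(x^{t-1},\gamma^{t-1})}2v(\delta)$ positions; the second interval is the disjoint union of $S(\gamma^{t-1})$ (of size $v(\gamma)$) and $\bigcup_{\gamma'\in C(x^{t-1},\gamma^{t-1})}r(\gamma')$. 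On the right-hand side, Lemma~\ref{lem:move-trapped}(i) gives $g(\gamma^{t-1})\in\Gammaf(f(x^{t-1}))$, so $\gamma$ is untrapped before and after the step and the last sum is~$0$; moreover the trap/release events with $[\gamma]$ acting as trapper are precisely the releases of the gliders in $T(x^{t-1},\gamma^{t-1})$ and the new trappings of the gliders in $T(f(x^{t-1}),g(\gamma^{t-1}))=\bigcup_{\gamma'\in C(x^{t-1},\gamma^{t-1})}\bigl(\{\gamma'\}\cup T(x^{t-1},\gamma')\bigr)$. Substituting, the one-step identity reduces to the counting identity
\begin{equation*}
\Bigl|\bigcup_{\gamma'\in C(x^{t-1},\gamma^{t-1})}r(\gamma')\Bigr|=\sum_{\gamma'\in C(x^{t-1},\gamma^{t-1})}\Bigl(2v(\gamma')+\sum_{\delta\in T(x^{t-1},\gamma')}2v(\delta)\Bigr).
\end{equation*}
This follows because $C(x^{t-1},\gamma^{t-1})$ is a downset of the subforest of $\Lambda(x^{t-1})$ induced by free gliders (the remark after~\eqref{eq:Cgamma}), so the union on the left is the disjoint union of the ranges of its maximal elements; since every step of a hill $\wh{x}_{r(\gamma')}$ belongs to exactly one glider equal to~$\gamma'$ or a descendant of~$\gamma'$, one has $|r(\gamma')|=\sum 2v(\epsilon)$ over all such~$\epsilon$, and among these the free ones reassemble to $C(x^{t-1},\gamma^{t-1})$ while the trapped ones reassemble to $\bigcup_{\gamma'}T(x^{t-1},\gamma')$ (using that each trapped glider lies in the range of a unique free ``trapping ancestor'').

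\textbf{Standing glider.} If $\gamma^{t-1}\notin M(x^{t-1})$, then $g(\gamma^{t-1})=\gamma^{t-1}$ by~\eqref{eq:g2}, hence $s(\gamma^t)=s(\gamma^{t-1})$, and the one-step identity becomes the assertion that the first sum is~$0$ and the second sum is $v(\gamma)$. The first sum vanishes because $T(f(x^{t-1}),g(\gamma^{t-1}))=T(x^{t-1},\gamma^{t-1})$ by Lemma~\ref{lem:move-trapped}(ii), so no trap/release event has $[\gamma]$ as trapper. For the second sum, $\gamma^{t-1}$ is either free (and then, being outside $M(x^{t-1})$, it is captured) or trapped. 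If it is free, let $\delta$ be its maximal capturer, which lies in $M(x^{t-1})$; then Lemma~\ref{lem:move-trapped}(i) shows that $\gamma^t$ becomes trapped by $[\delta]$ and by no other class, so the second sum is $2v(\gamma)\cdot\tfrac12=v(\gamma)$. If it is trapped, let $\delta$ be its unique free trapping ancestor; if $\delta\in M(x^{t-1})$ then $\gamma$ is released by $[\delta]$, while if $\delta\notin M(x^{t-1})$ then $\delta$ is captured and $\gamma^t$ becomes trapped by $[\delta]$'s maximal capturer; in either case the second sum again equals $v(\gamma)$.

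\textbf{Expected main obstacle.} I expect most of the work to lie in the moving case: verifying that the events listed are \emph{exactly} those occurring in the step, proving the counting identity for $\bigl|\bigcup r(\gamma')\bigr|$, and — the genuinely delicate point — checking that these events enter the sums of~\eqref{eq:svt} with the correct multiplicities once one passes from individual gliders to equivalence classes. The last point requires tracking representatives consistently and exploiting the $n$-periodicity of~$\wh{x}$ and of the glider partition; in particular one must rule out (or account for) a single class being simultaneously trapped and released in the same step via two different representatives. The standing case is essentially a direct consequence of Lemma~\ref{lem:move-trapped}, subject to the same care with equivalence classes.
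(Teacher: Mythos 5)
Your proof is correct and follows essentially the same route as the paper: reduce to the one-step identity, split on $\gamma^{t-1}\in M(x^{t-1})$ versus not, and invoke Lemma~\ref{lem:move-trapped} to translate changes in trapping into $\Delta c$-increments. The only organizational difference is in the moving case, where the paper computes $\Delta s_1$ and $\Delta s_2$ separately and reads off $\sum_{\gamma'\in T(x^t,\gamma^t)} 2v(\gamma')$ directly from the dent structure of $\wh{f(x)}_{r(g(\gamma))}$ via Lemma~\ref{lem:move}, thereby sidestepping your explicit counting identity for $\bigl|\bigcup_{\gamma'\in C}r(\gamma')\bigr|$; the two are arithmetically equivalent.
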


Note that if $\gamma'$ is trapped by~$\gamma$, then $v(\gamma')<v(\gamma)$ by Lemma~\ref{lem:child-props}.
Consequently, if $v(\gamma')\geq v(\gamma)$ then~$[\gamma']$ does not get trapped and does not get released by~$[\gamma]$ in any time step~$t\geq 1$, so~\eqref{eq:ct} directly yields that $c_{[\gamma'],[\gamma]}^t=0$ for all $t\geq 0$.
It follows that the only nonzero contributions to the first and second summation in~\eqref{eq:svt} come from gliders with $v(\gamma')<v(\gamma)$ or with $v(\gamma')>v(\gamma)$, respectively.

The equation~\eqref{eq:svt} has the following physical interpretation:
The first part of the equation $s(\gamma^t)=s(\gamma^0)+v(\gamma)\cdot t$ captures that the motion of~$\gamma$ is uniform with speed~$v(\gamma)$.
This is all that happens if~$\gamma$ never interacts with any other gliders, which occurs precisely if all gliders have the same speed.
Now let us discuss the additional terms in~\eqref{eq:svt} that make the motion non-uniform, in case gliders of different speeds are present.
The first summation in~\eqref{eq:svt}, which has a positive sign, corresponds to a boost whenever a slower glider~$[\gamma']$, i.e., one with $v(\gamma')<v(\gamma)$, gets trapped by~$[\gamma]$.
The second summation in~\eqref{eq:svt}, which has a negative sign, corresponds to a delay whenever~$[\gamma]$ gets trapped by a faster glider~$[\gamma']$, i.e., one with $v(\gamma')>v(\gamma)$.
Note that when $[\gamma']$ gets trapped or released by~$[\gamma]$ in step~$t$, then we see a change of $+2v(\gamma')\Delta c_{[\gamma'],[\gamma]}^t=+2v(\gamma')\cdot\frac{1}{2}=+v(\gamma')$ (recall~\eqref{eq:dct}) in the equation for~$\gamma$, and a change of $-2v(\gamma')\Delta c_{[\gamma'],[\gamma]}^t=-v(\gamma')$ in the equation for~$\gamma'$, i.e., two terms with the same absolute value but opposite signs.
This can be seen as `energy conservation' in the system of gliders.
For the slower glider~$\gamma'$, the uniform change in position by~$v(\gamma')$ and the delay term $-v(\gamma')$ cancel each other out, so effectively it does not change position in the two steps where it gets trapped and released; see Figure~\ref{fig:ct}.
On the other hand, the total boost received by the faster glider~$\gamma$ in these two steps is $2v(\gamma')$, which is twice the speed of the slower glider, or equivalently, the total number of its steps in the Motzkin path (number of $\ustep$-steps plus $\dstep$-steps; recall~\eqref{eq:speed}).

\begin{proof}
We calculate the change in position of a glider in the step from~$t-1$ to~$t$.
To prove~\eqref{eq:svt} we will show that this change equals
\begin{align}
\Delta s(\gamma^t)&:=s(\gamma^t)-s(\gamma^{t-1}) \notag \\
&=v(\gamma)+\sum_{[\gamma']\in \Gamma(x)/{\sim}} 2v(\gamma') \Delta c_{[\gamma'],[\gamma]}^t-\sum_{[\gamma']\in \Gamma(x)/{\sim}} 2v(\gamma) \Delta c_{[\gamma],[\gamma']}^t. \label{eq:Delta-s1}
\end{align}

We first consider a glider~$\gamma^{t-1}\in M(x^{t-1})$.
Note that $\gamma^{t-1}$ is free by assumption, and $\gamma^t$ is free by Lemma~\ref{lem:move-trapped}~(i), and therefore the second summation on the right hand side of~\eqref{eq:Delta-s1} equals~0.

Recall from~\eqref{eq:speed} that for any glider~$\gamma'=(A,B)\in \Gamma(x)$, we have $v(\gamma')=|A|=|B|$, and therefore the total number of its steps on the Motzkin path is $2v(\gamma')=2|A|=2|B|$.

Using this simple observation, the definitions~\eqref{eq:s12}, as well as~\eqref{eq:g1}, Lemma~\ref{lem:capture}~(ii) and Lemma~\ref{lem:move-trapped}~(i), we get
\begin{align*}
\Delta s_2(\gamma^t)&:=s_2(\gamma^t)-s_2(\gamma^{t-1})=v(\gamma)+\sum_{\gamma'\in T_{x^t}(\gamma^t)} 2v(\gamma'), \\
\Delta s_1(\gamma^t)&:=s_1(\gamma^t)-s_1(\gamma^{t-1})=v(\gamma)+\sum_{\gamma'\in T_{x^{t-1}}(\gamma^{t-1})} 2v(\gamma');
\end{align*}
see also Figure~\ref{fig:ct}.
Combining these two equations via~\eqref{eq:pos} we obtain
\begin{equation}
\label{eq:Delta-s2}
\Delta s(\gamma^t)=\frac{1}{2}\big(\Delta s_1(\gamma^t)+\Delta s_2(\gamma^t)\big)=v(\gamma)+\sum_{\gamma'\in T_{x^t}(\gamma^t)} v(\gamma') + \sum_{\gamma'\in T_{x^{t-1}}(\gamma^{t-1})} v(\gamma').
\end{equation}
We consider the first summation on the right hand side of~\eqref{eq:Delta-s2}.
By Lemma~\ref{lem:move-trapped}~(i), $T_{x^t}(\gamma^t)=C_{x^{t-1}}(\gamma^{t-1})\cup \bigcup_{\gamma'\in C_{x^{t-1}}(\gamma^{t-1})} T_{x^{t-1}}(\gamma')$, and these are precisely the gliders~$\gamma'\in\Gamma(x)$ for which $[\gamma']$ gets trapped by~$[\gamma]$ in step~$t$, i.e., we have $\Delta c_{[\gamma'],[\gamma]}^t=\frac{1}{2}$ by the first case in~\eqref{eq:dct} and therefore $v(\gamma')=2v(\gamma')\Delta c_{[\gamma'],[\gamma]}^t$ for such gliders.
Now consider the second summation on the right hand side of~\eqref{eq:Delta-s2}.
By Lemma~\ref{lem:move-trapped}~(i), none of the gliders in~$T_{x^{t-1}}(\gamma^{t-1})$ are trapped by~$\gamma^t$ in~$x^t$, so these are precisely the gliders~$\gamma'\in\Gamma(x)$ for which $[\gamma']$ gets released by~$[\gamma]$ in step~$t$, i.e., we have $\Delta c_{[\gamma'],[\gamma]}^t=\frac{1}{2}$ by the second case in~\eqref{eq:dct} and therefore $v(\gamma')=2v(\gamma')\Delta c_{[\gamma'],[\gamma]}^t$.

This shows that the right hand side of~\eqref{eq:Delta-s2} equals \eqref{eq:Delta-s1} (recall that the second summation in~\eqref{eq:Delta-s1} equals~0), which completes the proof for gliders~$\gamma^{t-1}\in M(x^{t-1})$.

It remains to consider a glider~$\gamma^{t-1}\notin M(x^{t-1})$, i.e., we have $\gamma^t=\gamma^{t-1}$, so $\Delta(s(\gamma^t))=0$.
By Lemma~\ref{lem:move-trapped}~(ii) the first summation on the right hand side of~\eqref{eq:Delta-s1} equals~0.
Furthermore, by Lemma~\ref{lem:move-trapped} there is a unique glider~$\gamma'\in \Gamma(x)$ such that $\gamma^{t-1}$ is trapped by $\gamma'^{t-1}$ and $\gamma^t=\gamma^{t-1}$ is not trapped by~$\gamma'^t$, or $\gamma^{t-1}$ is not trapped by~$\gamma'^{t-1}$ and $\gamma^t$ is trapped by~$\gamma'^t$.
It follows that we have $\Delta c_{[\gamma],[\gamma']}^t=\frac{1}{2}$ for this unique glider~$\gamma'$ and therefore \eqref{eq:Delta-s1} evaluates to $\Delta s(\gamma^t)=v(\gamma)-2v(\gamma)\Delta c_{[\gamma],[\gamma']}^t=v(\gamma)-2v(\gamma)\frac{1}{2}=0$, as it should.
\end{proof}

\subsection{Analyzing the equations}
\label{sec:analysis}

In this section, we analyze the equations of motion derived in the previous section, by showing that the corresponding coefficient matrix is non-singular.

For any bitstring~$x\in X_{n,k}$, we consider the cycle~$C(x)$ defined in~\eqref{eq:Cx}.
As the length of the cycle is finite, there is a minimum number~$T>0$, such that $x^T=x^0$ and within $T$ time steps from~$x^0$ to~$x^T$ every glider~$\gamma\in\Gamma(x)$ has traveled an integral multiple of~$n$ many steps. \marginpar{$T$}
In other words, after $T$ steps, in~$x^T$ a glider $\gamma'$ from the equivalence class~$[\gamma^T]$ satisfies~$\gamma'=\gamma^0$.
Note that~$T$ is not necessarily the length of the cycle, but a multiple of it.
For example, for $x=100100\in X_{6,2}$ we have $C(x)=(100100,010010,001001)$, i.e., the cycle has length 3, but we have $T=6$, as it takes 6 time steps until the gliders of speed~1 have traveled $n=6$ steps.

We consider the equivalence classes of gliders~$\Gamma(x)/{\sim}$, and we label one representative from each class by~$\gamma_i$, $i=1,\ldots,\nu$, \marginpar{$\gamma_i,v_i$} $\nu=\nu(x)$, in non-decreasing order of their speeds, i.e., for $v_i:=v(\gamma_i)$ we have $v_1\leq v_2\leq \cdots\leq v_\nu$. 

The definition of~$T$ gives rise to the system of equations
\begin{equation}
\label{eq:Delta-s-T}
s(\gamma_i^T)-s(\gamma_i^0)=c_i n, \quad \text{for } i=1,\ldots,\nu,
\end{equation}
where the non-negative integers~$c_i$, $i=1,\ldots,\nu$, \marginpar{$c_i$} specify how many multiples of~$n$ the glider~$\gamma^i$ travels between time~$0$ to time~$T$.
We define \marginpar{$c_{i,j}$}
\begin{equation}
\label{eq:cij}
c_{i,j}:=c_j-c_i
\end{equation}
for indices $1\leq i\leq j\leq \nu$.

\begin{lemma}
We have
\begin{equation}
\label{eq:c1cnu}
c_1\leq c_2\leq \cdots\leq c_\nu,
\end{equation}
with equalities $c_i=c_{i+1}$ if $v_i=v_{i+1}$, and therefore $c_{i,j}\geq 0$.
Furthermore, we have
\begin{equation}
\label{eq:cijgamma}
c_{i,j}=c_{[\gamma_i],[\gamma_j]}^T.
\end{equation}
\end{lemma}

\begin{proof}
For any $\gamma\in\Gamma(x)$ let $\thetatilde_\gamma$ be the function on the set of integers~$[0,T]$ defined by $t\mapsto s_2(\gamma^t)$ for all $t\in[0,T]$, and let $\theta_\gamma$ be the function on the set of real numbers from~$0$ to~$T$ obtained from $\thetatilde_\gamma$ by piecewise linear interpolation.
Furthermore, we let $\Theta_{[\gamma]}$ be the union of the graphs of~$\theta_{\gamma'}$ for all $\gamma'\in[\gamma]$.
An intersection point of the graphs of~$\theta_\gamma$ and~$\theta_{\gamma'}$ between two arguments~$t-1,t\in[0,T]$ is called \emph{positive} if the function~$\theta_\gamma-\theta_{\gamma'}$ changes its sign from negative to positive, formally $\theta_\gamma(t-1)=s_2(\gamma^{t-1})<\theta_{\gamma'}(t-1)=s_2(\gamma'^{t-1})$ and $\theta_\gamma(t)=s_2(\gamma^t)>\theta_{\gamma'}(t)=s_2(\gamma'^t)$.
By Lemma~\ref{lem:g-welldef}~(iii) the quantity $c_{i,j}=c_j-c_i$ for indices $1\leq i\leq j\leq \nu$ equals the number of intersection points of $\theta_{\gamma_j}$ and $\Theta_{[\gamma_i]}$, and as $v_i\leq v_j$ all intersection points are positive, implying that $c_i\leq c_j$, which proves~\eqref{eq:c1cnu}.
Lemma~\ref{lem:g-welldef}~(ii) shows that if $v_i=v_j$ then there are no intersection points of $\theta_{\gamma_j}$ and $\Theta_{[\gamma_i]}$ and therefore $c_i=c_j$.
This completes the proof of the first part of the lemma.
Furthermore, using~\eqref{eq:ct} and the second part of Lemma~\ref{lem:move-trapped}~(i), the number of intersection points of $\theta_{\gamma_j}$ and $\Theta_{[\gamma_i]}$ equals~$c_{[\gamma_i],[\gamma_j]}^T$, which proves~\eqref{eq:cijgamma}.
\end{proof}

The definition~\eqref{eq:cij} directly gives the two relations
\begin{equation}
\label{eq:c1ij}
\begin{split}
c_i&=c_1+c_{1,i}, \\
c_{i,j}&=c_{1,j}-c_{1,i},
\end{split}
\end{equation}
valid for all $1\leq i\leq j\leq \nu$.
The equation of motion~\eqref{eq:svt} guaranteed by Proposition~\ref{prop:motion} for the glider~$\gamma_i$ at time~$t=T$ combined with~\eqref{eq:Delta-s-T} and~\eqref{eq:cijgamma} reads
\begin{equation}
\label{eq:cij-T}
v_iT+\sum_{1\leq j<i} 2v_j c_{j,i}-\sum_{i<j\leq \nu} 2v_i c_{i,j}=c_i n, \quad \text{for } i=1,\ldots,\nu.
\end{equation}
Indeed, the first summation in~\eqref{eq:svt} only gives nonzero contributions for gliders~$[\gamma']=[\gamma_j]\in\Gamma(x)/{\sim}$ with $v(\gamma_j)<v(\gamma_i)$, which translates into the summation range $1\leq j<i$, and the second summation only gives nonzero contributions for gliders~$[\gamma']=[\gamma_j]\in\Gamma(x)/{\sim}$ with $v(\gamma_j)>v(\gamma_i)$, which translates into the summation range $i<j\leq \nu$.
Eliminating from~\eqref{eq:cij-T} all coefficients~$c_i$ and~$c_{i,j}$ with $i>1$ with the help of~\eqref{eq:c1ij} we obtain the linear system
\begin{equation}
\label{eq:M}
\begin{bmatrix}
1 \\ 1 \\ 1 \\ 1 \\ \vdots \\ 1
\end{bmatrix}
c_1n
=
\underbrace{
\begin{bmatrix}
v_1 & -2v_1 & -2v_1 & -2v_1 & \cdots & -2v_1 \\
v_2 & \scriptstyle V_2-2v_2-n & -2v_2 & -2v_2 & \cdots & -2v_2 \\
v_3 & -2v_2 & \scriptstyle V_3-2v_3-n & -2v_3 & \cdots & -2v_3 \\
v_4 & -2v_2 & -2v_3 & \scriptstyle V_4-2v_4-n & \cdots & -2v_4 \\
\vdots & \vdots & \vdots & \vdots & \ddots & \vdots \\
v_\nu & -2v_2 & -2v_3 & -2v_4 & \cdots & \scriptstyle V_\nu-2v_\nu-n
\end{bmatrix}
}_{=:M}
\begin{bmatrix}
T \\ c_{1,2} \\ c_{1,3} \\ c_{1,4} \\ \vdots \\ c_{1,\nu}
\end{bmatrix},
\end{equation}
where
\begin{equation*}
V_i:=\sum_{j=1}^\nu 2v_{\min\{i,j\}}, \quad \text{for } i=2,\ldots,\nu.
\end{equation*}
These are $\nu$ homogeneous equations in the $\nu+1$ integral non-negative unknowns $T,c_1,c_{1,2},\ldots,c_{1,\nu}$.

\begin{lemma}
\label{lem:detM}
The coefficient matrix~$M$ of the linear system~\eqref{eq:M} satisfies
\begin{equation}
\label{eq:detM}
\det M=(-1)^{\nu-1}v_1\prod_{i=2}^\nu (n-V_i)\neq 0.
\end{equation}
\end{lemma}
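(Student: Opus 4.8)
The plan is to compute $\det M$ by a sequence of elementary row and column operations that exploit the special structure of $M$: its first column is $(v_1,v_2,\dots,v_\nu)^\top$, and for $i\geq 2$ the $i$-th column has the constant entry $-2v_i$ in every row $j\neq i$ and the distinguished diagonal entry $V_i-2v_i-n$ in row $i$. First I would subtract, for each $i=2,\dots,\nu$, a suitable multiple of column $1$ from column $i$ — or, more cleanly, perform the row operation "subtract row $1$ (scaled) from every other row" — to kill the common constant $-2v_i$ off the diagonal. Concretely, replace row $j$ by row $j$ minus $\tfrac{v_j}{v_1}$ times row $1$ for $j=2,\dots,\nu$; since row $1$ is $(v_1,-2v_1,-2v_1,\dots,-2v_1)$, this zeroes out every off-diagonal entry in columns $2,\dots,\nu$ of rows $2,\dots,\nu$, and it also makes the first-column entries in rows $2,\dots,\nu$ vanish. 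The resulting matrix is block triangular: the $(1,1)$ entry is $v_1$ in the first column, the first row is unchanged, and the bottom-right $(\nu-1)\times(\nu-1)$ block becomes \emph{diagonal}, with $i$-th diagonal entry equal to $(V_i-2v_i-n) - \tfrac{v_i}{v_1}\cdot(-2v_1) = V_i - 2v_i - n + 2v_i = V_i - n$.

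Carrying this out, after the row operations $M$ is reduced to a matrix whose determinant is visibly $v_1 \cdot \prod_{i=2}^\nu (V_i - n)$ up to sign, since elementary row operations of the "add a multiple of one row to another" type do not change the determinant. Expanding along the first column (which now has only the entry $v_1$ in position $(1,1)$ nonzero) gives $\det M = v_1 \cdot \det(\text{diagonal block}) = v_1\prod_{i=2}^\nu(V_i-n)$; pulling out the sign to match the stated form $(-1)^{\nu-1}v_1\prod_{i=2}^\nu(n-V_i)$ is just $\prod_{i=2}^\nu(V_i-n) = (-1)^{\nu-1}\prod_{i=2}^\nu(n-V_i)$.

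It remains to show $\det M \neq 0$, i.e. that $V_i \neq n$ for all $i=2,\dots,\nu$. Here I would unwind the definition $V_i = \sum_{j=1}^\nu 2v_{\min\{i,j\}} = 2\big(\sum_{j<i} v_j\big) + 2(\nu - i + 1)v_i$, and use the two structural facts available: the speeds are sorted $v_1\leq\cdots\leq v_\nu$ with $\sum_{j=1}^\nu v_j = k$ (equation~\eqref{eq:sum-speeds}), and $n\geq 2k+1$. From sortedness, $\sum_{j<i} v_j + (\nu-i+1)v_i \geq \sum_{j=1}^\nu v_j = k$, with the inequality being an equality only in a degenerate case; in fact since $\nu-i+1\geq 1$ and $v_i\geq v_j$ for $j\geq i$ we get $V_i \geq 2k$, hence $V_i \geq 2k$ could at worst equal $n-1$ but never $n$ when... — the clean bound is $V_i \le 2k < 2k+1 \le n$ is the wrong direction, so instead I note $V_i \ge 2k$ would give $V_i$ possibly large, so the correct argument is the \emph{upper} bound: $V_i = 2\sum_{j<i}v_j + 2(\nu-i+1)v_i \le 2\sum_{j=1}^\nu v_j$ fails too. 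The main obstacle, and the step requiring genuine care, is precisely pinning down the right inequality: one must observe that $V_i/2 = \sum_{j<i} v_j + (\nu-i+1) v_i$ and compare this to $k = \sum_j v_j = \sum_{j<i}v_j + \sum_{j\ge i} v_j$; since there are exactly $\nu-i+1$ terms in $\sum_{j\ge i}v_j$, each at most $v_\nu$ but each at least $v_i$, we get $\sum_{j\ge i} v_j \le (\nu-i+1)v_\nu$ and $\ge (\nu-i+1)v_i$, giving $V_i/2 \ge k$, so $V_i \ge 2k$. To rule out $V_i = n$ one then needs $V_i \ne 2k+1, 2k+2, \ldots$; this cannot follow from $V_i\ge 2k$ alone, so the actual proof must instead bound $V_i$ from above using $i\ge 2$: then $\sum_{j<i}v_j \ge v_1 \ge 1$ so at most $\nu - 1$ of the speeds sit in the "$\ge i$" tail, and a short computation bounding $V_i \le 2k$ (using that $v_1 + \dots + v_{i-1}$ "absorbs" the overcounting when $i\ge 2$) shows $2k \le V_i$ and $V_i \le 2k$... — I would resolve this by the direct estimate $V_i - 2k = (\nu-i)v_i - \sum_{j>i}(v_j - v_i) \cdot(-1) $ and checking it lies in $[0, \cdot]$; in any event, once $V_i \le 2k < 2k+1 \le n$ or the analogous strict inequality is established, $V_i \ne n$ follows and the lemma is proved. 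I expect this final sign-and-range bookkeeping to be the only nonroutine part; the determinant computation itself is mechanical once the row-reduction idea is in place.
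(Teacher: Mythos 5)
Your determinant computation is essentially the paper's argument (the paper adds twice the first column to every other column, making $M$ lower triangular; you perform the transposed operation of subtracting $\tfrac{v_j}{v_1}\cdot\text{row}_1$ from row $j$, which also works once you note $v_1>0$). One small slip: after your row operations the bottom-right $(\nu-1)\times(\nu-1)$ block is not diagonal but \emph{lower triangular}, since the entry at $(j,m)$ with $2\leq m<j$ becomes $-2v_m+2v_j=2(v_j-v_m)$, which is generally nonzero. The conclusion $\det M = v_1\prod_{i=2}^\nu(V_i-n)$ still follows because triangularity suffices.

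The genuine gap is in the non-singularity argument. You never pin down the inequality and in fact assert the wrong direction of sortedness: you write ``$v_i\geq v_j$ for $j\geq i$,'' but the paper fixes $v_1\leq v_2\leq\cdots\leq v_\nu$, so for $j\geq i$ we have $v_j\geq v_i$. With the correct direction, $\sum_{j\geq i}v_j\geq(\nu-i+1)v_i$, and hence
\begin{equation*}
\tfrac{1}{2}V_i \;=\; \sum_{j<i}v_j + (\nu-i+1)v_i \;\leq\; \sum_{j<i}v_j + \sum_{j\geq i}v_j \;=\; k,
\end{equation*}
so $V_i\leq 2k<2k+1\leq n$, giving $n-V_i>0$ and $\det M\neq 0$. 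Your write-up flips between ``$V_i\geq 2k$'' and ``$V_i\leq 2k$'' without resolving the confusion, and the concluding sentence (``once $V_i\leq 2k<2k+1\leq n$ \dots is established'') correctly names the needed bound but does not actually establish it. The proof is therefore incomplete: the one-line inequality $V_i\leq 2k$ from the correct sorting direction is the missing step.
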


\begin{proof}
Adding twice the first column of~$M$ to every other column creates a matrix that has 0s above the diagonal, and with $i$th diagonal entry equal to $-(n-V_i)$ for all $i=2,\ldots,\nu$.
Multiplying those values on the diagonal yields the claimed determinant.
Note that $V_i\leq \sum_{j=1}^\nu 2v_i=2k$ (recall~\eqref{eq:sum-speeds}), and therefore $n-V_i\geq n-2k>0$, implying that the product in~\eqref{eq:detM} is nonzero.
\end{proof}

With similar tricks we could evaluate all cofactors of~$M$, and thus obtain an explicit solution of the system~\eqref{eq:M}.
However, this is not needed for the rest of this paper, and so we omit these calculations.

\subsection{Every glider moves eventually}

As mentioned before, gliders with the minimum speed play an important role in our arguments.
Clearly, a glider of the maximum speed moves in every time step, as it cannot be trapped by any other glider (recall Lemma~\ref{lem:child-props}).
On the other hand, a glider of the minimum speed may be trapped, and hence does not move, for several consecutive time steps by various faster gliders; see Figures~\ref{fig:open} and~\ref{fig:time}.
In fact, it is easy to construct examples where a glider is trapped for an arbitrarily long interval of time (by adding gliders of increasing speed on the left, which increases $n$ and~$k$).
Nevertheless, the next lemma asserts that gliders of the minimum speed cannot be trapped indefinitely, but eventually will move forward.
This property is true more generally for gliders of any speed.

\begin{lemma}
\label{lem:forward}
For any glider~$\gamma\in\Gamma(x)$, there is a $t>0$ such that $s(\gamma^t)-s(\gamma^0)>0$.
\end{lemma}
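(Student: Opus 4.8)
Throughout, I work with the quantity $T>0$ and the representatives $\gamma_1,\dots,\gamma_\nu$ fixed at the start of Section~\ref{sec:analysis}. The plan is to show that the displacement of every glider over one full period is strictly positive, i.e. $s(\gamma^T)-s(\gamma^0)>0$; since $T>0$, taking $t:=T$ then proves the lemma. Two easy reductions come first. Since $\wh{x}$ is $n$-periodic, the glider partition and the bijection~$g$ are equivariant under shifting by~$n$, so $s(\gamma^t)-s(\gamma^0)$ depends only on the equivalence class $[\gamma]$; hence it suffices to bound $s(\gamma_i^T)-s(\gamma_i^0)$ for $i=1,\dots,\nu$. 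Moreover, the position of any glider is non-decreasing over time: by~\eqref{eq:g2}, $\gamma^t\notin M(x^t)$ gives $\gamma^{t+1}=\gamma^t$ and hence $s(\gamma^{t+1})=s(\gamma^t)$, while by the remark after Lemma~\ref{lem:open-move}, $\gamma^t\in M(x^t)$ gives $s(\gamma^{t+1})>s(\gamma^t)$; either way $s(\gamma^{t+1})\geq s(\gamma^t)$.

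The key step uses Lemma~\ref{lem:detM}. Writing $\mathbf v:=(T,c_{1,2},\dots,c_{1,\nu})^{\top}$, the homogeneous system~\eqref{eq:M} reads $M\mathbf v=c_1 n\,\mathbf 1$. If $c_1=0$, then $M\mathbf v=\mathbf 0$, and since $\det M\neq 0$ this forces $\mathbf v=\mathbf 0$, in particular $T=0$, contradicting $T>0$. Hence $c_1\neq 0$. On the other hand, \eqref{eq:Delta-s-T} at $i=1$ gives $s(\gamma_1^T)-s(\gamma_1^0)=(c_1+c_{1,1})n=c_1 n$, using that $c_{1,1}=c_{[\gamma_1],[\gamma_1]}^T=0$ (a glider can never be trapped by a translate of itself, as that would require strictly larger speed by Lemma~\ref{lem:child-speed}). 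By the monotonicity noted above, $c_1 n=s(\gamma_1^T)-s(\gamma_1^0)\geq 0$, so $c_1\geq 0$, and together with $c_1\neq 0$ we obtain $c_1>0$. Thus the slowest glider satisfies $s(\gamma_1^T)-s(\gamma_1^0)=c_1 n>0$.

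Finally, I would extend this to every glider using that the overtaking counters are non-negative: by~\eqref{eq:ct} each $c_{[\gamma_1],[\gamma_i]}^t$ starts at~$0$ and is non-decreasing in~$t$, so $c_{1,i}=c_{[\gamma_1],[\gamma_i]}^T\geq 0$. Therefore, by~\eqref{eq:Delta-s-T}, $s(\gamma_i^T)-s(\gamma_i^0)=(c_1+c_{1,i})n\geq c_1 n>0$ for every $i=1,\dots,\nu$, and the two reductions from the first paragraph complete the proof.

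The only genuinely delicate point is linking the purely linear-algebraic statement ``$M$ is non-singular'' to the sign information actually needed: non-singularity alone yields only $c_1\neq 0$, and it is the combinatorial fact that gliders never move backward that upgrades this to $c_1>0$. This is precisely why the argument cannot avoid invoking~\eqref{eq:g2} together with the remark after Lemma~\ref{lem:open-move}. The remaining pieces — $c_{1,1}=0$, $c_{1,i}\geq 0$, and the reduction to class representatives — are immediate from the definitions, so I expect no real difficulty there.
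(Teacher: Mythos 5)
Your proposal is correct and follows essentially the same route as the paper: both hinge on Lemma~\ref{lem:detM} to conclude $c_1\neq 0$ from non-singularity of $M$, and both then use the displacement at time~$T$ to conclude. The paper's version is terser — it states without elaboration that it suffices to prove the claim for a glider of minimum speed, and proceeds by contradiction from $c_1=0$ — whereas you spell out the two combinatorial facts the paper leaves implicit: that positions are non-decreasing (via~\eqref{eq:g2} and the remark after Lemma~\ref{lem:open-move}), which upgrades $c_1\neq 0$ to $c_1>0$, and that $c_{1,i}\geq 0$, which transfers the conclusion from the slowest glider to all gliders.
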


Maybe surprisingly, we do not have a purely combinatorial proof for this lemma.
Instead, our proof relies on the earlier determinant computation.

\begin{proof}
If $s(\gamma^t)-s(\gamma^0)=0$ for all $t>0$, then in particular $s(\gamma^T)-s(\gamma^0)=0$, i.e., by~\eqref{eq:Delta-s-T} and the monotonicity~\eqref{eq:c1cnu} we would have $c_1=0$.
However, if $c_1=0$, then the left-hand side of~\eqref{eq:M} is the 0-vector, so by Lemma~\ref{lem:detM} the linear system~\eqref{eq:M} only has the trivial solution $T=0$ and $c_{1,2}=c_{1,3}=\cdots=c_{1,\nu}=0$.
\end{proof}

We consider a bitstring~$x\in X_{n,k}$ under parenthesis matching, and we say that a pair of matched bits is \emph{visible}, \marginpar{visible} if there is no pair of matched bits around it.
For example, the visible pairs of matched bits in the bitstring from Figure~\ref{fig:matching} are highlighted by shading in Figure~\ref{fig:visible}.

\begin{figure}[h!]
\includegraphics[page=2]{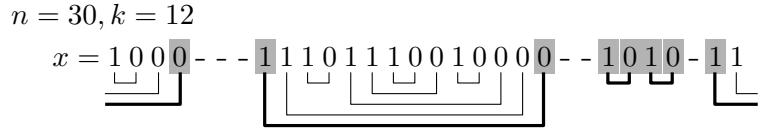}
\caption{Visible pairs of matched bits highlighted in the bitstring~$x$ from Figure~\ref{fig:matching}.}
\label{fig:visible}
\end{figure}

The next lemma asserts that for any glider~$\gamma\in\Gamma(x)$ of the minimum speed that is rightmost in its train, while moving along the cycle~$C(x)$ this glider visits every possible position~$i\in[n]$, both with its 0-bits and with its 1-bits, and moreover it is open in those steps, which implies that its outermost pair of matched bits is visible.
Recall that the positions of steps of~$\gamma$ in the infinite Motzkin path~$\wh{x}$ translate to positions of bits in the finite string~$x$ by considering them modulo~$n$, with $1,\ldots,n$ as representatives of the equivalence classes.
In this sense each step of~$\gamma$ corresponds to a bit in~$x$, specifically every $\ustep$-step corresponds to a matched~1 and every $\dstep$-step corresponds to a matched~0.
This lemma is illustrated in Figure~\ref{fig:open}.

\begin{lemma}
\label{lem:flip-vertex}
Let~$x\in X_{n,k}$ and let $\gamma\in\Gamma(x)$ be the rightmost glider in a train of the minimum speed~$v(\gamma)=\min V(x)$.
Then for $b\in\{0,1\}$ and for any position~$i\in [n]$ there is a $t>0$ such that~$\gamma^t$ is open and one of its $b$-bits is at position~$i$ in~$x^t$.
\end{lemma}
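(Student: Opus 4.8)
The plan is to track the chosen glider~$\gamma$ along the cycle~$C(x)$ and show that (a)~it is open infinitely often, and (b)~the positions it occupies when it is open are unbounded (in~$\wh{x}$), hence they hit every residue class modulo~$n$. For~(a), recall from Lemma~\ref{lem:clean} that $\gamma^t$ is clean for every~$t$, since $v(\gamma)=\min V(x)$ is preserved along the cycle by Lemma~\ref{lem:Vx-invariant}. By Lemma~\ref{lem:forward} applied to~$\gamma$ (and to each of its images, or equivalently by iterating), there are infinitely many time steps~$t$ with $s(\gamma^{t})>s(\gamma^{t-1})$, i.e.\ steps in which $\gamma$ moves. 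Whenever $\gamma$ moves in step~$t$, the glider that moved is $\gamma^{t-1}\in M(x^{t-1})$, and since $\gamma^{t-1}$ is the rightmost glider in its train of minimum speed (this property is preserved along the cycle: the train structure is invariant by Lemma~\ref{lem:Zx-invariant}, and minimum speed is preserved), Lemma~\ref{lem:move-open} tells us that $\gamma^{t}$ is open. So $\gamma^{t}$ is open at infinitely many times~$t$, and at each such time all $2v(\gamma)$ steps of the clean glider $\gamma^{t}$ are consecutive, with the $v(\gamma)$ many $1$-bits immediately followed by the $v(\gamma)$ many $0$-bits in~$x^{t}$.

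For~(b), I would argue that the set of positions~$s(\gamma^{t})$ for~$t$ ranging over the "open" times is unbounded above in~$\mathbb{Z}$ (recall we work in the infinite string~$\wh{x}$, where $\gamma$ continually moves towards~$+\infty$). Indeed, by Proposition~\ref{prop:motion} and the non-triviality established via Lemma~\ref{lem:detM} (as used in the proof of Lemma~\ref{lem:forward}, $c_1\neq 0$), after $T$ time steps the glider~$\gamma$ has advanced by exactly $c_1 n$ with $c_1\geq 1$, so $s(\gamma^{mT})=s(\gamma^0)+m c_1 n$ grows without bound as $m\to\infty$. Since between any two of these checkpoints~$\gamma$ is open at least once (there is always a moving step, hence an open step, within each period by Lemma~\ref{lem:forward}), the positions of~$\gamma$ at open times form a sequence tending to~$+\infty$. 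Now I claim that two consecutive open positions of~$\gamma$ differ by at most~$n$: between two consecutive times at which~$\gamma$ is open, $\gamma$ moves forward by a total of exactly~$n$ in~$\wh{x}$ is too strong, but the correct and sufficient statement is that within any window of~$n$ consecutive time steps $\gamma$ is open at least once and its position changes by a bounded amount; more carefully, I would show that the $1$-bit positions of~$\gamma$ at open times, reduced modulo~$n$, cycle through all of~$[n]$, using that $\gcd$-type obstructions are ruled out because~$\gamma$'s net displacement over one full return to~$x^0$ is a positive multiple of~$n$ while individual moving steps shift it by smaller amounts.

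Concretely, I would phrase~(b) as follows: consider the subsequence of times $t_1<t_2<\cdots$ at which $\gamma$ is open, and let $p_j$ be the position in~$\wh{x}$ of the rightmost $1$-bit of~$\gamma^{t_j}$ (equivalently $s_1(\gamma^{t_j})$). The $p_j$ are non-decreasing, tend to~$+\infty$, and I claim $p_{j+1}-p_j\leq n$ for all~$j$. Granting this, the values $p_j \bmod n$ take every residue in~$[n]$, which gives the $b=1$ case; for $b=0$ the $0$-bits of the clean open glider occupy the $v(\gamma)$ positions immediately to the right of its $1$-bits, so they also sweep all of~$[n]$ (shift the argument by~$v(\gamma)$). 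The bound $p_{j+1}-p_j\leq n$ is the crux: between times $t_j$ and $t_{j+1}$ the glider $\gamma$ performs a sequence of (possibly zero) stationary steps and moving steps, and I would bound the total forward displacement in any maximal block of consecutive steps during which $\gamma$ is never open. The main obstacle is exactly this displacement bound — controlling how far~$\gamma$ can leap during a long sequence of overtakings by faster gliders before it becomes open again. I expect to handle it by combining the equations of motion~\eqref{eq:svt} (the displacement of~$\gamma$ between two open times is $v(\gamma)\cdot(\text{number of moving steps}) + 2\sum v(\gamma')$ over intervening trap/release events) with the pigeonhole fact that each faster equivalence class~$[\gamma']$ can overtake~$[\gamma]$ at most once per~$n$ steps of relative motion, together with $\sum_{v'\in V(x)} v' = k$ and $n\geq 2k+1$, to conclude that the cumulative displacement between consecutive open times cannot exceed~$n$.
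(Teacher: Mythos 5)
Your part~(a) is correct and essentially matches the paper's: $\gamma^t$ is always clean by Lemma~\ref{lem:clean}, $\gamma^t$ remains the rightmost glider in a minimum-speed train by the invariance Lemmas~\ref{lem:Vx-invariant} and~\ref{lem:Zx-invariant}, and whenever $\gamma$ moves in step~$t$ Lemma~\ref{lem:move-open} makes $\gamma^t$ open; Lemma~\ref{lem:forward} then yields infinitely many such~$t$.

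Part~(b), however, has a genuine gap, and it stems from not carrying your own observation in~(a) through to~(b). You propose to bound $p_{j+1}-p_j\leq n$ and fear ``leaps'' of~$\gamma$ during ``a long sequence of overtakings by faster gliders before it becomes open again.'' But during an overtaking the trapped glider does not change position at all (this is precisely the point of Lemma~\ref{lem:move-trapped}~(ii) and of the equations of motion: while trapped, $\Delta s(\gamma^t)=0$); the glider only changes position when it is in~$M(x^{t-1})$, and by your own step~(a) that is exactly when it becomes open. So between consecutive open times $t_j<t_{j+1}$ there is precisely one moving step, namely the last one, and since the clean glider then advances its range by exactly $v(\gamma)$ (cf.\ the proof of Lemma~\ref{lem:move-open}: $S(\gamma)$ is a contiguous interval of length~$v(\gamma)$, so $r(\gamma^{t_{j+1}})=r(\gamma^{t_j})+v(\gamma)$), one has $p_{j+1}-p_j=v(\gamma)$ exactly. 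Consequently the $v(\gamma)$ consecutive $1$-bit positions at successive open times tile $\mathbb{Z}$, which is what yields every residue $i\in[n]$, and the $0$-bit case follows by shifting by $v(\gamma)$. Your weaker target bound $p_{j+1}-p_j\leq n$ is not only harder to establish by your proposed route, it is in fact insufficient for the conclusion: if the consecutive differences were all equal to~$n$, say, the intervals $[p_j-v(\gamma)+1,p_j]$ would miss all but $v(\gamma)$ residues modulo~$n$, and your appeal to ruling out ``gcd-type obstructions'' via Proposition~\ref{prop:motion} is not substantiated. The paper's proof is the tightened version of your plan: same three ingredients (Lemmas~\ref{lem:clean}, \ref{lem:move-open}, \ref{lem:forward}), but with the exact displacement $v(\gamma)$ per move doing the covering work directly.
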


\begin{proof}
By Lemma~\ref{lem:clean}, we know that $\gamma^t=:(A^t,B^t)$ is clean for all $t\geq 0$, i.e., we have $\wh{x^t}_{r(\gamma^t)}=1^{v(\gamma)}0^{v(\gamma)}$ if $\gamma^t$ is non-inverted and $\wh{x^t}_{r(\gamma^t)}=0^{v(\gamma)}1^{v(\gamma)}$ if $\gamma^t$ is inverted.

We consider two cases.
Case~(a): If $s(\gamma^t)=s(\gamma^{t-1})$, then we have $\gamma^t=\gamma^{t-1}$ and exactly one of $\gamma^{t-1}$ and~$\gamma^t$ is inverted and the other is non-inverted.
Case~(b): If $s(\gamma^t)>s(\gamma^{t-1})$, then we have $\gamma^{t-1}\in M(x^{t-1})$ and therefore $\gamma^{t-1}\in\Gammaf(x^{t-1})$, and also $\gamma^t\in\Gammaf(x^t)$ by Lemma~\ref{lem:move-trapped}~(i), i.e., $\gamma^{t-1}$ and~$\gamma^t$ are both free and hence non-inverted, which means that the $2v(\gamma)$ bits of~$\gamma^t$ are obtained by cyclically shifting the $2v(\gamma)$ bits of~$\gamma^{t-1}$ to the right by $v(\gamma)$ positions (from~\eqref{eq:g1} we see that $A^t=B^{t-1}=A^{t-1}+v(\gamma)$).
Also, $\gamma^t$ is open by Lemma~\ref{lem:move-open}.

By Lemma~\ref{lem:forward}, any time step~$t-1\rightarrow t$ as in case~(a) must eventually be followed by a step as in case~(b), and this proves the lemma.
\end{proof}

\section{Gluing the cycles together}
\label{sec:gluing}

In this section we glue the cycles from the factor~$\cC_{n,k}$ together via 4-cycles, as outlined in Section~\ref{sec:idea-gluing}.
The main result of this section, Theorem~\ref{thm:connect} below, asserts that this is possible without any conflicts between 4-cycles, and such that the gluing results in a single Hamilton cycle.
The proof of this theorem is the first and only time in this paper that the assumption $n\geq 2k+3$ is used.
With Theorem~\ref{thm:connect} in hand, we prove Theorem~\ref{thm:Knk} at the end of this section.

\subsection{Connectors}

For a bitstring~$x\in X_{n,k}$ and integer~$i\geq 0$, we write $\sigma^i(x)$ \marginpar{$\sigma^i(x)$} for the string obtained from~$x$ by cyclic right-shift by~$i$ positions.

If two bitstrings~$x,y\in X_{n,k}$ differ in an exchange of one visible pair of matched bits, then we refer to~$\{x,y\}$ as a \emph{connector}. \marginpar{connector $\{x,y\}$}
In other words, $x$ and~$y$ have the form
\begin{equation}
\label{eq:conn}
\begin{bmatrix}
x \\ y
\end{bmatrix}
=\sigma^i\left(
\begin{bmatrix}
\cdots\vis{1}\,u\,\vis{0}\cdots \,\hyph\,w\,\hyph\cdots \\
\cdots\,\hyph\,u\,\hyph\cdots \vis{1}\,w\,\vis{0}\cdots
\end{bmatrix}\right)
\end{equation}
for some $i\geq 0$, where $u,w\in D$ and the shaded pair of matched bits is visible, i.e., $x$ and~$y$ differ by transposing a visible pair of matched bits with two unmatched~0s that have no other unmatched~0s nor the visible pair in between them.
In other words, the transposed unmatched~0s are adjacent to a block other than the one containing the transposed pair of matched bits.
We write~$\cX_{n,k}$ \marginpar{$\cX_{n,k}$} for the set of all connectors.

\begin{lemma}
\label{lem:connector}
For any connector~$\{x,y\}\in \cX_{n,k}$, the sequence $C_4(x,y):=(x,f(x),y,f(y))$ is a 4-cycle in the Kneser graph~$K(n,k)$.
\end{lemma}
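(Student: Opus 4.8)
The plan is to verify directly that the four bitstrings $x$, $f(x)$, $y$, $f(y)$ are pairwise related by edges of $K(n,k)$ in cyclic order, i.e., that $x$ and $f(x)$ are disjoint, $f(x)$ and $y$ are disjoint, $y$ and $f(y)$ are disjoint, and $f(y)$ and $x$ are disjoint, and that the four strings are distinct. The edges $\{x,f(x)\}$ and $\{y,f(y)\}$ are immediate from the construction of $f$ (as noted right after the definition of $f$: $x$ and $f(x)$ are characteristic vectors of disjoint sets, since $\mu_1(f(x))=\mu_0(x)$), so the real content is the two ``cross'' edges $\{f(x),y\}$ and $\{f(y),x\}$, and distinctness.

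The key observation is that, by the definition of a connector in~\eqref{eq:conn}, $x$ and $y$ have exactly the same parenthesis matching except that one visible matched pair is in a different location: in $x$ the visible pair brackets the factor $u$ (at some position), and the two corresponding positions that are unmatched $\hyph$-bits in $x$ are, in $y$, the visible matched pair bracketing $w$; all other matched pairs, and in particular the sets $\mu_0(x)=\mu_0(y)$ restricted to the unaffected blocks, coincide. I would first record precisely how $\mu_1$, $\mu_0$ and $\ol\mu$ of $x$ and $y$ differ: writing $p<q$ for the positions of the visible matched pair in $x$ (so the $1$ is at $p$, the $0$ at $q$) and $p'<q'$ for the positions of the visible matched pair in $y$, we have $\mu_1(y)=(\mu_1(x)\setminus\{p\})\cup\{p'\}$ and $\mu_0(y)=(\mu_0(x)\setminus\{q\})\cup\{q'\}$, while $\ol\mu(y)=(\ol\mu(x)\setminus\{p',q'\})\cup\{p,q\}$; here I use that $p',q'\in\ol\mu(x)$ and $p,q\in\ol\mu(y)$, which is exactly the content of the picture in~\eqref{eq:conn} (the transposed unmatched $0$s of $x$ have no other unmatched $0$ nor the visible pair in between them, so the bracket structure of the rest is genuinely unchanged).

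From this, $\mu_1(f(x))=\mu_0(x)$ and $\mu_1(f(y))=\mu_0(y)$. To see $f(x)$ and $y$ are disjoint I must check $\mu_1(f(x))\cap\mu_1(y)=\emptyset$, i.e.\ $\mu_0(x)\cap\mu_1(y)=\emptyset$. Now $\mu_1(y)=(\mu_1(x)\setminus\{p\})\cup\{p'\}$; since $\mu_0(x)$ is disjoint from $\mu_1(x)$, it suffices to check $p'\notin\mu_0(x)$, which holds because $p'\in\ol\mu(x)$. Symmetrically, $f(y)$ and $x$ are disjoint iff $\mu_0(y)\cap\mu_1(x)=\emptyset$, and since $\mu_0(y)=(\mu_0(x)\setminus\{q\})\cup\{q'\}$ with $q'\in\ol\mu(x)$ disjoint from $\mu_1(x)$, this holds too. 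Distinctness: $x\neq f(x)$ and $y\neq f(y)$ follow from Lemma~\ref{lem:Cx} (or directly, since $f$ changes all $2k\geq 2$ matched bits); $x\neq y$ by definition of a connector (the visible pair is in different positions); $f(x)\neq y$ since $f(x)$ has $1$s exactly at $\mu_0(x)$ which contains $q$, whereas $y$ has a $0$ at $q$ (as $q\in\ol\mu(y)$); and $f(y)\neq x$ symmetrically. Hence $(x,f(x),y,f(y))$ is a closed walk of length $4$ on four distinct vertices, i.e.\ a $4$-cycle in $K(n,k)$.

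I expect the only delicate point to be the bookkeeping that the parenthesis matching of $x$ and $y$ really does agree outside the single swapped visible pair — i.e.\ justifying the displayed formulas for $\mu_1(y),\mu_0(y),\ol\mu(y)$ in terms of those of $x$. This is where the precise hypothesis on connectors in~\eqref{eq:conn} is used: the transposed unmatched $0$s are adjacent to a block different from the one containing the transposed visible pair, and there are no unmatched $0$s nor the visible pair strictly between them, so moving the visible matched pair from bracketing $u$ to bracketing $w$ does not disturb the matching of any other bit (the factors $u$ and $w$ are themselves Dyck words, hence internally matched, and the surrounding structure is untouched). Once that structural claim is in hand, everything else is the short index-chasing above, so I would state the structural claim carefully (perhaps with a one-line appeal to Figure~\ref{fig:capture} or to the discussion around visible pairs) and then finish with the disjointness and distinctness checks.
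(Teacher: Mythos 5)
Your proof is correct and follows essentially the same route as the paper's: the paper also reduces to the two cross edges $\{f(x),y\}$ and $\{f(y),x\}$ and checks them by observing that $\mu_0(y)$ is obtained from $\mu_0(x)$ by swapping the single matched-$0$ position $q$ for a position $q'$ that was an unmatched~$0$ of~$x$, so the $1$s of $f(y)$ all fall on $0$s of $x$. You are marginally more careful in also verifying distinctness of the four vertices, which the paper leaves implicit.
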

\marginpar{$C_4(x,y)$}

\begin{proof}
Recall from Section~\ref{sec:factor} that $f$ complements all matched bits.
We already know that $(x,f(x))$ and $(y,f(y))$ are edges in the Kneser graph~$K(n,k)$.
Therefore, it remains to show that $(x,f(y))$ and $(y,f(x))$ are also edges.
By symmetry, it suffices to argue that $(x,f(y))$ is an edge in~$K(n,k)$.
From the definition of~$f$ we know that the positions of~1s in~$f(y)$ are $\mu_1(f(y))=\mu_0(y)$.
Let $x$ and~$y$ be as in~\eqref{eq:conn}, and let $a$ and $b$ be the positions of the bits directly to the right of~$u$ and~$w$, respectively.
Clearly, we have $\mu_0(y)=\mu_0(x)\setminus\{a\}\cup\{b\}$, i.e., we have $\mu_1(f(y))=\mu_0(x)\setminus\{a\}\cup\{b\}$.
As $x_b=\hyph$ is an unmatched~0, we see that the positions of 1s in~$f(y)$ are where $x$ has either matched or unmatched~0s, which proves that $(x,f(y))$ is indeed an edge in~$K(n,k)$.
\end{proof}

Observe that if $\{x,y\}\in \cX_{n,k}$ and $C(x)$ and~$C(y)$ are two distinct cycles in the factor~$\cC_{n,k}$ defined in~\eqref{eq:factor}, then the symmetric difference of the edge sets~$\big(C(x)\cup C(y)\big)\Delta C_4(x,y)$ is a single cycle in~$K(n,k)$ on the same vertex set as~$C(x)\cup C(y)$, i.e., the 4-cycle `glues' two cycles from the factor together to a single cycle.

\begin{figure}
\makebox[0cm]{ 
\includegraphics[page=4]{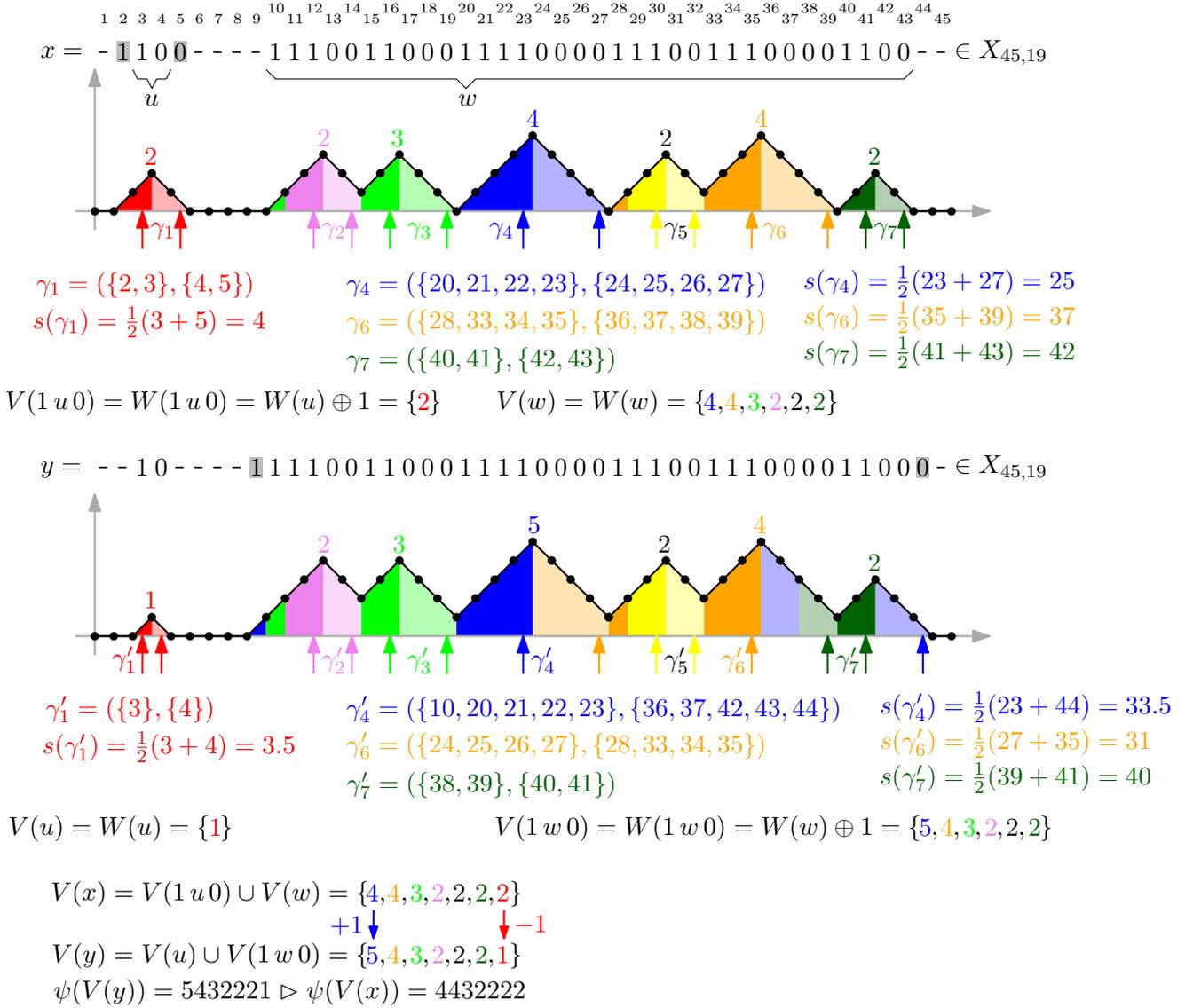}
}
\caption{Relation between gliders and speed sets~$V(x)$ and~$V(y)$ for a connector~$\{x,y\}$.
Note that the steps of the base hill~$w$ in~$x$ that belong to the gliders~$\gamma_4,\gamma_6,\gamma_7$ get redistributed into the gliders~$\gamma_4',\gamma_6',\gamma_7'$ in the base hill~$1\,w\,0$ in~$y$, and so there is no apparent correspondence between gliders in~$\Gamma(w)$ and gliders in~$\Gamma(1\,w\,0)$.
Nonetheless, the speed sets satisfy~$V(1\,w\,0)=V(w)\oplus 1$, i.e., a largest speed value is incremented by~1.}
\label{fig:speed-conn}
\end{figure}

Given a connector~$\{x,y\}$ as in~\eqref{eq:conn}, we now aim to understand the relation between the corresponding speed sets~$V(x)$ and~$V(y)$; see Figure~\ref{fig:speed-conn}.
From~\eqref{eq:conn} we see that
\begin{equation*}
V(x)=V(1\,u\,0)\cup V(w)\cup S
\end{equation*}
for some set~$S$ (the set~$S$ contains the speeds of gliders outside of~$1\,u\,0$ and~$w$).
From the definition~\eqref{eq:Wx} we obtain directly that $W(1\,u\,0)=W(u)\oplus 1$.
Using Lemma~\ref{lem:VW} we therefore have
\begin{subequations}
\label{eq:Vxy}
\begin{equation}
V(x)=\big(V(u)\oplus 1\big)\cup V(w)\cup S.
\end{equation}
With analogous arguments we obtain
\begin{equation}
V(y)=V(u)\cup \big(V(w)\oplus 1\big)\cup S.
\end{equation}
\end{subequations}
In words, $V(y)$ is obtained from~$V(x)$ by decrementing the speed of the fastest glider whose bits belong to the substring~$1\,u\,0$ by~1, and incrementing the speed of a fastest glider whose bits belong to the substring~$w$ by~1, whereas all other glider speeds remain unchanged.
We emphasize that~\eqref{eq:Vxy} is only a statement about the speed sets, not about the glider partition and the resulting glider positions in~$x$ and~$y$, which may in fact change considerably; see Figure~\ref{fig:speed-conn}.

In the following, we write~$\psi(V(x))$ \marginpar{$\psi(V(x))$} for the sequence obtained from the multiset~$V(x)$ by sorting its elements in non-increasing order.
As in Section~\ref{sec:analysis} before, we label the elements of~$V(x)$ in non-decreasing order as $v_1\leq v_2\leq \cdots\leq v_\nu$, $\nu=\nu(x)$, so we have $\psi(V(x))=(v_\nu,v_{\nu-1},\ldots,v_2,v_1)$.
In this way we can think of the sequence~$\psi(V(x))$ as a number partition of~$k$ (recall~\eqref{eq:sum-speeds}).
For two number partitions~$p$ and~$q$ of~$k$, we write $p\trr q$ \marginpar{$\trr$} if $p$ is lexicographically strictly larger than~$q$.
This defines a total order on all number partitions of~$k$.
Observe that even if $p$ and~$q$ do not have the same number of parts (i.e., different length), they will differ in a prefix of the same length in both, as they are partitions of the same number~$k$.

Going back to~\eqref{eq:Vxy}, note that if~$1\,u\,0=1^{v(\gamma)}0^{v(\gamma)}$ are the bits belonging to a glider~$\gamma\in\Gamma(x)$ of the minimum speed~$v(\gamma)=\min V(x)$, then the multiset~$V(y)$ is obtained from~$V(x)$ by decrementing an element of the minimum value~$v(\gamma)$ by~1, and incrementing another element by~1.
When considering the corresponding Young diagrams of the associated number partitions, this corresponds to one square moving from the smallest (rightmost) stack to a stack further to the left.
As a consequence, we have $\psi(V(y))\trr \psi(V(x))$.
We will use this observation to control the process of gluing together cycles from our factor via connectors.
Specifically, by using connectors that lexicographically increase the number partitions associated with the speed set of gliders, we ensure that every cycle is joined to a cycle~$C(x)$ that has the lexicographically largest partition~$\psi(V(x))=k$, i.e., a single glider of speed~$k$.

Furthermore, for any number partition~$p$ of~$k$ and for any integer~$i\geq 0$ we write $p\boxplus i$ for the $i$th smallest number partition larger than~$p$ in the total lexicographic order~$\trr$ of all number partitions of~$k$.
Similarly, we write $p\boxminus i$ for the $i$th largest number partition smaller than~$p$.
For example, all number partitions of~$k=5$ in lexicographic order are $11111,2111,221,311,32,41,5$, so we have $221\boxplus 1=311$, and $32\boxminus 3=2111$, $2111\boxplus 5\trr 41$, and $11111\boxplus 2\trr 32\boxminus 3$.
In this and the following examples of single-digit number partitions, we omit brackets and commas for brevity.

We state the following simple observations for further reference.

\begin{lemma}
\label{lem:partition}
Let $v_\nu\geq \cdots\geq v_1\geq 1$ be a number partition of~$k$ with $\nu\geq 2$ parts.
Then the multiset $V:=\{v_\nu,\ldots,v_1\}$ satisfies the following properties:
\begin{enumerate}[label=(\roman*),leftmargin=8mm]
\item
For any $\nu\geq j>i\geq 1$ we have $\psi\big(V\setminus\{v_j,v_i\}\cup\{v_j+1\}\cup(\{v_i-1\}\setminus\{0\})\big)\trr \psi(V)$.
\item
If $\nu\geq 3$ and $v_3>v_2$ then for any $\nu\geq j\geq 3$ and $i\in\{2,1\}$ we have $\psi\big(V\setminus\{v_j,v_i\}\cup \{v_j+1\}\cup(\{v_i-1\}\setminus\{0\})\big)\trr \psi(V)\boxplus 1$.
\item
If $v_1\geq 2$, then $\psi(V\setminus\{v_1\}\cup\{v_1-1,1\})=\psi(V)\boxminus 1$.
\end{enumerate}
\end{lemma}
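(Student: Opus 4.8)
The plan is to verify each of the four statements directly from the definitions, since Lemma~\ref{lem:partition} is a purely elementary fact about sorting multisets and about the lexicographic total order on number partitions of~$k$; no machinery from the glider analysis is needed. Throughout I would work with $p(V)=(v_\nu,v_{\nu-1},\ldots,v_1)$, the non-increasing arrangement, and recall that $\trr$ compares two such sequences lexicographically (and, as noted just before the lemma, two partitions of the same $k$ always first differ within a common-length prefix).

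For part~(i), moving one unit from $v_i$ to $v_j$ with $j>i$ (so $v_j\ge v_i$) produces a multiset $V'$ whose non-increasing arrangement agrees with that of $V$ on every entry strictly to the left of the position where $v_j$ sits, and at that position the entry strictly increases from $v_j$ to $v_j+1$ (it cannot be displaced further right because $v_j+1$ is now strictly larger than every remaining element). Hence $p(V')\trr p(V)$. Here I should be slightly careful about the degenerate case $v_i-1=0$, i.e.\ $v_i=1$: then $V'$ has one fewer part, but the argument is unaffected, since the first point of difference still occurs at the (unchanged) location of $v_j$ and is an increase. For part~(iii), with $v_1\ge 2$ we replace the smallest part $v_1$ by the two parts $v_1-1$ and $1$; the non-increasing arrangements of $V$ and $V'$ agree on the first $\nu-1$ entries (all of which are $\ge v_1-1$), and then $V$ has a final entry $v_1$ while $V'$ has $v_1-1$ followed by $1$. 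I would then argue that no partition of $k$ lies strictly between these two in lexicographic order: any partition $q$ with $p(V')\tr q\tr p(V)$ must share the length-$(\nu-1)$ prefix, and a short case check on the remaining tail (which must sum to $v_1$ and be $\le v_1-1$ entrywise after the shared prefix... actually $\le$ the last shared entry) shows the only possibilities are exactly $p(V)$ and $p(V')$, so $p(V')=p(V)\boxminus 1$. Part~(iv) is the special case $v_1=1$: then $v_1-1=0$ contributes no part and we adjoin a new part $1$, so $V'=V$ as multisets and $p(V')=p(V)$; this is immediate.

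Part~(ii) is the one requiring the most care, and I expect it to be the main obstacle, since it claims not merely a lexicographic increase but an increase by \emph{at least two} positions, i.e.\ $p(V')\trr p(V)\boxplus 1$, equivalently $p(V')\tr p(V)\boxplus 1$ fails to be an equality — precisely, $p(V')$ is not the immediate lexicographic successor of $p(V)$. The hypothesis $\nu\ge 3$ and $v_3>v_2$ (so in particular $v_2=v_1$, and these are the two smallest, both strictly below $v_3$) is what gives the slack. The strategy is: from part~(i) we already know $p(V')\trr p(V)$, so it suffices to exhibit one partition $q$ of $k$ with $p(V)\tr q \tr p(V')$. A natural candidate is $q=p(V\setminus\{v_2\}\cup\{v_2-1,1\})$ when $v_2\ge 2$ (this is $p(V)\boxminus 1 \boxplus \text{something}$... more simply, $q$ is $p(V)$ with the smallest-but-analysis; by part~(i) applied with the roles suitably chosen, or directly, $q\tr p(V)$), and then one checks $q\tr p(V')$ by comparing prefixes: $p(V')$ has strictly increased an entry at or left of position of $v_3$ (using $v_j\ge v_3>v_2$), whereas $q$ only rearranges the two smallest parts, leaving the prefix down through the $v_3$-block untouched. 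When $v_2=1$ one instead takes $q=p(V)\boxplus 1$ and argues directly that $p(V')$ strictly exceeds even this successor because the modification in $V'$ happens strictly earlier in the sequence (at the $v_j$-block with $v_j\ge v_3\ge 2$) than any change defining the immediate successor of $p(V)$, which can only touch the trailing $1$'s. I would write this out as a clean case distinction on whether $v_2\ge 2$, in each case producing the intermediate partition explicitly and checking the two strict inequalities by prefix comparison; the bookkeeping of exactly where the first differing coordinate lies is the delicate part, but it is entirely elementary.

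Finally I would remark that (ii) is only invoked in the excerpt's later gluing arguments, so it is enough to establish the stated inequalities; we do not need the exact position of $p(V')$ in the lexicographic order. With these four verifications the lemma follows.
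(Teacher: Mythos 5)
The paper states Lemma~\ref{lem:partition} without proof (it is introduced only as a ``simple observation''), so there is no official argument to compare against; I review your proposal on its own terms. Your verifications of (i), (iii), and (iv) are correct: the handling of the degenerate case $v_i=1$ in (i) and the ``nothing lies strictly in between'' check for (iii) are the right way to go.

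Part~(ii), however, contains a genuine error. First, the parenthetical ``so in particular $v_2=v_1$'' does not follow from $v_3>v_2$; one can perfectly well have $v_1<v_2<v_3$. Second, and more seriously, the intermediate partition you propose for the case $v_2\ge 2$, namely $q=p\big(V\setminus\{v_2\}\cup\{v_2-1,1\}\big)$, is lexicographically \emph{smaller} than $p(V)$, not larger: replacing the second-smallest part $v_2$ by $v_2-1$ and $1$ decreases the $(\nu-1)$st entry of the sorted sequence (or, when $v_1=v_2$, keeps it and decreases the $\nu$th), so $p(V)\trr q$, and such a $q$ cannot witness that something lies strictly between $p(V)$ and $p(V')$. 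The good news is that the argument you sketch for $v_2=1$ is the correct one and works uniformly, with no case split on $v_2$ at all. Since $v_3>v_2$ forces $v_2+1\le v_3$, the immediate lexicographic successor is
\[
p(V)\boxplus 1=(v_\nu,\ldots,v_3,\,v_2+1,\,\underbrace{1,\ldots,1}_{v_1-1}),
\]
which agrees with $p(V)$ in positions $1,\ldots,\nu-2$ and first differs at position $\nu-1$. On the other hand, $p(V')$ first differs from $p(V)$ at position $m+1$, where $m$ is the number of entries of $V$ strictly exceeding $v_j$; these entries are among $v_{j+1},\ldots,v_\nu$, so $m\le\nu-j\le\nu-3$, hence $m+1\le\nu-2$, and the difference there is an increase from $v_j$ to $v_j+1$. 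Therefore $p(V')$ agrees with $p(V)\boxplus 1$ through position $m$ and strictly exceeds it at position $m+1$, which gives $p(V')\trr p(V)\boxplus 1$ directly. This also makes transparent precisely where the hypothesis $v_3>v_2$ is needed: without it, $p(V)\boxplus 1$ could already alter an entry at position $\le\nu-2$ and the comparison would fail.
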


The expression $\{v_i-1\}\setminus\{0\}$ evaluates to~$\{v_i-1\}$ if $v_i>1$ and to~$\emptyset$ if $v_i=1$, and therefore the set $V\setminus\{v_j,v_i\}\cup\{v_j+1\}\cup(\{v_i-1\}\setminus\{0\})$ is obtained from~$V$ by decrementing~$v_i$ by~1 and removing it if~$v_i-1=0$, and incrementing~$v_j$ by~1.

\subsection{Gluing the single glider cycles}

Our proof will join most cycles from the factor~$\cC_{n,k}$ via the aforementioned connectors.
However, the cycles which contain the $n$ vertices
\marginpar{$s_i$}
\begin{equation}
\label{eq:si}
s_i:=\sigma^i(1^k\,0^k\,\hyph^\ell),
\end{equation}
$\ell:=n-2k$, for $i=0,\ldots,n-1$ will first be joined in a slightly different way.
These are precisely the vertices with $V(s_i)=\{k\}$, i.e., they have only a single glider of the maximum speed~$k$.
Specifically, defining $g:=\gcd(n,k)$ \marginpar{$g$} these cycles from~$\cC_{n,k}$ are $C(s_i)=(s_{i+kj})_{j=0,\ldots,n/g-1}$ for $i=0,\ldots,g-1$.
Consequently, the $n$ vertices $s_i$, $i=0,\ldots,n-1$, are partitioned into $g$ cycles, each containing $n/g$ of those vertices.
In particular, if $g=1$, then this is only a single cycle, and no joining is needed.
We define
\marginpar{$\cD$}
\begin{equation}
\label{eq:D}
\cD:=\{C(s_i)\mid i=0,\ldots,g-1\}.
\end{equation}

Note that the subgraph of~$K(n,k)$ induced by the vertices~$s_i$, $i=0,\ldots,n-1$, is isomorphic to the Cayley graph of~$\mathbb{Z}/n\mathbb{Z}$ with generators $\{k+i\mid i=0,\ldots,\ell\}$.
The following lemma is a simple consequence of that.

\begin{lemma}
\label{lem:join}
Let $k\geq 1$ and $n\geq 2k+1$.
For any $r\in\{0,\ldots,n-1\}$ the symmetric difference of the edge sets of the cycles~$\cD$ with the union of the 4-cycles $C_4'(s_{r+j},s_{r+j+k+1}):=(s_{r+j},s_{r+j+k},s_{r+j+2k+1},s_{r+j+k+1})$ for $j=0,\ldots,g-2$ is a single cycle in~$K(n,k)$ on the same vertex set.
\end{lemma}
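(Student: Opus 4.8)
The plan is to verify directly that the claimed symmetric difference is a single cycle, by identifying the relevant graph as a Cayley graph of a cyclic group and tracking how the symmetric difference operation acts on it. First I would set up notation: all indices are taken modulo $n$, and $g=\gcd(n,k)$, $\ell=n-2k$. As noted just before the lemma, the subgraph $G$ of $K(n,k)$ induced by $\{s_0,\ldots,s_{n-1}\}$ is isomorphic to the Cayley graph $\mathrm{Cay}(\mathbb{Z}/n\mathbb{Z}, S)$ with connection set $S=\{k,k+1,\ldots,k+\ell\}=\{k,k+1,\ldots,n-k\}$, via $s_i\mapsto i$; indeed $s_i$ and $s_j$ are disjoint as $k$-sets iff $j-i\in S$ (mod $n$), since $s_i$ occupies the $k$ cyclic positions $i+1,\ldots,i+k$. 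Under this identification the cycles in $\cD$ become the cosets of the subgroup $\langle k\rangle = g\mathbb{Z}/n\mathbb{Z}$, each traversed in the cyclic order given by repeatedly adding $k$; this uses only that $\cD$ consists of the orbits $C(s_i)=(s_{i+kj})_j$ as stated in the text. The generator $k+1\in S$ is also available, and the key arithmetic fact is that $k+1$ together with $k$ generates all of $\mathbb{Z}/n\mathbb{Z}$ (since $\gcd(k,k+1)=1$), so adding edges in the "$k+1$ direction" between consecutive cosets is exactly what is needed to merge them.

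The main step is the combinatorial/algebraic claim that taking the symmetric difference of a union of disjoint cycles $\cD$ with the $g-1$ four-cycles $C_4'(s_{r+j},s_{r+j+k+1})$, $j=0,\ldots,g-2$, yields a single cycle. Each $C_4'(s_{r+j},s_{r+j+k+1})$ uses the two edges $\{s_{r+j},s_{r+j+k}\}$ and $\{s_{r+j+k+1},s_{r+j+2k+1}\}$, both of which are "$+k$" edges lying on cycles of $\cD$ (on the coset of $r+j$ and the coset of $r+j+k+1$ respectively — these are distinct cosets since $k+1\not\equiv 0\pmod g$ as $g\mid k$ and $g\geq 1$... precisely because $\gcd(k,k+1)=1$ forces any common divisor of $n,k$ to not divide $k+1$ unless it is $1$, so the cosets of $r+j$ and $r+j+k+1$ differ; I would state this carefully). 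The other two edges of the $4$-cycle, $\{s_{r+j},s_{r+j+k+1}\}$ and $\{s_{r+j+k},s_{r+j+2k+1}\}$, are "$+(k+1)$" edges not present in $\cD$. I would first check these four $4$-cycles are pairwise edge-disjoint and that their "$+k$" edges are pairwise distinct and lie on the appropriate cycles of $\cD$: the edge $\{s_{r+j},s_{r+j+k}\}$ is determined by $j$, and as $j$ ranges over $0,\ldots,g-2$ these are distinct edges; similarly the edges $\{s_{r+j+k+1},s_{r+j+2k+1}\}$ are distinct for distinct $j$. One then verifies no "$+k$" edge of one $4$-cycle coincides with a "$+k$" edge of another or with a "$+(k+1)$" edge (the latter is immediate since $+k$ and $+(k+1)$ edges are different edge types when $n\geq 2k+1$ so that $k\neq k+1$ and neither equals $n-k$ ambiguously — here $n\geq 2k+1$ guarantees $k<k+1\leq n-k$, so these generators are genuinely distinct non-inverse elements). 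Granting edge-disjointness, each successive symmetric difference with a $4$-cycle sharing exactly two opposite edges with the current cycle-union reduces the number of cycle components by exactly one, the standard fact that $(C\cup C')\,\Delta\,D$ is a single cycle when $D$ is a $4$-cycle sharing one edge with each of two distinct cycles $C,C'$; this is precisely the gluing principle recalled in Section~\ref{sec:idea-gluing} and in the remark after Lemma~\ref{lem:connector}.

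The remaining point, and the part requiring the most care, is a connectivity/bookkeeping argument: applying $g-1$ gluings reduces $\cD$ (which has $g$ components) to a single component, \emph{provided} the gluings actually connect all $g$ cosets rather than repeatedly merging an already-merged pair. Here I would argue that the $4$-cycle indexed by $j$ joins the coset containing $r+j$ to the coset containing $r+j+k+1$; since $g\mid k$, the coset of $r+j$ is the coset of $r+j\bmod g$, and the coset of $r+j+k+1$ is the coset of $(r+j+1)\bmod g$. Thus the $j$-th gluing connects coset "$(r+j)\bmod g$" to coset "$(r+j+1)\bmod g$", and over $j=0,\ldots,g-2$ these gluings form a path through all $g$ cosets $r,r+1,\ldots,r+g-1\pmod g$, hence a connected graph on the $g$ components — exactly $g-1$ edges on $g$ vertices with no cycle, a spanning tree. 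Therefore after all $g-1$ symmetric differences the result is a single connected $2$-regular graph on $\{s_0,\ldots,s_{n-1}\}$, i.e.\ a Hamilton cycle of $G$, which is a single cycle in $K(n,k)$ on the vertex set $\bigcup_{i=0}^{g-1}V(C(s_i))=\{s_0,\ldots,s_{n-1}\}$. I expect the only genuinely delicate checks to be (a) confirming $g\mid k$ so the coset arithmetic above is valid — this is immediate since $g=\gcd(n,k)$ divides $k$ — and (b) confirming the edge-disjointness of the $4$-cycles together with the fact that each shares exactly its two "$+k$" edges (one per cycle) with $\cD$ and its two "$+(k+1)$" edges with neither, which is a short finite verification using $n\geq 2k+1$.
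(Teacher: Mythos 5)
Your proposal is correct and takes essentially the route the paper intends: the paper notes just before the lemma that the induced subgraph on $\{s_0,\ldots,s_{n-1}\}$ is a Cayley graph of $\mathbb{Z}/n\mathbb{Z}$ and calls the lemma ``a simple consequence,'' without spelling out the details. Your plan fills those in correctly — identifying $\cD$ with the $\langle k\rangle$-cosets, noting each $C_4'$ shares its two opposite ``$+k$'' edges with $\cD$ and contributes two ``$+(k{+}1)$'' edges, checking pairwise edge-disjointness of the $4$-cycles (which does rely on $g\mid \ell$ so that $\ell\geq g$, a point worth making explicit in a fully written version), and observing that the $j$-th gluing joins cosets $(r+j)\bmod g$ and $(r+j+1)\bmod g$, so the $g-1$ gluings form a spanning path on the $g$ components.
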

\marginpar{$C_4'(x,y)$}

As expected, Lemma~\ref{lem:join} is trivial for~$g=1$, as in this case $\cD$ only contains a single cycle and so no 4-cycles are needed for the joining.
For $g\geq 2$, the edges that the 4-cycles~$C_4'(s_{r+j},s_{r+j+k+1})$ defined in Lemma~\ref{lem:join} share with the cycles from~$\cD$ are precisely the edges $(s_i,f(s_i))$ with $s_i$ in
\marginpar{$\cS_r$}
\begin{equation}
\label{eq:Sr}
\cS_r:=\big\{\{s_{r+j},s_{r+j+k+1}\}\mid j=0,\ldots,g-2\big\}.
\end{equation}

The pairs in~$\cS_r$ are not connectors in the sense of our definition~\eqref{eq:conn}.
However, they serve the same purpose of describing pairs of edges on the cycle factor~$\cC_{n,k}$ that lie on a common 4-cycle (specifically, each edge is described by one endpoint~$x$, and the other endpoint is~$f(x)$).
Note that the 4-cycle $C_4(x,y)$ defined in Lemma~\ref{lem:connector} for a connector~$\{x,y\}\in\cX_{n,k}$ has the form~$C_4(x,y)=(x,f(x),y,f(y))$, whereas for a pair~$\{x,y\}\in\cS_r$ the 4-cycle $C_4'(x,y)$ defined in Lemma~\ref{lem:join} has the form~$C_4'(x,y)=(x,f(x),f(y),y)$.

\subsection{Auxiliary graph}

Our strategy is to join the cycles of the factor~$\cC_{n,k}$ by repeatedly gluing pairs of them together via connectors, as described before.
This is modeled by the following auxiliary graph.
For any set of connectors~$\cU\seq\cX_{n,k}$ we define a graph~$\cH_{n,k}[\cU]$ \marginpar{$\cH_{n,k}[\cU]$} as follows:
The nodes of~$\cH_{n,k}[\cU]$ are the cycles of the factor~$\cC_{n,k}\setminus\cD$, plus the set~$\cD$ defined in~\eqref{eq:D}, which forms its own single node.
For any connector~$\{x,y\}\in\cU$ such that $C(x),C(y)\in \cC_{n,k}\setminus\cD$ are distinct cycles we add an edge that connects~$C(x)$ and~$C(y)$ to the graph~$\cH_{n,k}[\cU]$.
Furthermore, for any connector~$\{x,y\}\in\cU$ such that $C(x)\in\cC_{n,k}\setminus \cD$ and~$C(y)\in \cD$ we add an edge that connects~$C(x)$ and~$\cD$ to the graph~$\cH_{n,k}[\cU]$.
The graph~$\cH_{n,k}[\cU]$ does not have multiple edges, even though there may be several connectors for the same two cycles~$C(x)$ and~$C(y)$.

Note that every bitstring~$x\in\cX_{n,k}$ is contained in many different connectors.
Specifically, let $p\geq 1$ denote the number of visible pairs of matched bits in~$x$.
There are $\ell:=n-2k$ unmatched~0s in~$x$, i.e., $\ell$ pairs of substrings of the form~$\hyph w\hyph$ with $w\in D$, possibly $w=\varepsilon$.
Each visible pair is contained in precisely one of those substrings, and not contained in the remaining $\ell-1$ of them.
Consequently, $x$ is contained in exactly $p(\ell-1)$ connectors.
For example, the bitstring in Figure~\ref{fig:visible} has $p=4$ visible pairs and $\ell=30-2\cdot 12=6$ unmatched~0s, i.e., it is contained in exactly~20 connectors.

Exploiting this observation, it is relatively easy to show that the graph~$\cH_{n,k}[\cX_{n,k}]$ is connected, i.e., that it has a spanning tree, and each of the connectors corresponding to one edge of this spanning tree glues together two cycles from the factor~$\cC_{n,k}$.
However, to obtain a Hamilton cycle in~$K(n,k)$, it is crucial that no two gluing operations interfere with each other, i.e., that any two of the 4-cycles used for the gluing are edge-disjoint.
For this it is enough to guarantee that $C_4(x,y)$ and~$C_4(x',y')$ do not have an edge in common that lies on one of the cycles of the factor~$\cC_{n,k}$, which is equivalent to $\{x,y\}\cap \{x',y'\}=\emptyset$.
We therefore require a set of \emph{pairwise disjoint} connectors~$\cU\seq\cX_{n,k}$ such that~$\cH_{n,k}[\cU]$ is a connected graph.
We remark that the condition $\{x,y\}\cap \{x',y'\}=\emptyset$ does not rule out that $C_4(x,y)$ and~$C_4(x',y')$ have an edge in common that goes between the same two cycles $\{C(x),C(y)\}=\{C(x'),C(y')\}$ of the factor, but then at most one of the two 4-cycles will be used in the gluing process (there is no need to glue the same two cycles of the factor together more than once), so this is not an issue.

\begin{theorem}
\label{thm:connect}
For any~$k\geq 1$ and~$n\geq 2k+3$, there is a set~$\cU\seq\cX_{n,k}$ of connectors and an~$r\in \{0,\ldots,n-1\}$ such that~$\cU$ and~$\cS_r$ defined in~\eqref{eq:Sr} satisfy the following: the sets in~$\cU$ are pairwise disjoint, the sets in~$\cU$ and~$\cS_r$ are pairwise disjoint, and $\cH_{n,k}[\cU]$ is a connected graph.
\end{theorem}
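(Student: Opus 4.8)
The plan is to choose, for each cycle $C$ of the factor $\cC_{n,k}\setminus\cD$, one connector that links $C$ to a cycle whose associated number partition $p(V(\cdot))$ of $k$ is lexicographically larger, so that iterating this choice walks every node of $\cH_{n,k}[\cU]$ up to the node $\cD$ (whose cycles are exactly the ones with partition $(k)$, the lexicographic maximum). First I would fix a \emph{canonical representative} $x_C\in C$ for each cycle: since the speed set $V(\cdot)$ and the train composition $Z(\cdot)$ are invariant along $C$ (Lemmas~\ref{lem:Vx-invariant} and~\ref{lem:Zx-invariant}), and since Lemma~\ref{lem:flip-vertex} lets us bring the rightmost glider $\gamma$ of a train of minimum speed into a normalized position in which it is clean and open (say with its block $1^{v(\gamma)}0^{v(\gamma)}$ occupying a prescribed cyclic location followed by an unmatched $0$), this pins down a unique vertex on each cycle. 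Then I would classify the canonical representatives into nine families by regular expressions describing the local pattern around $\gamma$ and around the next faster glider to its right, and for each family prescribe an explicit connector $\{x_C,y_C\}\in\cX_{n,k}$: it moves a visible matched pair (the outermost pair of a suitable block) onto, or off, the block of $\gamma$, together with the forced relocation of two unmatched $0$s.

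Next I would control the partition. By~\eqref{eq:Vxy}, crossing a connector decreases by $1$ the speed of the fastest glider inside one block and increases by $1$ the speed of the fastest glider inside another, so Lemma~\ref{lem:partition} governs the effect on $p(V(\cdot))$: if the decreased glider has minimum speed and the increased one is arbitrary, then $p(V(y_C))\trr p(V(x_C))$ by part~(i), and in the favourable subcases part~(ii) yields the stronger $p(V(y_C))\trr p(V(x_C))\boxplus 1$. The assumption $n\ge 2k+3$ enters exactly here: the two additional unmatched $0$s give the room to always place the relocated unmatched $0$s so that such a connector is available on the canonical representative. In the few residual cases where no single connector gives an immediate lexicographic gain, one instead accepts $p(V(y_C))=p(V(x_C))\boxminus 1$ (parts~(iii)/(iv), splitting off a clean speed-$1$ glider) and checks that the canonical representative of $C(y_C)$ lies in a family whose prescribed connector overshoots to a partition $\trr p(V(x_C))$, so that the two-step composite is still strictly increasing. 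Taking $\cU:=\{\{x_C,y_C\}\mid C\in\cC_{n,k}\setminus\cD\}$, the directed graph on $\cC_{n,k}\setminus\cD$ with edges $C\to C(y_C)$ then admits a potential refining the lexicographic rank of $p(V(\cdot))$ by a fractional correction on the residual family, which strictly increases along every edge; hence there are no directed cycles and every maximal directed path ends at a cycle of partition $(k)$, i.e.\ inside $\cD$. Thus $\cH_{n,k}[\cU]$ is connected.

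The main obstacle is the two disjointness conditions. Distinct cycles have distinct canonical representatives, so the sources $x_C$ are pairwise distinct; I would arrange the nine prescribed connectors so that (a)~a target $y_C$ is never a canonical representative — e.g.\ in $y_C$ the minimum-speed glider's block is not in the prescribed location, or the step that must be an unmatched $0$ is not — which rules out $x_C=x_{C'}$ and $x_C=y_{C'}$, and (b)~$x_C$ is reconstructible from $y_C$, since the connector move is reversible once one knows which block carries $\gamma$ and $y_C$ determines that block — which rules out $y_C=y_{C'}$ for $C\ne C'$. This makes the sets in $\cU$ pairwise disjoint. For disjointness from $\cS_r$, note $x_C\notin\cD$, hence $x_C\ne s_i$ for all $i$, so the only possible clash is $y_C=s_i$ for a $C$ whose target lies in $\cD$ (necessarily a cycle of partition $(k-1,1)$); let $R\seq\{0,\dots,n-1\}$ collect these indices. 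I would analyse the connectors prescribed for partition-$(k-1,1)$ cycles to show that $R$ omits a window of $2(g-1)$ consecutive indices, and then choose $r$ so that the $2(g-1)$ indices appearing in $\bigcup\cS_r$ fall into that window; this is possible because $n\ge 2k+1>2(g-1)$.

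Verifying that last step rigorously — that the $\cD$-endpoints of the residual connectors are structured enough to leave a clean window for $\cS_r$, and that conditions (a) and (b) hold simultaneously across all nine families — is the delicate part, and I expect it to require the explicit form of the nine regular expressions rather than any soft counting argument. With Theorem~\ref{thm:connect} in hand, Theorem~\ref{thm:Knk} then follows by taking the symmetric difference of $\cC_{n,k}$ with all the $4$-cycles $C_4(x,y)$ for $\{x,y\}\in\cU$ (Lemma~\ref{lem:connector}) and with the $4$-cycles $C_4'(\cdot,\cdot)$ of Lemma~\ref{lem:join} for this $r$, all of which are pairwise edge-disjoint, together with the base cases $(n,k)\in\{(5,2),(6,2),(6,3)\}$ handled via the earlier results for $n\in\{2k+1,2k+2\}$.
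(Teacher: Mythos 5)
Your proposal tracks the paper's strategy in its broad outlines (partitions of $k$, a normalized $\tau$-vertex, nine families, the $\boxplus/\boxminus$ bookkeeping of Lemma~\ref{lem:partition}, and the window argument for $\cS_r$), but the reformulation to ``one canonical representative and one connector per cycle, then a potential function'' has a genuine gap. The paper does not select one connector per cycle: it puts a connector at \emph{every} vertex of the form $\underline{1^{a}\,0^{a}}\,\hyph\,\ast$ (there can be several such vertices per cycle, one per visit of each minimum-speed train to position~$p$), and the connectivity argument walks along a \emph{sequence} of connectors, at each intermediate cycle choosing the specific $\tau$-vertex dictated by the incoming connector (namely $\tau(y,\gamma)$ where $\gamma$ is the glider created or moved by that connector). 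In your version you would need the outgoing connector of the \emph{canonical} representative of $C(y_C)$ to be the one that overshoots, but this is a different vertex in general, and you give no reason the two coincide. Furthermore, your residual analysis conflates parts (iii) and (iv) of Lemma~\ref{lem:partition}: part (iv) ($v_1=1$, which is exactly the regime of $\alpha_4$ with $a=1$ and of $\alpha_9$) gives $p(V(y))=p(V(x))$, \emph{unchanged}, not $\boxminus 1$. These are precisely the cases where an unbounded chain of connectors that never improve the partition could a priori occur — a speed-$1$ glider shuffled around indefinitely — and the role of $\ell=n-2k\geq 3$ in the paper is to give, for $a=1$, a choice between two possible connector targets in~$\eqref{eq:alpha4}$ that breaks such loops. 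A potential function ``strictly increasing along every edge'' of your out-degree-$1$ digraph is equivalent to acyclicity of that digraph, which is the thing to be proved; asserting a fractional correction without a concrete definition just restates the goal. The paper sidesteps acyclicity entirely: it proves that from any cycle there is a \emph{path} in $\cH_{n,k}[\cU]$ to a cycle with lexicographically larger partition, which suffices for connectivity, with no need for the functional graph to be acyclic or for a single prescribed connector per cycle.

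Two secondary points. First, a canonical representative on each cycle is not obviously unique: a cycle can carry several trains of minimum speed, each giving its own $\tau$-vertex; pinning one down is easy but needs saying, and it interacts with the overshoot argument above. Second, your disjointness window for $\cS_r$ uses $2(g-1)$ indices, but the vertices touched by $\cS_r$ actually span an interval of length about $g+k$ (the pairs are $\{s_{r+j},\,s_{r+j+k+1}\}$); the paper's computation uses $\ell=(n-2k)$ and $g\le k$ to fit the window $[p+1,\,p+\ell]$ for the $\cU$-endpoints and $[p+\ell+1,\,p+\ell+g+k]$ for $\cS_r$ into a single period. The window-fitting idea is right, just the bound is off.
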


Before providing the formal proof of Theorem~\ref{thm:connect}, we first give an informal sketch that highlights the crucial ingredients and obstacles, following the outline given in Section~\ref{sec:idea-gluing}.

\textit{Proof sketch.}
We start by fixing a position~$p\in[n]$ arbitrarily.
For any vertex~$x\in X_{n,k}$ with at least two gliders, i.e., $\nu(x)\geq 2$, we consider its speed set~$V(x)$ and the rightmost glider~$\gamma\in\Gamma(x)$ in a train of the minimum speed~$v(\gamma)=\min V(x)=v_1$.
We follow the vertices along the cycle~$C(x)$, tracking the movement of the glider~$\gamma$, until we reach a vertex~$\chi(x,\gamma)$ in which the glider~$\gamma$ has moved to the special position~$p$ and is open, which is possible by Lemma~\ref{lem:flip-vertex}.
Specifically, if $v_1$ is even/odd then we let $\chi(x,\gamma)$ be such that $\gamma$ has one of its 0-bits/1-bits at position~$p$, respectively, and we write $X\seq X_{n,k}$ for the set of all such vertices~$\chi(x,\gamma)$; see Figure~\ref{fig:connect-sketch}~(a).
For any $x\in X$ we add a connector $\{x,\alpha(x)\}$ to~$\cU$, where the mapping~$\alpha(x)$ is defined as follows (recall \eqref{eq:conn}):
As the glider~$\gamma$ at position~$p$ in~$x$ is open, the bit to the right of~$\gamma$ is an unmatched~0, and furthermore the outermost pair of matched bits of~$\gamma$ is visible.
We let $w$ be the first block in~$x$ to the right of~$\gamma$, and we let $\alpha(x)$ be the vertex obtained from~$x$ by removing the visible pair of matched bits of~$\gamma$ and adding a new pair of matched bits around~$w$.
We write $Y:=\alpha(X)$ for the image of~$X$ under the mapping~$\alpha$.

To check that the connectors in~$\cU:=\{\{x,\alpha(x)\}\mid x\in X\}$ are pairwise disjoint, we argue as follows:
Given any vertex~$x\in X\cup Y$, we can uniquely reconstruct to which of the two sets~$X$ or~$Y$ it belongs and the corresponding image or preimage under~$\alpha$, based on the speed of the glider at position~$p$ (even or odd) and which bits it has at position~$p$ (0 or 1).
For example, if the glider~$\gamma$ at position~$p$ has even speed and a 1-bit at this position, then we have $x\in Y$, and the corresponding preimage under~$\alpha$ is obtained by removing the first visible pair of matched bits to the right of~$\gamma$ and adding a new pair of matched bits around~$\gamma$.

\begin{figure}
\includegraphics[page=2]{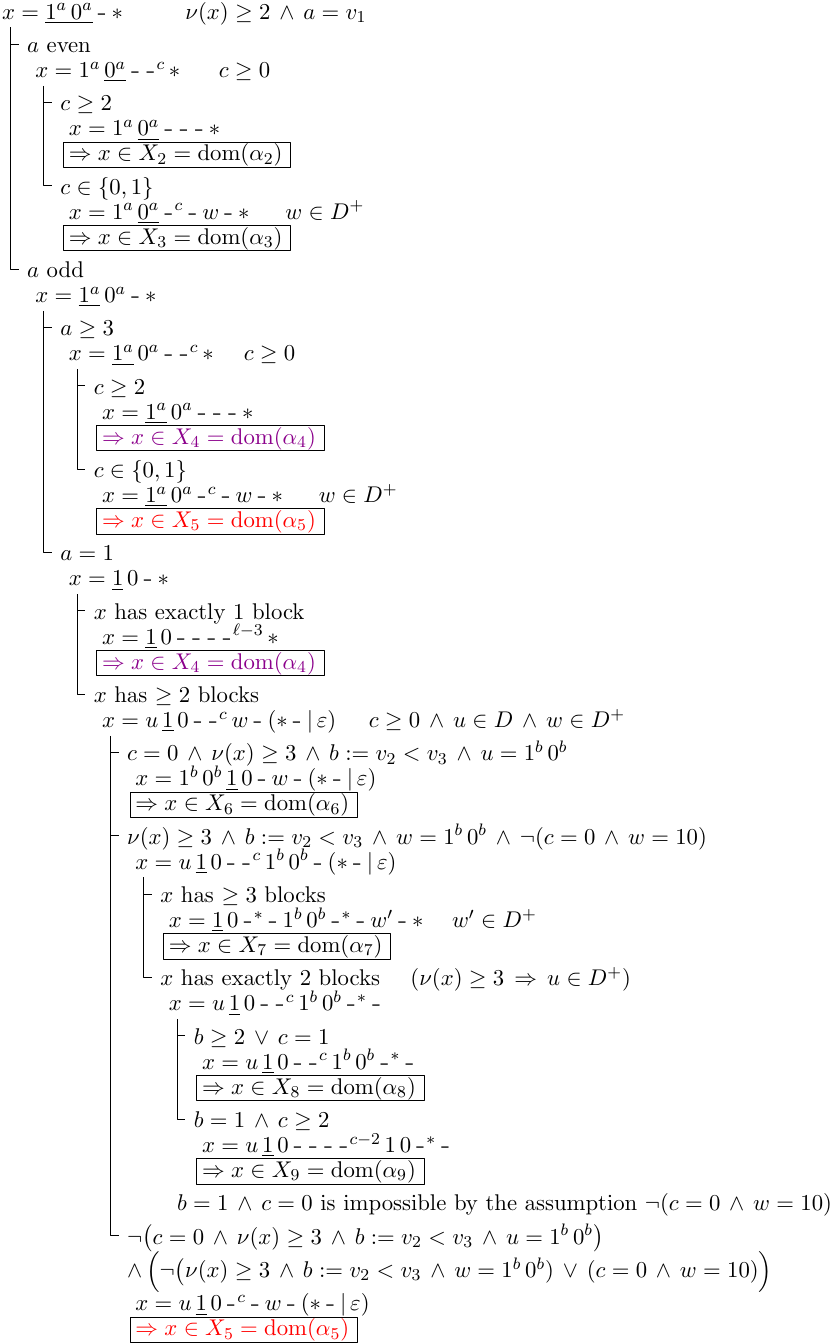}
\caption{Illustration of the proof sketch of Theorem~\ref{thm:connect}.}
\label{fig:connect-sketch}
\end{figure}

To show that the graph~$\cH_{n,k}[\cU]$ is connected, we consider the speed sets~$V(x)$ of vertices~$x\in X_{n,k}$, and the corresponding number partitions~$\psi(V(x))$ of~$k$ obtained by sorting the speeds in~$V(x)$ non-increasingly.
Note that the speed sets and hence the number partitions are invariant along each cycle of the factor~$\cC_{n,k}$ by Lemma~\ref{lem:Vx-invariant}, but they can only change along connectors.
Thus, we consider the number partitions~$\psi(V(x))$ and~$\psi(V(\alpha(x)))$ for connectors~$\{x,\alpha(x)\}$, $x\in X$, and we argue that they increase lexicographically, i.e., $\psi(V(\alpha(x)))\trr \psi(V(x))$.
This ensures that every cycle is joined, via a sequence of connectors, to a cycle in~$\cD$, which has the lexicographically largest number partition of~$k$, namely the number~$k$ itself (a single part).
By our definition of~$\alpha$ that involves a minimum speed glider~$\gamma$ at position~$p$ in~$x$ and the first block~$w$ to the right of it, we obtain from~\eqref{eq:Vxy} that the speed set~$V(\alpha(x))$ is obtained from the speed set~$V(x)$ by decrementing the smallest speed value~$v_1$ by~1, and incrementing one of the largest speed values from the gliders in~$\Gamma(w)$ by~1.
Lemma~\ref{lem:partition}~(i) thus gives $\psi(V(\alpha(x)))\trr \psi(V(x))$, as desired.
An example of such a sequence of lexicographically increasing number partitions via connectors as described before for $k=9$ is $4221\trl 432\trl 531\trl 54\trl 63\trl 72\trl 81\trl 9$.

The proof sketch presented so far has sneakily glossed over two problems with the definition of connectors~$\{x,\alpha(x)\}$ that we discuss below.
Fixing these problems requires careful adjustment of the definition of the mapping~$\alpha$, so that the two opposing objectives disjointness and connectedness are met simultaneously.
This was achieved with the help of a computer program that finds pairs of connectors~$\{x,\alpha(x)\}$ and~$\{x',\alpha(x')\}$ with $\{x,\alpha(x)\}\cap\{x',\alpha(x')\}\neq \emptyset$ arising from the current definition of~$\alpha$ for small values of~$n$ and~$k$, and then successively changing and refining the definition, so as to avoid these particular conflicts, sometimes creating new and unforeseen conflicts.
This process started with the two parity-based rules captured in Figure~\ref{fig:connect-sketch}~(a) and terminated with the regular expressions stated in~\eqref{eq:alphai} that split the definition of~$\alpha$ into nine cases~$\alpha_i$, $i\in\{1,\ldots,9\}$.
One can see that $\alpha_3$ and~$\alpha_5$ defined in~\eqref{eq:alpha3} and~\eqref{eq:alpha5} are refinements of the parity-based rules in Figure~\ref{fig:connect-sketch}~(a).

The first problem is that the aforementioned disjointness argument fails when~$\gamma$ has its first or last bit at position~$p$.
For example, if $(n,k)=(13,5)$, $p=4$, and $x=\hyph10\underline{1}0\hyph111000\hyph$ and $x'=110\underline{0}\hyph\hyph111000\hyph$, then we have a conflict $\alpha(x)=\alpha(x')=\hyph10\underline{\hyph}\hyph11110000$ (the bit at position~$p$ is underlined).
Avoiding such conflicts is achieved by the rule~$\alpha_6$ defined in~\eqref{eq:alpha6}.

The second problem is that the specification `let $w$ be the first block in~$x$ to the right of~$\gamma$' does not give a valid connector if~$x$ has only a single block, i.e., all matched bits and all unmatched bits are consecutive.
We illustrate this with a concrete example; see Figure~\ref{fig:connect-sketch}~(b).
Let $(n,k)=(21,9)$ and $p=16$, and consider the vertex $x=111110000011110\underline{0}00\hyph\hyph\hyph\in X$, which has two gliders~$\gamma$ and~$\gamma'$ of speeds~$v_1=4$ and~5, respectively, the slower one~$\gamma$ at position~$p$, so $\psi(V(x))=54$.
However, there is \emph{no} connector $\{x,y\}$ for which $\psi(V(y))\trr \psi(V(x))$.
Indeed, removing the visible pair of matched bits of~$\gamma$ and adding the matched pair at positions~$(n-2,n-1)$ or $(n-1,n)$ yields vertices~$y$ and~$y'$ with $\psi(V(y))=\psi(V(y'))=531$.
Also, removing the visible pair of bits of~$\gamma'$ and adding the pair at positions~$(n-2,n-1)$ or~$(n-1,n)$ yields vertices~$\ty$ and~$\ty'$ with $\psi(V(\ty))=\psi(V(\ty'))=441$.
Neither $531$ nor $441$ are lexicographically larger than~$54$.
However, for~$y$ and~$y'$ we have $\psi(V(y))=\psi(V(y'))=\psi(V(x))\boxminus 1$ by Lemma~\ref{lem:partition}~(iii), so the lexicographic decrease is very small.
Let $\gamma$ be the speed~1 glider in~$y$, and note that $y'$ differs from~$y$ only in pushing~$\gamma$ to the right by one position.
Formally, by Lemma~\ref{lem:open-move} and the definition of the mapping~$h_\gamma$ given in Section~\ref{sec:slow-move} we have $y'=h_\gamma(y)$.
We now track the movement of~$\gamma$ along the cycle~$C(y)$ until it reaches position~$p$, which happens in the vertex~$z:=\chi(y,\gamma)$, and there are two potential issues with~$z$.
The first is if in~$z$ the first block~$w$ to the right of~$\gamma$ equals $w=111000$, then the connector~$\{z,\alpha(z)\}$ would lead back to a vertex with the same speed set~$\psi(V(\alpha(z)))=54=\psi(V(x))$ as we started with (i.e., $w$ contains only the glider whose speed was previously decremented and is now being incremented back to its original value), leading to a potentially infinite loop of connectors with no lexicographic increase.
This issue is prevented by the rules~$\alpha_7$ and~$\alpha_8$ defined in~\eqref{eq:alpha7} and~\eqref{eq:alpha8}.
The second potential issue is that $z$ might be another single-block vertex, in which the only available connectors displace the speed~1 glider~$\gamma$ or create more speed~1 gliders, which would aggravate the lexicographic decrease.
However, in such a situation, Lemma~\ref{lem:push} comes to our rescue, as it ensures that the vertex $z':=h_\gamma(z)$ differs from~$z$ only in pushing~$\gamma$ one position to the right, which means that~$z'$ is \emph{not} a single-block vertex, so we can reach a lexicographically larger vertex~$\alpha(z')$ along a suitable connector~$\{z',\alpha(z')\}$, in our example $\psi(V(\alpha(z')))=63\trr 54=\psi(V(x))$.
Overall, the lexicographic decrease $54\trr 531$ along the first connector~$\{x,y'\}$ is compensated by a lexicographic increase $531\trl 63$ along the second connector~$\{z',\alpha(z')\}$, which is an increase $54\trl 63$ altogether.
In general, we use Lemma~\ref{lem:partition}~(ii) to show that $\psi(V(\alpha(z')))\trr \psi(V(z'))\boxplus 1$, which then gives $\psi(V(\alpha(z')))\trr \psi(V(z'))\boxplus 1=\psi(V(y'))\boxplus 1=(\psi(V(x))\boxminus 1)\boxplus 1=\psi(V(x))$.
The conditional definition $\alpha(x):=y$ or $\alpha(x):=y'$, depending on whether $z=\chi(y,\gamma)$ is a single-block vertex or not, requires that the number of unmatched bits in~$x$ is at least~3, which is equivalent to $n\geq 2k+3$, and this is why this part of the argument does not extend to the cases~$n=2k+2$ or~$n=2k+1$.
In the rules~$\alpha_2$ and~$\alpha_4$ defined in~\eqref{eq:alpha2} and~\eqref{eq:alpha4}, one sees the conditional pushing of the speed~1 glider, and in rule~$\alpha_1$ defined in~\eqref{eq:alpha1} one sees the compensating lexicographic improvement after pushing the speed~1 glider one position to the right in a single-block vertex.
\qed

\begin{proof}
We define $\ell:=n-2k$, which is the number of unmatched bits in any vertex~$x\in X_{n,k}$.
Note that by the assumption~$n\geq 2k+3$ we have $\ell\geq 3$, which will be crucial.
Throughout the proof, we fix a position~$p\in[n]$ arbitrarily.
For any~$x\in X_{n,k}$ and $i\in\mu(x)$, we write $\gamma(x,i)=(A,B)\in\Gamma(x)$ for the glider with~$i\in A\cup B$.

As before, we will denote vertices from~$X_{n,k}$ under parenthesis matching by strings of length~$n$ over the alphabet~$\{1,0,\hyph\}$.
We now introduce regular expressions to specify subsets of vertices from~$X_{n,k}$ that satisfy certain constraints.
In those expressions, the symbol~$*$ is a wildcard character that represents a string of any length, possibly zero, of the symbols~$\{1,0,\hyph\}$.
Moreover, for $b\in\{1,0,\hyph\}$ we write $b^*$ for any number of occurrences of the symbol~$b$, possibly zero.
For example, if $(n,k)=(7,3)$ then $10\,\hyph^*\,10\,*$ denotes the set of vertices~$\{101010\hyph,1010\hyph 10,10\hyph 1010\}$.
Disjunction of two regular expressions~$x$ and~$y$ is denoted as~$(x\,|\,y)$.
For example, if $(n,k)=(6,2)$ then $(1100\,|\,\hyph^2)*$ denotes the set of vertices~$\{1100\hyph\hyph,\hyph\hyph1100,\hyph\hyph1010\}$.
We write $D^+:=D\setminus\{\varepsilon\}$ for the set of all nonempty Dyck words, which represent nonempty sequences of matched pairs of bits.
There may be additional constraints on certain substrings in the expressions, captured by propositional formulas.
For example, if $(n,k)=(11,4)$, then $1^b\,0^b\,\hyph^c\,w\,*$ with the constraint formula $b\geq 2\wedge c\in\{1,2\}\wedge w\in D^+$ represents the set of vertices $\{1100\hyph1010\hyph\hyph,1100\hyph10\hyph10\hyph,1100\hyph10\hyph\hyph10,1100\hyph1100\hyph\hyph,1100\hyph\hyph1010\hyph,1100\hyph\hyph10\hyph10,1100\hyph\hyph1100\hyph,\allowbreak 111000\hyph10\hyph\hyph,111000\hyph\hyph10\hyph\}$.
Furthermore, our regular expressions contain underlined symbols to indicate strings that have to be shifted cyclically so that one of the underlined symbols is at position~$p$.
For example, if $(n,k)=(5,2)$ and $p=2$ then $110\underline{0\hyph1}$ denotes the set of vertices~$\{00\hyph 111,0\hyph 1110,\hyph 11100\}$.
As before, in a connector~$\{x,y\}$, we shade the visible pairs of matched bits in which~$x$ and~$y$ differ, which provides a visual aid to highlight the change.

We define mappings~$\alpha_i$, $i=1,\ldots,9$, whose domains and images are subsets of~$X_{n,k}$ such that $\{x,\alpha_i(x)\}$ is a connector for all $x\in\dom(\alpha_i)$ as follows:
\begin{subequations}
\label{eq:alphai}
\begin{align}
\alpha_1:\;\;
\begin{bmatrix}
x \\ \alpha_1(x)
\end{bmatrix}
&=
\begin{bmatrix}
u\,\underline{\hyph}\,\vis{1}\,\vis{0}\,\hyph^{\ell-2}\,\hyph \\
u\,\underline{\vis{0}}\,\hyph\,\hyph\,\hyph^{\ell-2}\,\vis{1} \\
\end{bmatrix},
&& u\in D^+;  \label{eq:alpha1} \\
%
\alpha_2:\;\;
\begin{bmatrix}
x \\ \alpha_2(x)
\end{bmatrix}
&=
\begin{bmatrix}
\vis{1}\,1^{a-1}\,\underline{0^{a-1}\,\vis{0}}\,\hyph\,\hyph\,\hyph\,* \hspace{25.5mm} \\
\begin{cases}
\hyph\,1^{a-1}\,\underline{0^{a-1}\,\hyph}\,\vis{1}\,\vis{0}\,\hyph\,* & \text{if } \chi(y,\gamma)\notin X_4 \\
\hyph\,1^{a-1}\,\underline{0^{a-1}\,\hyph}\,\hyph\,\vis{1}\,\vis{0}\,* & \text{otherwise}
\end{cases}
\end{bmatrix},
&& \nu(x)\geq 2\,\wedge\,a=v_1 \text{ even}; \label{eq:alpha2} \\
%
\alpha_3:\;\;
\begin{bmatrix}
x \\ \alpha_3(x)
\end{bmatrix}
&=
\begin{bmatrix}
\vis{1}\,1^{a-1}\,\underline{0^{a-1}\,\vis{0}}\,\hyph^c\,\hyph\,w\,\hyph\,* \\
\hyph\,1^{a-1}\,\underline{0^{a-1}\,\hyph}\,\hyph^c\,\vis{1}\,w\,\vis{0}\,* \\
\end{bmatrix},
&& \hspace{-2cm} a=v_1 \text{ even} \,\wedge\, c\in\{0,1\} \,\wedge\, w\in D^+; \label{eq:alpha3} \\
%
\alpha_4:\;\;
\begin{bmatrix}
x \\ \alpha_4(x)
\end{bmatrix}
&=
\begin{bmatrix}
\underline{\vis{1}\,1^{a-1}}\,0^{a-1}\,\vis{0}\,\hyph\,\hyph\,\hyph\,\hyph^c\,* \hspace{25.5mm} \\
\begin{cases}
\underline{\hyph\,1^{a-1}}\,0^{a-1}\,\hyph\,\vis{1}\,\vis{0}\,\hyph\,\hyph^c\,* & \text{if } \chi(y,\gamma)\notin X_4 \\
\underline{\hyph\,1^{a-1}}\,0^{a-1}\,\hyph\,\hyph\,\vis{1}\,\vis{0}\,\hyph^c\,* & \text{otherwise}
\end{cases}
\end{bmatrix},
&& 
\parbox[c]{5cm}{$\nu(x)\geq 2 \,\wedge\, a=v_1 \text{ odd} \,\wedge\,$ \\
$(a\geq 3 \,\vee\, c=\ell-3)$;
} \label{eq:alpha4} \\
%
\alpha_5:\;\;
\begin{bmatrix}
x \\ \alpha_5(x)
\end{bmatrix}
&=
\begin{bmatrix}
u\,\underline{\vis{1}\,1^{a-1}}\,0^{a-1}\,\vis{0}\,\hyph^c\,\hyph\,w\,\hyph\,({*}\,\hyph\,|\,\varepsilon) \\
u\,\underline{\hyph\,1^{a-1}}\,0^{a-1}\,\hyph\,\hyph^c\,\vis{1}\,w\,\vis{0}\,({*}\,\hyph\,|\,\varepsilon)
\end{bmatrix},
&& \notag \\
& && \hspace{-8cm}
\parbox[c]{14cm}{
$a=v_1 \text{ odd} \,\wedge\, (a=1 \,\vee\, c\in\{0,1\}) \,\wedge\, u\in D \,\wedge\, w\in D^+ \,\wedge\,$ \\
$\lnot\big(a=1 \,\wedge\, c=0 \,\wedge\, \nu(x)\geq 3 \,\wedge\, b:=v_2<v_3 \,\wedge\, u=1^b\, 0^b \big) \,\wedge\,$ \\
$\Big(\lnot \big(a=1 \,\wedge\, \nu(x)\geq 3 \,\wedge\, b:=v_2<v_3 \,\wedge\, w=1^b\, 0^b \big) \,\vee\, (a=1\,\wedge\, c=0\,\wedge\,w=10)\Big)$;
} \label{eq:alpha5} \\
%
\alpha_6:\;\;
\begin{bmatrix}
x \\ \alpha_6(x)
\end{bmatrix}
&=
\begin{bmatrix}
\vis{1}\,1^{b-1}\,0^{b-1}\,\vis{0}\,\underline{1}\,0\,\hyph\,w\,\hyph\,(*\,\hyph\,|\,\varepsilon) \\
\hyph\,1^{b-1}\,0^{b-1}\,\hyph\,\underline{1}\,0\,\vis{1}\,w\,\vis{0}\,(*\,\hyph\,|\,\varepsilon)
\end{bmatrix},
&& \hspace{-16mm} b=v_2<v_3 \,\wedge\, w\in D^+; \label{eq:alpha6} \\
%
\alpha_7:\;\;
\begin{bmatrix}
x \\ \alpha_7(x)
\end{bmatrix}
&=
\begin{bmatrix}
\underline{\vis{1}}\,\vis{0}\,\hyph^*\,\hyph\,1^b\,0^b\,\hyph^*\,\hyph\,w'\,\hyph\,* \\
\underline{\hyph}\,\hyph\,\hyph^*\,\hyph\,1^b\,0^b\,\hyph^*\,\vis{1}\,w'\,\vis{0}\,*
\end{bmatrix},
&& \hspace{-16mm} b=v_2<v_3 \,\wedge\, b\geq 2 \,\wedge\, w'\in D^+; \label{eq:alpha7} \\
%
\alpha_8:\;\;
\begin{bmatrix}
x \\ \alpha_8(x)
\end{bmatrix}
&=
\begin{bmatrix}
u\,\underline{1}\,0\,\hyph\,\hyph^c\,\vis{1}\,1^{b-1}\,0^{b-1}\,\vis{0}\,\hyph^*\,\hyph \\
u\,\underline{1}\,0\,\vis{0}\,\hyph^c\,\hyph\,1^{b-1}\,0^{b-1}\,\hyph\,\hyph^*\,\vis{1}
\end{bmatrix},
&& \hspace{-16mm} b=v_2<v_3 \,\wedge\, (b\geq 2 \,\vee\, c=1) \,\wedge\, u\in D^+; \label{eq:alpha8} \\
%
\alpha_9:\;\;
\begin{bmatrix}
x \\ \alpha_9(x)
\end{bmatrix}
&=
\begin{bmatrix}
u\,\underline{\vis{1}}\,\vis{0}\,\hyph^c\,\hyph\,\hyph\,\hyph\,1\,0\,\hyph^*\,\hyph \\
u\,\underline{\hyph}\,\hyph\,\hyph^c\,\vis{1}\,\vis{0}\,\hyph\,1\,0\,\hyph^*\,\hyph
\end{bmatrix},
&& \hspace{-16mm} 1<v_3 \,\wedge\, c\geq 0 \,\wedge\, u\in D^+. \label{eq:alpha9}
\end{align}
\end{subequations}

These specifications use the regular expressions defined before.
Furthermore, preimage and image of each mapping are written as a column vector, to facilitate their positionwise comparison, which allows immediate verification that these are indeed valid connectors (recall~\eqref{eq:conn}).
These expressions contain several occurrences of substrings of the form~$\vis{1}\,1^{a-1}\,0^{a-1}\,\vis{0}$, which could of course be simplified to~$1^a\,0^a$, i.e., $a$ consecutive 1s follows by $a$ consecutive 0s, but this would make the positionwise comparison harder, as the second expression is~$\hyph\,1^{a-1}\,0^{a-1}\,\hyph$.
Note that the definitions of~$\alpha_i$, $i\in\{2,4\}$, each involve a distinction between two possible cases for~$\alpha_i(x)$.
The condition $\chi(y,\gamma)\notin X_4$ stated in the `if' branch of these case distinctions will be specified later at the end of this proof.

For $i=1,\ldots,9$ we define $X_i:=\dom(\alpha_i)$ and $Y_i:=\img(\alpha_i)=\alpha_i(X_i)$.
Furthermore, the set of connectors~$\cU\seq\cX_{n,k}$ we use to prove the theorem is defined, as one would expect at this point, by
\begin{equation}
\cU:=\bigcup_{i=1}^9 \big\{\{x,\alpha_i(x)\}\mid x\in X_i\big\}.
\end{equation}
To complete the proof, we have to show that the sets in~$\cU$ are pairwise disjoint, that the sets in~$\cU$ and~$\cS_r$ are pairwise disjoint for a suitable value of~$r$, and that~$\cH_{n,k}[\cU]$ is connected.

We first argue that the connectors in~$\cU$ are pairwise disjoint.
As we have not yet specified the conditions in the `if' case of the definitions of~$\alpha_2$ and~$\alpha_4$, we prove the slightly stronger statement that the connectors are disjoint for both of the two exclusive cases, in whichever combination they may occur.

For any vertex~$x\in X_{n,k}$ with $\nu(x)\geq 2$ and any glider~$\gamma\in\Gamma(x)$ that is the rightmost glider in a train of the minimum speed~$v(\gamma)=\min V(x)=v_1$, we define a vertex~$\chi(x,\gamma)$ on the cycle~$C(x)$ as follows.
If $v(\gamma)$ is even, then let $t\geq 0$ be such that~$\gamma^t$ satisfies the conditions stated in Lemma~\ref{lem:flip-vertex} for $b:=0$ and~$i:=p$, and define~$\chi(x,\gamma):=x^t$.
If $v(\gamma)$ is odd, then let $t\geq 0$ be such that~$\gamma^t$ satisfies the conditions stated in Lemma~\ref{lem:flip-vertex} for $b:=1$ and~$i:=p$, and define~$\chi(x,\gamma):=x^t$.
In words, the vertex~$\chi(x,\gamma)$ is obtained by moving along the cycle~$C(x)$ starting from~$x$ and tracking the movement of the glider~$\gamma$ until one of its $b$-bits is at position~$p$.

Clearly, the set of all vertices from~$X_{n,k}$ under the mapping~$\chi$ is specified by the regular expression~$x=\underline{1^a\,0^a}\,\hyph\,*$ with the condition~$\nu(x)\geq 2\,\wedge\,a=v_1$.
We first argue that every such vertex~$x$ belongs to exactly one of the sets~$X_i$, $i=2,\ldots,9$, and that these sets are all disjoint.
This argument is given in the decision tree shown in Figure~\ref{fig:connect}.
Each branching distinguishes several exclusive cases, captured by logical conditions at the nodes.
Furthermore, the conditions imposed on~$x$ are captured by a regular expression that gets more refined as we move towards the leaves and extra conditions are imposed.
At the leaf nodes, one can check that $x$ belongs to precisely one of the sets~$X_i$.
Note that there are two branches in the tree ending with~$X_4$ and two branches ending with~$X_5$.
By taking the disjunction of conjunctions of logical conditions of all root-leaf paths for a particular set~$X_i$, $i\in\{2,\ldots,9\}$, we obtain precisely the regular expressions and corresponding logical conditions stated in the definition of~$\alpha_i$ before.

\begin{figure}
\includegraphics[page=1]{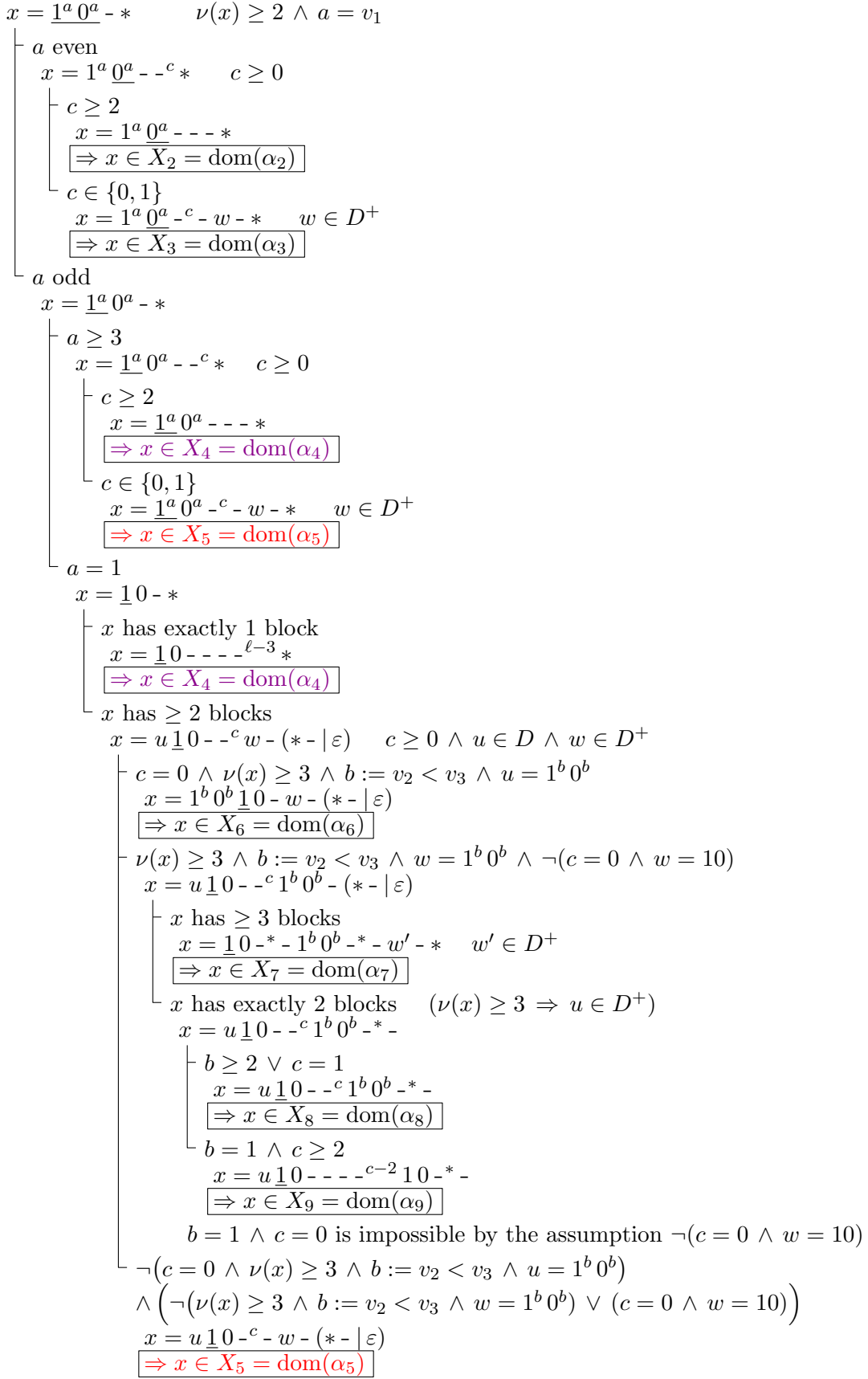}
\caption{Illustration of the proof of Theorem~\ref{thm:connect}.}
\label{fig:connect}
\end{figure}

We have shown that $X_i$, $i=2,\ldots,9$ are pairwise disjoint sets.
Note also that these sets are all disjoint from~$X_1$, as for $x\in X_1$ we have $x_p=\hyph$, whereas $x'_p\in\{0,1\}$ for all $x'\in X_2\cup\cdots\cup X_9$.

Furthermore, it is easy to check that each of the mappings~$\alpha_i$, $i=1,\ldots,9$, is invertible and thus an injection.
This is because in each of the regular expressions that specifies the image~$\alpha_i(x)$, the shaded visible pair of bits $\vis{1}\,\vis{0}$ can be identified uniquely and exchanged accordingly to obtain the preimage~$x$.
For example, the preimage of $y=\alpha_1(x)=u\,\underline{\vis{0}}\,\hyph\,\hyph\,\hyph^{\ell-2}\,\vis{1}$ is obtained by removing the visible pair of bits that includes the 0-bit at position~$p$ and adding the pair at positions~$(p+1,p+2)$ instead, yielding $x=u\,\underline{\hyph}\,\vis{1}\,\vis{0}\,\hyph^{\ell-2}\,\hyph$.

Consequently, we still need to verify that $Y_i\cap X_j=\emptyset$ for all $1\leq i,j\leq 9$ and $Y_i\cap Y_j=\emptyset$ for all $1\leq i<j\leq 9$.
We do this pinpointing, for any pair of such bitstrings, a particular property in which they differ.
In the remainder of the proof we sometimes refer to the speed sets of several different bitstrings~$x\in X_{n,k}$, and to avoid ambiguities we then explicitly specify the argument~$x$ for the different speed values as~$V(x)=\{v_1(x),\ldots,v_{\nu(x)}(x)\}$.

{\bf Case 1:} $y=\alpha_1(x)\in Y_1$.
We have $y_p=0$, whereas $x'_p\in\{1,\hyph\}$ for $x'\in X_i,Y_i$, $i=4,\ldots,9$.
The vertex~$y$ has exactly one visible pair of matched bits, whereas every $x'\in X_1,X_2,Y_2,X_3,Y_3$ has at least two visible pairs of matched bits.

{\bf Case 2:} $y=\alpha_2(x)\in Y_2$.
The vertex~$y$ has the following properties:
\begin{enumerate}[label=(\roman*),leftmargin=8mm,topsep=0mm]
\item $y_p\in\{0,\hyph\}$.
\item If $y_p=0$, then $v(\gamma(y,p))$ is odd.
\item If $y_p=\hyph$, then either $y_{p+1}=1$ and $y$ contains at least three blocks, or $y_{p+1}=\hyph$.
\item The first block entirely to the right of position~$p$ starts with~$10$.
\end{enumerate}
If $x'\in X_1$, then $x'_p=\hyph$, $x'_{p+1}=1$ and $x'$ contains exactly two blocks, contradicting~(iii).

If $x'\in X_2\cup X_3$, then we have $x'_p=0$ and $v(\gamma(x',p))$ is even, contradicting~(ii).
If $y'\in Y_3$, then the first block entirely to the right of position~$p$ starts with $1\,w\,0$ for some~$w\in D^+$, contradicting~(iv).
If $x'\in X_4,X_5,X_6,X_7,X_8,X_9,Y_6,Y_8$, then we have $x'_p=1$, contradicting~(i).
If $y'=\alpha_4(x')\in Y_4$, then if $y'_p=1$ we have a contradiction to~(i).
Otherwise we have $y'_p=\hyph$ and the first block entirely to the right of position~$p$ starts with~$1^{a-1}\,0^{a-1}$ for some odd~$a$, which contradicts~(iv) if $a\geq 3$.
Otherwise we have $a=1$, and then the definition~\eqref{eq:alpha4} implies that~$x'$ has only a single block, and all $\hyph$ are consecutive.
Then the equality $y=y'$ implies that $x=1^{a'}\,0^{a'}\,\hyph^\ell$ for some $a'\geq 2$, i.e., $\nu(x)=1$, contradicting the condition~$\nu(x)\geq 2$ in~\eqref{eq:alpha2}.
If $y'\in Y_5$, then either $y'_p=1$, which contradicts~(i), or $y'_p=\hyph$ and the first block entirely to the right of position~$p$ is $1^{a-1}\,0^{a-1}$ for some odd~$a$ or starts with~$1\,w\,0$ for $w\in D^+$, contradicting~(iv).
If $y'\in Y_7$, then the first block entirely to the right of position~$p$ is $1^b\,0^b$ for $b\geq 2$, contradicting~(iv).
If $y'\in Y_9$, then the equality $y=y'$ implies that $x$ contains a substring~$\hyph 10$, which means that $v_1(x)=1$, contradicting the condition that $v_1(x)$ must be even in~\eqref{eq:alpha2}.

{\bf Case 3:} $y=\alpha_3(x)\in Y_3$.
The vertex~$y$ has the following properties:
\begin{enumerate}[label=(\roman*),leftmargin=8mm,topsep=0mm]
\item $y_p\in\{0,\hyph\}$.
\item If $y_p=0$, then $v(\gamma(y,p))$ is odd.
\item If $y_p=\hyph$, then $y_{p+2}=1$.

\item The first block entirely to the right of position~$p$ starts with~$1\,w\,0$ for some $w\in D^+$.
\end{enumerate}
If $x'\in X_1,Y_9$, then the first block entirely to the right of position~$p$ starts with~$10$, contradicting~(iv).
If $x'\in X_2,X_3$, then we have $x'_p=0$ and $v(\gamma(x',p))$ is even, contradicting~(ii).
If $x'\in X_4,X_5,X_6,X_7,X_8,X_9,Y_6,Y_8$, then we have $x'_p=1$, contradicting~(i).
If $y'=\alpha_4(x')\in Y_4$, then if $y'_p=1$ there is a contradiction to~(i).
Otherwise we have $y'=\underline{\hyph}\,1^{a-1}\,0^{a-1}\,\hyph\,\hyph^*\,1\,0\,*$ for some odd~$a\geq 1$.
Consequently, if $a=1$ then the first block in~$y'$ entirely to the right of position~$p$ starts with~$10$, contradicting~(iv).
On the other hand, if $a\geq 3$ then the equality $y=y'$ implies that $x$ contains a substring~$\hyph 10$, which means that $v_1(x)=1$, contradicting the condition that $v_1(x)$ must be even in~\eqref{eq:alpha3}.
If $y'=\alpha_5(x')\in Y_5$, then if $y'_p=1$ we have a contradiction to~(i).
Otherwise we have $y'_p=\hyph$ and then a conflict~$y=y'$ either implies that $y=y'=\hyph\,1^{a-1}\,0^{a-1}\,\underline{\hyph}\,\hyph\,1\,w\,0\,*$ or $y=y'=\hyph\,1^{a-1}\,0^{a-1}\,\underline{\hyph}\,1\,w\,0\,\hyph\,\hyph^*\,1\,w'\,0\,*$ for some even $a\geq 2$ with $a=v_1(x)$ and $w,w'\in D^+$.
In the first case, $x'=\hyph\,1^{a-1}\,0^{a-1}\,\underline{1}\,0\,\hyph\,w\,\hyph\,*$ violates the conditions in the second row of the conjunction in~\eqref{eq:alpha5}, which is a contradiction.
In the second case, we have $x'=\hyph\,1^{a-1}\,0^{a-1}\,\underline{1}\,1\,w\,0\,0\,\hyph^*\,\hyph\,w'\,\hyph\,*$.
Note that $v_1(w)\geq a$, and consequently $v_1(1\,1\,w\,0\,0)\geq a+2>a-1$.
It follows that $v_1(x')=a-1$, and since the 1-bit in~$x'$ at position~$p$ does not belong to a glider of the minimum speed by Lemma~\ref{lem:outer-steps}, we have again a contradiction to the definition~\eqref{eq:alpha5}.
If $y'\in Y_7$, then we have $y'_p=\hyph$ and $y'_{p+2}=\hyph$, contradicting~(iii).

{\bf Case 4:} $y=\alpha_4(x)\in Y_4$.
The vertex~$y$ has the following properties:
\begin{enumerate}[label=(\roman*),leftmargin=8mm,topsep=0mm]
\item $y_p\in\{1,\hyph\}$.
\item If $y_p=1$ then $v(\gamma(y,p))$ is even.
\item $y_{[p,p+1,p+2]}\neq \hyph\,1\,0$.
\item $y_{[p-1,p]}\neq 01$.
\item The first block entirely to the right of position~$p+1$ starts with~$10$.
\end{enumerate}
If $x'\in X_1$, then we have $x'_{[p,p+1,p+2]}=\hyph\,1\,0$, contradicting~(iii).
If $x'\in X_2,X_3$, then we have $x'_p=0$, contradicting~(i).
If $x'\in X_4,X_5,X_6,X_7,X_8,X_9,Y_6$, then we have $x'_p=1$ and $v(\gamma(x',p))$ is odd, contradicting~(ii).
If $y'\in Y_5$, then the first block entirely to the right of position~$p+1$ starts with $1\,w\,0$ for some $w\in D^+$, contradicting~(v).
If $y'\in Y_7$, then the first block entirely to the right of position~$p+1$ starts with $1^b\,0^b$ for some $b\geq 2$, contradicting~(v).
If $y'\in Y_8$, then we have $y'_{[p-1,p]}=01$, contradicting~(iv).
If $y'\in Y_9$, then the equality $y=y'$ implies that $v_1(x)=1$ and then the definition~\eqref{eq:alpha4} implies that~$x$ must have a single block, and all $\hyph$ are consecutive.
Then $y$ has only two blocks, whereas $y'$ has three blocks, a contradiction.

{\bf Case 5:} $y=\alpha_5(x)\in Y_5$.
The vertex~$y$ has the following properties:
\begin{enumerate}[label=(\roman*),leftmargin=8mm,topsep=0mm]
\item $y_p\in\{1,\hyph\}$.
\item If $y_p=1$ then $v(\gamma(y,p))$ is even.
\item $y_{[p,p+1,p+2]}\neq \hyph\,1\,0$.
\item $y_{p-1,p}\neq 01$.
\item The first block entirely to the right of position~$p+1$ starts with $1\,w\,0$ for some $w\in D^+$.
\end{enumerate}
If $x'\in X_1$, then we have $x'_{[p,p+1,p+2]}=\hyph\,1\,0$, contradicting~(iii).
If $x'\in X_2,X_3$, then we have $x'_p=0$, contradicting~(i).
If $x'\in X_4,X_5,X_6,X_7,X_8,X_9,Y_6$, then we have $x'_p=1$ and $v(\gamma(x',p))$ is odd, contradicting~(ii).
If $y'=\alpha_7(x')\in Y_7$, then a conflict~$y=y'$ implies $y=y'=\underline{\hyph}\,\hyph\,\hyph^*\,\hyph\,1^b\,0^b\,\hyph^*\,1\,w'\,0\,*$ for $b:=v_2(x')<v_3(x')$, $b\geq 2$, and some $w'\in D^+$.
This however implies $x=\underline{1}\,0\,\hyph^*\,\hyph\,\hyph\,1^{b-1}\,0^{b-1}\,\hyph\,\hyph^*\,1\,w'\,0\,*$, so $x$ violates the conditions in the third row of the conjunction in~\eqref{eq:alpha5}, a contradiction.
If $y'\in Y_8$, then we have $y'_{[p-1,p]}=01$, contradicting~(iv).
If $y'\in Y_9$, then the first block entirely to the right of position~$p+1$ starts with~$10$, contradicting~(v).

{\bf Case 6:} $y=\alpha_6(x)\in Y_6$.
The vertex~$y$ has the following properties:
\begin{enumerate}[label=(\roman*),leftmargin=8mm,topsep=0mm]
\item $y_p=1$.
\item The glider $\gamma(y,p)$ is not open.
\item $y_{p+2}=1$.
\end{enumerate}
If $x'\in X_1,X_2,X_3,Y_7,Y_9$, then we have $x'_p\in\{0,\hyph\}$, contradicting~(i).
If $x'\in X_4,X_5,X_6,X_7,X_8,X_9$, then the glider~$\gamma(x',p)$ is open, contradicting~(ii).
If $y'\in Y_8$, then we have $y'_{p+2}=0$, contradicting~(iii).

{\bf Case 7:} $y=\alpha_7(x)\in Y_7$.
The vertex~$y$ has the following properties:
\begin{enumerate}[label=(\roman*),leftmargin=8mm,topsep=0mm]
\item $y_p=\hyph$.
\item $y_{p+1}=\hyph$.
\item The first block entirely to the right of position~$p$ starts with~$1^b\,0^b$ for some~$b\geq 2$.
\end{enumerate}
If $x'\in X_1$, then we have $x'_{p+1}=1$, contradicting~(ii).
If $x'\in X_2,X_3,X_4,X_5,X_6,X_7,X_8,X_9,Y_8$, then we have $x'_p\in\{0,1\}$, contradicting~(i).
If $y'\in Y_9$, then the first block entirely to the right of position~$p$ starts with~$10$, contradicting~(iii).

{\bf Case 8:} $y=\alpha_8(x)\in Y_8$.
If $x'\in X_1,X_2,X_3,Y_9$, then we have $x'_p\in\{0,\hyph\}$, whereas $y_p=1$.
If $x'\in X_4,X_5,X_6,X_7,X_8,X_9$, then we have $x'_{[p-1,p+2]}\neq 0100$, whereas $y_{[p-1,p+2]}=0100$.

{\bf Case 9:} $y=\alpha_9(x)\in Y_9$.
If $x'\in X_1$, then we have $x'_{p+1}=1$, whereas $y_{p+1}=\hyph$.
If $x'\in X_2,X_3,X_4,X_5,X_6,X_7,X_8,X_9$, then we have $x'_p\in\{0,1\}$, whereas $y_p=\hyph$.

We now argue that the sets in~$\cU$ and~$\cS_r$ are pairwise disjoint for a suitable value of~$r\in\{0,\ldots,n-1\}$.
The connectors in~$\cU$ that contain a vertex~$s_i$ as defined in~\eqref{eq:si} all arise as images of~$\alpha_1$ and~$\alpha_5$.
Specifically, we have $s_{p+c}=\underline{\hyph}\,\hyph^c\,1^k\,0^k\,\hyph^{\ell-1-c}\in Y_5$ for all $c=1,\ldots,\ell-1$ and $s_{p+\ell}=1^k\,0^{k-1}\,\underline{0}\,\hyph^\ell\in Y_1$, i.e., all those connectors~$s_i$ satisfy
\begin{equation}
\label{eq:siU}
p+1\leq i\leq p+\ell.
\end{equation}
Note that the pairs of sets~$\cS_r$ defined in~\eqref{eq:Sr} contain only vertices~$s_i$ with $r\leq i\leq r+(g-2)+k+1$
Consequently, if we choose $r:=p+\ell+1$, then we have
\begin{equation}
\label{eq:siS}
p+\ell+1\leq i\leq (p+\ell+1)+(g-2)+k+1\leq p+n=p+1+(n-1),
\end{equation}
where we used $\ell=n-2k$ and the simple fact $g=\gcd(n,k)\leq k$ in the upper bound estimates.
From~\eqref{eq:siU} and~\eqref{eq:siS} we see that the sets in~$\cU$ and~$\cS_r$ are pairwise disjoint, as desired.

It remains to argue that~$\cH_{n,k}[\cU]$ is connected.
For this we consider the number partitions~$\psi(V(x))$ of vertices~$x\in X_i$, $i\in\{1,\ldots,9\}$, and we argue how they change with the connectors~$\{x,\alpha_i(x)\}$.
Specifically, we prove that the number partitions increase lexicographically, either directly along one connector, or along a sequence of two connectors where the first step is a small lexicographic decrease and the second step compensates for this, giving a lexicographic increase overall.
This last part of the proof also specifies the conditions for the case distinctions in the definitions~\eqref{eq:alpha2} and~\eqref{eq:alpha4}.

For each of the connectors~$\{x,\alpha_i(x)\}$, $i=1,\ldots,9$, defined in~\eqref{eq:alphai} we first describe how the number partitions~$\psi(V(x))$ and~$\psi(V(\alpha_i(x)))$ compare lexicographically.
For any $x\in X_1$, we obtain from~\eqref{eq:alpha1}, \eqref{eq:Vxy} and Lemma~\ref{lem:partition}~(i)+(ii) that
\begin{subequations}
\begin{equation}
\label{eq:part1}
\psi(V(\alpha_1(x)))\trr
\begin{cases}
\psi(V(x))\boxplus 1 & \text{if } \nu(x)\geq 3 \text{ and } v_3>v_2, \\
\psi(V(x)) & \text{otherwise.}
\end{cases}
\end{equation}
Similarly, for any $x\in X_2$ the definition~\eqref{eq:alpha2} and Lemma~\ref{lem:partition}~(iii) yield
\begin{equation}
\label{eq:part2}
\psi(V(\alpha_2(x)))=\psi(V(x))\boxminus 1
\end{equation}
(note that $v_1$ is assumed to be even and therefore $v_1\geq 2$).
For any $x\in X_3$ the definition~\eqref{eq:alpha3} and Lemma~\ref{lem:partition}~(i) give
\begin{equation}
\label{eq:part3}
\psi(V(\alpha_3(x)))\trr \psi(V(x)).
\end{equation}
For any $x\in X_4$, the definition~\eqref{eq:alpha4} and Lemma~\ref{lem:partition}~(iii) prove that
\begin{equation}
\label{eq:part4}
\psi(V(\alpha_4(x)))=
\begin{cases}
\psi(V(x))\boxminus 1 & \text{if } v_1\geq 3, \\
\psi(V(x)) & \text{if } v_1=1
\end{cases}
\end{equation}
(note that $v_1$ is assumed to be odd and therefore $v_1\neq 2$).
For any $x\in X_5$ the definition~\eqref{eq:alpha5} and Lemma~\ref{lem:partition}~(i)+(ii) show that
\begin{equation}
\label{eq:part5}
\psi(V(\alpha_5(x)))\trr
\begin{cases}
\psi(V(x))\boxplus 1 & \text{if } a=1\,\wedge\,\nu(x)\geq 3\,\wedge\,v_3>v_2\,\wedge\,w\neq 10, \\
\psi(V(x)) & \text{otherwise.}
\end{cases}
\end{equation}
For any $x\in X_i$, $i\in\{6,7,8\}$, the definitions~\eqref{eq:alpha6}--\eqref{eq:alpha8} and Lemma~\ref{lem:partition}~(ii) establish that
\begin{equation}
\label{eq:part678}
\psi(V(\alpha_i(x)))\trr \psi(V(x))\boxplus 1.
\end{equation}
Lastly, for any $x\in X_9$ we obtain from the definition~\eqref{eq:alpha9} that
\begin{equation}
\label{eq:part9}
\psi(V(\alpha_9(x)))=\psi(V(x)).
\end{equation}
\end{subequations}

Let $X:=\bigcup_{i=1}^9 X_i$ and consider a vertex~$x\in X$.
We show that there is a sequence of connectors and cycles of the factor~$\cC_{n,k}$ in between them to reach a vertex~$y\in X_{n,k}$ with $\psi(V(y))\trr \psi(V(x))$.

For $x\in X_1,X_3,X_5,X_6,X_7,X_8$ this follows directly from~\eqref{eq:part1}, \eqref{eq:part3}, \eqref{eq:part5}, \eqref{eq:part678}, respectively.
It remains to consider the cases $x\in X_2,X_4,X_9$.

{\bf Case (a):} $x\in X_2$.
Consider the two connectors~$\{x,y\}$ and~$\{x,y'\}$ where $y$ and~$y'$ are obtained from~$x$ as described by the first and second case of the case distinction in~\eqref{eq:alpha2}.
From~\eqref{eq:part2} we see that $\psi(V(y))=\psi(V(x))\boxminus 1$.
As $\nu(x)\geq 2$ and $v_1(x)\geq 2$ we have $\nu(y)\geq 3$, $v_1(y)=1$ and $v_3(y)>v_2(y)$.
We clearly also have~$V(y')=V(y)$.
Let $q$ be the position of the 1-bit of the substring~$10$ in which~$y$ differs from~$x$, and define $\gamma:=\gamma(y,q)\in\Gamma(y)$ and~$\gammatilde:=\gamma(y,q-2)\in\Gamma(y)$.
By Lemma~\ref{lem:open-move} and the definition of the mapping~$h_\gamma$ given in Section~\ref{sec:slow-move} we have $y'=h_\gamma(y)$.

We now consider the vertex~$z:=\chi(y,\gamma)\in X_i$, $i\in\{2,\ldots,9\}$, which satisfies $V(z)=V(y)$ by Lemma~\ref{lem:Vx-invariant}, and we let $t>0$ be such that~$y^t=z$.
As $v_1(y)=1$ is odd, we know that~$i\notin\{2,3\}$.

If $i\neq 4$, then we define~$\alpha_2(x):=y$, and we continue the argument as follows.
If $i=6,7,8$, then by~\eqref{eq:part678} we have
\begin{equation}
\label{eq:partpm}
\psi(V(\alpha_i(z)))\trr \psi(V(z))\boxplus 1=\psi(V(y))\boxplus 1=\big(\psi(V(x))\boxminus 1\big)\boxplus 1=\psi(V(x)),
\end{equation}
so we are done.
If $i=9$ then we have $z=u\,\underline{1}\,0\,\hyph\,\hyph^*\,1\,0\,\hyph\,\hyph^*$ for some $u\in D^+$.
Clearly, as $v_1(x)\geq 2$ there are at most two equivalence classes of gliders in~$\Gamma(y)$ and~$\Gamma(z)$ that have speed~1, and there can only be exactly two if~$v_1(x)=2$.
However, note that in~$h_\gammatilde(y)$ the two gliders of speed~1 form a train, and applying Lemma~\ref{lem:push} shows that in~$h_{\gammatilde^t}(z)$ they do not form a train (as $u\in D^+$), a contradiction to Lemma~\ref{lem:Zx-invariant}.
This shows that the case~$i=9$ cannot occur.
If $i=5$ then we have $z=u\,\underline{1}\,0\,\hyph\,\hyph^*\,w\,\hyph\,*$.
For the same reason as in the case~$i=9$, we must have $w\neq 10$, and therefore $\psi(V(\alpha_i(z)))\trr \psi(V(z))\boxplus 1$ by~\eqref{eq:part5}, so again~\eqref{eq:partpm} holds.

If $i=4$, on the other hand, then $z$ has only a single block, i.e., we have $z=u\,\underline{1}\,0\,\hyph^{\ell}$ for some $u\in D^+$.
In this case we define~$\alpha_2(x):=y'$.
By Lemma~\ref{lem:push} the vertex $z':=h_{\gamma^t}(z)=u\,\underline{\hyph}\,1\,0\,\hyph^{\ell-1}$ lies on the cycle~$C(y')$, and we have $z'\in X_1$.
From~\eqref{eq:part1} we therefore obtain
\begin{equation*}
\psi(V(\alpha_1(z')))\trr \psi(V(z'))\boxplus 1=\psi(V(y'))\boxplus 1=(\psi(V(x))\boxminus 1)\boxplus 1=\psi(V(x)),
\end{equation*}
so we are done.

{\bf Case (b):} $x\in X_4$.
Consider the two connectors~$\{x,y\}$ and~$\{x,y'\}$ where $y$ and~$y'$ are obtained from~$x$ as described by the first and second case of the case distinction in~\eqref{eq:alpha4}.
If $v_1(x)\geq 3$, then the argument continues as in case~(a) before.
It remains to consider the case~$v_1(x)=1$.
From~\eqref{eq:part4} we see that $\psi(V(y))=\psi(V(x))$.
We clearly also have~$V(y')=V(y)$.
Let $q$ be the position of the 1-bit of the substring~$10$ in which~$y$ differs from~$x$, and define $\gamma:=\gamma(y,q)\in\Gamma(y)$.

We now consider the vertex~$z:=\chi(y,\gamma)\in X_i$, $i\in\{2,\ldots,9\}$, and we let $t>0$ be such that~$y^t=z$.
As $v_1(y)=1$ is odd, we know that~$i\notin\{2,3\}$.

If $i\neq 4$, then we define~$\alpha_4(x):=y$, and we continue the argument as follows.
If $i=5,6,7,8$, then from~\eqref{eq:part5} and~\eqref{eq:part678} we have
\begin{equation*}
\psi(V(\alpha_i(z)))\trr \psi(V(z))=\psi(V(y))=\psi(V(x)).
\end{equation*}
If $i=9$, then we have $z=u\,\underline{1}\,0\,\hyph^c\,\hyph\,\hyph\,\hyph\,1\,0\,\hyph\,\hyph^*$ for some $c\geq 0$ and $u\in D^+$.
Furthermore, we have $V(\alpha_9(z))=V(z)$ by~\eqref{eq:part9}.
Let $\tq$ be the position of the 1-bit of the substring~$10$ in which $\alpha_9(z)$ differs from~$z$, and define $\gammatilde:=\gamma(\alpha_9(z),\tq)\in\Gamma(z)$.
We observe that~$\tz:=\chi(\alpha_9(z),\gammatilde)$ satisfies $\tz\in X_j$ with $j\notin \{2,3,4,9\}$ (apply Lemma~\ref{lem:push} to $\gammatilde$ and~$\alpha_9(z)$) and therefore
\begin{equation*}
\psi(V(\alpha_j(\tz)))\trr \psi(V(\tz))=\psi(V(\alpha_9(z)))=\psi(V(z))=\psi(V(y))=\psi(V(x)).
\end{equation*}

If $i=4$, on the other hand, then we define~$\alpha_4(x):=y'$.
By Lemma~\ref{lem:push} the vertex $z':=h_{\gamma^t}(z)$ lies on the cycle~$C(y')$, and we have $z'\in X_1$.
Using~\eqref{eq:part1} it follows that
\begin{equation*}
\psi(V(\alpha_1(z')))\trr \psi(V(z'))=\psi(V(y'))=\psi(V(x)).
\end{equation*}

{\bf Case (c):} $x\in X_9$.
This case can be settled in the same way as the subcase~$i=9$ in case~(b).

This completes the proof of the theorem.
\end{proof}

\subsection{Proof of Theorem~\ref{thm:Knk}}

\begin{proof}[Proof of Theorem~\ref{thm:Knk}]
The graphs~$K(3,1)$, $K(4,1)$ and~$K(6,2)$ can easily be checked to admit a Hamilton cycle.
On the other hand, $K(5,2)$ is the Petersen graph, which is well known not to have a Hamilton cycle.
Furthermore, it was shown in~\cite[Thm.~1]{MR4273468} that $K(2k+1,k)$ has a Hamilton cycle for all~$k\geq 3$.
Combining this result with~\cite[Thm.~1]{MR2836824} proves that $K(2k+2,k)$ has a Hamilton cycle for all~$k\geq 3$.
To cover the remaining cases, let $k\geq 1$ and $n\geq 2k+3$.
By Theorem~\ref{thm:connect}, there is a set~$\cU\seq\cX_{n,k}$ and a choice for $r$ where~$\cS_r$ is the set defined in~\eqref{eq:Sr} such that the sets in~$\cU$ are pairwise disjoint, the sets in~$\cU$ and~$\cS_r$ are pairwise disjoint, and $\cH_{n,k}[\cU]$ is a connected graph.
Let $\cT$ be an inclusion minimal subset of~$\cU$ such that~$\cH_{n,k}[\cT]$ is connected, i.e., this graph is a spanning tree.
By the pairwise disjointness of the sets in~$\cU\cup \cS_r$, no two of the 4-cycles~$C_4(x,y)$, $\{x,y\}\in\cT$, defined in Lemma~\ref{lem:connector} or 4-cycles $C_4'(x,y)$, $\{x,y\}\in\cS_r$, defined in Lemma~\ref{lem:join} have an edge in common with one of the cycles of the factor~$\cC_{n,k}$ in~$K(n,k)$ defined in~\eqref{eq:factor}.
Furthermore, by the minimality of~$\cT$, no two of these 4-cycles have an edge in common that does not lie on one of the cycles of the factor (otherwise, $\cT$ would contain two edges connecting the same pair of nodes), so these 4-cycles are pairwise edge-disjoint.
Consequently, as $\cH_{n,k}[\cT]$ is connected, the symmetric difference of the edge set of the cycle factor~$\cC_{n,k}$ with the 4-cycles~$C_4(x,y)$, $\{x,y\}\in\cT$, and with the 4-cycles $C_4'(x,y)$, $\{x,y\}\in\cS_r$, is a Hamilton cycle in~$K(n,k)$.
\end{proof}

\section{Open questions}

An interesting open problem is to develop an efficient algorithm for computing a Hamilton cycle in the Kneser graph~$K(n,k)$, i.e., an algorithm whose running time is polynomial in~$n$ and~$k$, not only polynomial in~$N:=\binom{n}{k}$, the size of the Kneser graph.
Our construction does not give such an algorithm for fundamental reasons.
Most importantly, our proof does not reveal the number of cycles in the cycle factor, so it is not even clear how many gluings via 4-cycles have to be used to join them to a Hamilton cycle.
Clearly, if there are $c$ cycles, then $c-1$ gluings will be needed, but we do not know how to compute $c$ more efficiently than actually computing all the cycles of our factor, which takes time polynomial in~$N$.
Even if we knew how to compute~$c$ efficiently, we do not know how to efficiently compute an inclusion minimal set of $c-1$ connectors that joins all the cycles (our proof uses many more connectors than will eventually be needed).

Furthermore, for any of the families of vertex-transitive graphs shown in Figure~\ref{fig:theorems}, it would be very interesting to investigate whether they admit a Hamilton decomposition, i.e., a partition of all their edges into Hamilton cycles, plus possibly a perfect matching.
This problem was raised by Meredith and Lloyd~\cite{MR0321782}, and by Biggs~\cite{MR556008} for the odd graphs~$O_k=K(2k+1,k)$.
Gould's survey~\cite{MR1106528} mentions the analogous problem for the middle levels graphs~$M_k=H(2k+1,k)$.
In these two cases, we do not even know \emph{two} edge-disjoint Hamilton cycles, so the problem seems to be very hard in general.
On the other hand, for Johnson graphs~$J(n,2)$ with odd~$n$, two edge-disjoint Hamilton cycles are known~\cite{MR4046775}.
To tackle this problem in general, it might be helpful to first consider decompositions of the edges of the graph into cycle factors; see~\cite{MR2128031}.

A strengthening of the concept of containing a Hamilton cycle is to contain the $r$th power of a Hamilton cycle.
To this end, Katona~\cite{MR2181045} conjectured that $K(n,k)$ contains the $r$th power of a Hamilton cycle for $r:=\max\{1,\lfloor n/k\rfloor-2\}$, which means that any $r+1$ consecutive vertices along the Hamilton cycle are disjoint sets, and he proved this for $k=2$ using Walecki's theorem.
Theorem~\ref{thm:Knk} confirms this conjecture for the cases~$2k+1\leq n\leq 4k-1$ (where $r=1$).
It seems plausible that Katona's conjecture holds even for $r:=\lceil n/k\rceil-2$, and our theorem confirms this stronger variant of the conjecture for the cases~$2k+1\leq n\leq 3k$ (where $r=1$).

\section*{Acknowledgements}

We thank Petr Gregor and Pascal Su for several inspiring discussions about Kneser graphs, and we also thank Petr Gregor for providing feedback on the first section of this paper.
We are extremely grateful to one of the STOC reviewers and one of the reviewers of the journal version, whose careful reading and checking of all proof details helped eliminating several technical errors, and to substantially improve the presentation.

\bibliographystyle{alpha}
\bibliography{refs}

\newcommand{\etalchar}[1]{$^{#1}$}
\begin{thebibliography}{GMKM21}

\bibitem[AAC{\etalchar{+}}18]{MR3713392}
L.~A. Agong, C.~Amarra, J.~S. Caughman, A.~J. Herman, and T.~S. Terada.
\newblock On the girth and diameter of generalized {J}ohnson graphs.
\newblock {\em Discrete Math.}, 341(1):138--142, 2018.

\bibitem[Bal72]{balaban:72}
A.~T. Balaban.
\newblock Chemical graphs. {XIII}. {C}ombinatorial patterns.
\newblock {\em Rev. Roumain Math. Pures Appl.}, 17:3--16, 1972.

\bibitem[Bar75]{MR0416986}
Zs. Baranyai.
\newblock On the factorization of the complete uniform hypergraph.
\newblock In {\em Infinite and finite sets ({C}olloq., {K}eszthely, 1973;
  dedicated to {P}. {E}rd\H os on his 60th birthday), {V}ol. {I}}, pages
  91--108. Colloq. Math. Soc. J{\'a}nos Bolyai, Vol. 10. North-Holland,
  Amsterdam, 1975.

\bibitem[BCK19]{MR3937815}
J.~Balogh, D.~Cherkashin, and S.~Kiselev.
\newblock Coloring general {K}neser graphs and hypergraphs via high-discrepancy
  hypergraphs.
\newblock {\em European J. Combin.}, 79:228--236, 2019.

\bibitem[Big79]{MR556008}
N.~Biggs.
\newblock Some odd graph theory.
\newblock In {\em Second {I}nternational {C}onference on {C}ombinatorial
  {M}athematics ({N}ew {Y}ork, 1978)}, volume 319 of {\em Ann. New York Acad.
  Sci.}, pages 71--81. New York Acad. Sci., New York, 1979.

\bibitem[BS21]{MR4247012}
J.~Bellmann and B.~Sch\"{u}lke.
\newblock Short proof that {K}neser graphs are {H}amiltonian for {$n\geq4k$}.
\newblock {\em Discrete Math.}, 344(7):Paper No. 112430, 2~pp., 2021.

\bibitem[BW84]{MR737262}
M.~Buck and D.~Wiedemann.
\newblock Gray codes with restricted density.
\newblock {\em Discrete Math.}, 48(2-3):163--171, 1984.

\bibitem[CF02]{MR1883565}
Y.~Chen and Z.~F{\"u}redi.
\newblock Hamiltonian {K}neser graphs.
\newblock {\em Combinatorica}, 22(1):147--149, 2002.

\bibitem[Cha89]{MR995888}
P.~Chase.
\newblock Combination generation and graylex ordering.
\newblock {\em Congr. Numer.}, 69:215--242, 1989.
\newblock Eighteenth Manitoba Conference on Numerical Mathematics and Computing
  (Winnipeg, MB, 1988).

\bibitem[Che00]{MR1778200}
Y.~Chen.
\newblock Kneser graphs are {H}amiltonian for {$n\geq 3k$}.
\newblock {\em J. Combin. Theory Ser. B}, 80(1):69--79, 2000.

\bibitem[Che03]{MR1999733}
Y.~Chen.
\newblock Triangle-free {H}amiltonian {K}neser graphs.
\newblock {\em J. Combin. Theory Ser. B}, 89(1):1--16, 2003.

\bibitem[CI96]{MR1405991}
W.~E. Clark and M.~E.~H. Ismail.
\newblock Binomial and {$Q$}-binomial coefficient inequalities related to the
  {H}amiltonicity of the {K}neser graphs and their {$Q$}-analogues.
\newblock {\em J. Combin. Theory Ser. A}, 76(1):83--98, 1996.

\bibitem[CL87]{MR888679}
B.~Chen and K.~Lih.
\newblock Hamiltonian uniform subset graphs.
\newblock {\em J. Combin. Theory Ser. B}, 42(3):257--263, 1987.

\bibitem[CL21]{caoduro-lichev:22}
M.~Caoduro and L.~Lichev.
\newblock On the boxicity of {K}neser graphs and complements of line graphs.
\newblock {\it arXiv:2105.02516}, 2021.

\bibitem[Cor92]{corbett_1992}
P.~F. Corbett.
\newblock Rotator graphs: An efficient topology for point-to-point
  multiprocessor networks.
\newblock {\em IEEE Transactions on Parallel and Distributed Systems},
  3:622--626, 1992.

\bibitem[CW93]{MR1308693}
R.~C. Compton and S.~G. Williamson.
\newblock Doubly adjacent {G}ray codes for the symmetric group.
\newblock {\em Linear Multilinear Algebra}, 35(3-4):237--293, 1993.

\bibitem[CW08a]{MR2466973}
Y.~Chen and W.~Wang.
\newblock Diameters of uniform subset graphs.
\newblock {\em Discrete Math.}, 308(24):6645--6649, 2008.

\bibitem[CW08b]{MR2427759}
Y.~Chen and Y.~Wang.
\newblock On the diameter of generalized {K}neser graphs.
\newblock {\em Discrete Math.}, 308(18):4276--4279, 2008.

\bibitem[Den97]{MR1468332}
T.~Denley.
\newblock The odd girth of the generalised {K}neser graph.
\newblock {\em European J. Combin.}, 18(6):607--611, 1997.

\bibitem[EHR84]{MR821383}
P.~Eades, M.~Hickey, and R.~C. Read.
\newblock Some {H}amilton paths and a minimal change algorithm.
\newblock {\em J. Assoc. Comput. Mach.}, 31(1):19--29, 1984.

\bibitem[EKR61]{MR0140419}
P.~Erd{\H o}s, C.~Ko, and R.~Rado.
\newblock Intersection theorems for systems of finite sets.
\newblock {\em Quart. J. Math. Oxford Ser. (2)}, 12:313--320, 1961.

\bibitem[EM84]{MR782221}
P.~Eades and B.~McKay.
\newblock An algorithm for generating subsets of fixed size with a strong
  minimal change property.
\newblock {\em Inform. Process. Lett.}, 19(3):131--133, 1984.

\bibitem[FKMS20]{MR4046775}
S.~Felsner, L.~Kleist, T.~M\"{u}tze, and L.~Sering.
\newblock Rainbow cycles in flip graphs.
\newblock {\em SIAM J. Discrete Math.}, 34(1):1--39, 2020.

\bibitem[FR18]{MR3766248}
E.~Friedgut and O.~Regev.
\newblock Kneser graphs are like {S}wiss cheese.
\newblock {\em Discrete Anal.}, pages Paper No. 2, 18~pp., 2018.

\bibitem[Fra85]{MR797510}
P.~Frankl.
\newblock On the chromatic number of the general {K}neser-graph.
\newblock {\em J. Graph Theory}, 9(2):217--220, 1985.

\bibitem[GK76]{MR0389608}
C.~Greene and D.~J. Kleitman.
\newblock Strong versions of {S}perner's theorem.
\newblock {\em J. Combin. Theory Ser. A}, 20(1):80--88, 1976.

\bibitem[GMKM21]{MR4311584}
I.~Garc\'{\i}a-Marco, K.~Knauer, and L.~P. Montejano.
\newblock Chomp on generalized {K}neser graphs and others.
\newblock {\em Internat. J. Game Theory}, 50(3):603--621, 2021.

\bibitem[GMN18]{MR3819051}
P.~Gregor, T.~M\"{u}tze, and J.~Nummenpalo.
\newblock A short proof of the middle levels theorem.
\newblock {\em Discrete Anal.}, Paper No. 8:12~pp., 2018.

\bibitem[Gou91]{MR1106528}
R.~J. Gould.
\newblock Updating the {H}amiltonian problem---a survey.
\newblock {\em J. Graph Theory}, 15(2):121--157, 1991.

\bibitem[GR87]{MR900932}
R.~J. Gould and R.~Roth.
\newblock Cayley digraphs and {$(1,j,n)$}-sequencings of the alternating groups
  {$A_n$}.
\newblock {\em Discrete Math.}, 66(1--2):91--102, 1987.

\bibitem[Hol17]{MR3599935}
A.~E. Holroyd.
\newblock Perfect snake-in-the-box codes for rank modulation.
\newblock {\em IEEE Trans. Inform. Theory}, 63(1):104--110, 2017.

\bibitem[HW78]{MR510592}
K.~Heinrich and W.~D. Wallis.
\newblock Hamiltonian cycles in certain graphs.
\newblock {\em J. Austral. Math. Soc. Ser. A}, 26(1):89--98, 1978.

\bibitem[HW14]{MR3177543}
D.~J. Harvey and D.~R. Wood.
\newblock Treewidth of the {K}neser graph and the {E}rd{\H{o}}s-{K}o-{R}ado
  theorem.
\newblock {\em Electron. J. Combin.}, 21(1):Paper 1.48, 11~pp., 2014.

\bibitem[JK04]{MR2128031}
J.~R. Johnson and H.~A. Kierstead.
\newblock Explicit 2-factorisations of the odd graph.
\newblock {\em Order}, 21(1):19--27, 2004.

\bibitem[JM20]{MR4040062}
A.~Jafari and M.~J. Moghaddamzadeh.
\newblock On the chromatic number of generalized {K}neser graphs and {H}adamard
  matrices.
\newblock {\em Discrete Math.}, 343(2):111682, 3~pp., 2020.

\bibitem[Joh04]{MR2046083}
J.~R. Johnson.
\newblock Long cycles in the middle two layers of the discrete cube.
\newblock {\em J. Combin. Theory Ser. A}, 105(2):255--271, 2004.

\bibitem[Joh11]{MR2836824}
J.~R. Johnson.
\newblock An inductive construction for {H}amilton cycles in {K}neser graphs.
\newblock {\em Electron. J. Combin.}, 18(1):Paper 189, 12~pp., 2011.

\bibitem[JR94]{MR1298971}
M.~Jiang and F.~Ruskey.
\newblock Determining the {H}amilton-connectedness of certain vertex-transitive
  graphs.
\newblock {\em Discrete Math.}, 133(1-3):159--169, 1994.

\bibitem[Kat05]{MR2181045}
Gy. O.~H. Katona.
\newblock Constructions via {H}amiltonian theorems.
\newblock {\em Discrete Math.}, 303(1-3):87--103, 2005.

\bibitem[Kno94]{MR1301949}
M.~Knor.
\newblock Gray codes in graphs.
\newblock {\em Math. Slovaca}, 44(4):395--412, 1994.

\bibitem[Knu11]{MR3444818}
D.~E. Knuth.
\newblock {\em The Art of Computer Programming. {V}ol. 4{A}. {C}ombinatorial
  Algorithms. {P}art 1}.
\newblock Addison-Wesley, Upper Saddle River, NJ, 2011.

\bibitem[KZ22]{kozhevnikov-zhukovskii:22}
V.~Kozhevnikov and M.~Zhukovskii.
\newblock Large cycles in generalized {J}ohnson graphs.
\newblock {\it arXiv:2203.03006}, 2022.

\bibitem[LCL22]{MR4406218}
K.~Liu, M.~Cao, and M.~Lu.
\newblock Treewidth of the generalized {K}neser graphs.
\newblock {\em Electron. J. Combin.}, 29(1):Paper No. 1.57, 19~pp., 2022.

\bibitem[Lov70]{MR0263646}
L.~Lov{\'a}sz.
\newblock Problem 11.
\newblock In {\em Combinatorial Structures and Their Applications (Proc.
  Calgary Internat. Conf., Calgary, AB, 1969)}. Gordon and Breach, New York,
  1970.

\bibitem[Lov78]{MR514625}
L.~Lov{\'a}sz.
\newblock Kneser's conjecture, chromatic number, and homotopy.
\newblock {\em J. Combin. Theory Ser. A}, 25(3):319--324, 1978.

\bibitem[M\"23]{MR4649606}
T.~M\"{u}tze.
\newblock Combinatorial {G}ray codes---an updated survey.
\newblock {\em Electron. J. Combin.}, DS26(Dynamic Surveys):93~pp., 2023.

\bibitem[Mat76]{MR0389663}
M.~Mather.
\newblock The {R}ugby footballers of {C}roam.
\newblock {\em J. Combin. Theory Ser. B}, 20(1):62--63, 1976.

\bibitem[Met22]{metsch:22}
K.~Metsch.
\newblock On the treewidth of generalized {K}neser graphs.
\newblock {\it arXiv:2203.14036}, 2022.

\bibitem[ML72]{MR0457282}
G.~H.~J. Meredith and E.~K. Lloyd.
\newblock The {H}amiltonian graphs {$O_{4}$} to {$O_{7}$}.
\newblock In {\em Combinatorics ({P}roc. {C}onf. {C}ombinatorial {M}ath.,
  {M}ath. {I}nst., {O}xford, 1972)}, pages 229--236. Inst. Math. Appl.,
  Southend-on-Sea, 1972.

\bibitem[ML73]{MR0321782}
G.~H.~J. Meredith and E.~K. Lloyd.
\newblock The footballers of {C}roam.
\newblock {\em J. Combin. Theory Ser. B}, 15:161--166, 1973.

\bibitem[MMN23]{MR4617441}
A.~Merino, T.~M\"{u}tze, and Namrata.
\newblock Kneser graphs are {H}amiltonian.
\newblock In {\em S{TOC}'23---{P}roceedings of the 55th {A}nnual {ACM}
  {S}ymposium on {T}heory of {C}omputing}, pages 963--970. ACM, New York, 2023.

\bibitem[MNW21]{MR4273468}
T.~M\"{u}tze, J.~Nummenpalo, and B.~Walczak.
\newblock Sparse {K}neser graphs are {H}amiltonian.
\newblock {\em J. Lond. Math. Soc. (2)}, 103(4):1253--1275, 2021.

\bibitem[MS17]{MR3759914}
T.~M{\"u}tze and P.~Su.
\newblock Bipartite {K}neser graphs are {H}amiltonian.
\newblock {\em Combinatorica}, 37(6):1207--1219, 2017.

\bibitem[M{\"u}t16]{MR3483129}
T.~M{\"u}tze.
\newblock Proof of the middle levels conjecture.
\newblock {\em Proc. Lond. Math. Soc.}, 112(4):677--713, 2016.

\bibitem[M{\"u}t23]{gliders}
T.~M{\"u}tze.
\newblock Gliders in {K}neser graphs, 2023.
\newblock \url{http://tmuetze.de/gliders.html}.

\bibitem[NW75]{MR0396274}
A.~Nijenhuis and H.~Wilf.
\newblock {\em Combinatorial Algorithms}.
\newblock Academic Press, New York-London, 1975.
\newblock Computer Science and Applied Mathematics.

\bibitem[Ord67]{DBLP:journals/cacm/Ord-Smith67}
R.~J. Ord{-}Smith.
\newblock Algorithm 308: {G}eneration of the permutations in
  pseudo-lexicographic order {[G6]}.
\newblock {\em Commun. {ACM}}, 10(7):452, 1967.

\bibitem[Rus88]{MR936104}
F.~Ruskey.
\newblock Adjacent interchange generation of combinations.
\newblock {\em J. Algorithms}, 9(2):162--180, 1988.

\bibitem[RW10]{MR2682614}
F.~Ruskey and A.~Williams.
\newblock An explicit universal cycle for the {$(n-1)$}-permutations of an
  {$n$}-set.
\newblock {\em ACM Trans. Algorithms}, 6(3):Art. 45, 12~pp., 2010.

\bibitem[Sav97]{MR1491049}
C.~D. Savage.
\newblock A survey of combinatorial {G}ray codes.
\newblock {\em SIAM Rev.}, 39(4):605--629, 1997.

\bibitem[SS04]{MR2020936}
I.~Shields and C.~D. Savage.
\newblock A note on {H}amilton cycles in {K}neser graphs.
\newblock {\em Bull. Inst. Combin. Appl.}, 40:13--22, 2004.

\bibitem[SW20]{MR4060409}
J.~Sawada and A.~Williams.
\newblock Solving the sigma-tau problem.
\newblock {\em ACM Trans. Algorithms}, 16(1):Art. 11, 17~pp., 2020.

\bibitem[TL73]{MR0349274}
D.~Tang and C.~Liu.
\newblock Distance-{$2$} cyclic chaining of constant-weight codes.
\newblock {\em IEEE Trans. Comput.}, C-22:176--180, 1973.

\bibitem[VPV05]{MR2186709}
M.~Valencia-Pabon and J.-C. Vera.
\newblock On the diameter of {K}neser graphs.
\newblock {\em Discrete Math.}, 305(1--3):383--385, 2005.

\bibitem[Zak84]{MR753548}
S.~Zaks.
\newblock A new algorithm for generation of permutations.
\newblock {\em BIT}, 24(2):196--204, 1984.

\bibitem[Zak20]{MR4055024}
D.~Zakharov.
\newblock Chromatic numbers of {K}neser-type graphs.
\newblock {\em J. Combin. Theory Ser. A}, 172:105188, 16~pp., 2020.

\end{thebibliography}

\end{document}